\title{Minimal Arrangements of Spherical Geodesics} 
\author{Giovanni Viglietta}{University of Aizu, Japan}{viglietta@gmail.com}{https://orcid.org/0000-0001-6145-4602}{}
\authorrunning{G. Viglietta} 
\keywords{Minimal arrangement, spherical geodesic, visibility map, polyhedron, swirl graph, one-sided, $k$-oriented} 
\newtheorem{open}{Open Problem}
\begin{document}

\maketitle

\begin{abstract}
We study arrangements of geodesic arcs on a sphere, where all arcs are internally disjoint and each arc has its endpoints located within the interior of other arcs. We establish fundamental results concerning the minimum number of arcs in such arrangements, depending on local geometric constraints such as \emph{one-sidedness} and \emph{$k$-orientation}.

En route to these results, we generalize and settle an open problem from CCCG~2022, proving that any such arrangement has at least two \emph{clockwise swirls} and at least two \emph{counterclockwise swirls}. 
\end{abstract}

\section{Introduction}\label{sec:1}
\subsection{Art Gallery Problem and Spherical Diagrams}\label{sec:1.1}
In Discrete and Computational Geometry, the \emph{Art Gallery Problem} asks how many ``light sources'' are required to ``illuminate'' a given geometric environment~\cite{orourke,urrutia2}. The environment may be an enclosure, such as a polygon or a polyhedron, or more broadly, a spatial arrangement of ``opaque'' objects that obstruct light rays.

In recent years, a line of research on the Art Gallery Problem has focused on illuminating $3$-dimensional polyhedra by choosing a subset of their edges as light sources. In this setting, the facets of a polyhedron are interpreted as opaque obstacles, and one wants to illuminate the polyhedron's interior with as few ``edge lights'' as possible~\cite{urrutia,orthoguard,cano,2reflex}.

To this end, a technique developed in~\cite{cano} involves selecting a small set of edges whose endpoints collectively include all vertices of the polyhedron. Interestingly, however, these edges may not be sufficient to fully illuminate the polyhedron's interior, as there is an abundance of polyhedra having internal points that do not directly see any vertices~\cite{orourke,spherical1,spherical2}. We say that such points are \emph{vertex-hidden}. Thus, the study of vertex-hidden points and the combinatorics of their visibility maps becomes central in~\cite{cano}, as well as in the development of a general theory of visibility-related problems for polyhedral objects in $3$-space~\cite{mini}.

In order to systematize these fundamental investigations, Spherical Occlusion Diagrams (SODs) were introduced as a model for visibility maps of vertex-hidden viewpoints relative to \emph{polygonal scenes}, i.e., arrangements of interior-disjoint polygons in $3$-space~\cite{spherical1,spherical2}.

We give some preliminary definitions. A \emph{geodesic arc} on a sphere is the unique shortest curve connecting two non-antipodal points. Clearly, any geodesic arc lies within a great circle on the sphere. We say that a geodesic arc $a$ \emph{blocks} a geodesic arc $b$ if an endpoint of $b$ lies in the relative interior of $a$. If $a$ blocks $b$, we say that $b$ \emph{hits} $a$.

\begin{definition}\label{d:1a}
A \emph{Spherical Diagram (SD)} is a finite non-empty collection $\mathcal D$ of pairwise interior-disjoint geodesic arcs on the unit sphere in $\mathbb R^3$, such that each arc of $\mathcal D$ is blocked by arcs of $\mathcal D$ at each endpoint.
\end{definition}

\begin{definition}\label{d:1b}
A \emph{Spherical Occlusion Diagram (SOD)} is a Spherical Diagram $\mathcal D$ with the additional property that all arcs in $\mathcal D$ that hit the same arc of $\mathcal D$ reach it from the same side.
\end{definition}

\begin{figure*}
\centering
\includegraphics[scale=0.55]{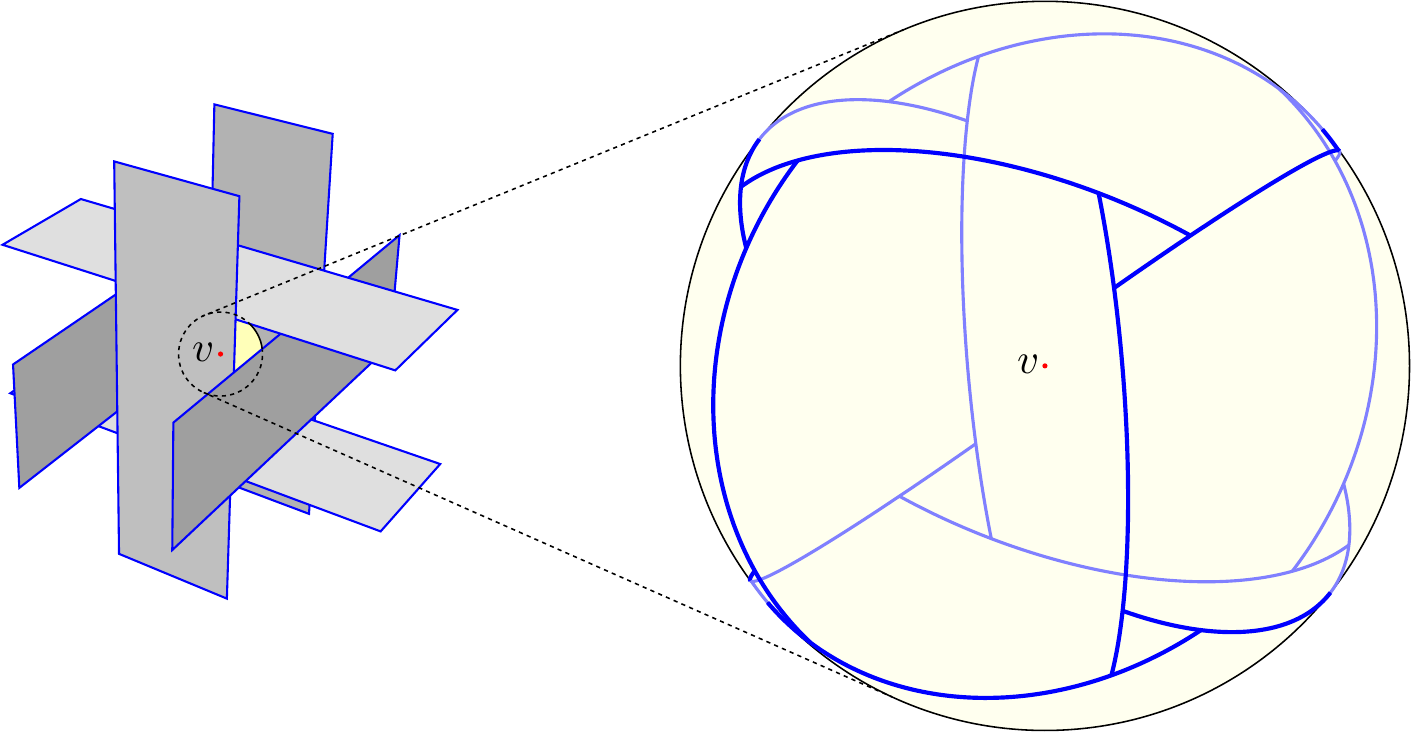}
\caption{A polygonal scene of six rectangles (left), where the central point $v$ is vertex-hidden. The portion of each edge that is visible to $v$ is radially projected onto a sphere centered at $v$, creating a geodesic arc. The resulting visibility map is an SOD (right).}
\label{fig:intro}
\end{figure*}

SDs may also be referred to as \emph{two-sided} arrangements of arcs, whereas SODs are \emph{one-sided}, to stress the fact that their arcs are hit only from one side (the term ``one-sided'' was previously used in~\cite{eppstein} in the context of rectangular layouts). While SODs were introduced and studied in~\cite{mini,spherical1,spherical2}, in this paper we also set out to investigate SDs as a natural generalization. Notably, most of what is currently known about SODs is more generally true for SDs (refer to \cref{sec:previous}), and SDs are interesting geometric objects in their own right.

As illustrated in \cref{fig:intro}, the visibility map of a vertex-hidden viewpoint in a polygonal scene is necessarily an SOD (see~\cite{spherical2}). Interestingly, the converse is not always the case, as was shown in~\cite{kimberly}. Nonetheless, SODs were instrumental in proving that any vertex-hidden viewpoint in a polygonal scene must see at least eight distinct edges (this number reduces to six for viewpoints that are not vertex-hidden). This result was obtained in~\cite{mini} by showing that any SOD consists of at least eight arcs. In turn, this follows from the observation that any SOD (in fact, any SD) must have at least four \emph{swirls}, which are defined next.

\begin{definition}\label{d:swirl}
A \emph{swirl} in an SD is a cycle of arcs, each of which hits the next (and such that the last hits the first), going either all clockwise or all counterclockwise. The \emph{degree} of a swirl is the number of arcs constituting it.
\end{definition}
As an example, the SOD in \cref{fig:intro} has four clockwise swirls and four counterclockwise swirls, all of degree three.

\subsection{Polyhedra with Restricted Edge Orientations}\label{sec:1.2}
In the study of the Art Gallery Problem, it is customary to investigate not only polygons or polyhedra in their full generality, but also specific subclasses with particular geometric properties. For example, alongside general polyhedra, also \emph{orthogonal polyhedra} have been considered (\cref{fig:poly.a}): these are polyhedra whose faces meet at right angles~\cite{urrutia,orthoguard,2reflex}.

Similarly, a \emph{$k$-face-oriented} polyhedron has the property that there exist $k$ unit vectors such that the normal of each face is parallel to one of these vectors~\cite{face}. Note that orthogonal polyhedra are $3$-face-oriented, and any polyhedron is $k$-face-oriented for a large-enough $k$.

In the same vein, we may define a polygonal scene to be \emph{$k$-edge-oriented} if there exist $k$ unit vectors such that every edge of every polygon in the scene is parallel to one of the vectors. Again, any orthogonal polyhedron is a $3$-edge-oriented polygonal scene; in general, any $k$-face-oriented polyhedron is also a ${k\choose 2}$-edge-oriented polygonal scene (\cref{fig:poly.c}).

\begin{figure*}
\centering
\begin{subfigure}{0.3\textwidth}
    \includegraphics[scale=0.75]{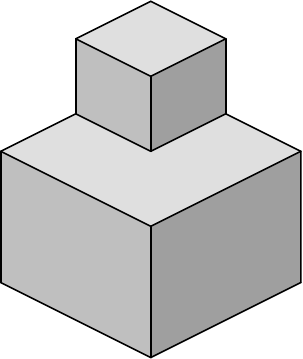}
    \subcaption{}
    \label{fig:poly.a}
\end{subfigure}
\hfill
\begin{subfigure}{0.3\textwidth}
    \includegraphics[scale=0.75]{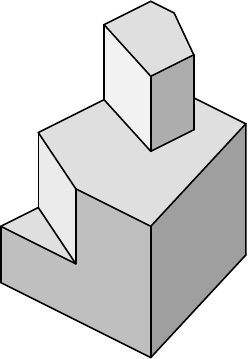}
    \subcaption{}
    \label{fig:poly.b}
\end{subfigure}
\hfill
\begin{subfigure}{0.3\textwidth}
    \includegraphics[scale=0.75]{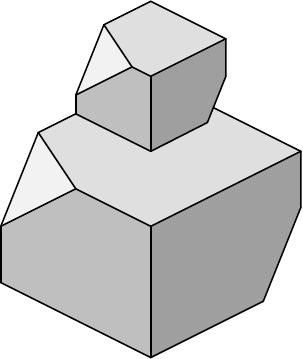}
    \subcaption{}
    \label{fig:poly.c}
\end{subfigure}
\caption{(a) An orthogonal polyhedron. (b) A $7$-edge-oriented polyhedron having only vertical and horizontal edges (and faces). (c) A $4$-face-oriented and $6$-edge-oriented polyhedron.}
\label{fig:poly}
\end{figure*}

Following the seminal result of~\cite{mini} mentioned in \cref{sec:1.1}, our motivating question is: {\bf What is the minimum number of edges that are necessarily visible to a vertex-hidden viewpoint in a $k$-edge-oriented polygonal scene?} In order to model the visibility maps of such viewpoints, we give the following definition.

\begin{definition}\label{d:orient}
An SD $\mathcal D$ is \emph{$k$-oriented} if there exists a set $P$ of $k$ points on the unit sphere, called \emph{poles}, no two of which are antipodal, as well as a function $f\colon \mathcal D\to P$ such that each arc $a\in \mathcal D$ is collinear with the pole $f(a)$, but contains neither $f(a)$ nor its antipodal point.
\end{definition}
With the notation of \cref{d:orient}, we refer to the point antipodal to a pole $p\in P$ as the \emph{anti-pole} of $p$. Also, for any arc $a\in \mathcal D$, the pole $f(a)$ and its anti-pole $-f(a)$ are said to be the \emph{vanishing points} of $a$.\footnote{The term ``vanishing point'' comes from perspective theory, and indicates a point where parallel lines in $3$-space appear to converge when projected onto a surface such as a plane or, in this case, a sphere.} If three of the $k$ poles lie on a same great circle, then $\mathcal D$ is said to be \emph{degenerate}. Otherwise, $\mathcal D$ is \emph{non-degenerate}.

It is immediate to recognize that the visibility map of any vertex-hidden point of a $k$-edge-oriented polygonal scene is a $k$-oriented SOD. The converse is not true in general (a counterexample is found in~\cite{kimberly}), but it may be true for sufficiently small values of $k$.

Thus, we recast our motivating question: {\bf What is the minimum number of arcs that a $k$-oriented SD or SOD can have?} While this formulation may not be strictly equivalent to the one about polygonal scenes, it stands as a question of independent interest in extremal graph theory~\cite{bollobas}, because SDs can be characterized as spherical drawings of certain graphs. Consider a non-empty $3$-regular planar graph $G$ that can be drawn on a sphere by means of internally disjoint geodesic arcs in such a way that each vertex of $G$ is incident to two collinear arcs, and any chain of such collinear arcs is itself a geodesic arc. Such a drawing coincides with an SD in which no two arcs have a common endpoint, and vice versa. For the purpose of minimizing arcs, this is not a restrictive assumption on SDs; also, it is easy to see that minimizing arcs in SDs is equivalent to minimizing vertices (or edges) in such graphs. $k$-oriented SDs and SODs can be characterized in similar ways, as well.

\subsection{Statement of Results}\label{sec:1.3}
In this paper, we give a complete answer to the above question for all \emph{non-degenerate} SDs and SODs. That is, {\bf for every $k$, we provide non-degenerate $k$-oriented SDs and SODs having the minimum possible number of arcs}.

We are also able to do so for \emph{degenerate} $k$-oriented SDs and SODs of all possible configurations, provided that $k\leq 5$. For $k\geq 6$ we have sporadic results, but classifying and analyzing the numerous degenerate configurations of the poles remains a challenge.

In addition, we also determine the minimum number of \emph{swirls} that a $k$-oriented SD or SOD may have in all of the aforementioned cases (with one exception; see below).

Our results are summarized in \cref{tab:1}.

\begin{table}[h!]
\centering
\begin{tabular}{ |c|c||c|c|c| }
\hline
$k$-orientation & alignment & swirls & SD arcs & SOD arcs \\ 
\hline
\hline
$3$-oriented & $(2,2,2)$ & $8$ & $12$ & $12$ \\ 
\hline
$4$-oriented & $(2,2,2,3)$ & $6$ & $9$ & $11$ \\
$4$-oriented & $(3,3,3,3)$ & $6$ & $9$ & $10$ \\
\hline
$5$-oriented & $(2,2,2,2,4)$ & $6$ & $9$ & $11$ \\
$5$-oriented & $(2,3,3,3,3)$ & $5$ & $8$ & $9$ \\
$5$-oriented & $(3,3,3,4,4)$ & $4$--$5$ & $8$ & $9$ \\
$5$-oriented & $(4,4,4,4,4)$ & $4$ & $8$ & $9$ \\
\hline
$(k \geq 6)$-oriented & $(2,2,\dots,2,k-1)$ & $6$ & $9$ & $11$ \\
$6$-oriented & $(3,3,3,3,3,3)$ & $4$ & $6$ & $8$ \\
$6$-oriented & $(3,3,3,4,4,4)$ & $4$ & $6$ & $8$ \\
$6$-oriented & $(3,4,4,4,4,5)$ & $4$ & $6$ & $8$ \\
$6$-oriented & $(4,4,4,5,5,5)$ & $4$ & $6$ & $8$ \\
$(k \geq 6)$-oriented & $(\geq 5,\geq 5,\dots, \geq 5)$ & $4$ & $6$ & $8$ \\
\hline
\end{tabular}

\vspace{0.4cm}
\caption{Minimum numbers of swirls and arcs in $k$-oriented SDs (two-sided arrangements) and SODs (one-sided arrangements), depending on their alignment. The list includes all non-degenerate configurations for every $k$, as well as all degenerate configurations up to $k=5$.}\label{tab:1}
\end{table}

To uniformly describe degenerate and non-degenerate $k$-oriented SDs with respect to the configuration of their poles, we use the concept of \emph{alignment}. A $k$-oriented SD with poles $p_1$, $p_2$, \dots, $p_k$ has alignment $(d_1, d_2,\dots, d_k)$ if, for every $1\leq i\leq k$, there are exactly $d_i$ distinct great circles that contain $p_i$ and another pole (we may assume $d_1\leq d_2\leq\dots\leq d_k$).

Therefore, non-degenerate $k$-oriented SDs have alignment $(k-1,k-1,\dots,k-1)$. Another important alignment is $(2,2,\dots,2,k-1)$, which corresponds to the visibility map of a polyhedron having only vertical and horizontal edges (\cref{fig:poly.b}), as well as $(3,3,3,3,3,3)$, which corresponds to the visibility map of a $4$-face-oriented polyhedron (\cref{fig:poly.c}).

Notably, for nearly all of the alignment configurations listed in \cref{tab:1}, there are SDs and SODs that simultaneously minimize the number of arcs and the number of swirls. The only exceptions are $(3,3,3,4,4)$, where we do not know if four swirls are possible, and $(4,4,4,4,4)$, where any SD or SOD minimizing the number of arcs has five swirls (the minimum is four).

Although the last row of \cref{tab:1} was partially established in~\cite{mini,spherical1,spherical2}, a question was left open: Does any SOD have at least two clockwise and two counterclockwise swirls? In this paper we answer it in the affirmative, not only for SODs, but more generally for all SDs.

The technical proofs of most preliminary results are found in \cref{asec:walks,asec:doubling,asec:swirl,asec:attractors}. Concluding remarks and several open problems are in \cref{asec:open}.

\section{Basic Constructions and Preliminary Results}\label{sec:3}
\subsection{Previous Results}\label{sec:previous}
We summarize all previous results on SDs (see also \cref{asec:previous}). Proofs of the following statements are  found in~\cite{mini,spherical2}; although they were stated and proved only for SODs, the reader may verify that none of the proofs makes use of the one-sidedness of the arrangements.

Every arc in an SD hits exactly two distinct arcs (one at each endpoint), and no two arcs in an SD intersect in more than one point. The union of all the arcs in an SD is a connected set. An SD with $n$ arcs partitions the unit sphere into $n+2$ spherically convex regions called \emph{tiles}, and no tile contains antipodal points. The union of the arcs of a swirl separates the unit sphere in two regions, exactly one of which is spherically convex; this region is called the \emph{eye} of the swirl. Given an SD, the interior of any great semicircle on the unit sphere is crossed by at least one arc of the SD, and the interior of any hemisphere contains the eye of at least one swirl. The \emph{swirl graph} of an SD is the undirected multigraph on the set of swirls having an edge between two swirls for every arc shared by them; this graph is planar. Every SD has at least four swirls, including a clockwise one and a counterclockwise one.

In addition to the above properties, SODs also enjoy the following ones. The swirl graph of any SOD is simple, planar, and bipartite. Also, every SOD has at least eight arcs.

\begin{figure*}
\centering
\begin{subfigure}{0.49\textwidth}
    \includegraphics[scale=0.5]{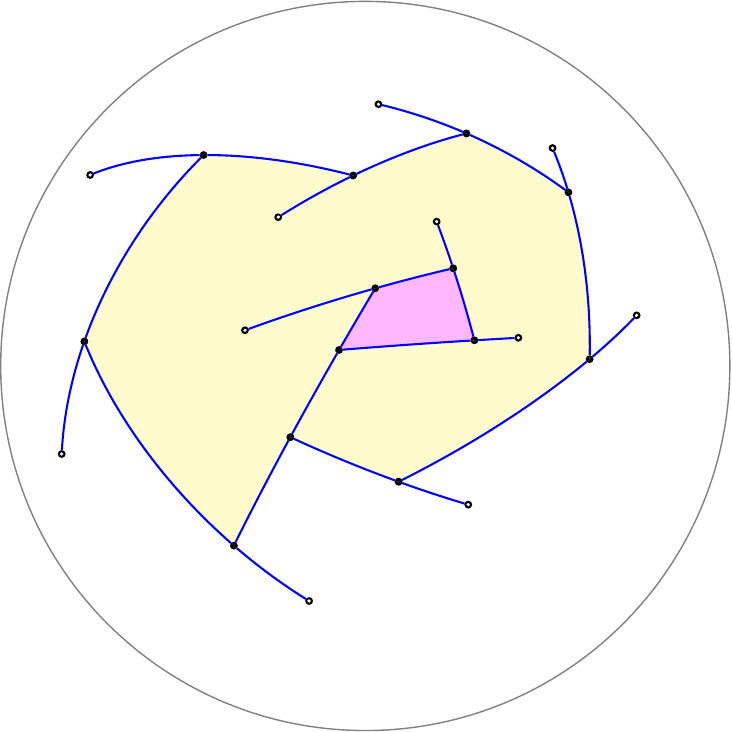}
    \subcaption{}
    \label{fig:1a}
\end{subfigure}
\hfill
\begin{subfigure}{0.49\textwidth}
    \includegraphics[scale=0.5]{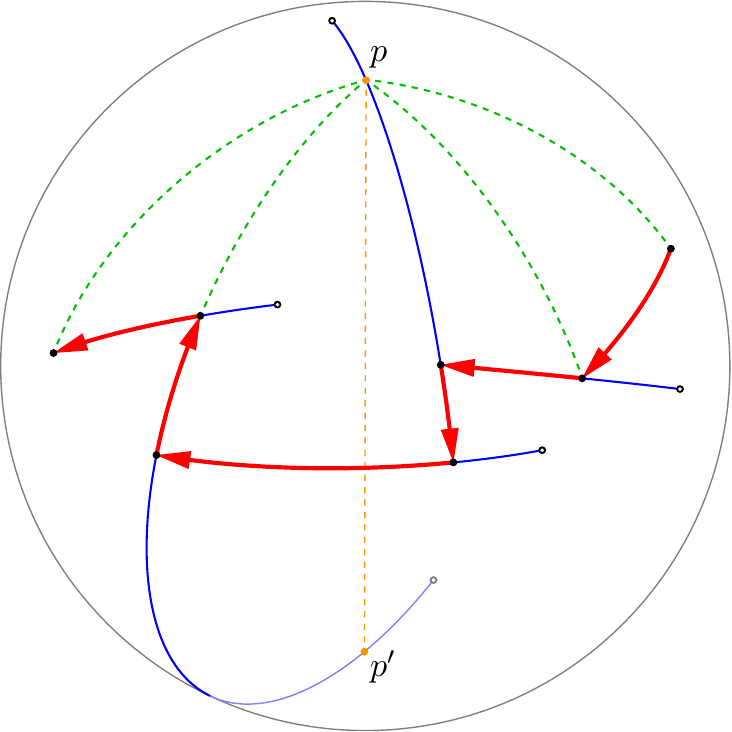}
    \subcaption{}
    \label{fig:1b}
\end{subfigure}
\caption{(a) The area in yellow is the right-side region of a sliding walk. The eye of a clockwise swirl (in purple) can be found within this region by doing a right-handed sliding walk from its boundary. (b) The initial steps of a clockwise walk with fulcrum $p$ (in red).}
\label{fig:1}
\end{figure*}

\subsection{Sliding Walks}\label{sec:walks}
Let $\mathcal D$ be an SD, and let $D$ be the union of the arcs in $\mathcal D$. Note that any point $p\in D$ lies in the interior of exactly one arc $r(p)\in\mathcal D$. A \emph{sliding walk} on $\mathcal D$ is defined as a continuous function $w\colon [0,+\infty)\to D$ such that, for every maximal interval $I\subseteq [0,+\infty)$ with $|r(w(I))|=1$, $I$ is left-closed and right-open, and $w$ restricted to $I$ is a unit-speed regular curve.

That is, a sliding walk continues to move in the same direction at unit speed along the same arc $a\in\mathcal D$ until it reaches one of its endpoints. Then it turns left or right into the (unique) arc hit by $a$, follows it until one of its endpoints, and so on indefinitely.

Since an SD $\mathcal D$ has finitely many arcs, a sliding walk $w$ cannot be an injective function, and there exists a minimum $x$ and a unique interval $[x,x']\subset [0,+\infty)$ such that the restriction of $w$ to $[x,x']$ is a simple closed curve (it is easy to see that either $x=0$ or $w(x)$ is an endpoint of an arc of $\mathcal D$). This curve separates the unit sphere into a \emph{right-side region} and a \emph{left-side region}. Note that these two regions are well defined and unique for any sliding walk.

A \emph{right-handed walk} (resp., \emph{left-handed walk}) is a sliding walk that always turns right (resp., left) upon hitting a new arc.

\begin{observation}\label{yobs:right}
The right-side region (resp., left-side region) of any right-handed walk (resp., left-handed walk) on an SD $\mathcal D$ coincides with the eye of a clockwise swirl (resp., counterclockwise swirl) of $\mathcal D$.\qed
\end{observation}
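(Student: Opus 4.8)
The plan is to analyse the simple closed curve $\gamma := w|_{[x,x']}$ formed by the periodic part of a right-handed walk $w$. Up to restarting the walk at one of the points where it turns, we may assume that $w(x)$ is an endpoint of an arc. Let $e_1,\dots,e_m$ be the points of $\gamma$ at which $w$ turns, listed in cyclic order of visit; between two consecutive turning points $e_{i-1}$ and $e_i$ the walk stays on a single arc $a_i\in\mathcal D$, travelling from the interior point $e_{i-1}$ to the endpoint $e_i$ of $a_i$. Since $e_{i-1}$ lies in the interior of $a_i$ and $e_i$ lies in the interior of $a_{i+1}$ (indices modulo $m$), each $a_i$ hits $a_{i+1}$, so $\gamma$ is a simple closed spherical geodesic polygon with vertices $e_1,\dots,e_m$ and edges $[e_{i-1},e_i]\subseteq a_i$, at each vertex of which the walk turns right.

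The geometric core is to deduce convexity. A short computation in the tangent plane at $e_i$ shows that a right turn corresponds to an interior angle lying in $(0,\pi)$ for the right-side region $R$ of $\gamma$ at that vertex; by the Gauss--Bonnet formula such a polygon encloses a region of area less than $2\pi$, and a region of area less than $2\pi$ bounded by a simple closed geodesic polygon all of whose interior angles are less than $\pi$ is spherically convex (the spherical analogue of the fact that a simple planar polygon whose turns are all of one handedness is convex). Hence $R$ is spherically convex. This in turn forces the arcs $a_1,\dots,a_m$ to be pairwise distinct: if some arc occurred twice, $\gamma$ would contain two disjoint sub-arcs of a common great circle $C$, separated along $C$ by a gap which, by convexity of $R$, would lie in $\overline R\cap C$ and hence on $\gamma$ --- a contradiction. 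So $(a_1,\dots,a_m)$ is a cycle of distinct arcs, each hitting the next, traversed with all right turns, i.e.\ a clockwise swirl.

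It remains to identify $R$ with the eye of this swirl. The union $a_1\cup\dots\cup a_m$ of the \emph{full} arcs is $\gamma$ together with, for each $i$, the ``overhang'' of $a_i$ beyond the vertex $e_{i-1}$ on the side away from $e_i$. Each overhang prolongs the edge $[e_{i-1},e_i]$ of $\gamma$ past $e_{i-1}$ along the same great circle and, since the interior angle of $R$ at $e_{i-1}$ is less than $\pi$, it immediately leaves $R$; thus the overhangs lie outside $R$ and do not subdivide it, so $R$ is still one of the two regions into which $a_1\cup\dots\cup a_m$ separates the sphere. Being spherically convex, $R$ is therefore the eye (recalling from \cref{sec:previous} that a swirl's arcs separate the sphere into exactly two regions, of which the eye is the convex one). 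The eye lies to the right of the right-handed traversal of the cycle, so the swirl is clockwise, which proves the statement; the claim for left-handed walks follows by symmetry, reflecting the sphere, which exchanges left-handed with right-handed walks and clockwise with counterclockwise swirls.

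The main obstacle I anticipate is the geometric core: upgrading the purely local condition ``every turn is a right turn'' to the global conclusion that $R$ is spherically convex, and the companion fact that the overhangs of the full arcs do not intrude into $R$. Everything else is bookkeeping about the deterministic successor structure of the walk and the combinatorics of its eventual cycle.
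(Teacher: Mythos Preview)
The paper records this statement as an \emph{observation} and supplies no argument at all (it is followed immediately by a \qed), so there is no proof in the paper to compare yours against; your write-up is far more detailed than anything the authors provide.

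Your overall architecture is correct: the periodic part of a right-handed walk is a simple closed geodesic polygon~$\gamma$ whose interior angles on the right-side region~$R$ all lie in~$(0,\pi)$; once $R$ is known to be spherically convex, distinctness of the arcs and the identification of~$R$ with the eye follow as you describe. You have also correctly located the only substantive step. The one point I would flag is that your justification of convexity is not yet a proof: Gauss--Bonnet gives only $\mathrm{Area}(R)<2\pi$, and the spherical statement ``simple geodesic polygon with all interior angles $<\pi$ on one side $\Rightarrow$ that side is convex'' --- while true --- does not follow from the area bound alone; it needs, e.g., an argument that for each edge~$e$ the path $\gamma\setminus e$ stays in the closed hemisphere determined by~$e$ (so that each edge's great circle supports~$R$). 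Since you explicitly anticipate this as the obstacle, it would be worth writing that step out rather than citing it as an analogue of the planar fact.

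A small economy once distinctness is in hand: you need not re-use convexity of~$R$, nor analyse the overhangs, to identify the eye. By \cref{p:eyeconv1} the eye is a spherical $m$-gon whose vertices are exactly the pairwise intersections $e_1,\dots,e_m$ of the swirl arcs, so its boundary is precisely~$\gamma$; hence the eye is either~$R$ or its complement. The complement has every interior angle in~$(\pi,2\pi)$ at the vertices~$e_i$, so it cannot be the convex region, and therefore $R$ is the eye. This sidesteps the overhang discussion entirely.
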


The following lemma is illustrated in \cref{fig:1a}; a complete proof is in \cref{asec:walks}.

\begin{lemma}\label{ylemma:right}
The right-side region (resp., left-side region) of any sliding walk on an SD $\mathcal D$ contains the eye of a clockwise swirl (resp., counterclockwise swirl) of $\mathcal D$.\qed
\end{lemma}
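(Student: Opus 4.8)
The plan is to reduce the general case to Observation~\ref{yobs:right} by ``straightening'' an arbitrary sliding walk into a right-handed one, using the right-side region as a monotone quantity. Concretely, let $w$ be a sliding walk, and let $C$ be the simple closed curve obtained by restricting $w$ to the interval $[x,x']$ described before the statement, so that $C$ bounds the right-side region $R$ of $w$. I want to produce a \emph{right-handed} walk whose right-side region is contained in $R$; by Observation~\ref{yobs:right} that region is the eye of a clockwise swirl, and we are done. (The left-handed claim is symmetric, swapping the roles of the two sides, so I will only argue the right-handed one.)

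First I would set up the right-handed modification. Start at the point $w(x)$ — which, as noted, is an endpoint of an arc of $\mathcal D$ whenever $x>0$, and otherwise we may shift the starting point to the first arc-endpoint encountered — and begin a right-handed walk $w'$ heading into the same arc as $C$ does, on the side of that arc facing into $R$. The key local observation is a \emph{monotonicity of enclosed area}: whenever $w'$ and $C$ agree along an arc up to a common endpoint and then $C$ turns left while $w'$ turns right (or $C$ continues straight where $w'$ could turn right), the right-side region only shrinks, because turning right ``cuts off'' the part of the sphere that was to the left. More carefully, I would track the region to the right of the curve traced so far: each time the right-handed walk deviates from $C$ by turning right where $C$ did not, the new curve separates off a sub-region, and one checks that the eye we are chasing stays on the right side throughout. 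Since $\mathcal D$ is finite, $w'$ eventually closes up into a simple closed curve $C'$, and by construction $R' \subseteq R$ for its right-side region $R'$.

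The technical heart — and the step I expect to be the main obstacle — is making the phrase ``the right-side region only shrinks'' fully rigorous in the presence of arcs that $w'$ may revisit, and in verifying that the closed subcurve $C'$ extracted from $w'$ is genuinely the boundary of a region inside $R$. The subtlety is that a right-handed walk, before it stabilizes into its eventual cycle, can wander along arcs outside the eventual loop; one must argue that the ``transient'' portion does not carry the walk outside $R$. I would handle this by an invariant argument: maintain the claim ``the unbounded progress of $w'$ always stays within the closure of the current right-side region,'' proved by induction on the sequence of arc-endpoints visited, using spherical convexity of tiles (every tile lies in an open hemisphere, from \cref{sec:previous}) to control how a right turn interacts with the boundary $C$. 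Once this invariant is in place, the eye of the clockwise swirl associated to $w'$ via Observation~\ref{yobs:right} lies inside $R'\subseteq R$, which is exactly the assertion of the lemma. The symmetric bookkeeping gives the left-handed statement, completing the proof.
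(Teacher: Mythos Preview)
Your plan is the same as the paper's: start a right-handed walk $w'$ from a point on the boundary of $R$, argue that $w'$ never leaves $\overline R$, and then invoke Observation~\ref{yobs:right}. Where you diverge is in how hard you make the containment step. You frame it as a ``monotone shrinking'' comparison between $w'$ and $C$ and then flag the transient portion of $w'$ as the main obstacle, proposing an inductive invariant backed by spherical convexity of tiles.

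That obstacle is illusory, and convexity is irrelevant. The paper's argument is a one-line local check: whenever $w'$ lies on the boundary $C$ of $R$, it necessarily travels along $C$ in the clockwise sense (because $w'$ arrived from the $R$-side and turned right, and $R$ is by definition on the right of $C$). It then follows $C$ until the endpoint of the current arc, which is a vertex of $C$; there $w'$ turns right, which either continues along $C$ (if $C$ also turned right) or heads into the interior of $R$ (if $C$ turned left). In neither case does $w'$ exit $\overline R$. Since this holds from the very first step, the entire trajectory of $w'$---transient included---stays in $\overline R$, and hence so does its right-side region. No comparison of areas, no induction on endpoints, and no tile convexity are needed; the containment is immediate from the definition of ``right-side region'' and the local geometry of a T-junction.

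So your proposal is on the right track but incomplete as written: you correctly identify the invariant ``$w' \subseteq \overline R$'' but do not prove it, and the tools you reach for are heavier than required. Replacing your proposed inductive argument with the direct local observation above closes the gap and recovers exactly the paper's proof.
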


If $p$ is a point on the unit sphere, a \emph{clockwise walk} (resp., \emph{counterclockwise walk}) with \emph{fulcrum} $p$ is a sliding walk that moves clockwise (resp., counterclockwise) around $p$ in a weakly monotonic fashion, without touching $p$ or the point $p'$ antipodal to $p$.\footnote{In~\cite{spherical2}, such a walk is called ``monotonic walk'', and $p$ is called ``pole''. We adopted a different terminology in this paper to avoid confusion with the poles of $k$-oriented SDs.} In \cref{fig:1b}, the initial steps of a clockwise walk with fulcrum $p$ are shown.

We are ready to settle the open problem stated in~\cite[Conjecture~2]{spherical1}, \cite[Conjecture~22]{spherical2}, and~\cite[Section~6]{mini}. In fact, we prove it not only for all SODs, but more generally for all SDs.

\begin{theorem}\label{yt:22swirls}
Any SD has at least two clockwise swirls and two counterclockwise swirls.
\end{theorem}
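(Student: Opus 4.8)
The plan is to prove the statement by contradiction, with \cref{ylemma:right} doing the essential work. Recall from \cref{sec:previous} that every SD is already known to have at least one clockwise swirl and at least one counterclockwise swirl, and that reflecting $\mathcal D$ across a plane through the centre of the sphere produces another SD in which the two chiralities of swirls are exchanged. Hence it suffices to rule out the possibility that an SD has exactly one clockwise swirl; applying this to the reflected diagram then yields the ``two counterclockwise swirls'' half of the statement.

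So suppose, for contradiction, that $\mathcal D$ has a unique clockwise swirl $S$, and let $E$ be its eye. First I would choose a point $p$ in the interior of $E$ that is generic enough for a counterclockwise walk $w$ with fulcrum $p$ to exist: this is possible because $E$ has nonempty interior while the positions of $p$ that must be avoided form a measure-zero subset of the sphere (essentially the arcs of $\mathcal D$ and their antipodes, plus finitely many further great circles), and the existence of the walk itself follows from the ``monotonic walk'' construction of \cite{spherical2}. Then I would pin down the location of $p$ in two incompatible ways. On one hand, \cref{ylemma:right} applied to $w$ says that the right-side region of $w$ contains the eye of \emph{some} clockwise swirl of $\mathcal D$; since $S$ is the only clockwise swirl, that swirl is $S$, so the right-side region of $w$ contains $E$ and hence contains $p$. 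On the other hand, since $w$ revolves monotonically counterclockwise around $p$, the simple closed curve that $w$ eventually traces keeps $p$ on the walker's left at every instant, so $p$ lies in the \emph{left-side region} of $w$. As $p$ lies on no arc of $\mathcal D$, it is not on that curve, so it belongs to exactly one of the two regions --- yet we have just placed it in both. This contradiction shows that $\mathcal D$ has at least two clockwise swirls, and, by the reflection argument, at least two counterclockwise swirls.

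The step I anticipate to be the main obstacle is precisely the claim that a counterclockwise walk with fulcrum $p$ leaves $p$ in its left-side region rather than in its right-side region, since the whole contradiction rests on it. Making this rigorous means fixing a single orientation convention throughout --- the one under which a right-handed walk has the eye of a clockwise swirl on its right, as in \cref{yobs:right} --- and then verifying that the monotonic revolution of $w$ around $p$ forces the closed curve it traces to wind around $p$ with nonzero winding number and $p$ on the left. The companion fact that a valid counterclockwise walk exists for a suitably generic $p$ is also technical, but it is already part of the theory of sliding walks developed in \cref{sec:walks} and \cite{spherical2}.
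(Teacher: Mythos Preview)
Your proposal is correct and takes essentially the same approach as the paper: pick a point $p$ in the eye $E$ of a known clockwise swirl, run a counterclockwise walk with fulcrum $p$, and invoke \cref{ylemma:right} to locate a second clockwise swirl in the right-side region. The only differences are cosmetic---the paper argues directly rather than by contradiction, and it starts the walk on the boundary of $E$ so that one sees concretely that the walk moves outward along the arcs of $S$ and never re-enters $E$; this is exactly the geometric content of the winding claim you flag as the main obstacle.
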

\begin{proof}
Let $\mathcal D$ be an SD; it suffices to prove that $\mathcal D$ has two clockwise swirls. Let $S$ be any clockwise swirl of $\mathcal D$ (found, for instance, as in \cref{yobs:right}), and let $E$ be the eye of $S$. Let $p$ be any point in the interior of $E$, and let $w$ be a counterclockwise walk with fulcrum $p$ starting from the boundary of $E$.

Since $S$ is a clockwise swirl, whenever $w$ is in the interior of an arc $a\in S$, it moves toward the endpoint of $a$ not on $E$. In particular, $w$ never reaches the interior of $E$, and therefore the left-side region of $w$ contains $E$. By \cref{ylemma:right}, the right-side region of $w$ contains the eye of a clockwise swirl, which cannot be $E$. Hence, $\mathcal D$ has a clockwise swirl other than $S$.
\end{proof}

\subsection{Swirl Adjacency}\label{sec:swirl}
The missing proofs of the results in this section are found in \cref{asec:swirl}.

Recall from \cref{sec:previous} that the swirl graph of any SOD is simple, i.e., any two swirls of an SOD can have at most one arc in common. Moreover, any arc can contribute to at most two swirls. On the contrary, the swirl graph of a general SD may be a non-simple multigraph, since two swirls in an SD may share more than one arc, as shown in \cref{fig:adjacency}. Also, a single arc of an SD can contribute to up to four different swirls (one on each side of each endpoint).

There are essentially two different ways in which two swirls may share multiple arcs. The first configuration involves two \emph{contiguous swirls} whose eyes are adjacent along an arc $a$ of the SD and also share an endpoint of $a$. An example is given by the yellow and the green swirl in \cref{fig:adjacency}.

\begin{proposition}\label{xp:contiguous}
In any SD, pairs of contiguous swirls share exactly two arcs.\qed
\end{proposition}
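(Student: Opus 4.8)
The plan is to analyze the local structure around the shared arc $a$ of two contiguous swirls $S$ and $S'$. By definition of contiguity, the eyes $E$ and $E'$ of $S$ and $S'$ are adjacent along $a$ (so $a$ bounds both eyes, on opposite sides), and $E$ and $E'$ share an endpoint of $a$; call it $u$, and call the other endpoint $u'$. First I would observe that since $a$ appears in a swirl and swirls are oriented cycles, the two swirls traversing $a$ must do so in opposite directions (one clockwise, one counterclockwise) — this is forced by the fact that the eye lies on one side, and the two eyes $E$, $E'$ are on opposite sides of $a$. Thus near $u$, both swirls ``turn the same way'' geometrically: walking along $a$ toward $u$ in $S$, one turns (say) into the arc $b$ that $a$ hits at $u$ from one side; walking along $a$ toward $u$ in $S'$ (the opposite direction, so actually approaching $u'$)… here I need to be careful about which endpoint each swirl uses.

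The cleaner way is this: the shared endpoint $u$ of the two eyes is a common vertex of $\partial E$ and $\partial E'$. Since $a$ lies on both boundaries and they share the endpoint $u$ of $a$, at $u$ the boundary of $E$ continues from $a$ into some arc $c$ (the arc hit by $a$ at $u$, entered on the $E$-side), and likewise $\partial E'$ continues from $a$ into an arc $c'$ (the arc hit by $a$ at $u$, entered on the $E'$-side). But $a$ hits exactly one arc at $u$ (a previously established property), so $c$ and $c'$ lie on the same great circle through $u$; in fact $c$ and $c'$ are the two ``sides'' of that single hit arc at $u$ — meaning $\partial E$ and $\partial E'$ both leave $a$ at $u$ along the same line, on opposite sides of it. Now I would run a right-handed (or left-handed, matching the orientation of $S$) sliding walk starting from $a$ and following $\partial E = S$: it traverses $a$, and at the endpoint it turns into $c$, and so on around $E$. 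The key claim is that the \emph{second} shared arc is precisely the arc hit by $a$ at $u$ (i.e., the arc containing both $c$ and $c'$): this arc bounds $E$ on one side and $E'$ on the other, by the same opposite-orientation reasoning, and by convexity of the eyes and the fact that they share only the corner $u$ of $a$, no further arcs can be shared.

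The proof of ``exactly two'' then splits into ``at least two'' and ``at most two''. For ``at least two'': having established that $a$ is shared and that the hit-arc at the common endpoint $u$ is also shared, we get two. (I must double-check these two arcs are distinct — they are, since $a$ contains $u$ in its relative interior is false; rather $u$ is an endpoint of $a$ and an interior point of the hit arc, so the two arcs are different.) For ``at most two'': suppose $S$ and $S'$ shared a third arc $e$. Then $e$ lies on both $\partial E$ and $\partial E'$. Traverse $\partial E$ (the swirl $S$) starting right after $a$: it is a simple closed curve, and $\partial E'$ is another simple closed curve sharing the connected sub-path $a \cup \{u\}$-to-hit-arc with it. Two convex regions whose boundaries share a connected arc and then share a \emph{third} disjoint arc would force the boundaries to cross transversally or force one eye to be non-convex or to contain antipodal points — I would invoke the spherical convexity of eyes (stated in the Previous Results) together with the fact that no tile/eye contains antipodal points and that two arcs meet in at most one point, to derive a contradiction. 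Concretely: if the two simple closed curves $\partial E$, $\partial E'$ share two disjoint connected pieces, then between those pieces $\partial E$ lies strictly inside $E'$ on one arc and strictly outside on another (since near $a$ the regions are on opposite sides), so $\partial E$ must cross $\partial E'$ at a point that is not an endpoint of any arc, impossible since arcs are interior-disjoint.

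The main obstacle I anticipate is making the ``at most two'' direction fully rigorous: tracking the cyclic order in which shared arcs appear along the two eye-boundaries and showing that a third shared arc forces an illegal crossing. The delicate point is that the two eyes lie on opposite sides of $a$, so ``shared boundary arc'' does not mean the regions locally agree — it means they locally oppose — and one has to argue that this opposition, combined with convexity, permits exactly one ``switch'' (at the common corner $u$) and no more. I would handle this by choosing a consistent orientation of $\partial E$ and of $\partial E'$ so that $a$ is traversed in the same geometric direction by both parametrizations, noting the induced orientations disagree with the ``region on the left'' convention on exactly one of them, and then a shared arc other than $a$ and the hit-arc-at-$u$ would have to be traversed in \emph{opposite} directions by the two parametrizations, which (together with both curves being simple and the sphere being a sphere) is impossible for a third such arc. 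Filling in that parity/orientation argument carefully is the crux; the rest is bookkeeping with the already-cited properties (each arc hits exactly two arcs, arcs pairwise meet in $\le 1$ point, eyes are spherically convex and antipode-free).
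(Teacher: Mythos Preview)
Your ``at least two'' direction is essentially the paper's starting point: both of you observe that the arc $b$ hit by $a$ at the shared endpoint $u$ must also belong to both swirls, since $u$ is a vertex of each eye and the only arc through $u$ other than $a$ is $b$. (One small slip: you assert that $E$ and $E'$ lie on opposite sides of $b$. In fact they lie on the \emph{same} side of the great circle through $b$ --- the eyes occupy lunes adjacent along $a$, not along $b$. This does not affect the conclusion that $b$ is shared, but it does undercut the ``opposite-orientation'' heuristic you want to reuse later.)

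For ``at most two'' the paper's argument is quite different from your plan and considerably shorter. It passes to the great circles through $a$ and $b$, which cut the sphere into four lunes; the two convex eyes lie in lunes adjacent along $a$. From this lune constraint one deduces that any third shared arc $c$ would have to be hit by $a$ at the endpoint $u'$ opposite $u$. But each arc of a swirl hits exactly one other arc of that swirl (its successor in the cycle), and $a$ already hits $b$; hence $a$ cannot also hit $c$ within either swirl --- contradiction.

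Your proposed orientation/parity route may ultimately be salvageable, but as you yourself flag, the crux --- why a third shared arc forces an illegal transversal crossing of the two eye boundaries --- is exactly the part you have not pinned down, and the side-of-$b$ error above shows the geometry is not quite what your heuristic assumes. The paper sidesteps all of this with the lune decomposition plus the one-line cycle fact ``an arc of a swirl hits exactly one other arc in the same swirl''. That is the key idea missing from your sketch.
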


Even if two swirls are not contiguous, they may still share multiple arcs: an example is given by the yellow and purple swirls in \cref{fig:adjacency}.

\begin{proposition}\label{yc:noncontiguous}
In any SD, a swirl of degree $d$ may share at most $\lfloor d/2\rfloor$ arcs with the same non-contiguous swirl.\qed
\end{proposition}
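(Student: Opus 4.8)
The plan is to argue by a counting/separation argument on the eye of the degree-$d$ swirl. Let $S$ be a swirl of degree $d$, so $S$ is a cycle $a_1, a_2, \dots, a_d$ of arcs (each hitting the next, all turning the same way, say clockwise), and let $E$ be its eye, which is spherically convex by the results recalled in \cref{sec:previous}. Let $T$ be a non-contiguous swirl sharing a set of arcs $A \subseteq \{a_1, \dots, a_d\}$ with $S$; write $m = |A|$, and suppose for contradiction that $m > \lfloor d/2 \rfloor$, i.e.\ $2m > d$. The key geometric observation I would establish first is that, since $T$ is a swirl whose eye $E'$ is also spherically convex and lies on the opposite side of each shared arc from $E$ (an arc of an SOD is hit from one side, but even in an SD the two swirls using a common arc $a$ sit on opposite sides of $a$ when the shared arc is traversed in opposite directions by the two cycles — this needs to be checked carefully and is where the word ``non-contiguous'' does work: contiguity is precisely the degenerate case where the shared arcs are consecutive around a common vertex, which is excluded here).

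Next I would look at the cyclic arrangement of the shared arcs $A$ around the cycle $S$. Each arc $a_i \in A$ is traversed by $T$, and because $T$ is itself a cycle, the arcs of $A$ together with connecting sub-walks of $T$ through the complementary region form $E'$; meanwhile the arcs $a_i \notin A$ are traversed only by $S$. The crucial claim is that \emph{no two shared arcs can be consecutive on the cycle $S$}: if $a_i$ and $a_{i+1}$ were both in $A$, then $T$, passing through both, would share the vertex $a_i \cap a_{i+1}$ side-configuration with $S$, forcing $S$ and $T$ to be contiguous (by \cref{xp:contiguous} and the definition of contiguous swirls), contrary to hypothesis. Granting this claim, the arcs of $A$ form an independent set in the cycle $C_d$ (the cyclic adjacency structure of $S$), and an independent set in $C_d$ has size at most $\lfloor d/2 \rfloor$, giving $m \le \lfloor d/2 \rfloor$ and completing the proof.

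So the real content reduces to two lemmas: (i) two swirls sharing a common arc lie on opposite sides of it, and (ii) if two swirls share two \emph{consecutive} arcs of one of them, they are contiguous. Step (ii) is the main obstacle. To prove it I would argue directly from \cref{d:swirl}: if $a_i, a_{i+1} \in S \cap T$ and $v = a_i \cap a_{i+1}$ is the shared endpoint, then within $T$ the arc $a_i$ must hit $a_{i+1}$ (or vice versa) at $v$ — since by the first recalled property in \cref{sec:previous} each arc hits exactly two arcs, one at each endpoint, so the arc $T$ uses to leave $a_i$ at the endpoint $v$ is forced to be $a_{i+1}$. Hence the eyes $E$ and $E'$ are both incident to $v$ and are adjacent along both $a_i$ and $a_{i+1}$; since they are on opposite sides (step (i)) and spherically convex, this is exactly the ``contiguous'' picture of \cref{sec:swirl} preceding \cref{xp:contiguous}. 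The only subtlety I anticipate is the orientation bookkeeping — ensuring that ``$S$ clockwise'' and ``$T$ sharing $a_i$'' really do force $T$ to traverse $a_i$ in the opposite direction, so that the hit at $v$ goes into $a_{i+1}$ rather than back along $a_i$; this should follow from the fact that a swirl turns consistently in one direction and that the two tiles on the two sides of $v$ along $a_i$ belong to the two different eyes.
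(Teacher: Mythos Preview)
Your overall strategy is exactly the paper's: reduce to the claim that no two arcs shared with a non-contiguous swirl $T$ can be consecutive in $S$, then observe that an independent set in the cycle $C_d$ has size at most $\lfloor d/2\rfloor$. That reduction is correct and is precisely what the paper does.

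The gap is in your justification of step~(ii). You argue that if $a_i,a_{i+1}\in S\cap T$ with $a_i$ hitting $a_{i+1}$ at $v$, then ``within $T$ the arc $a_i$ must hit $a_{i+1}$ (or vice versa) at $v$'', invoking the fact that each arc hits exactly two arcs, one at each endpoint. But that fact only tells you that \emph{at the endpoint $v$}, the arc $a_i$ hits $a_{i+1}$; it does not tell you that the cycle $T$ uses the endpoint $v$ of $a_i$. A~priori, $T$ could traverse $a_i$ using its \emph{other} endpoint, so that $a_i$ and $a_{i+1}$ both lie in $T$ yet are not consecutive there. Your proposed fix via ``orientation bookkeeping'' and step~(i) does not close this: step~(i) (``two swirls sharing a common arc lie on opposite sides of it'') is itself not established, and in any case does not by itself force $T$ to exit $a_i$ at $v$.

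The paper sidesteps all of this with a one-line appeal to \cref{p:eyeconv1}: since $a_i$ and $a_{i+1}$ are both arcs of the swirl $T$ and they intersect at $v$, that proposition (``the only points of intersection between pairs of arcs of a swirl are the vertices of its eye'') forces $v$ to be a vertex of the eye of $T$. Thus the eyes of $S$ and $T$ share the vertex $v$ and are adjacent along $a_{i+1}$ there, which is the definition of contiguous. Step~(i) is then unnecessary.
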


Two swirls are said to be \emph{discordant} if one of them is clockwise and the other is counterclockwise; they are \emph{concordant} otherwise.

\begin{proposition}\label{yp:adjswirl1}
In an SD, let $S_1$ and $S_2$ be two swirls that share more than one arc. The following statements are equivalent: (i) $S_1$ and $S_2$ are not contiguous; (ii) $S_1$ and $S_2$ are concordant; (iii) the eyes of $S_1$ and $S_2$ have antipodal interior points.\qed
\end{proposition}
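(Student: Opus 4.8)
Here is how I would approach \cref{yp:adjswirl1}.

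The plan is to prove the three statements equivalent by a short cycle of implications, using as the main workhorse a \emph{local chirality principle}: orient each swirl along its cyclic direction (the direction in which consecutive arcs hit one another); then, read locally, \cref{yobs:right} says that a clockwise swirl keeps its eye to its right and a counterclockwise swirl keeps its eye to its left. The consequence I will invoke repeatedly is that, if a swirl $S$ contains an arc $a$, then the side of the great circle $C_a$ supporting $a$ on which the eye of $S$ lies is determined by exactly two data: whether $S$ is clockwise or counterclockwise, and which endpoint of $a$ is the one where $a$ hits the next arc of $S$ (call it the \emph{exit-endpoint} of $S$ at $a$). I also need the preliminary fact that two distinct swirls sharing an arc $a$ have their eyes on \emph{opposite} sides of $C_a$; intuitively, just off $a$ each such eye coincides with a union of the tiles flanking $a$ on one side, and examining how a swirl curve must turn at the endpoints of the sub-arc of $a$ it uses shows that two distinct swirls cannot occupy the same flank.

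Granting these, the equivalence (i)\,$\Leftrightarrow$\,(ii) comes out cleanly. Assume $S_1,S_2$ are not contiguous and share arcs $a_1,\dots,a_m$ with $m\geq 2$. For each $a_j$ the two swirls must have \emph{different} exit-endpoints: if they shared one, say $v$, then since their eyes lie on opposite sides of $C_{a_j}$ and both touch $v$, the eyes would be adjacent along $a_j$ and share the endpoint $v$, making $S_1,S_2$ contiguous. Applying the chirality principle to $a_1$ — opposite sides of $C_{a_1}$ plus different exit-endpoints — forces $S_1$ and $S_2$ to have the same chirality, i.e.\ they are concordant. For the converse, if $S_1,S_2$ are contiguous then by definition their eyes are adjacent along some shared arc $a$ (hence on opposite sides of $C_a$) and both contain a common endpoint $v$ of $a$, so both swirls exit $a$ at $v$; now the chirality principle gives opposite chirality, i.e.\ discordance. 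Thus (i)\,$\Rightarrow$\,(ii) and $\neg$(i)\,$\Rightarrow$\,$\neg$(ii), establishing (i)\,$\Leftrightarrow$\,(ii).

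It remains to weave in (iii). I would prove (iii)\,$\Rightarrow$\,(i) contrapositively: a contiguous pair shares exactly two arcs $a,b$ by \cref{xp:contiguous}, and the analysis above forces the eyes onto opposite sides of both $C_a$ and $C_b$, pinched at a common endpoint of $a$ and at a common endpoint of $b$; from this "bowtie" one checks that $\mathrm{eye}(S_1)$ and $-\mathrm{eye}(S_2)$ sit in a common lune with no room for their interiors to meet, so the eyes have no antipodal interior points. The genuine obstacle is (ii)\,$\Rightarrow$\,(iii): given concordant $S_1,S_2$ sharing arcs $a$ and $b$, I must actually exhibit a point of $\mathrm{eye}(S_1)$ whose antipode lies in $\mathrm{eye}(S_2)$, and the chirality bookkeeping alone only places $\mathrm{eye}(S_1)$ and $-\mathrm{eye}(S_2)$ in the same lune cut out by $C_a$ and $C_b$, which does not by itself force an overlap. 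The proof has to exploit the \emph{global} way two concordant swirls thread through the shared arcs — the sub-arcs of $a$ (and of $b$) used by $S_1$ and by $S_2$ have different exit-endpoints, so they interleave, forcing each eye to reach across the lune. I expect the cleanest way to turn this into an honest proof is a sliding-walk argument: start a walk inside $\mathrm{eye}(S_2)$, push it until it hugs $\gamma_{S_1}$, and apply \cref{ylemma:right} to locate the eye of a swirl in the complementary region that must meet $-\mathrm{eye}(S_2)$; making this last step precise is the technically delicate part.
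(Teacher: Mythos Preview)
Your chirality bookkeeping for (i)$\Leftrightarrow$(ii) is reasonable and close in spirit to the paper's argument, which works instead with the four lunes cut by $C_a$ and $C_b$: it places $E_1$ in a lune $L_1$, rules out adjacent lunes for $E_2$ (that would force contiguity), hence $E_2$ lies in the opposite lune $L_2$, and concordance is read off from the directions in which $a$ and $b$ run along the lune boundaries. One correction to your contrapositive for (iii)$\Rightarrow$(i): when $a$ hits $b$ at the shared vertex $p$ of a contiguous pair, the two eyes lie on opposite sides of $C_a$ but on the \emph{same} side of $C_b$, since both occupy a wedge at $p$ that contains $a$. So your ``bowtie'' picture is wrong; the correct picture is actually simpler, as both eyes sit in a single hemisphere bounded by $C_b$, and that is exactly how the paper dispatches this implication.

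The real gap is (ii)$\Rightarrow$(iii), which you explicitly leave unfinished. Your sliding-walk idea is vague, and it is not clear how \cref{ylemma:right} would produce a point of $E_1$ antipodal to one of $E_2$ rather than merely some swirl eye in the right region. The paper avoids walks entirely with a direct construction. With $E_1\subset L_1$ and $E_2\subset L_2$, the endpoints $p_1,q_1$ of $a,b$ on the $L_1$ side are vertices of $E_1$, and the endpoints $p_2,q_2$ on the $L_2$ side are vertices of $E_2$; by convexity the chord $p_1q_1$ lies in $E_1$ and $p_2q_2$ in $E_2$. Because $a$ and $b$ are shorter than semicircles, the antipodes $p_2',q_2'$ also lie in $L_1$, and since $a,b$ are disjoint (by \cref{yc:noncontiguous} or rather its underlying fact that non-contiguous shared arcs are non-consecutive) they pass through different vertices of the lune, which forces the cyclic order of $p_1,p_2',q_1,q_2'$ on $\partial L_1$ to interleave. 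Hence $p_1q_1$ and $p_2'q_2'$ cross at a point $x$ interior to $E_1$, and its antipode lies on $p_2q_2\subset E_2$. This is the missing idea.
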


\begin{figure*}
\centering
\begin{subfigure}{0.49\textwidth}
    \includegraphics[scale=0.5]{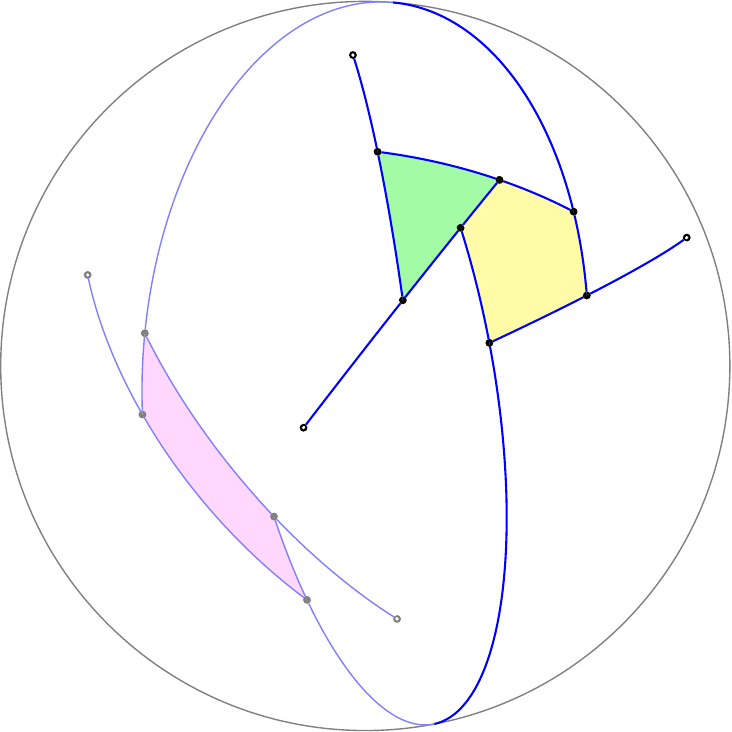}
    \subcaption{}
    \label{fig:adjacency}
\end{subfigure}
\hfill
\begin{subfigure}{0.49\textwidth}
    \includegraphics[scale=0.5]{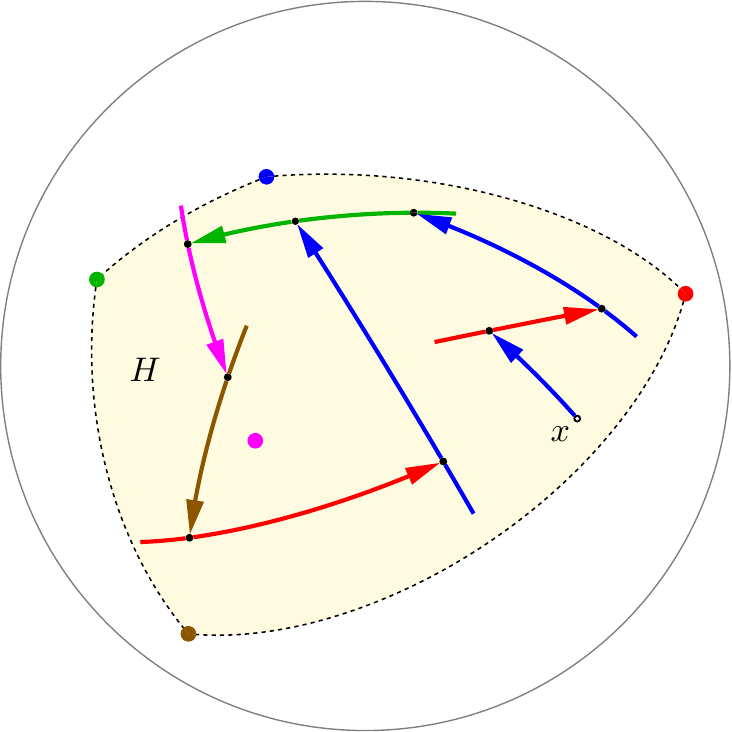}
    \subcaption{}
    \label{fig:walk.a}
\end{subfigure}
\caption{(a) The swirl in yellow shares two arcs with the contiguous swirl in green and two arcs with the non-contiguous swirl in purple. (b) An attractor hull and a sliding walk within its interior.}
\label{fig:prelim}
\end{figure*}

\subsection{Attractors}\label{sec:attractors}
The missing proofs of the results in this section are found in \cref{asec:attractors}.

\begin{definition}
An \emph{attractor} of a $k$-oriented SD is a set of $k$ points, no two of which are antipodal, chosen among its poles and anti-poles. An \emph{attractor hull} is the spherical convex hull of an attractor.
\end{definition}

Since no two poles are antipodal, a $k$-oriented SD has exactly $2^k$ distinct attractors.

\begin{observation}\label{yo:attractor}
Given any attractor $A$ of a $k$-oriented SD $\mathcal D$, each arc of $\mathcal D$ has a unique vanishing point in $A$, and is collinear with it.\qed
\end{observation}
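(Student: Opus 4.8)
The plan is simply to unwind the definitions, since the statement is essentially bookkeeping about antipodal pairs. Fix a $k$-oriented SD $\mathcal D$ with pole set $P=\{p_1,\dots,p_k\}$ and orientation function $f\colon\mathcal D\to P$, and let $A$ be an attractor. The first step I would take is to observe that $A$ is a transversal of the antipodal pairs of poles: by hypothesis in \cref{d:orient} no two of the $k$ poles are antipodal, so the $2k$ points $p_1,-p_1,\dots,p_k,-p_k$ are partitioned into exactly $k$ antipodal pairs $\{p_i,-p_i\}$, and $A$ is built by selecting, from each pair, either $p_i$ or $-p_i$. Hence $A$ contains exactly one point of each pair $\{p_i,-p_i\}$.

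Next, take an arbitrary arc $a\in\mathcal D$ and write $f(a)=p_j$. By the definition of vanishing points, the vanishing points of $a$ are precisely $p_j$ and $-p_j$, that is, the two points of the $j$-th antipodal pair. Since $A$ meets this pair in exactly one point — call it $q$ — the arc $a$ has exactly one vanishing point lying in $A$, namely $q$. No other element of $A$ can be a vanishing point of $a$, because $a$ has only the two vanishing points $p_j$ and $-p_j$. This establishes the uniqueness claim.

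It remains to verify collinearity. By \cref{d:orient}, $a$ is collinear with $f(a)=p_j$, meaning the great circle supporting $a$ passes through $p_j$; but any great circle through a point also passes through its antipode, so this great circle passes through $-p_j$ as well. Therefore $a$ is collinear with both $p_j$ and $-p_j$, in particular with whichever of the two lies in $A$, namely $q$. I do not expect any genuine obstacle here; the only point requiring a moment's care is the claim that $\{\pm p_i\}_{i}$ forms exactly $k$ antipodal pairs, which is precisely where the no-two-antipodal-poles hypothesis of \cref{d:orient} is used — without it an attractor would not even be well defined as a $k$-element set.
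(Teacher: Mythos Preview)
Your proposal is correct; the paper treats this observation as immediate from the definitions and gives no proof at all (it is marked with a bare \qed). Your careful unwinding of \cref{d:orient} and the definition of attractor is exactly the verification the paper omits, and it is sound.
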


\begin{proposition}\label{yp:noncoll}
The $k$ poles of a $k$-oriented SD cannot be all collinear. Thus, all attractor hulls have non-empty interiors.\qed
\end{proposition}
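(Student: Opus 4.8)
The plan is to first establish the statement about the poles directly, by contradiction, and then deduce the statement about attractor hulls from it. So assume that all $k$ poles of a $k$-oriented SD $\mathcal D$ lie on a single great circle $C$, and let $H^+,H^-$ be the two open hemispheres bounded by $C$. Fix any arc $a\in\mathcal D$ and let $C_a$ be the great circle containing it. Since $a$ is collinear with $f(a)$, the circle $C_a$ passes through both vanishing points $f(a)$ and $-f(a)$ of $a$, which lie on $C$. Two distinct great circles meet in exactly one antipodal pair of points, so either $C_a=C$, in which case $a\subseteq C$, or $C_a\cap C=\{f(a),-f(a)\}$. In the latter case $C_a\setminus\{f(a),-f(a)\}$ is the disjoint union of two open great semicircles, each disjoint from $C$ and therefore entirely contained in $H^+$ or in $H^-$; since $a$ contains neither $f(a)$ nor $-f(a)$, it lies in one of them, so $a\subseteq H^+$ or $a\subseteq H^-$. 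Hence every arc of $\mathcal D$ lies in exactly one of the three pairwise disjoint sets $C$, $H^+$, $H^-$.

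Now I would bring in connectedness. Grouping the arcs of $\mathcal D$ by which of $C$, $H^+$, $H^-$ contains them, we obtain three finite unions of closed arcs lying in three pairwise disjoint subsets of the sphere; these three sets are therefore themselves pairwise disjoint and closed, and their union is $D=\bigcup\mathcal D$. As $D$ is connected and non-empty, exactly one of the three groups is non-empty, and consequently at least one of $H^+,H^-$ is disjoint from $D$ (both of them if all arcs lie on $C$). Call such a hemisphere $H'$, let $q$ be its center, and pick any $x\in C=\partial H'$; then the great semicircle from $x$ to $-x$ through $q$ has relative interior contained in $H'$ (the point at arclength $t\in(0,\pi)$ from $x$ along it is $\cos(t)\,x+\sin(t)\,q$, with inner product $\sin t>0$ with $q$), hence disjoint from $D$. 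So the interior of this great semicircle is crossed by no arc of $\mathcal D$, contradicting the fact, recalled in \cref{sec:previous}, that the interior of every great semicircle is crossed by at least one arc of an SD. (Alternatively, when $D\subseteq C$ one may simply observe that $S^2\setminus D$ has at most two connected components, whereas an SD with $n\ge1$ arcs has $n+2\ge3$ tiles.) This proves that the $k$ poles cannot all be collinear.

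For the attractor hulls, observe that any attractor $A$ is again a legitimate pole set for $\mathcal D$: by \cref{yo:attractor} each arc $a$ is collinear with the unique point of $A$ lying in $\{f(a),-f(a)\}$, and $a$ avoids that point as well as its antipode (the two of them being exactly $f(a)$ and $-f(a)$), while no two points of $A$ are antipodal by the definition of attractor. Applying the first part of the statement with $A$ in the role of the pole set shows that the points of $A$ do not all lie on a common great circle; equivalently, the convex cone they span in $\mathbb R^3$ is three-dimensional, so the attractor hull, being their spherical convex hull, has non-empty interior.

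The only delicate point, and hence the main obstacle, is the claim that each arc is confined to a single one of $C$, $H^+$, $H^-$: the crux is that an arc whose great circle differs from $C$ cannot have an endpoint on $C$, which is precisely what the requirement that an arc avoid its vanishing points provides. Once every arc is so confined, connectedness together with the great-semicircle crossing property closes the argument with no real computation.
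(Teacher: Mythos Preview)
Your proof is correct, and both your argument and the paper's ultimately rest on the great-semicircle crossing property recalled in \cref{sec:previous}; the difference is in how directly it is applied. The paper simply takes an arc $a$ that crosses $C$ (guaranteed by that property), notes that $a$ is collinear with $f(a)\in C$ and with the crossing point $x\in C$, and since $x\neq\pm f(a)$ (because $a$ avoids its vanishing points) the great circle through $x$ and $f(a)$ is $C$ itself, forcing $a\subseteq C$---a contradiction in two lines. Your classification of arcs into $C$, $H^+$, $H^-$ already proves the equivalent statement that \emph{no} arc crosses $C$, but you then detour through connectedness of $D$ to manufacture an arc-free open hemisphere before invoking the semicircle property there. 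That detour is unnecessary: once every arc is confined to one of $C,H^+,H^-$, applying the semicircle property to any great semicircle lying \emph{on} $C$ yields the contradiction immediately, with no appeal to connectedness. Your treatment of attractor hulls---observing that any attractor is itself a legitimate pole set for $\mathcal D$ and reapplying the first part---is precisely the paper's argument, just spelled out in more detail.
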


To state the following results, we have to define the terms \emph{intersect}, \emph{cross}, and \emph{thrust}. By \emph{intersect} we simply mean ``have a non-empty intersection''. Thus, two arcs intersect even if they only share an endpoint. Instead, an arc $a$ \emph{crosses} a curve $\gamma$ when their intersection includes an isolated point $x$ internal to $a$, such that any neighborhood of $x$ contains points of $\gamma$ lying on both sides of $a$. Let $R$ be a spherical polygon contained in the interior of a hemisphere. We say that an arc $a$ \emph{thrusts} the boundary of $R$ at a point $x$ if $a$ intersects the boundary of $R$ at $x$, as well as the interior of $R$.

\begin{proposition}\label{yp:2cross}
If an arc of a $k$-oriented SD intersects the interior of an attractor hull $H$, it intersects the boundary of $H$ in at most one point.\qed
\end{proposition}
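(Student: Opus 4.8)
The plan is to leverage the single feature that distinguishes an arc of $\mathcal D$ from an arbitrary geodesic arc: the great circle carrying it runs through one of its vanishing points, which the arc itself never reaches. Let $a\in\mathcal D$ be an arc that meets the interior of $H$, let $A$ be the attractor whose hull is $H$, and let $C$ be the great circle containing $a$. By \cref{yo:attractor}, $a$ has a unique vanishing point $v\in A$ and is collinear with it, so $v\in C$; and since $v$ is one of the two vanishing points $f(a),-f(a)$ of $a$, \cref{d:orient} gives $v\notin a$. Note that $v\in A\subseteq H$, and recall that $H$ is a spherical polygon with non-empty interior (\cref{yp:noncoll}).

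First I would pin down $C\cap H$. Since $H$ is spherically convex, $C\cap H$ is a spherically convex subset of the great circle $C$, hence a single geodesic arc $\beta$ (possibly just a point), and $v\in C\cap H=\beta$. If $a$ ran along an edge of $H$, then $C$ would carry that edge and hence be a supporting great circle of $H$, so $H$ would lie on one side of $C$ and $C$ would not meet the interior of $H$ --- contradicting the hypothesis that $a\subseteq C$ does. Therefore $C$ properly crosses the interior of $H$, which forces $\beta$ to be a non-degenerate arc with two distinct endpoints $x_1\neq x_2$; moreover $x_1$ and $x_2$ are exactly the points of $C\cap\partial H$, and the relative interior of $\beta$ coincides with the set of points where $C$ meets the interior of $H$. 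In particular, every point of $a\cap\partial H$ is $x_1$ or $x_2$.

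It remains to rule out $\{x_1,x_2\}\subseteq a$. If $v$ equals $x_1$ or $x_2$, then that endpoint is not in $a$ (since $v\notin a$), so $a\cap\partial H$ has at most one element and the proof is complete. Otherwise $v$ lies in the relative interior of $\beta$, strictly between $x_1$ and $x_2$. Suppose, for contradiction, that $x_1,x_2\in a$, and let $a'\subseteq a$ be the sub-arc of $a$ joining them. Being a sub-arc of the geodesic arc $a$, which has length less than $\pi$, $a'$ is itself a minimizing geodesic, hence $a'$ is \emph{the} geodesic arc from $x_1$ to $x_2$; it lies in $H$ by convexity and on $C$ because its endpoints do, so $a'\subseteq C\cap H=\beta$. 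As $a'$ is a connected subset of $\beta$ containing both endpoints of $\beta$, we conclude $a'=\beta$, whence $v\in\beta=a'\subseteq a$, contradicting $v\notin a$. Therefore $a$ meets $\partial H$ in at most one point.

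The step I expect to be the main obstacle is establishing that $C$ meets $\partial H$ in precisely the two points $x_1,x_2$: once $C$ has entered the interior of $H$ it might, a priori, also graze the boundary at a vertex of $H$ lying off the chord $\beta$. This is handled by the ``chord'' property of convex bodies --- a great circle through the interior of a spherical convex polygon cuts it in a segment whose relative interior lies in the interior and whose two endpoints lie on the boundary --- so any point of $C\cap\partial H$, lying in $C\cap H=\beta$ and on the boundary, must be an endpoint of $\beta$. The rest is the elementary remark that a geodesic arc on $C$ avoiding $v$ stays on one side of $v$, while the chord $\beta$ straddles it.
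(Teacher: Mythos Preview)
Your proof is correct and follows essentially the same approach as the paper's. The paper's argument is compressed to a single sentence---the arc is collinear with a vanishing point $p\in H$ it does not contain, so two boundary intersections would lie on opposite sides of $p$ (or one would equal $p$), forcing $p\in a$---and you have simply unpacked this: identifying the chord $\beta=C\cap H$, locating $v$ on it, and arguing that a sub-arc of $a$ through both endpoints of $\beta$ would equal $\beta$ and hence contain $v$. One minor omission: you tacitly assume $H$ is non-total (so that $\partial H\neq\emptyset$ and $H$ sits in an open hemisphere), but the total case is vacuous.
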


If $\mathcal D$ is an SD and $R$ is a region of the unit sphere, we say that two points $x$ and $y$ are \emph{$R$-connected} (with respect to $\mathcal D$) if there is a path with endpoints $x$ and $y$ that follows the arcs of $\mathcal D$ while remaining within $R$. If such a path is internal to $R$ (except for its endpoints $x$ and $y$, which may be on the boundary of $R$), then $x$ and $y$ are \emph{internally $R$-connected}.

\begin{proposition}\label{yp:1inters}
If an arc of a $k$-oriented SD $\mathcal D$ intersects the boundary of an attractor hull $H$ at a point $x$, then there is an arc of $\mathcal D$ that crosses the boundary of $H$ at a point $y$, such that $x$ and $y$ are $H$-connected (with respect to $\mathcal D$) and lie on a same edge of $H$. Moreover, if $x\neq y$, then $y$ is not a vertex of $H$.\qed
\end{proposition}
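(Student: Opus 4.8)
The plan is to reduce the claim to the behaviour of $\mathcal D$ near a single edge of $H$, and then to trace a sliding walk that clings to $\partial H$ until it is forced to cross into the interior of $H$, which produces the point $y$.

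\textbf{Local step.} Let $e$ be the edge of $H$ whose relative interior contains $x$ (if $x$ is a vertex of $H$---which can occur only for degenerate $\mathcal D$---one runs the argument along either incident edge), and let $C$ be the great circle supporting $e$. By \cref{yp:noncoll}, and since an attractor hull lies in an open hemisphere, $\overline H$ is contained in one of the closed hemispheres bounded by $C$, with $\overline H\cap C=e$; in particular, near $x$ the boundary $\partial H$ coincides with $C$. If the arc $a_0$ already crosses $\partial H$ at $x$, take $y=x$ and $a^\ast=a_0$, and the ``moreover'' clause is vacuous. Otherwise let $b\in\mathcal D$ be the arc with $x$ in its relative interior---namely $b=a_0$ if $x$ is interior to $a_0$, and otherwise the arc that blocks $a_0$ at the endpoint $x$. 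Since two distinct great circles meet transversally, if $b$ were not supported on $C$ then $b$ would cross $C$, hence $\partial H$, at the non-vertex point $x$, contradicting our assumption. So $b$ lies on $C$ and runs along $e$ near $x$.

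\textbf{Boundary walk.} Starting at $x$ and moving along $b$, run a sliding walk $w$ that always turns to the same side, so that on a chain of collinear arcs it proceeds straight, i.e.\ monotonically around $C$. The key observation is that, as long as $w$ stays inside $e$ and has not met a crossing, it remains on $C$: each time $w$ reaches an endpoint $z$ of its current arc with $z$ in the relative interior of $e$, it enters the arc $b'$ with $z$ in its relative interior, and if $b'$ were transverse to $C$ it would cross $\partial H$ at $z$; but the portion of $w$ from $x$ to $z$ lies in $e\subseteq\overline H$, so $x$ and $z$ are $H$-connected, and $(a^\ast,y)=(b',z)$ settles the statement---$z$ lies in the relative interior of $e$ and so is not a vertex. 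Running $w$ from $x$ toward each of the two endpoints of $e$, we are done likewise if $w$ ``falls off'' $C$ while still inside $e$: the arc blocking $w$ at the point where it leaves $C$ is transverse to $C$, hence a crossing arc, at a point in the relative interior of $e$.

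\textbf{The hard part.} What remains---and is the main obstacle---is to rule out the case in which both walks traverse all of $e$ without finding a crossing, so that $e$ is entirely covered by $\mathcal D$-arcs lying on $C$ and the walks reach the vertices $u,v$ of $e$. Here I would analyze the arc $q$ that $w$ meets at a vertex, say $v$. Such a $q$ cannot lie on $C$ (it would overlap the interior of a $C$-arc covering $e$, violating interior-disjointness), so $q$ is transverse to $C$ at $v$. A short angular argument at the convex corner $v$ shows that $q$ crosses $\partial H$ at $v$ precisely when a branch of $q$ enters $\operatorname{int}(H)$; if so, then by \cref{yp:2cross} that branch stays in $\operatorname{int}(H)$ up to an interior endpoint of $q$, and continuing $w$ along it keeps $w$ inside $\overline H$, so one may iterate: $w$ becomes a sliding walk running through $\operatorname{int}(H)$ that can return to $\partial H$ only through a crossing, and tracking which edge this happens on---bounded, via \cref{yp:2cross} and the convexity of $\overline H$, in how long $w$ can remain interior---forces the crossing back onto $e$, at an interior (non-vertex) point. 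In the opposite case $q$ has no branch in $\operatorname{int}(H)$, and $w$ continues past $v$ strictly outside $\overline H$ along $C$; should this happen at both ends, $C$ gets covered entirely and $w$ closes into the simple loop $C$, whose complement consists of the two open hemispheres of $C$, both spherically convex. But taking $w$ right-handed, \cref{yobs:right} identifies one of these hemispheres with the eye of a clockwise swirl, contradicting the property (recalled in \cref{sec:previous}) that the union of the arcs of a swirl bounds exactly one spherically convex region. Thus the difficulty is precisely this vertex bookkeeping---showing the eventual crossing lies in the relative interior of the same edge $e$, and excluding the degenerate possibility that $C$ is entirely occupied by $\mathcal D$.
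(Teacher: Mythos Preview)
Your approach overlooks the one fact that makes this proposition almost immediate: the $k$-oriented structure forces the arc running along $e$ to terminate inside $e$. Here is the paper's argument. If some arc already crosses $\partial H$ at $x$, take $y=x$. Otherwise, as you observe, the arc $a$ containing $x$ in its relative interior lies on the great circle $C$ supporting an edge $e$ of $H$. Now invoke \cref{yo:attractor}: $a$ has a vanishing point $v$ in the attractor $A$; since $v\in A\subseteq H$ and $H\cap C=e$, we have $v\in e$. By the definition of a $k$-oriented SD, $a$ does not contain $v$. Following $a$ from $x$ in the direction of $v$, the arc must end at a point $y$ strictly between $x$ and $v$, where it hits an arc $a'$; this $a'$ is necessarily transverse to $C$ and hence crosses $\partial H$ at $y$. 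Since $y$ lies strictly between two points of $e$, it is interior to $e$, which also yields the ``moreover'' clause.

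Thus your entire ``hard part'' is a phantom: the very first arc along $e$ already stops short of its vanishing point, and the blocking arc there is the crossing you seek. Your proposal never uses vanishing points; it treats $\mathcal D$ as a generic SD and $H$ as a generic spherically convex polygon, and for such objects there is no reason the crossing must land on the same edge, so a pure sliding-walk argument cannot close. Concretely, your handling of the hard case has gaps: the walk you specify (``always turns to the same side'' yet ``proceeds straight'' on collinear chains) is not a right-handed walk, so \cref{yobs:right} does not apply to it; and once the walk dives into $\operatorname{int}(H)$ at a vertex, nothing you cite forces its eventual exit to occur on $e$ rather than on some other edge of $H$.
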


We say that an attractor hull is \emph{total} if it coincides with the entire unit sphere. If an attractor hull is not total, then it is contained in the interior of a hemisphere.

The following result justifies the choice of the term ``attractor''.

\begin{proposition}\label{yp:attractor}
Let $H$ be an attractor hull of a $k$-oriented SD $\mathcal D$. Then, there exist arcs of $\mathcal D$ that intersect the interior of $H$. Moreover, any point of intersection between an arc of $\mathcal D$ and the interior of $H$ is internally $H$-connected with the vertices of the eye of a swirl of $\mathcal D$.
\end{proposition}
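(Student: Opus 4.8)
The statement splits into two claims: that at least one arc of $\mathcal D$ meets the interior of $H$, and that every point at which an arc meets the interior of $H$ can be joined to the vertices of some swirl's eye by a path which follows arcs of $\mathcal D$ and stays inside $H$. My plan is a contradiction argument for the first claim and a confined sliding walk for the second.

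For the first claim I argue by contradiction: assume that no arc of $\mathcal D$ meets the interior of $H$. Then $H$ cannot be total (otherwise every arc meets its interior, and $\mathcal D$ is non-empty), so $H$ lies in an open hemisphere. If some arc met $\partial H$, then by \cref{yp:1inters} some arc would cross $\partial H$, and an arc that crosses $\partial H$ necessarily enters the interior of $H$ --- a contradiction. Hence $D$ is disjoint from $\overline{H}$, and since $\overline{H}$ is connected it lies inside a single tile $T$; in particular the attractor $A$ lies in the open region $T$. Now choose any edge of the convex polygon $\overline{T}$: it lies on some arc $a\in\mathcal D$, and by \cref{yo:attractor} the great circle carrying $a$ passes through the unique vanishing point $q_a\in A$ of $a$. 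But that great circle supports the spherically convex polygon $\overline{T}$ along one of its edges, hence is disjoint from $T$, contradicting $q_a\in A\subseteq T$. (Here I also invoke \cref{yp:noncoll}, so that $H$ --- and therefore $\overline{T}$ --- has non-empty interior and $T$ has genuine edges.)

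For the second claim, fix a point $z\in D$ lying in the interior of $H$, and build a sliding walk $w$ starting at $z$ that never leaves the interior of $H$. The construction is inductive: whenever $w$ is at a point $x$ interior to an arc $a$ and interior to $H$, \cref{yp:2cross} guarantees that $a$ meets $\partial H$ in at most one point, so at most one of the two directions along $a$ issuing from $x$ ever touches $\partial H$; let $w$ proceed in the other direction, reach an endpoint of $a$ still interior to $H$, turn onto the next arc, and repeat. Thus $w$ is a genuine sliding walk whose image lies in the interior of $H$, and it eventually traces a simple closed curve $L$ in the interior of $H$. Since the interior of $H$ is an open topological disk, $L$ bounds inside it a sub-disk $R$ with $\overline{R}$ contained in the interior of $H$, and $R$ is one of the two sides --- right-side or left-side --- of $w$. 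By \cref{ylemma:right}, $R$ then contains the eye $E$ of a swirl $S$ (clockwise if $R$ is the right side, counterclockwise otherwise); in particular $\overline{E}\subseteq\overline{R}$, so the vertices of $E$ lie in the interior of $H$. Finally, since $D$ is connected, $\partial E\subseteq R$, and $L\subseteq D$, the sets $\partial E$ and $L$ are joined by a path lying in $D\cap\overline{R}$; prepending the initial stretch of $w$, from $z$ to $L$, produces a path that follows arcs of $\mathcal D$, stays in the interior of $H$, and joins $z$ to every vertex of $E$. Hence $z$ is internally $H$-connected with the vertices of the eye of $S$.

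The delicate point, and the step I expect to require the most care in a full write-up, is the assertion that the sliding walk can always be continued without leaving $\overline{H}$: this is precisely where \cref{yp:2cross} is used, and one must dispose of the borderline configurations --- an arc tangent to $\partial H$, or an arc with an endpoint on $\partial H$ --- by noting that a walk which always advances into the $\partial H$-free direction is at every instant strictly interior to $H$ and so never encounters them. A lesser point is to confirm that the eye furnished by \cref{ylemma:right} lies in $R$ rather than in the complementary region bounded by $L$; this is immediate, since $R$ is literally one of the two sides of $w$, so \cref{ylemma:right} applies to it verbatim.
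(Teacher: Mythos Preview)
Your proof is correct and follows the paper's architecture: a tile-based contradiction for existence, then a sliding walk confined to the interior of $H$ plus \cref{ylemma:right} for the connectivity claim. The one substantive difference is in how the walk is confined. The paper simply directs the walk along each arc toward that arc's unique vanishing point in the attractor $A$; since $H$ is convex and the walk is always heading toward a point of $A\subset H$ without ever reaching it, it remains interior to $H$ automatically. Your rule---at each arc, choose whichever direction avoids $\partial H$, justified by \cref{yp:2cross}---reaches the same conclusion but adds a layer of indirection (indeed, \cref{yp:2cross} is itself proved from precisely the vanishing-point observation). The paper's formulation also dissolves the edge cases you flag, since ``move toward a point strictly inside $H$'' is manifestly an interior motion with no tangency or boundary-endpoint issues to dispose of. One minor imprecision: your claim $\partial E\subseteq R$ should read $\partial E\subseteq\overline R$, though the argument is unaffected; and rather than appealing to global connectedness of $D$ for the final link from $L$ to $\partial E$, you could simply cite the right-handed walk used in the proof of \cref{ylemma:right}, which already furnishes an explicit path in $D\cap\overline R$ from $L$ to $\partial E$.
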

\begin{proof}
Let $A$ be an attractor of $\mathcal D$, and let $H$ be the spherical convex hull of $A$. If $H$ is total, there is nothing to prove, because $\mathcal D$ is connected and it has at least a swirl (see \cref{sec:previous}).

Otherwise, assume for a contradiction that no arc of $\mathcal D$ intersects $H$. Then, $H$ is contained in the interior of a spherically convex tile $T$ (see \cref{sec:previous}). If $a$ is any arc of $\mathcal D$ bounding $T$, then $T$ lies on one side of the great circle $\gamma$ containing $a$, and therefore $\gamma$ does not intersect $H$. Hence, $a$ is not collinear with any point in $A\subset H$, contradicting \cref{yo:attractor}. Consequently, there is an arc of $\mathcal D$ that intersects $H$, and by \cref{yp:1inters} there is also an arc that intersects the interior of $H$ (note that $H$ has an interior, due to \cref{yp:noncoll}).

Now, let $x$ be any point of intersection between an arc of $\mathcal D$ and the interior of $H$, and let $w$ be a sliding walk that starts from $x$ and follows each arc in the direction of its vanishing point in $A$, which exists due to \cref{yo:attractor}. An example of such a sliding walk is shown in \cref{fig:walk.a}. Since $H$ is convex and $w$ always moves toward a point of $A\subset H$ without ever reaching it, we conclude that $w$ never leaves the interior of $H$. Thus, either the left-side or the right-side region of $w$ is entirely contained in the interior of $H$. In turn, this region contains the eye of a swirl, due to \cref{ylemma:right}.
\end{proof}

\begin{corollary}\label{yc:attractoreye}
In any $k$-oriented SD $\mathcal D$, the interior of any attractor hull contains the eye of a swirl of $\mathcal D$.\qed
\end{corollary}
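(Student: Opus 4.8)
The plan is to read this off almost directly from \cref{yp:attractor}; in fact the statement is essentially already contained in the final paragraph of its proof, and the only adjustment is to start the relevant sliding walk from an \emph{interior} intersection point rather than an arbitrary one. Fix an attractor $A$ of $\mathcal D$ and let $H$ be its hull. If $H$ is total there is nothing to do, since the whole unit sphere trivially contains the eye of any of the swirls guaranteed by the results recalled in \cref{sec:previous}. So I would assume $H$ is not total, hence $H$ lies in the interior of a hemisphere and, by \cref{yp:noncoll}, has non-empty interior.

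First I would apply the first assertion of \cref{yp:attractor} to obtain an arc of $\mathcal D$ that meets the interior of $H$, and pick a point $x$ in that intersection. Then I would run the sliding walk $w$ starting at $x$ that, on each arc it traverses, heads toward that arc's unique vanishing point in $A$ (which exists by \cref{yo:attractor}). Exactly as in the proof of \cref{yp:attractor}, the convexity of $H$ together with the fact that $w$ always moves strictly toward a point of $A\subseteq H$ without ever reaching it forces $w$ to remain in the interior of $H$ for all time; consequently the simple closed curve that $w$ eventually traces lies in the interior of $H$.

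The next step is the topological observation already invoked in \cref{yp:attractor}: because $H$ is a convex region inside an open hemisphere with non-empty interior, its interior is a topological open disk and its complement in the unit sphere is connected, so that complement lies entirely in one of the two regions bounded by the traced simple closed curve; hence the other region is contained in the interior of $H$. Finally, by \cref{ylemma:right} the right-side region of $w$ contains the eye of a clockwise swirl and the left-side region contains the eye of a counterclockwise swirl, so whichever of the two side regions lies inside the interior of $H$ exhibits a swirl whose eye is contained in the interior of $H$. That is precisely the claim.

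I do not expect any genuine obstacle here, since the argument is a verbatim extraction of the last part of the proof of \cref{yp:attractor}. The only point that deserves a word of care is that topological fact that one of the two Jordan regions of the traced curve falls entirely within the interior of $H$; this is where non-degeneracy enters, via \cref{yp:noncoll}, to ensure that the interior of a non-total attractor hull is a well-behaved open disk.
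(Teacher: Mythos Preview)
Your argument is correct and is essentially the paper's approach: you reproduce the sliding-walk step from the last paragraph of the proof of \cref{yp:attractor}, whereas the paper's own proof of the corollary instead cites the second assertion of \cref{yp:attractor} as a black box (the vertices of some swirl's eye are internally $H$-connected with $x$, hence lie in the interior of $H$) and then observes that convexity of $H$ forces the entire eye into the interior. Both routes are the same argument packaged slightly differently; yours is a direct re-run of what is already inside \cref{yp:attractor}, while the paper's is a one-line appeal to its statement plus convexity.
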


\begin{proposition}\label{yp:3intercirc}
Given an SD $\mathcal D$, any great circle on the unit sphere is crossed by at least three arcs of $\mathcal D$.\qed
\end{proposition}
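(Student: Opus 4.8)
The plan is to argue by contradiction: assume that $\gamma$ is crossed by at most two arcs of $\mathcal D$, and contradict the fact (recalled in \cref{sec:previous}) that the interior of every great semicircle is crossed by at least one arc of an SD. So the whole argument will reduce the statement to that one previously known ``semicircle'' lemma, without needing the swirl machinery of \cref{sec:walks,sec:swirl}.

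The first ingredient is the elementary observation that a geodesic arc crosses a great circle in at most one point. Indeed, an arc of $\mathcal D$ that is not contained in $\gamma$ lies on a great circle $\gamma'\neq\gamma$, and $\gamma'$ meets $\gamma$ in exactly two antipodal points; since a geodesic arc has length strictly less than $\pi$, it contains at most one of these two points, and it crosses $\gamma$ only if that point lies in its relative interior. Consequently, the set of points at which arcs of $\mathcal D$ cross $\gamma$ consists of at most two points $x_1$ and $x_2$ (possibly equal, possibly only one, possibly none); arcs contained in $\gamma$ do not cross $\gamma$ and may simply be ignored in the count.

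The second ingredient is that for any $x\in\gamma$ the antipode $-x$ also lies on $\gamma$, so the pair $\{x,-x\}$ splits $\gamma$ into two great semicircles whose open interiors avoid both $x$ and $-x$. If no arc crosses $\gamma$ we already contradict \cref{sec:previous}; otherwise I fix a crossing point $x_1$ and split $\gamma$ at $\{x_1,-x_1\}$. If $x_2\in\{x_1,-x_1\}$ --- which in particular covers the case of a single crossing arc --- then neither of the two semicircle interiors contains a crossing point; and if $x_2\notin\{x_1,-x_1\}$, then $x_2$ lies in the interior of exactly one of the two semicircles, so the interior of the other one contains no crossing point. In every case this produces an open great semicircle of $\gamma$ meeting no point where an arc crosses $\gamma$, hence crossed by no arc of $\mathcal D$, which contradicts \cref{sec:previous} and finishes the proof.

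I do not expect a genuine obstacle here; the argument relies only on the semicircle lemma from \cref{sec:previous} together with the sub-$\pi$ length bound for geodesic arcs. The one point that needs attention is making the case analysis on the positions of the (at most two) crossing points exhaustive --- especially the sub-case in which the two crossing points are antipodal, which is precisely the sub-case handled by splitting $\gamma$ at one crossing point together with its antipode rather than at an arbitrary antipodal pair.
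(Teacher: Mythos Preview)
Your proof is correct and is essentially the same as the paper's: both reduce to the semicircle lemma from \cref{sec:previous} together with the observation that a geodesic arc crosses a great circle in at most one point. The paper argues directly (split $\gamma$ at an arbitrary antipodal pair to find two crossing points $x,y$, then split at $x,y$ and apply the semicircle lemma to the longer of the two resulting arcs), while you argue the contrapositive (split $\gamma$ at a crossing point and its antipode to isolate an empty semicircle); these are minor rearrangements of the same idea.
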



\begin{proposition}\label{yp:3interhull}
Let $H$ be a non-total attractor hull of a $k$-oriented SD $\mathcal D$, and let $x$ be a point of intersection between an arc of $\mathcal D$ and the interior of $H$. Then, there are three distinct arcs of $\mathcal D$ that thrust the boundary of $H$ at three distinct points, not all lying on the same edge of $H$, all of which are internally $H$-connected with $x$.\qed
\end{proposition}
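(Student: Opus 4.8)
The plan is to bootstrap from the weaker facts already in hand: \cref{yp:attractor} tells us $x$ is internally $H$-connected to the vertices of the eye $E$ of some swirl $S$, and \cref{yp:1inters} tells us that any arc meeting $\partial H$ is $H$-connected to an arc that genuinely \emph{crosses} $\partial H$ on the same edge. The first task is therefore to produce \emph{three} crossing points of $\partial H$, all reachable from $x$ by paths that stay inside $H$. I would start from the eye $E$ of the swirl $S$ obtained in \cref{yp:attractor}, whose vertices are internally $H$-connected to $x$. The boundary of $E$ is a closed chain of arcs of $\mathcal D$; since $\mathcal D$ is connected and $H$ is non-total (so $E \subsetneq$ the sphere and $E$ is strictly inside $H$ — wait, $E \subseteq \operatorname{int} H$ by \cref{yc:attractoreye}), I follow the arcs of $\mathcal D$ outward from the vertices of $E$. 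The key geometric lever is that the union $D$ of all arcs is connected and that every arc is collinear with a vanishing point in the attractor $A \subset H$: thus, starting on $\partial E$ and walking along arcs \emph{away} from the relevant vanishing points produces sliding walks that must eventually leave $E$ but, by the convexity argument of \cref{yp:attractor} run in reverse, the components of $D \cap \operatorname{int} H$ that contain $x$ cannot be a closed subset of $\operatorname{int} H$ unless it is all of $D$ — and if it were all of $D$, then $D$ would be contained in $\operatorname{int} H \subsetneq$ sphere, so every arc of $\mathcal D$ would be disjoint from $\partial H$, but then by \cref{yp:3intercirc} the great circle through any edge of $H$ would still be crossed by three arcs, a contradiction once we check those arcs are forced to meet $\operatorname{int} H$. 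So the $H$-component of $x$ in $D$ reaches $\partial H$.

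Next I would upgrade ``reaches $\partial H$'' to ``thrusts $\partial H$ at three points not all on one edge''. Applying \cref{yp:1inters} gives one crossing point $y_1$ on some edge $e_1$ of $H$, $H$-connected to $x$; and since the crossing arc also meets $\operatorname{int} H$, it thrusts $\partial H$ at $y_1$. For the other two: consider the edge $e_1$ of $H$ as a subarc of a great circle $\gamma_1$. By \cref{yp:3intercirc}, $\gamma_1$ is crossed by at least three arcs of $\mathcal D$; I want to argue that at least two of those crossings lie on the edge $e_1$ itself, or else produce crossings on the other two edges. Here the attractor-hull structure matters: $H$ is a spherical triangle (or at least a convex polygon; for $k=3$ it is a triangle, but \cref{d:orient} allows $k$ poles, so $H$ may have more edges — however the three vertices $f$-images relevant to a given arc give a triangle, and one reduces to that subtriangle). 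The cleanest route is to note that the component of $D\cap\operatorname{int} H$ containing $x$, together with $\partial H$, bounds a subregion; walking along $\partial E$ and pushing outward via sliding walks toward \emph{distinct} vanishing points (the attractor has at least three non-collinear points by \cref{yp:noncoll}) forces exits through $\partial H$ near different vertices, hence on different edges. Concretely: take the three vertices $v_1,v_2,v_3$ of a non-degenerate subtriangle of $H$; a sliding walk from $x$ heading toward $v_i$ stays in $H$ and must terminate its monotone progress, which (since it cannot stop) means it spirals into the eye of a swirl pinned near $v_i$, and the boundary of that eye thrusts $\partial H$ near the edge opposite — iterating this for the three choices of $v_i$, internal $H$-connectivity is preserved throughout because all these walks live in $\operatorname{int} H$ and share the starting arc through $x$.

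The main obstacle I anticipate is the bookkeeping around \emph{internal} $H$-connectivity versus mere $H$-connectivity, and ensuring the three thrust points are genuinely on three different edges rather than accidentally collinear or coincident. \cref{yp:1inters} already warns that the crossing point $y$ it produces may differ from the starting point $x$ on $\partial H$, and only guarantees $y$ is not a vertex — I will need the analogous ``not a vertex'' control for all three points, and I will need to rule out the degenerate possibility that two of the three sliding walks exit through the same edge (which would happen only if the corresponding vanishing points were positioned so that $H$ effectively degenerates, contradicting \cref{yp:noncoll}). I expect to handle this by a careful case analysis on which edge of $H$ each sliding walk crosses, using \cref{yp:2cross} (an arc meeting $\operatorname{int} H$ crosses $\partial H$ at most once) to keep the arcs from re-entering, and using the convexity of $H$ together with collinearity-with-vanishing-points to pin down the exit edge of each walk. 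Once the three distinct thrust points on (at least) two — and then, by pushing the third walk, three — distinct edges are in place, the internal $H$-connectivity to $x$ follows by concatenating the internal paths, since every sliding walk involved was constructed to remain in $\operatorname{int} H$ except at its single exit point on $\partial H$.
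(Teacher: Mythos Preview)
Your plan has a genuine gap in its core mechanism. You propose to reach $\partial H$ by sliding walks directed by vanishing points, but you waver between two opposite directions and neither one does what you need. Walking \emph{toward} the vanishing point in $A$ is exactly the walk used in \cref{yp:attractor}: it stays in $\operatorname{int} H$ forever and spirals into the eye of a swirl. You then assert that ``the boundary of that eye thrusts $\partial H$ near the edge opposite'', but this is false: by \cref{yc:attractoreye} the eye lies in $\operatorname{int} H$, so its bounding arcs need not touch $\partial H$ at all. Walking \emph{away} from the vanishing point might eventually exit $H$, but you do not argue this, and even granting one exit point you have no control over which edge it lands on, so the ``not all on one edge'' clause is left to a hand-wave about ``different vanishing points forcing exits near different vertices''. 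The appeal to \cref{yp:3intercirc} does not help either: the three arcs crossing the great circle through an edge $e_1$ may cross it far from $e_1$ and need not be $H$-connected to $x$.

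The paper's argument uses a different tool entirely. It first proves (as an intermediate lemma) that for \emph{any} great circle $\gamma$ through $x$, there are arcs thrusting $\partial H$ at points on strictly opposite sides of $\gamma$, all internally $H$-connected to $x$; this is obtained via clockwise and counterclockwise walks with fulcrum a point of $\gamma$, which are monotone in angle and therefore must exit the bounded region $H$. Once that lemma is in hand, the ``not all on one edge'' clause is immediate: if the three thrust points already found all lay on a single edge $e$, choose $\gamma$ through $x$ disjoint from $e$ (possible since $e$ is shorter than a semicircle) and apply the lemma to produce a thrust point on the far side of $\gamma$, hence off $e$. The missing idea in your plan is precisely this fulcrum-walk device for exiting $H$ on a prescribed side of a line; the attractor-directed walks you invoke go the wrong way.
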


The previous result cannot be improved, as there are $k$-oriented SDs whose arcs thrust the boundary of an attractor hull at exactly three points, two of which lie on the same edge.


\section{Minimal Arrangements}\label{sec:6}

In this section, we will derive all the results listed in \cref{tab:1}.

\begin{proposition}\label{p:no12orien}
There are no $1$-oriented or $2$-oriented SDs.
\end{proposition}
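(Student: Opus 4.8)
The plan is to rule out $1$- and $2$-oriented SDs by showing that their poles would have to be collinear, contradicting \cref{yp:noncoll}. The key observation is that in a $k$-oriented SD, every arc lies on a great circle through a pole and its anti-pole, and \cref{yp:3intercirc} forces \emph{every} great circle (in particular, every one that could be such a great circle of directions) to be crossed by at least three arcs of $\mathcal D$. I would combine this with the structural constraint of \cref{d:orient} and the existence of swirls to extract a contradiction in each small case.

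First I would dispose of the $1$-oriented case. If $\mathcal D$ is $1$-oriented with sole pole $p$, then every arc of $\mathcal D$ is collinear with $p$, hence lies on the single great circle $\gamma$ through $p$ and $-p$; but then the union $D$ of all arcs is contained in $\gamma$, so $D$ cannot separate the sphere into convex tiles in the manner required (the two ``tiles'' would be hemispheres containing antipodal points), contradicting the basic facts recalled in \cref{sec:previous}. More directly: \cref{yp:noncoll} says the (one) pole cannot be ``all collinear'' — with a single pole this is vacuous, so I should instead argue via tiles. A cleaner route: an SD has at least four swirls (\cref{sec:previous}), and a swirl is a cycle of arcs each hitting the next with nonzero turning; but if all arcs lie on the same great circle $\gamma$, no arc can hit another from a side at all (there are no ``sides''), so there are no swirls — contradiction. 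This settles $k=1$.

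For the $2$-oriented case, let $\mathcal D$ have poles $p_1,p_2$. By \cref{yp:noncoll} the two poles cannot both lie with nothing else on one great circle — but with only two points that condition is again essentially automatic, so the real content comes from \cref{yp:3interhull} or \cref{yc:attractoreye} applied to an attractor hull $H$. Since $k=2$, an attractor $A=\{q_1,q_2\}$ consists of two non-antipodal points, and its hull $H$ is a geodesic arc — which has \emph{empty interior}. But \cref{yp:noncoll} explicitly asserts that all attractor hulls have non-empty interiors, and its proof depends only on the poles not being collinear, which for $k\le 2$ points is never a genuine obstruction. The resolution is that \cref{yp:noncoll}'s conclusion is simply \emph{false} for $k\le 2$, and therefore such SDs cannot exist: concretely, \cref{yc:attractoreye} would demand that the interior of $H$ contain the eye of a swirl, but $H$ is a one-dimensional arc with empty interior, a contradiction. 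So the argument is: assume a $2$-oriented SD exists; pick any attractor; its hull is a sub-arc of a great circle, hence has empty interior; this contradicts \cref{yc:attractoreye} (equivalently \cref{yp:attractor}). The same reasoning, with $H$ a single point, handles $k=1$ uniformly.

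The main obstacle I anticipate is being careful about which earlier result is actually invoked without circularity: \cref{yp:noncoll} and everything downstream of it (\cref{yp:2cross} through \cref{yp:3interhull}) presuppose ``non-empty interior of attractor hulls'', so I should not cite \cref{yp:noncoll} to prove \cref{p:no12orien}; rather I want a result whose proof is independent of that hypothesis. \cref{yp:attractor} is the right tool, since its proof constructs an arc meeting $H$ using only connectivity and \cref{yo:attractor} (collinearity of arcs with their vanishing points), together with \cref{ylemma:right} for the swirl eye — none of which needs $H$ to be full-dimensional a priori; the contradiction is precisely that the produced swirl eye, an open region, cannot sit inside a lower-dimensional $H$. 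I would phrase the proof to lean on \cref{yp:attractor}/\cref{yc:attractoreye} for both $k=1$ (attractor hull is a point) and $k=2$ (attractor hull is an arc), each having empty interior, which is the desired contradiction.
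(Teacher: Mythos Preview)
Your proposal rests on a misreading of ``collinear'' and of \cref{yp:noncoll}. In this paper, a set of points is collinear when it lies on a common great circle; any single point and any pair of non-antipodal points trivially \emph{do} lie on a great circle, so for $k\in\{1,2\}$ the condition ``all poles are collinear'' is automatically \emph{satisfied}, not vacuous. The paper's proof is therefore one line: the poles of a $1$- or $2$-oriented SD would be collinear, contradicting \cref{yp:noncoll}. Your claim that \cref{yp:noncoll} ``presupposes'' non-empty attractor-hull interiors inverts the logic: the proof of \cref{yp:noncoll} (in the appendix) uses only \cref{p:semi} and the definition of $k$-oriented to show the poles cannot be collinear, and \emph{derives} non-empty interiors as a corollary. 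There is no circularity in citing it here.

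Your alternative route through \cref{yp:attractor}/\cref{yc:attractoreye} is not only unnecessarily elaborate, it is the one that is actually circular: the proof of \cref{yp:attractor} explicitly invokes \cref{yp:noncoll} (``note that $H$ has an interior, due to \cref{yp:noncoll}'') to pass from an arc touching $H$ to an arc in the interior of $H$, and the sliding-walk construction of the swirl eye takes place inside that interior. If $H$ has empty interior, the argument does not produce a swirl eye at all, so you get no contradiction from \cref{yp:attractor} as stated; you would have to unwind its proof and re-derive essentially the content of \cref{yp:noncoll} anyway. The fix is simply to cite \cref{yp:noncoll} directly.
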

\begin{proof}
The poles of such SDs would be collinear, which contradicts \cref{yp:noncoll}.
\end{proof}

An \emph{attractor partition} for a $k$-oriented SD $\mathcal D$ is a collection of internally disjoint non-total attractor hulls of $\mathcal D$ that collectively cover the unit sphere. The \emph{vertices} and \emph{edges} of an attractor partition are the vertices and edges of the attractor hulls that constitute it.

\begin{lemma}\label{l:attpart}
Let $\mathcal D$ be a $k$-oriented SD with an attractor partition $\mathcal S$ consisting of $m$ attractor hulls. Then, $\mathcal D$ has at least $m$ non-contiguous swirls, each of which has an eye in the interior of a different attractor hull of $\mathcal S$. Moreover, $\mathcal D$ has at least $\lceil 3m/2\rceil$ arcs that thrust edges of $\mathcal S$, and no arc of $\mathcal D$ thrusts the boundaries of more than two attractor hulls.
\end{lemma}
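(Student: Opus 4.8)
The plan is to extract the three assertions of the lemma from the results accumulated in Section~\ref{sec:attractors}, using the attractor partition $\mathcal S = \{H_1, \dots, H_m\}$ to localize swirls and their thrusting arcs. For the first claim, I would apply \cref{yc:attractoreye} to each $H_j$: its interior contains the eye $E_j$ of some swirl $S_j$. Distinctness of the $S_j$ is immediate because the $H_j$ are internally disjoint while each $E_j$ is an open set inside $H_j$, so the eyes are pairwise disjoint and hence the swirls are pairwise distinct. It remains to check that each $S_j$ is non-contiguous: by \cref{yp:adjswirl1}, contiguous swirls have eyes sharing an endpoint of a common arc, but more to the point I would argue via \cref{yp:2cross}/\cref{yp:3interhull} that the arcs of $S_j$ cannot simultaneously respect the convexity of $H_j$ and form a contiguous pair whose second eye escapes $H_j$; alternatively, one observes that a contiguous partner of $S_j$ would have an eye adjacent to $E_j$ along a shared arc, forcing that arc to lie inside $H_j$, and then that partner's eye would also be forced inside $H_j$, contradicting \cref{yp:3interhull} which says three distinct arcs must thrust $\partial H_j$. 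I expect the cleanest route is: any swirl with eye in the interior of a non-total attractor hull has all its arcs crossing $\partial H_j$ outward (by the direction-toward-vanishing-point argument in the proof of \cref{yp:attractor}), so its ``outer'' region is not a single adjacent eye, precluding contiguity.

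For the counting claim, \cref{yp:3interhull} gives, for each $H_j$, three distinct arcs thrusting $\partial H_j$ at three points not all on one edge of $H_j$; so at least three arcs thrust the edges of $H_j$. The subtlety is that an arc may be counted in two different $H_j$'s, which is exactly why the bound is $\lceil 3m/2 \rceil$ rather than $3m$ — so I first establish the final assertion, that no arc thrusts the boundaries of more than two attractor hulls, and then do a double-counting argument. For the final assertion: if an arc $a$ thrusts $\partial H_i$ at $x$, it enters the interior of $H_i$ and (by \cref{yp:2cross}) exits $\partial H_i$ at most once more; thus $a$ meets $\partial H_i$ in at most two points, and since $H_i$ is spherically convex, $a$ passes through the interior of $H_i$ along a single sub-arc. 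Now the interiors of the $H_j$ partition the sphere (minus the shared boundaries), so as one traverses $a$ from one endpoint to the other, $a$ visits a sequence of hull-interiors; by convexity and \cref{yp:2cross} it can visit each hull-interior at most once, and the total number of hull-interiors it visits equals one plus the number of edge-crossings of $\partial\mathcal S$. But $a$ is itself a geodesic arc of bounded length ($<\pi$), and its two endpoints lie in the interiors of other arcs — I would bound the number of hulls it can traverse by a convexity/length argument: a geodesic arc that enters and leaves $H_i$ subtends an angular width at each vanishing point, and since $a$ has a single vanishing point in the attractor $A$ of $H_i$ and is collinear with it, $a$ lies on a great circle through that vanishing point, which can meet the convex hull $H_i$ in only one arc — and similarly for at most one ``opposite'' hull on the same great circle. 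So $a$ thrusts at most two of the $H_j$.

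The main obstacle is the non-contiguity part of the first claim and the ``at most two hulls'' part of the last claim, which are really the same geometric fact viewed from two angles: a geodesic arc $a$ collinear with a vanishing point $v$ lies on the great circle $\gamma_v$ through $v$, and $\gamma_v$ passes through every attractor hull whose attractor contains $v$ — but since $v$ is a single pole or anti-pole, and the $H_j$ tile the sphere, $\gamma_v$ enters at most two of them in a way that $a$ (a proper sub-arc, not the whole circle) can thrust. I would make this precise by noting that $\gamma_v \cap H_j$ is either empty or a single sub-arc (convexity), and that $a \subset \gamma_v$ thrusts $\partial H_j$ only if $a$ overlaps that sub-arc's interior; a short argument using that $a$ has length $<\pi$ and that the relevant sub-arcs around $\gamma_v$ are arranged without overlap then yields the bound two. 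Once these two geometric facts are in hand, the swirl count follows from \cref{yc:attractoreye} plus disjointness, and the arc count $\lceil 3m/2\rceil$ follows by summing the ``$\geq 3$ thrusting arcs per hull'' bound over $j$ and dividing by the maximum multiplicity $2$.
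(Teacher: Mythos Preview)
Your overall architecture is right --- apply \cref{yc:attractoreye} to each hull, invoke \cref{yp:3interhull} for three thrusting arcs per hull, and double-count --- and this matches the paper. But two of the intermediate steps are off in ways that matter.

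\textbf{Non-contiguity.} You are proving the wrong statement. ``$m$ non-contiguous swirls'' means the $m$ swirls $S_1,\dots,S_m$ are \emph{pairwise} non-contiguous, not that each $S_j$ has no contiguous partner anywhere in $\mathcal D$. Once read correctly, the argument is one line: the eyes $E_i$ and $E_j$ lie in the interiors of internally disjoint hulls, so $E_i$ and $E_j$ share no boundary point; but contiguous swirls, by definition, have eyes sharing an endpoint of a common arc. Your proposed routes (forcing a partner's eye into $H_j$ and then invoking \cref{yp:3interhull}, or arguing that all arcs of $S_j$ cross $\partial H_j$ outward) are attempts at the stronger, unneeded claim, and neither is actually completed.

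\textbf{At most two hulls.} Your vanishing-point/great-circle argument is hand-wavy and has a structural problem: the vanishing point of $a$ depends on the attractor, so different hulls $H_j$ assign $a$ different points $v_j\in\{f(a),-f(a)\}$, and the promised ``short argument using that $a$ has length $<\pi$'' is never supplied. The paper's route is much cleaner and uses \cref{yp:2cross} directly, not the way you quote it. The proposition says that if $a$ meets the interior of $H_j$ then it meets $\partial H_j$ in \emph{at most one} point total (not ``once more''). Hence if $a$ thrusts $\partial H_j$, one endpoint of $a$ lies in the interior of $H_j$. Since $a$ has only two endpoints and the hull interiors are disjoint, $a$ can thrust the boundaries of at most two hulls. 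With this in hand, summing ``$\geq 3$ thrusting arcs per hull'' over the $m$ hulls and dividing by the maximum multiplicity $2$ gives $\lceil 3m/2\rceil$, exactly as you outline.
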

\begin{proof}
Since the attractor hulls constituting $\mathcal S$ are internally disjoint, \cref{yp:attractor} guarantees that each of their interiors contains a swirl of $\mathcal D$. Such swirls are distinct and non-contiguous, because their eyes do not share any boundary points.

Moreover, \cref{yp:3interhull} ensures that at least three arcs of $\mathcal D$ thrust the boundary of each attractor hull of $\mathcal S$. Also, no arc of $\mathcal D$ thrusts the boundary of any attractor hull of $\mathcal S$ at more than one point, due to \cref{yp:2cross}. Hence, no arc of $\mathcal D$ thrusts the boundaries of more than two attractor hulls of $\mathcal S$. We conclude that $\mathcal D$ has at least $\lceil 3m/2\rceil$ arcs that thrust boundaries of attractor hulls of $\mathcal S$.
\end{proof}

\subsection{3-Oriented SDs}\label{sec:6.1}

Observe that no $3$-oriented SDs are degenerate, otherwise their three poles would be collinear, contradicting \cref{yp:noncoll}. In fact, there is essentially one possible configuration for the poles of a $3$-oriented SD, and the set of its $2^3=8$ attractor hulls constitutes an attractor partition. We call each attractor hull of a $3$-oriented SD an \emph{octant} (see \cref{fig:222}).

\begin{theorem}\label{t:3orien}
Any $3$-oriented SD has at least eight swirls and $12$ arcs. Moreover, there are matching examples that are SODs.
\end{theorem}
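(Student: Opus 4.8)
The plan is to establish the lower bounds and the matching construction separately. For the lower bound on swirls, I would apply \cref{l:attpart} with the attractor partition $\mathcal S$ consisting of all eight octants. This immediately yields at least $8$ non-contiguous swirls, one with an eye in each octant, proving the first half. For the arc count, \cref{l:attpart} also gives at least $\lceil 3\cdot 8/2\rceil = 12$ arcs that thrust edges of $\mathcal S$, which is exactly the bound we want; so the lower bound of $12$ arcs follows directly as well, provided we are careful that the arcs being counted are genuinely distinct arcs of $\mathcal D$ (which is ensured by the ``no arc thrusts more than two attractor hulls'' clause together with a counting argument: $8$ hulls, each with $\geq 3$ thrusting arcs, each arc used at most twice).

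For the matching examples, I would exhibit an explicit $3$-oriented SOD with exactly $12$ arcs and $8$ swirls — and in fact the SOD already shown in \cref{fig:intro}, which has four clockwise and four counterclockwise swirls of degree three, is the natural candidate; one checks that it can be realized with its $12$ arcs lying on three mutually orthogonal families of great circles (so it is $3$-oriented with the three coordinate axes as poles, alignment $(2,2,2)$), that every arc has its endpoints interior to other arcs, and that each arc is hit from only one side (one-sidedness). The verification that this arrangement is a valid SOD — internal disjointness, the blocking condition at all $24$ endpoints, and one-sidedness — is a finite check, and it realizes both the swirl bound ($8$) and the arc bound ($12$) simultaneously, matching the lower bounds and completing the proof. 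Since an SOD is in particular an SD, the same example serves as the matching SD.

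The main obstacle I anticipate is not the lower bound, which is essentially a direct invocation of \cref{l:attpart}, but rather pinning down the explicit construction and confirming it is minimal in both parameters at once. One must argue that the $12$-arc, $8$-swirl configuration actually exists as a geometric SOD (not merely combinatorially), and that it is not possible to do better — the lower bounds take care of the latter. A subtle point in the construction is checking that all arcs can be taken to be proper geodesic arcs avoiding their own vanishing points (as required by \cref{d:orient}); with the three coordinate axes as poles, each arc must avoid the two corresponding axis points, which is easily arranged by choosing the arcs to lie in suitable octant-boundary positions. I would present the construction with a figure and a short table of the $12$ arcs and their endpoints, leaving the routine incidence verification to the reader.
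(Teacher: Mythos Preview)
Your approach is essentially identical to the paper's: both invoke \cref{l:attpart} with the attractor partition of $m=8$ octants to obtain the lower bounds of eight swirls and $\lceil 3\cdot 8/2\rceil=12$ arcs, and both point to an explicit $3$-oriented SOD as the matching example. The only cosmetic difference is that the paper cites the purpose-built \cref{fig:222} rather than \cref{fig:intro}; your added remarks about verifying the geometric realizability and one-sidedness of the construction are reasonable due diligence but not something the paper spells out.
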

\begin{proof}
Since any such SD has an attractor partition consisting of $m=8$ octants, it follows from \cref{l:attpart} that it has at least $m=8$ swirls and $3m/2=12$ arcs. A $3$-oriented SOD with exactly eight swirls and $12$ arcs is shown in \cref{fig:222}.
\end{proof}

\begin{figure*}
\centering
\begin{subfigure}{0.49\textwidth}
    \includegraphics[scale=0.5]{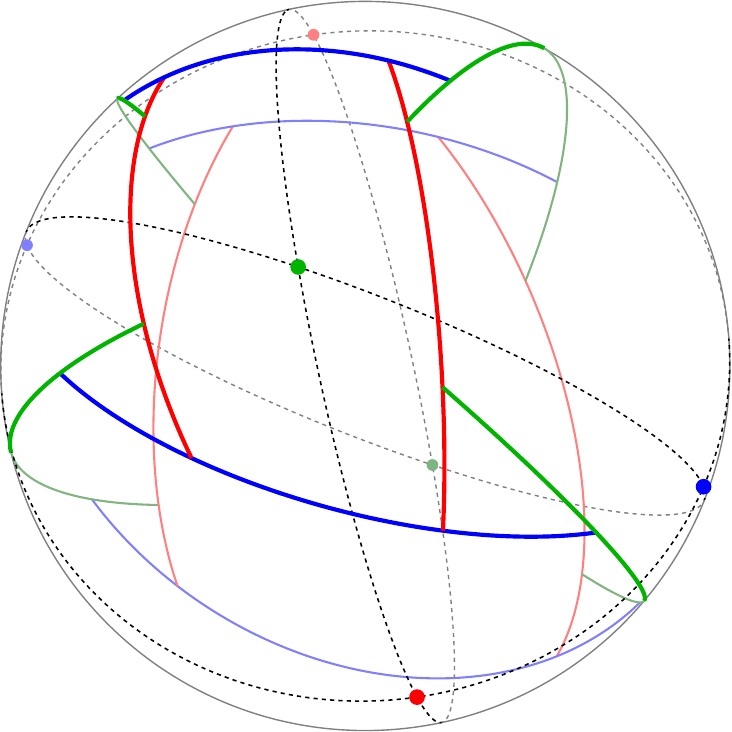}
    \subcaption{}
    \label{fig:222}
\end{subfigure}
\hfill
\begin{subfigure}{0.49\textwidth}
    \includegraphics[scale=0.5]{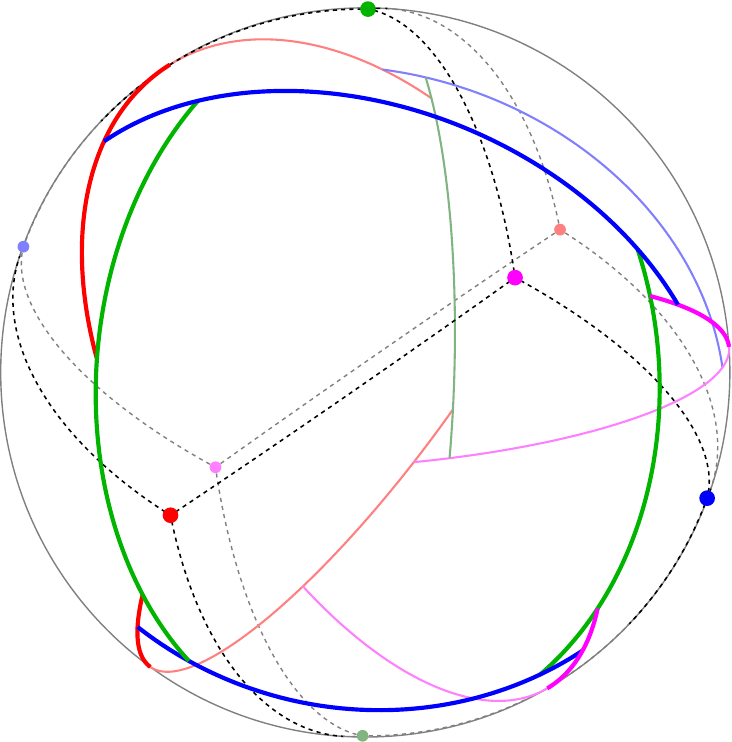}
    \subcaption{}
    \label{fig:3333}
\end{subfigure}
\caption{(a) A $3$-oriented SD with exactly eight swirls and $12$ arcs. (b) A non-degenerate $4$-oriented SOD with exactly six swirls and ten arcs. Each arc has the same color as its vanishing points. Dashed lines mark the boundaries of octants and sextants, respectively.}
\label{fig:triquad}
\end{figure*}

\subsection{4-Oriented SDs}\label{sec:6.2}
There are essentially two possible configurations of four poles on a sphere. Starting from three poles determining eight octants, a fourth pole can be placed either in the interior of an octant, giving rise to a non-degenerate configuration with alignment $(3,3,3,3)$, or on the boundary between two octants, giving rise to a degenerate configuration with alignment $(2,2,2,3)$. Due to \cref{yp:noncoll}, there are no other configurations.

Every non-degenerate $4$-oriented SD has an attractor partition consisting of six strictly convex quadrilaterals, which we call \emph{sextants}.\footnote{The term ``sextant'' traditionally refers to a navigation instrument. Its name is derived from the fact that a sextant's arc covers $60$ degrees, which is one sixth of a circle. In this paper, we use the term to indicate each of six regions that partition a sphere.} An example is in \cref{fig:3333}.

\begin{theorem}\label{t:4nondeg}
Any non-degenerate $4$-oriented SD has at least six swirls and nine arcs; if it is an SOD, it has at least ten arcs. The bounds are tight.
\end{theorem}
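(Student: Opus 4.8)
The plan is to read the SD lower bounds straight off \cref{l:attpart}, and then to extract one extra arc from the one-sidedness of an SOD by a short argument about its swirl graph. For the SD part, recall from the discussion preceding the theorem that every non-degenerate $4$-oriented SD admits an attractor partition $\mathcal S$ into $m=6$ sextants. Feeding this partition into \cref{l:attpart} immediately yields at least $m=6$ pairwise non-contiguous swirls, with eyes in the interiors of six \emph{distinct} sextants (hence six distinct swirls), together with at least $\lceil 3m/2\rceil = 9$ arcs that thrust edges of $\mathcal S$; in particular the SD has at least $6$ swirls and at least $9$ arcs.

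For the SOD bound I would argue by contradiction, assuming a non-degenerate $4$-oriented SOD $\mathcal D$ with exactly nine arcs and deriving that its swirl graph is $K_{3,3}$. Two preliminary observations drive this. First, every swirl of an SD has degree at least $3$: a hypothetical degree-$2$ swirl would consist of two distinct arcs each hitting the other, so the two arcs would meet in two distinct points — an endpoint of each lying in the relative interior of the other — contradicting the fact that two arcs of an SD intersect in at most one point (and a degree-$1$ swirl is impossible since every arc hits two \emph{distinct} arcs). Second, in an SOD every arc belongs to at most two swirls, and the swirl graph is simple, planar, and bipartite, with the two colour classes being the clockwise and the counterclockwise swirls. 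Now let $s$ be the number of swirls of $\mathcal D$. Summing swirl degrees and counting incidences with arcs, $3s \le \sum_{S}\deg(S) = \sum_{\text{arcs }a}|\{S : a\in S\}| \le 2\cdot 9 = 18$, so $s\le 6$; combined with $s\ge 6$ from the previous paragraph this forces $s=6$ with equality throughout, i.e. every swirl has degree exactly $3$ and every arc lies in exactly two swirls. Hence the swirl graph of $\mathcal D$ is a $3$-regular simple graph on six vertices, and since it is bipartite it can only be $K_{3,3}$, which is non-planar — contradicting the planarity of the swirl graph of an SOD. Therefore every non-degenerate $4$-oriented SOD has at least ten arcs.

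For tightness, the non-degenerate $4$-oriented SOD in \cref{fig:3333} has exactly six swirls and ten arcs, which meets the swirl bound and the SOD arc bound simultaneously; and a genuinely two-sided non-degenerate $4$-oriented SD with six swirls and nine arcs can be exhibited to meet the remaining bound. I expect the lower-bound argument to be essentially the whole story: the degree-$\ge 3$ fact and the $K_{3,3}$ step are both short, so the only place that demands genuine care is verifying that the claimed extremal arrangements really are SDs and SODs with the advertised numbers of swirls and arcs — a fiddly but routine drawing check rather than a conceptual obstacle.
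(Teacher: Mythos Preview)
Your argument is correct. The SD part is identical to the paper's: apply \cref{l:attpart} with the six sextants to get six non-contiguous swirls and nine arcs. For the SOD bound the paper takes a slightly different route: it restricts to the subgraph $G$ of the swirl graph induced by the six \emph{sextant} swirls (there could in principle be further swirls), shows by a short case split on the sizes of the bipartition classes that any simple planar bipartite graph on six vertices has at most eight edges (the $3$--$3$ case invoking non-planarity of $K_{3,3}$), and then counts $6\times 3-8=10$ arcs among the sextant swirls. Your contradiction argument instead pins down the \emph{entire} swirl graph: from nine arcs you squeeze $s=6$, force $3$-regularity, and land on $K_{3,3}$ directly. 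Both proofs ultimately rest on the non-planarity of $K_{3,3}$; yours avoids the case analysis at the price of needing the global bound $s\le 6$, while the paper's version is indifferent to how many swirls there are in total. For tightness, the paper obtains the nine-arc SD by perturbing the poles of the degenerate example in \cref{fig:2223SD}, and uses \cref{fig:3333} for the SOD, exactly as you indicate.
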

\begin{proof}
Since any such SD has an attractor partition consisting of $m=6$ sextants, it follows from \cref{l:attpart} that it has at least $m=6$ non-contiguous swirls and $3m/2=9$ arcs. These six swirls, one per sextant, are called \emph{sextant swirls}.

It remains to prove that any non-degenerate $4$-oriented SOD $\mathcal D$ has at least ten arcs. Let $G$ be the subgraph of the swirl graph of $\mathcal D$ induced by its sextant swirls. As we recalled in \cref{sec:previous}, the swirl graph of $\mathcal D$ is simple, planar, and bipartite; therefore, so is $G$. We will prove that $G$ has at most eight edges.

Recall that $G$ has six vertices. If the partite sets of $G$ have $1$ and $5$ vertices, then $G$ has at most five edges. If the partite sets of $G$ have $2$ and $4$ vertices, then $G$ has at most eight edges. Finally, if the partite sets of $G$ have $3$ and $3$ vertices, then $G$ has at most eight edges, because the complete bipartite graph $K_{3,3}$ is not planar. Thus, $G$ has at most eight edges.

Since a swirl involves at least three arcs, and each arc of an SOD contributes to at most two swirls, the arcs of $\mathcal D$ that are involved in sextant swirls are at least $6\times 3-8=10$, and hence $\mathcal D$ has at least ten arcs.

A non-degenerate $4$-oriented SD with exactly six swirls and nine arcs is obtained by perturbing the poles of the SD in \cref{fig:2223SD}. A non-degenerate $4$-oriented SOD with exactly six swirls and ten arcs is shown in \cref{fig:3333}.
\end{proof}

As for degenerate $4$-oriented SDs, we treat them as special cases of $k$-oriented SDs with alignment $(2,2,\dots,2,k-1)$. It turns out that all of these SDs have the same minimum number of swirls and arcs, for any $k\geq 4$. However, since they do not have an attractor partition for $k>4$, analyzing them will require an ad-hoc technique.

\begin{figure*}
\centering
\begin{subfigure}{0.49\textwidth}
    \includegraphics[scale=0.5]{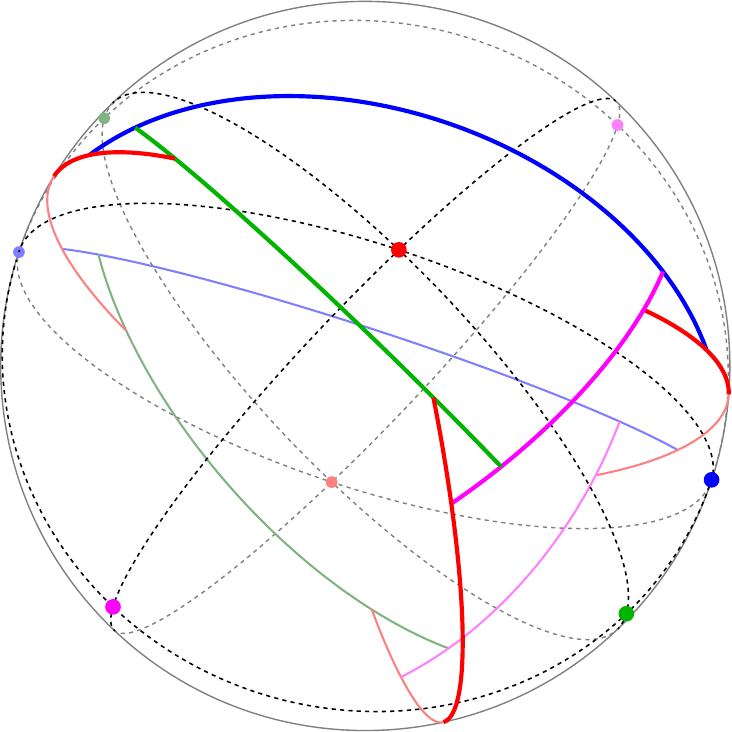}
    \subcaption{}
    \label{fig:2223SD}
\end{subfigure}
\hfill
\begin{subfigure}{0.49\textwidth}
    \includegraphics[scale=0.5]{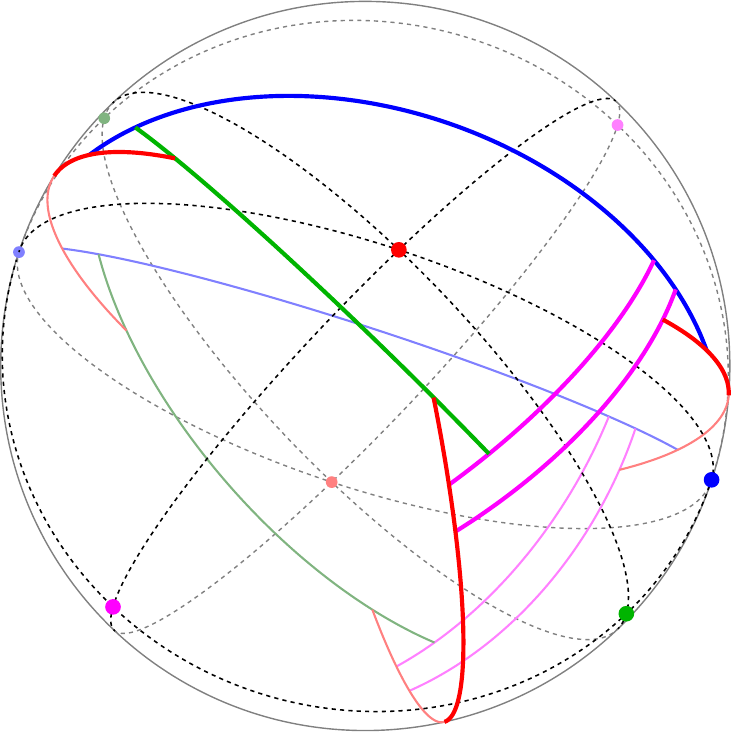}
    \subcaption{}
    \label{fig:2223SOD}
\end{subfigure}
\caption{(a) A $4$-oriented SD with alignment $(2,2,2,3)$ having exactly six swirls and nine arcs. (b) A $4$-oriented SOD with alignment $(2,2,2,3)$ having exactly six swirls and 11 arcs.}
\label{fig:quaddeg}
\end{figure*}

\begin{theorem}\label{t:222k}
If $k\geq 4$, any $k$-oriented SD with alignment $(2,2,\dots,2,k-1)$ has at least six swirls and nine arcs; if it is an SOD, it has at least 11 arcs. The bounds are tight.
\end{theorem}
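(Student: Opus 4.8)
The plan is to combine the attractor-hull machinery of \cref{sec:attractors} for the ``polar'' part of the sphere with a direct sliding-walk argument for the two ``gap'' regions that carry no attractor hull once $k\ge 5$. First I would normalise coordinates so that the pole of alignment $k-1$ is the north pole $N$ (anti-pole the south pole $S$) and the other $k-1$ poles lie on the equator $\mathcal E$; after relabelling some of them as their anti-poles — which changes neither the alignment nor the property of being a $k$-oriented SD — they may be taken to lie, in cyclic order $p_1,\dots,p_{k-1}$, on a common open semicircular arc of $\mathcal E$. Every attractor then consists of $N$ or $S$ together with one point from each pair $\{p_i,-p_i\}$, and when its equatorial points span less than a semicircle its hull is the ``slab'' with apex $N$ or $S$ over the arc they span. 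I would single out the four slabs $H_1=\mathrm{hull}\{N,p_1,\dots,p_{k-1}\}$, $H_2=\mathrm{hull}\{S,p_1,\dots,p_{k-1}\}$, $H_3=-H_2$, $H_4=-H_1$, whose interiors are pairwise disjoint, together with the two remaining regions $G^{+}$ and $G^{-}=-G^{+}$, where $G^{+}$ is the lune bounded by the half of the meridian of $p_{k-1}$ through $p_{k-1}$ and the half of the meridian of $p_{1}$ through $-p_{1}$. These six regions tile the sphere with pairwise disjoint interiors. For $k\ge 5$ the gap lunes $G^{\pm}$ contain too few equatorial poles to be attractor hulls, which is exactly why \cref{l:attpart} is unavailable and an ad-hoc argument is required (for $k=4$ one may instead observe that the sphere admits a genuine attractor partition into six slabs, three with apex $N$ and three with apex $S$, and conclude via \cref{l:attpart}; but the argument below is uniform in $k$).

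Next, for the swirl lower bound: since $H_1,\dots,H_4$ are non-total attractor hulls with pairwise disjoint interiors, \cref{yc:attractoreye} already gives four pairwise distinct swirls, one with eye in $\mathrm{int}\,H_i$ for each $i$. To produce a fifth swirl with eye in $\mathrm{int}\,G^{+}$ (and a sixth, symmetrically, in $\mathrm{int}\,G^{-}$) I would run a sliding walk confined to $\overline{G^{+}}$ and invoke \cref{ylemma:right}: $\overline{G^{+}}$ is the intersection of the two closed hemispheres bounded by the meridians of $p_{k-1}$ and of $p_{1}$, each spherically convex, so a sliding walk that on every arc moves toward a vanishing point lying in $\overline{G^{+}}$ never leaves $\overline{G^{+}}$, and one of its two side regions contains a swirl eye, necessarily disjoint from $\mathrm{int}\,H_1,\dots,\mathrm{int}\,H_4$ and hence new. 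The hard part will be making this confinement work: an arc collinear with one of the ``interior'' equatorial poles $p_2,\dots,p_{k-2}$ has \emph{neither} vanishing point in $\overline{G^{+}}$, so a naive walk can escape. I would start the walk on $\partial G^{+}$ and choose a turning rule that makes it hug one of the two bounding meridians — in effect substituting the two poles $p_1,p_{k-1}$ whose meridians bound $G^{+}$ for the full vanishing-point set — and show, using \cref{yp:1inters,yp:2cross} on the slabs $H_1,H_3$ that border $G^{+}$, that any attempt of the walk to cross a bounding meridian is rebuffed back into $G^{+}$. Altogether this gives at least six swirls.

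For the arc counts I would proceed as in \cref{l:attpart}, applied to the six-region tiling. Each of $\partial H_1,\dots,\partial H_4$ is thrust by at least three arcs (\cref{yp:3interhull}), and the same holds for $\partial G^{+},\partial G^{-}$ because the proofs of \cref{yp:attractor,yp:3interhull} use only that the region is a spherically convex union of great-circle arcs met by some arc in its interior — which the preceding step supplies. Since the boundary one-skeleton lies inside the union of the three great circles $\mathcal E$, the meridian of $p_1$, and the meridian of $p_{k-1}$, and a geodesic arc of length below $\pi$ meets each of these in at most one point, a bookkeeping of thrust points against these three walls yields at least $\lceil 3\cdot 6/2\rceil = 9$ arcs; the delicate point here is the extension of the argument to $G^{\pm}$, since unlike an attractor hull a lune may be met by a single arc at two boundary points. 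For the SOD bound I would pass to the swirl graph, which is simple, planar and bipartite (\cref{sec:previous}); the subgraph induced by the six swirls embeds in the region-adjacency graph of the tiling (two swirls share an arc only if their regions share a boundary arc), which is $K_{2,4}$ together with the edges $H_1H_2$, $H_3H_4$, whose largest bipartite subgraph has eight edges, so the counting of \cref{t:4nondeg} gives at least $6\cdot 3-8=10$ arcs. The extra arc peculiar to the degenerate case I would extract from the fact that the equator now carries $k-1\ge 3$ poles: applying \cref{yp:3interhull} to $H_1$ and its antipode $H_4$ (or using the three-crossings property of $\mathcal E$), one locates an arc crossing $\mathcal E$ that is not one of the at most eight arcs shared between pairs of the six swirls — alternatively, one shows a gap-lune swirl must have degree at least four — giving at least eleven arcs. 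This last step is the most sensitive to the degenerate geometry and is where I expect the real work to lie.

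Finally, all three bounds are tight: starting from the minimal $(2,2,2,3)$-oriented SD (resp.\ SOD) of \cref{fig:2223SD} (resp.\ \cref{fig:2223SOD}), which already has exactly six swirls and nine (resp.\ eleven) arcs, I would insert $k-4$ further poles on the equator at positions lying on none of the existing meridians through $N$ and another pole and otherwise generic; this changes neither the arrangement nor any arc's collinearity, keeps every arc assigned to one of the original four vanishing directions, and produces an arrangement of alignment $(2,2,\dots,2,k-1)$ with the same six swirls and nine (resp.\ eleven) arcs.
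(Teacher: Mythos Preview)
Your six-region decomposition is a different route from the paper's, and the gaps you flag are real obstacles, not just details. The paper avoids them by working with only the two hemispheres $H,H'$ bounded by the equator $\gamma$. For swirls, it notes that the $2k-2$ non-total attractor hulls with apex $p$ lie in $H$; taking one swirl eye $E$ in such a hull and any meridian $\gamma'$ through $p$ meeting $\operatorname{int}E$, each of the two resulting half-hemispheres still contains an entire attractor hull, hence a swirl distinct from $E$. That gives three swirls per hemisphere with no walk needed in any non-attractor region. For the nine arcs, the paper counts three arcs crossing $\gamma$ (these must be collinear with $p$, by \cref{yp:3intercirc}), and then a clockwise walk about $p$ inside $H$, following each $p$-collinear arc toward $p$, is trapped in $H$ and needs at least three arcs with equatorial vanishing points to complete a turn; those three lie entirely in $\operatorname{int}H$, and by symmetry three more in $\operatorname{int}H'$. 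For the SOD bound, the key geometric fact is that a swirl eye, being convex, can be bounded by at most two arcs with vanishing point $p$, hence at most one arc crossing $\gamma$; so cross-hemisphere shared arcs number at most three, while the induced swirl graph on the three $H$-swirls (simple bipartite on three vertices) has at most two edges, and likewise for $H'$. That yields at most seven shared arcs and hence at least $18-7=11$.

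In your outline, the confinement of a sliding walk to $G^+$ is not salvageable along the lines you suggest: an arc collinear with some $p_j$ ($2\le j\le k-2$) can carry the walk straight across $G^+$ and out the far meridian, and \cref{yp:1inters,yp:2cross} applied to the neighbouring slabs do not ``rebuff'' it---those propositions constrain how arcs meet an attractor hull, not how they behave in its complement. More fundamentally, $G^+$ is never an attractor hull (even for $k=4$ it omits both $p_2$ and $-p_2$), so there is no a~priori reason it must contain a swirl eye. For the nine-arc count, the one-thrust-per-arc fact from \cref{yp:2cross} genuinely fails for a lune: a $p$-collinear arc can thrust $\partial G^+$ at both meridians, so the $\lceil 3\cdot 6/2\rceil$ bookkeeping does not go through. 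For the SOD bound, the assertion that the induced swirl graph embeds in the region-adjacency graph is unjustified---a shared arc between a swirl in $H_1$ and one in $H_4$ may pass through $G^{\pm}$ without the regions themselves being adjacent---so your eight-edge bound is unproven, and the ``extra arc'' step remains speculative. Your tightness argument via dummy equatorial poles matches the paper and is fine.
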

\begin{proof}
All poles lie on a same great circle $\gamma$, except for one pole $p$. Due to \cref{yp:3intercirc}, $\gamma$ is crossed by at least three arcs, which must have $p$ as a vanishing point. Let $H$ and $H'$ be the two hemispheres bounded by $\gamma$, with $p\in H$. Let $w$ be a clockwise walk with fulcrum $p$ starting from the interior of $H$, with the property that the arcs with vanishing point $p$ are followed in the direction of $p$ (as opposed to the anti-pole $-p$). Clearly, $w$ always remains within $H$, and it takes at least three distinct arcs with vanishing points on $\gamma$ to do a complete turn around $p$. Note that such arcs lie entirely in the interior of $H$. Similarly, there are at least three arcs lying in the interior of $H'$, which yields nine arcs in total.

There are $2k-2$ distinct attractor hulls in $H$. By \cref{yp:attractor}, each of them contains the eye of a swirl; let $E$ be one of these eyes. Let $\gamma'$ be a great circle through $p$ that intersects the interior of $E$. Note that $\gamma'$ separates $H$ into two parts, each of which contains an attractor hull, and hence the eye of a swirl distinct from $E$. Thus, there are at least three swirls whose eyes are in $H$; for the same reason, there are at least three swirls whose eyes are in $H'$.

Let $G$ be the subgraph of the swirl graph induced by these six swirls; we will prove that, in an SOD, $G$ has at most seven arcs. Observe that, in any of these swirls, at most two arcs may have $p$ as a vanishing point (otherwise the eye would not be spherically convex), and at most one of them may cross $\gamma$. Recall from \cref{sec:previous} that $G$ is simple and bipartite. Hence, at most two arcs may be shared among the three swirls whose eyes are in $H$, and at most two arcs may be shared among the other three swirls. Thus, $G$ has at most three arcs crossing $\gamma$ and at most four other arcs. Therefore, the six swirls involve at least $6\times 3-7=11$ arcs.

Matching examples for $k=4$ are shown in \cref{fig:quaddeg}; adding ``dummy poles'' on the great circle that already contains three of them yields examples for all $k>4$, as well.
\end{proof}

\begin{figure*}
\centering
\begin{subfigure}{0.49\textwidth}
    \includegraphics[scale=0.5]{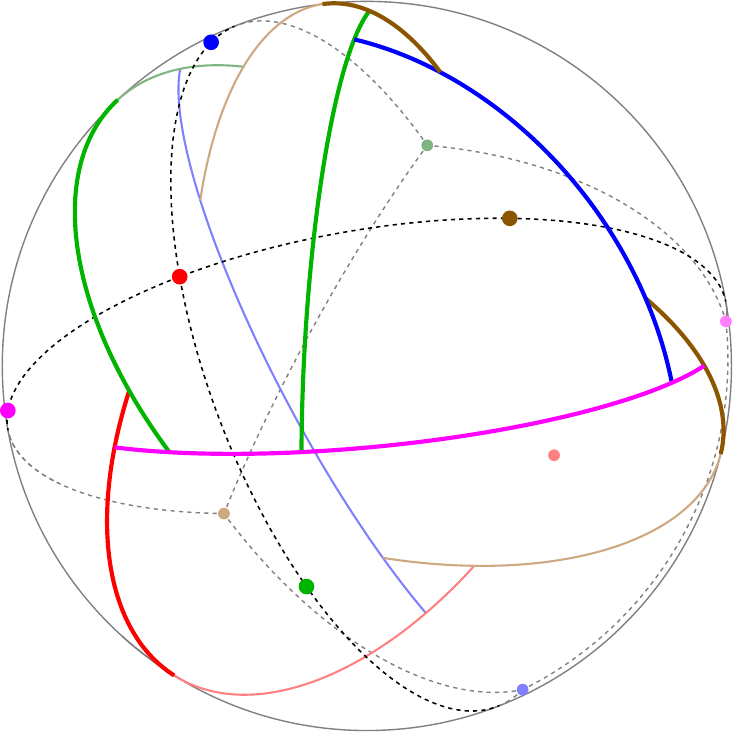}
    \subcaption{}
    \label{fig:23333SD}
\end{subfigure}
\hfill
\begin{subfigure}{0.49\textwidth}
    \includegraphics[scale=0.5]{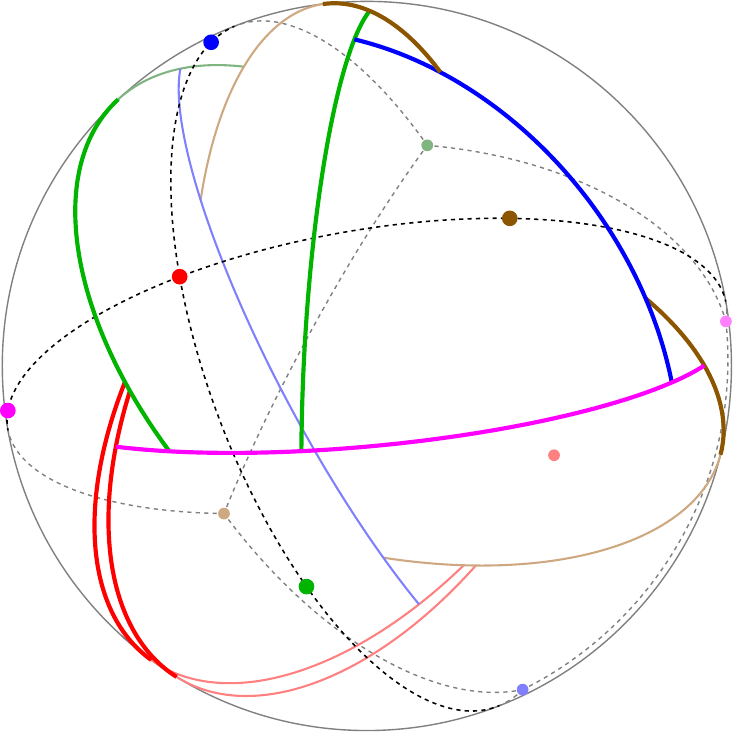}
    \subcaption{}
    \label{fig:23333SOD}
\end{subfigure}
\caption{(a) A $5$-oriented SD with alignment $(2,3,3,3,3)$ having exactly five swirls and eight arcs. (b) A $5$-oriented SOD with alignment $(2,3,3,3,3)$ having exactly five swirls and nine arcs.}
\label{fig:23333}
\end{figure*}

\subsection{5-Oriented SDs}\label{sec:6.3}
There are essentially four distinct configurations of five poles. By \cref{yp:noncoll}, the poles cannot all be collinear; nonetheless, four of them may lie on a same great circle, giving rise to a configuration with alignment $(2,2,2,2,4)$, which was already discussed in \cref{t:222k}.

If three poles lie on a same great circle $\gamma$, we have two possible alignments: $(2,3,3,3,3)$ if the two remaining poles are collinear with a pole on $\gamma$ (see \cref{fig:23333zones}), and $(3,3,3,4,4)$ otherwise. Each of these alignments completely determines the combinatorics of the configuration.

Finally, we have the non-degenerate alignment $(4,4,4,4,4)$. To see why all configurations of five poles with this alignment are equivalent, consider four poles with alignment $(3,3,3,3)$. The great circles through pairs of these poles partition each sextant into four spherical triangles. Since these 24 triangles are all equivalent, placing a fifth pole in the interior of any one of them yields combinatorially equivalent configurations with alignment $(4,4,4,4,4)$.

\begin{figure*}
\centering
\begin{subfigure}{0.49\textwidth}
    \includegraphics[scale=0.5]{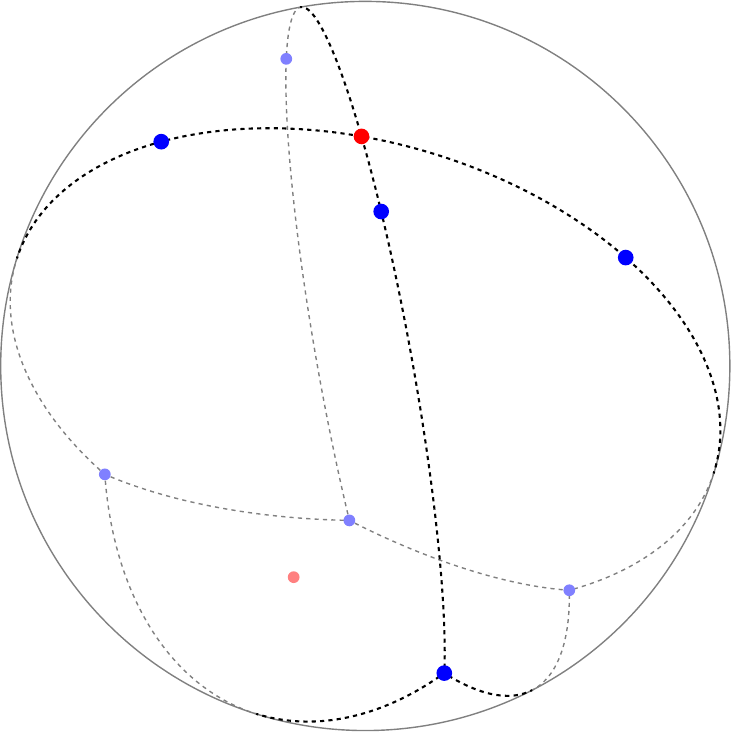}
    \subcaption{}
    \label{fig:23333zones}
\end{subfigure}
\hfill
\begin{subfigure}{0.49\textwidth}
    \includegraphics[scale=0.5]{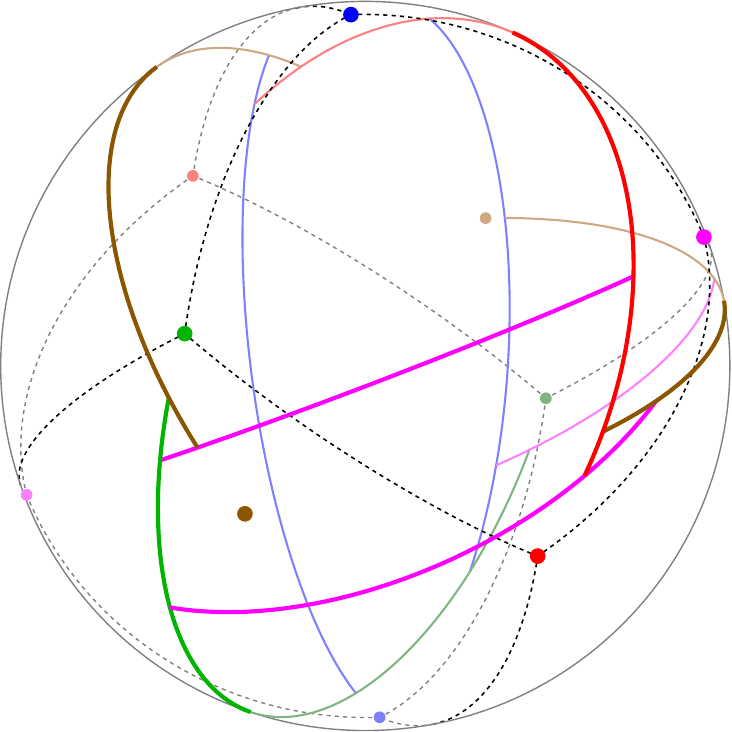}
    \subcaption{}
    \label{fig:44444}
\end{subfigure}
\caption{(a) A configuration of five poles with alignment $(2,3,3,3,3)$ and an attractor partition. (b) A non-degenerate $5$-oriented SD with exactly four swirls and nine arcs.}
\label{fig:various}
\end{figure*}

\begin{theorem}\label{t:23333}
Any $5$-oriented SD with alignment $(2,3,3,3,3)$ has at least five swirls. Moreover, there are SDs with alignment $(2,3,3,3,3)$ that have exactly five swirls and eight arcs, and SODs with alignment $(2,3,3,3,3)$ that have exactly five swirls and nine arcs.
\end{theorem}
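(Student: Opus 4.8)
The plan is to follow the pattern of \cref{t:3orien} and \cref{t:4nondeg}: produce an attractor partition, invoke \cref{l:attpart} to bound the number of swirls from below, sharpen the arc count in the one-sided case by a small graph argument, and finish with explicit matching examples. Recall from the discussion above that the alignment $(2,3,3,3,3)$ determines the configuration up to equivalence: three poles $p_1,p_2,p_3$ lie on a great circle $\gamma$, the poles $p_3,p_4,p_5$ lie on a great circle $\delta$, and no other triple of poles is collinear; since $p_3=\gamma\cap\delta$, the great circles through pairs of poles are precisely the six great circles of the non-degenerate $4$-pole configuration $\{p_1,p_2,p_4,p_5\}$, and no new great circle is created by $p_3$.

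The crux of the argument is to show that this configuration admits an attractor partition $\mathcal S$ into exactly five non-total attractor hulls, namely the partition shown in \cref{fig:23333zones}. I would establish this concretely: fix coordinates for the five poles, name the five attractors whose hulls constitute $\mathcal S$, and check that those hulls have pairwise disjoint interiors and cover the sphere. This is a finite verification, because every edge of every attractor hull lies along one of the six great circles identified above.

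Given $\mathcal S$, \cref{l:attpart} at once yields that $\mathcal D$ has at least five pairwise non-contiguous swirls $S_1,\dots,S_5$, the eye of $S_i$ lying in the interior of the $i$-th hull of $\mathcal S$; this proves the stated lower bound of five swirls (and, as a by-product, the lower bound of $\lceil 3\cdot5/2\rceil=8$ arcs for SDs). Now assume $\mathcal D$ is an SOD and let $G$ be the subgraph of its swirl graph induced by $S_1,\dots,S_5$. By the facts recalled in \cref{sec:previous}, the swirl graph of an SOD is simple and bipartite, hence so is $G$, and a simple bipartite graph on five vertices has at most six edges (the extremal case being $K_{2,3}$). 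Since every swirl consists of at least three arcs and every arc of an SOD lies in at most two swirls, the number of distinct arcs occurring among $S_1,\dots,S_5$ is at least $5\cdot3-6=9$, so $\mathcal D$ has at least nine arcs.

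It remains to supply the matching constructions: the SD of \cref{fig:23333SD}, with exactly five swirls and eight arcs, and the SOD of \cref{fig:23333SOD}, with exactly five swirls and nine arcs; their properties are checked directly. I expect the only genuine obstacle to be the first step, namely pinning down the correct five attractors and proving that their hulls tile the sphere for this degenerate configuration; once the attractor partition $\mathcal S$ is in hand, the remaining arguments are routine adaptations of the proof of \cref{t:4nondeg}.
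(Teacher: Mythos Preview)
Your approach matches the paper's: build the five-hull attractor partition of \cref{fig:23333zones}, apply \cref{l:attpart} for the swirl lower bound, and exhibit the examples of \cref{fig:23333}. Note, however, that the theorem as stated asserts no lower bound on the number of arcs---only the swirl bound and the two existence claims---so your bipartite-graph argument for at least nine arcs in the SOD case, while correct, is superfluous here; the paper defers the arc lower bounds for all $5$-oriented SDs and SODs to \cref{t:5nondeg}.
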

\begin{proof}
As shown in \cref{fig:23333zones}, any SD with alignment $(2,3,3,3,3)$ has an attractor partition consisting of $m=5$ attractor hulls. It follows from \cref{l:attpart} that it has at least $m=5$ non-contiguous swirls. An SD and an SOD with the desired properties are found in \cref{fig:23333}.
\end{proof}

By perturbing the red poles in \cref{fig:23333}, one obtains configurations of SDs and SODs with alignments $(3,3,3,4,4)$ and $(4,4,4,4,4)$ with the same characteristics. In the latter case, however, we can actually construct SDs and SODs with only four swirls (which is minimum).

\begin{figure*}
\centering
\begin{subfigure}{0.49\textwidth}
    \includegraphics[scale=0.6]{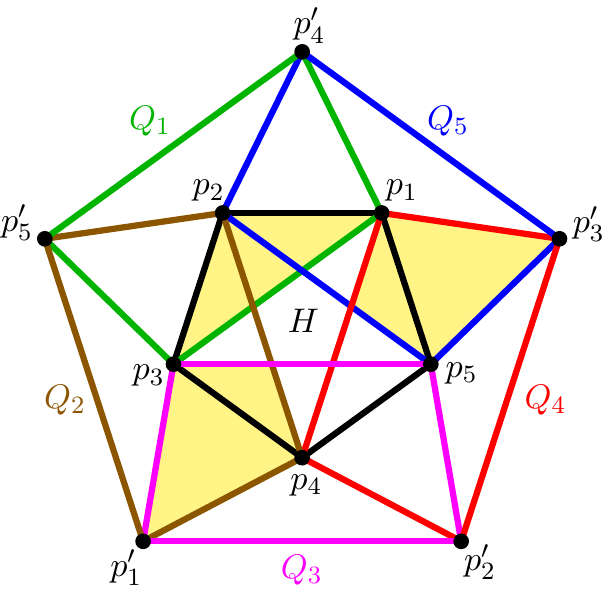}
    \subcaption{}
    \label{fig:kx1}
\end{subfigure}
\hfill
\begin{subfigure}{0.49\textwidth}
    \includegraphics[scale=0.6]{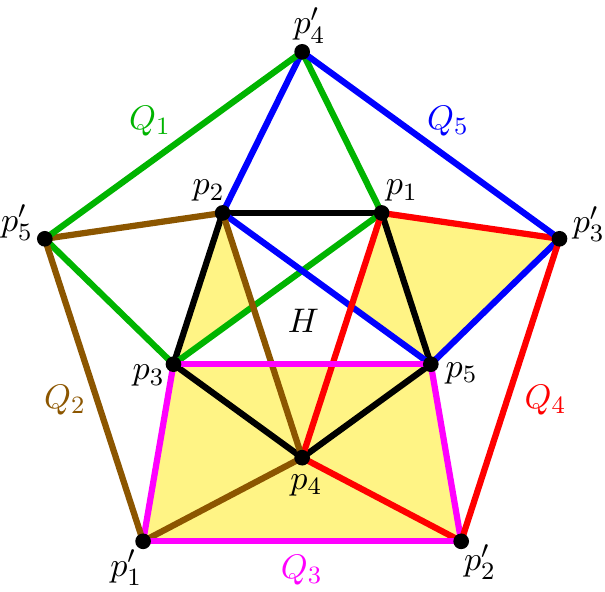}
    \subcaption{}
    \label{fig:kx2}
\end{subfigure}
\caption{Sketch of the two cases in the proof of \cref{l:5nondeg}.}
\label{fig:kx}
\end{figure*}

\begin{figure*}
\centering
\begin{subfigure}{0.49\textwidth}
    \includegraphics[scale=0.5]{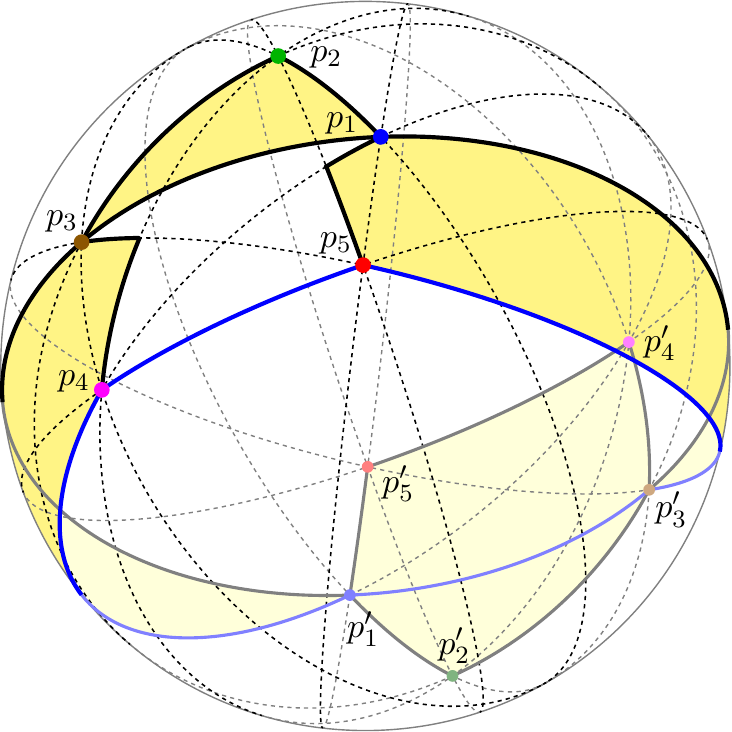}
    \subcaption{}
    \label{fig:k1.a}
\end{subfigure}
\hfill
\begin{subfigure}{0.49\textwidth}
    \includegraphics[scale=0.5]{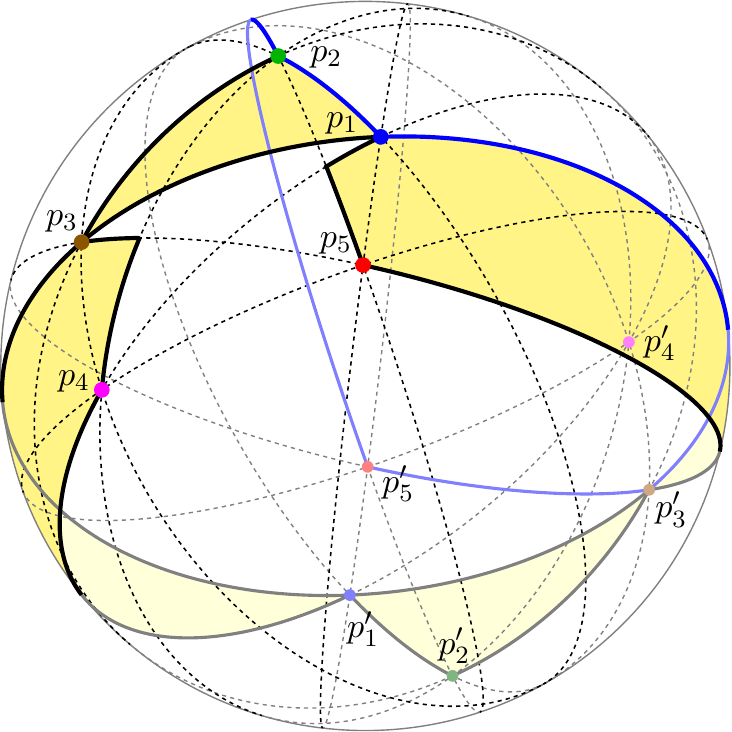}
    \subcaption{}
    \label{fig:k1.b}
\end{subfigure}
\caption{Case~1 in the proof of \cref{l:5nondeg}.}
\label{fig:k1}
\end{figure*}

\begin{figure*}
\centering
\begin{subfigure}{0.49\textwidth}
    \includegraphics[scale=0.5]{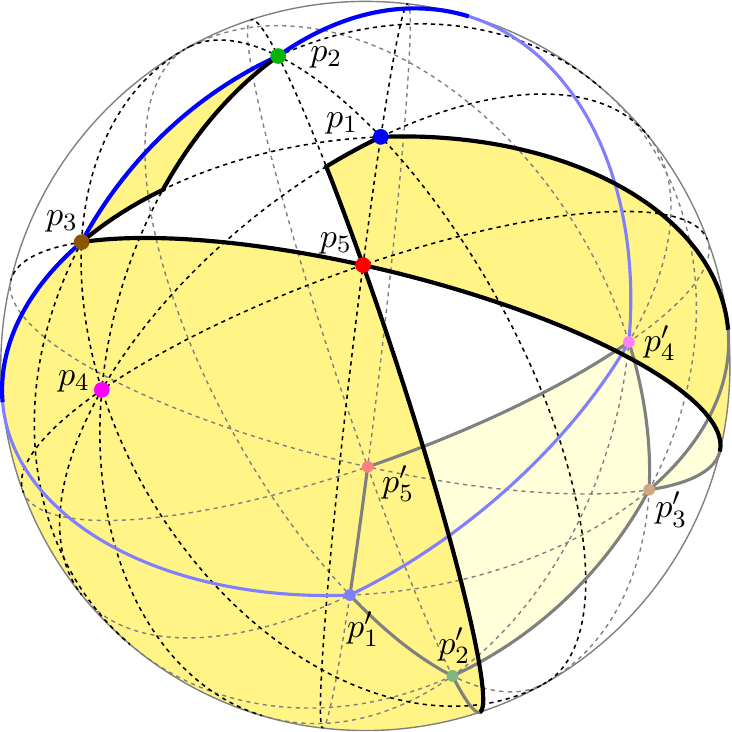}
    \subcaption{}
    \label{fig:k2.a}
\end{subfigure}
\hfill
\begin{subfigure}{0.49\textwidth}
    \includegraphics[scale=0.5]{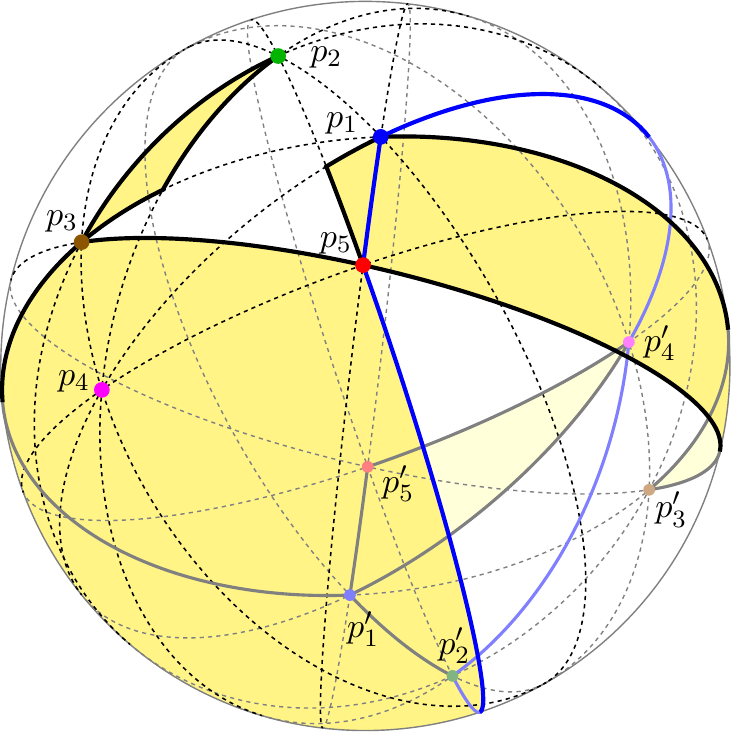}
    \subcaption{}
    \label{fig:k2.b}
\end{subfigure}

\begin{subfigure}{0.49\textwidth}
    \includegraphics[scale=0.5]{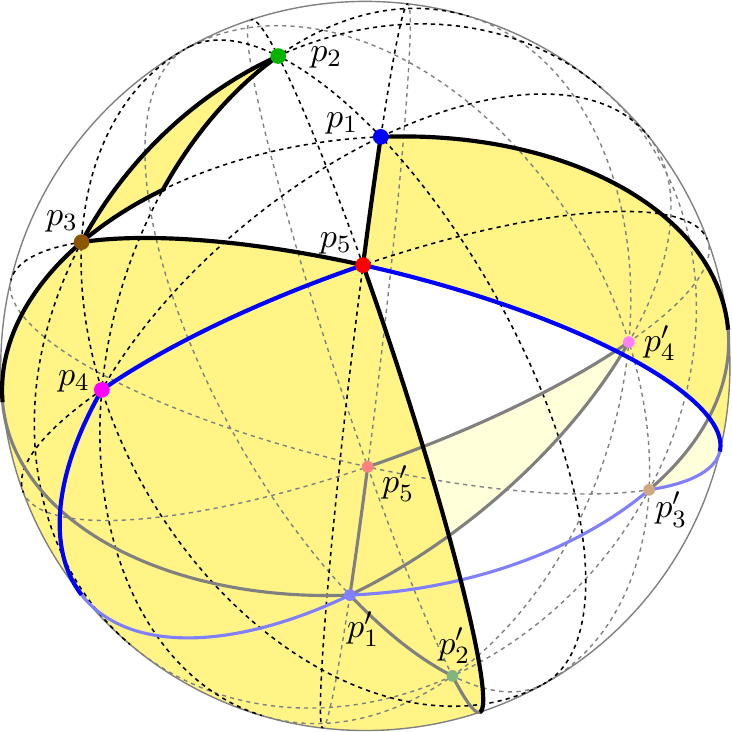}
    \subcaption{}
    \label{fig:k2.c}
\end{subfigure}
\hfill
\begin{subfigure}{0.49\textwidth}
    \includegraphics[scale=0.5]{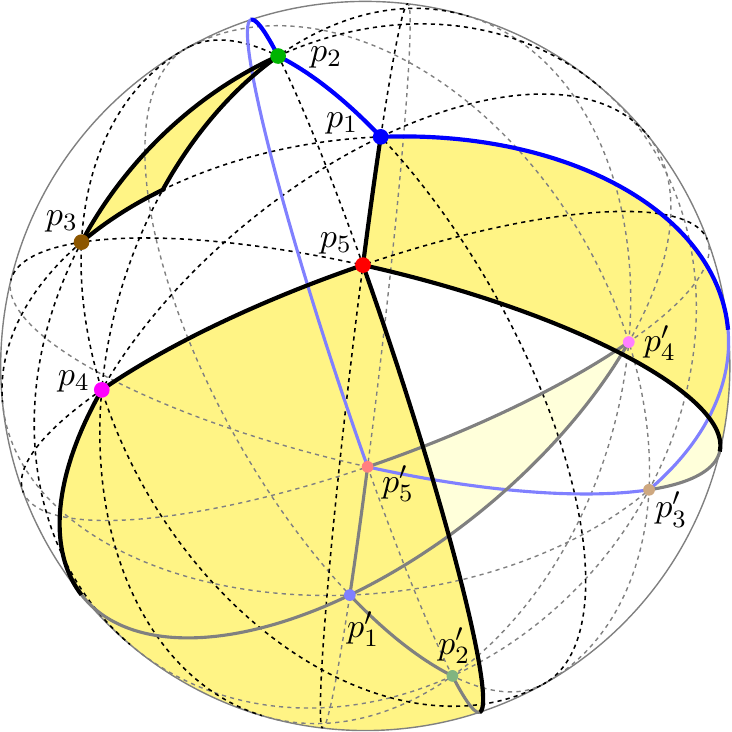}
    \subcaption{}
    \label{fig:k2.d}
\end{subfigure}

\begin{subfigure}{0.49\textwidth}
    \includegraphics[scale=0.5]{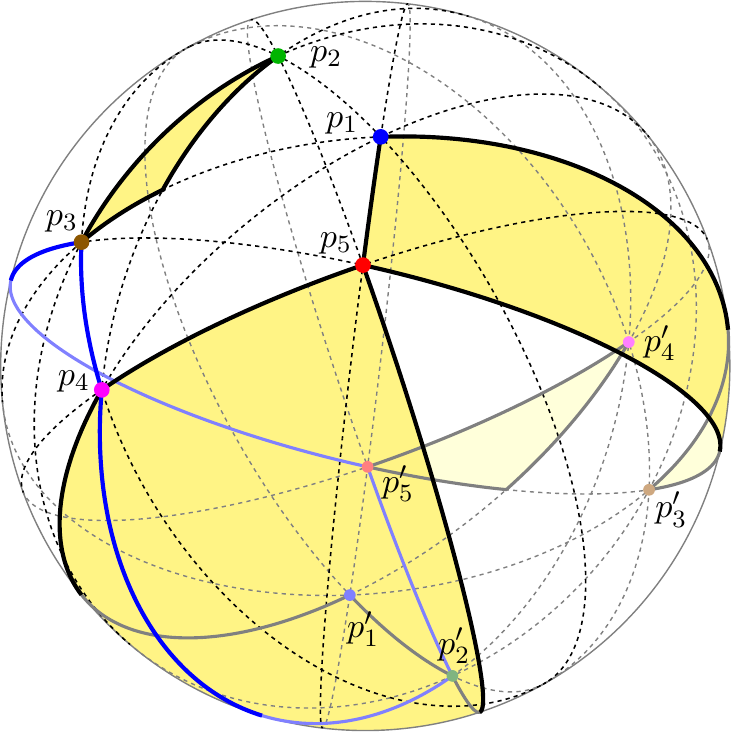}
    \subcaption{}
    \label{fig:k2.e}
\end{subfigure}
\hfill
\begin{subfigure}{0.49\textwidth}
    \includegraphics[scale=0.5]{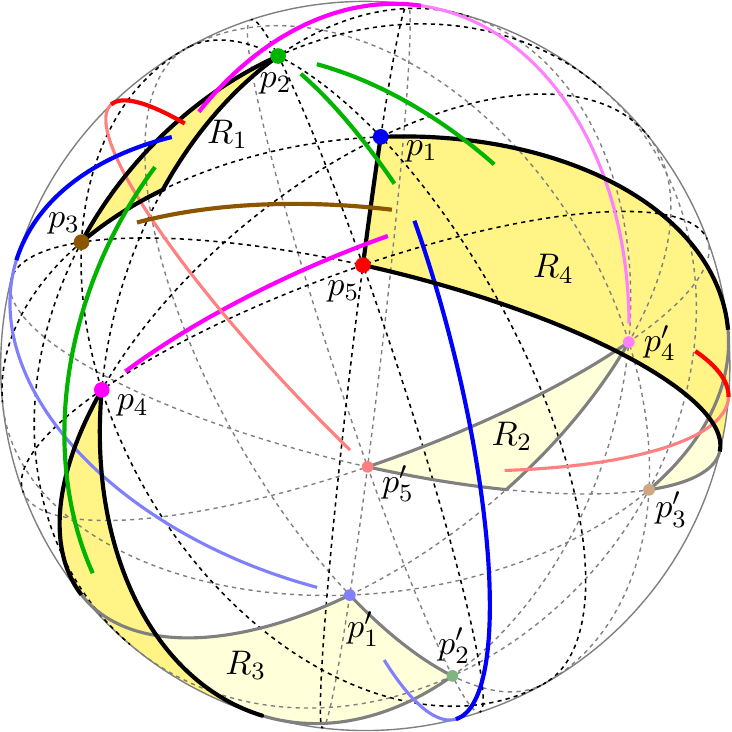}
    \subcaption{}
    \label{fig:k2.f}
\end{subfigure}
\caption{Case~2 in the proof of \cref{l:5nondeg}.}
\label{fig:k2}
\end{figure*}

\begin{lemma}\label{l:5nondeg}
If a $5$-oriented SD has exactly four swirls, then it has at least nine arcs. There are matching non-degenerate examples.
\end{lemma}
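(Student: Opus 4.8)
The plan is to note first that the statement is vacuous for the alignments $(2,2,2,2,4)$ and $(2,3,3,3,3)$: by \cref{t:222k} and \cref{t:23333}, every $5$-oriented SD with one of these alignments has at least six, respectively five, swirls, so none of them has exactly four. Hence it suffices to treat the alignments $(3,3,3,4,4)$ and $(4,4,4,4,4)$. Fix such an SD $\mathcal D$ with exactly four swirls. By \cref{yt:22swirls}, these are two clockwise swirls $C_1,C_2$, with eyes $E_1,E_2$, and two counterclockwise swirls $D_1,D_2$, with eyes $F_1,F_2$. By \cref{yp:adjswirl1}, two members of a concordant pair (such as $C_1,C_2$ or $D_1,D_2$) that share more than one arc are non-contiguous and have antipodally overlapping eyes, while two members of a discordant pair that share more than one arc are contiguous and then share exactly two arcs (\cref{xp:contiguous}); moreover a degree-$d$ swirl shares at most $\lfloor d/2\rfloor$ arcs with a non-contiguous swirl (\cref{yc:noncontiguous}). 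I will assume for a contradiction that $\mathcal D$ has at most eight arcs.

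The engine of the proof is the tension between the $2^{5}=32$ attractor hulls of $\mathcal D$ and the mere four eyes $E_1,E_2,F_1,F_2$: by \cref{yc:attractoreye}, the interior of every attractor hull contains one of them, and a non-total attractor hull is thrust by at least three arcs of $\mathcal D$, at three points not all on one edge (\cref{yp:3interhull}), each arc thrusting it at most once (\cref{yp:2cross}). Combining these facts exactly as in \cref{l:attpart}, if one can exhibit three pairwise internally disjoint non-total attractor hulls whose interiors capture three distinct eyes, then — since no arc thrusts the boundaries of more than two of them — these hulls already account for at least $\lceil 3\cdot 3/2\rceil = 5$ distinct arcs of $\mathcal D$; a short bookkeeping argument then adds the contributions of the fourth swirl (of degree at least $3$), of the limited sharing of arcs allowed by \cref{xp:contiguous,yc:noncontiguous}, and of the planarity of the swirl graph, pushing the count past eight. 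The delicate point is that three eyes cannot always be isolated inside disjoint hulls — for instance when two concordant eyes are ``entangled'' — so the possible placements of the four eyes relative to the great circles through the poles must be enumerated; this is where the argument splits into two cases (sketched in \cref{fig:kx}).

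In the first case (\cref{fig:k1}), such a separating triple of non-total attractor hulls is readily available from the pole configuration, and the counting just described closes the argument. The second case (\cref{fig:k2}), in which no such triple exists because a concordant pair of eyes overlaps antipodally in a way that blocks separation, is the crux: here I analyze directly the finitely many combinatorial layouts of the arcs of the four swirls compatible with the $5$-orientation — using \cref{yp:3intercirc} to force crossings of the relevant great circles and \cref{yp:2cross,yp:1inters} to propagate those crossings — ruling out every layout that uses at most eight arcs. This exhaustive local analysis, corresponding to the several sub-configurations of \cref{fig:k2}, is the step I expect to be the main obstacle.

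Finally, the bound is tight: the non-degenerate $(4,4,4,4,4)$-oriented SD displayed in \cref{fig:44444} has exactly four swirls and exactly nine arcs.
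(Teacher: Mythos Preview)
Your plan has a real gap in the counting step. In your Case~1 you extract, via the mechanism of \cref{l:attpart}, only $\lceil 3\cdot 3/2\rceil = 5$ arcs from three internally disjoint hulls; the promised ``short bookkeeping'' that pushes this to nine is not there, and it is not clear it can be. The fourth swirl adds at most three new arc--swirl incidences, but in an SD a single arc may contribute to up to four swirls, and you have not bounded how many of the six swirl pairs share arcs, nor by how many --- invoking planarity of the swirl multigraph is vacuous on four vertices. Your Case~2 then defers everything to an ``exhaustive local analysis'' of arc layouts that you do not perform; this is precisely the hard part, and nothing in the proposal indicates how it would terminate.

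The paper's argument is organized quite differently, and avoids any enumeration of arc layouts. After reducing to the non-degenerate configuration by perturbing the poles, it fixes the convex pentagonal attractor hull $H$ on the poles, its antipode $H'$ on the anti-poles, and the five quadrilateral hulls $Q_1,\dots,Q_5$ overlapping $H$. One eye sits in $H'$; the other three must be shared among $H,Q_1,\dots,Q_5$. The case split is on whether some eye lies in a triple intersection $H\cap Q_i\cap Q_{i+1}$. When none does, a short chain of auxiliary attractor hulls produces a hull whose interior misses all four candidate eye-regions, contradicting \cref{yc:attractoreye}; so this case is \emph{impossible}, not merely arc-rich. When one does, a longer chain of auxiliary hulls (each intersecting only one current region) progressively shrinks the four eye-regions to four concrete cells $R_1,\dots,R_4$ with two geometric features: only three of the six pairs $(R_i,R_j)$ can be joined by any arc collinear with a pole, and no two $R_i$ contain antipodal interior points. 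By \cref{yp:adjswirl1} the latter forces every pair of swirls to share at most one arc, so the four swirls use at least $4\cdot 3 - 3 = 9$ arcs. The entire weight is carried by the repeated use of \cref{yc:attractoreye} to confine eyes; the final count is a one-line consequence.
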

\begin{proof}
It is not restrictive to focus on non-degenerate SDs, because the poles of a degenerate SD can be perturbed without altering the number of swirls and arcs, making it non-degenerate.

As argued above, there is essentially a unique non-degenerate configuration of five poles, in the sense that drawing great circles through pairs of such poles yields combinatorially equivalent partitions of the sphere into spherical polygons. Moreover, it is easy to see that there is always a spherically convex pentagonal attractor hull. In fact, without loss of generality, we may assume that the five poles $p_1$, \dots, $p_5$ are the vertices of such an attractor hull $H$; likewise, the anti-poles $p'_1$, \dots, $p'_5$ are also vertices of a spherically convex pentagonal attractor hull $H'$, as shown in \cref{fig:k1}.

Assume that there are four swirls; we will identify four interior-disjoint regions of the sphere where their eyes must be located. By \cref{yc:attractoreye}, every attractor hull contains the eye of a swirl. Hence, $H'$ contains the eye of a swirl: this is the first region. Consider the attractor hull $H$, as well as the five quadrilateral attractor hulls $Q_1$, \dots, $Q_5$, each of which shares an edge with $H'$ and two non-consecutive vertices with $H$. For example, $Q_1$ has vertices $p'_4$, $p'_5$, $p_3$, $p_1$ and contains $p_2$ in its interior, etc., as shown in \cref{fig:kx}. The eyes of the remaining three swirls must be shared among these six attractor hulls.

We will distinguish two cases. In Case~1, no eye lies in the intersection of more than two of these six attractor hulls. Since the $Q_i$'s are all equivalent, we may assume that the three eyes are in the regions $H\cap Q_1$, $Q_2\cap Q_3$, and $Q_4\cap Q_5$, as sketched in \cref{fig:kx1}. Since the interior of the attractor hull $p'_1p'_3p_5p_4$ only intersects the chosen region $H'$ (\cref{fig:k1.a}), there must be an eye in their intersection. However, this implies that the attractor hull $p'_3p'_5p_2p_1$ has no eyes in its interior (\cref{fig:k1.b}), which is a contradiction.

In Case~2, an eye lies in the intersection of three of the six attractor hulls, say $H\cap Q_1\cap Q_2$. Without loss of generality, the other two eyes are in the regions $Q_3$ and $Q_4\cap Q_5$, as sketched in \cref{fig:kx2}. Again, we can progressively reduce these four regions by finding attractor hulls whose interiors intersect only one region, as detailed in \cref{fig:k2}. Finally, we obtain the four regions $R_1$, $R_2$, $R_3$, $R_4$ shown in \cref{fig:k2.f}, where the only pairs of regions that may be connected by an arc are $(R_1,R_3)$, $(R_2,R_4)$, and $(R_3,R_4)$. Indeed, observe that $R_2$ and $R_3$ are equivalent to $R_1$ and $R_4$, respectively. Thus, we only have to verify that no arc can connect $R_1$ with $R_2$ or with $R_4$. Furthermore, due to \cref{yp:adjswirl1}, no two swirls may share more than one arc, since no two regions have antipodal internal points. Hence, there are at most three arcs that are shared between pairs of swirls. We conclude that the arcs contributing to the four swirls are at least $4\times 3-3=9$.

An SD with the desired properties is shown in \cref{fig:44444}.
\end{proof}

\begin{figure*}
\centering
\begin{subfigure}{0.49\textwidth}
    \includegraphics[scale=0.5]{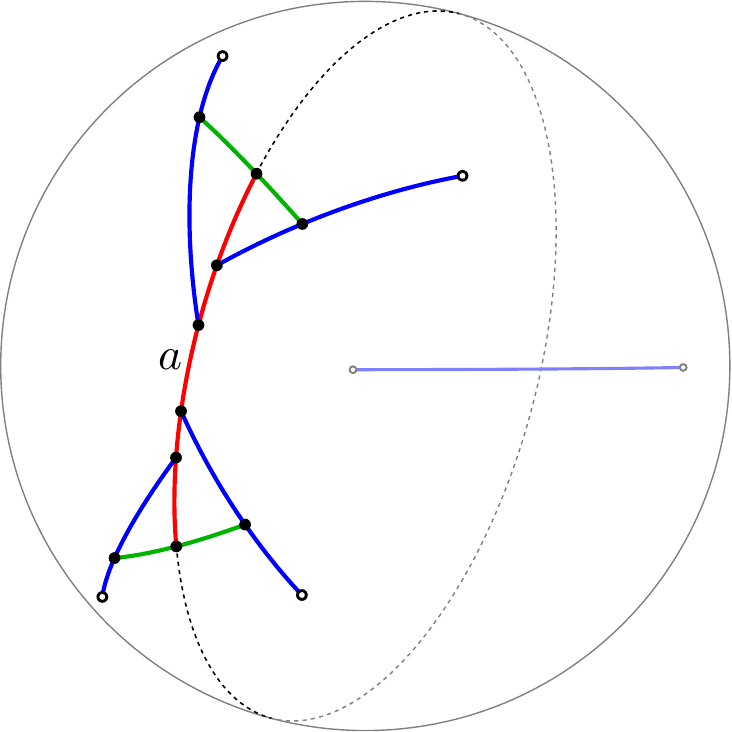}
    \subcaption{}
    \label{fig:8arc}
\end{subfigure}
\hfill
\begin{subfigure}{0.49\textwidth}
    \includegraphics[scale=0.5]{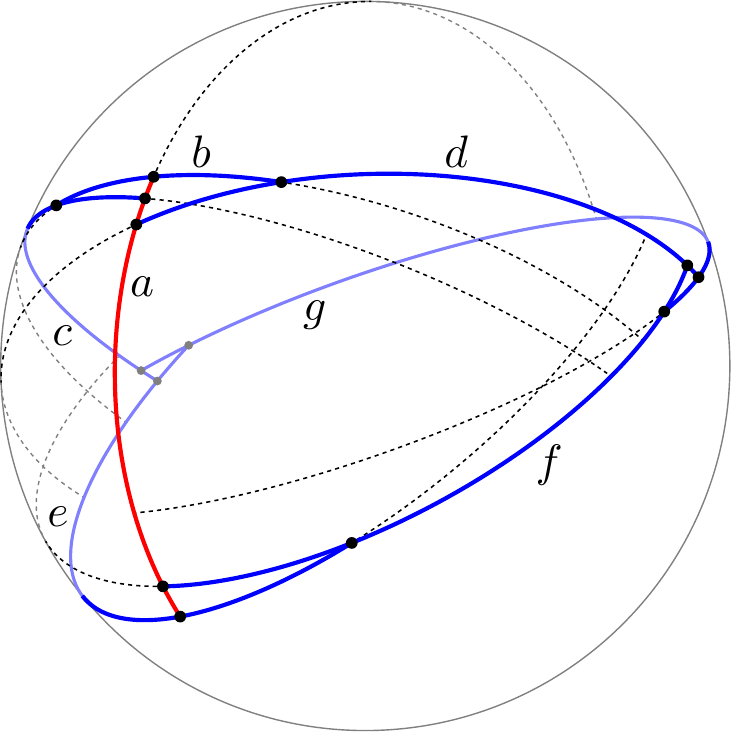}
    \subcaption{}
    \label{fig:7arc}
\end{subfigure}
\caption{(a) If an arc $a$ contributes to four swirls, the SD has at least eight arcs. (b) In an SD with seven arcs, if $a$ contributes to three swirls, only $b$ and $g$ may have a common vanishing point.}
\label{fig:78arc}
\end{figure*}

\begin{proposition}\label{p:poleanti}
In a $k$-oriented SD, if two non-collinear arcs $a$ and $b$ have a common vanishing point, then the great circle containing $a$ does not intersect $b$, and vice versa.
\end{proposition}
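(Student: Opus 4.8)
The plan is to reduce the statement to elementary spherical geometry; only the $k$-orientation structure of \cref{d:orient} is needed, not the SD or SOD axioms, and not one-sidedness. First I would make the hypothesis precise: let $v$ be a vanishing point common to $a$ and $b$. Since the two vanishing points of an arc always form an antipodal pair (they are $f(a)$ and $-f(a)$ for $a$), the condition $v\in\{f(a),-f(a)\}$ forces $-v\in\{f(a),-f(a)\}$, and likewise $-v\in\{f(b),-f(b)\}$. Hence $a$ and $b$ in fact have the \emph{same} pair of vanishing points, $\{v,-v\}$.

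Next, write $C_a$ and $C_b$ for the great circles carrying $a$ and $b$. By \cref{d:orient}, $a$ is collinear with $f(a)$, so $C_a$ contains $f(a)$; being a great circle it also contains $-f(a)$, whence $\{v,-v\}\subseteq C_a$, and symmetrically $\{v,-v\}\subseteq C_b$. Since $a$ and $b$ are non-collinear, $C_a\neq C_b$. Each great circle is the intersection of the sphere with a plane through the origin, and these two planes are distinct, so they meet in a line through the origin, which cuts the sphere in a single antipodal pair of points; therefore $C_a\cap C_b=\{v,-v\}$. Finally, \cref{d:orient} also says that $b$ contains neither $f(b)$ nor $-f(b)$, i.e.\ $b$ avoids both $v$ and $-v$; since $b\subseteq C_b$ we get $C_a\cap b\subseteq C_a\cap C_b=\{v,-v\}$, a set disjoint from $b$, so $C_a\cap b=\emptyset$. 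The argument is symmetric in $a$ and $b$, which gives $C_b\cap a=\emptyset$ as well.

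I do not expect a genuine obstacle: the proof is essentially a two-line observation once the setup is in place. The only point deserving care is the first step — making explicit that sharing a \emph{single} vanishing point forces $a$ and $b$ to share \emph{both}. This is exactly what pins $C_a\cap C_b$ down to precisely $\{v,-v\}$ (rather than merely to contain $v$), and hence what makes the final inclusion $C_a\cap b\subseteq\{v,-v\}$, and thus the whole argument, go through. It is also worth noting in passing why non-collinearity is indispensable: if $C_a=C_b$ then $C_a\cap b=b\neq\emptyset$, so the conclusion genuinely fails in that case.
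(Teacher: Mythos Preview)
Your proof is correct and follows essentially the same route as the paper's: identify $C_a\cap C_b$ as exactly the pair $\{v,-v\}$ of common vanishing points, then use the clause in \cref{d:orient} that $b$ avoids its own vanishing points to conclude $C_a\cap b=\emptyset$. The paper compresses this into two sentences, while you spell out explicitly why sharing one vanishing point forces sharing both; but the underlying argument is the same.
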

\begin{proof}
The vanishing points of $a$ and $b$ are the two intersection points between the great circles containing $a$ and $b$. By definition of $k$-oriented SD, $b$ does not intersect its vanishing points, and thus it cannot intersect the great circle containing $a$, and vice versa.
\end{proof}

\begin{theorem}\label{t:5nondeg}
Any $5$-oriented SD has at least eight arcs; if it is an SOD, it has at least nine arcs.
\end{theorem}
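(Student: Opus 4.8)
The plan is to case on the number $s$ of swirls of the given $5$-oriented SD $\mathcal D$. A first reduction: if the arcs of $\mathcal D$ are collinear with only $j\le 4$ of the five poles, then $\mathcal D$ is a $j$-oriented SD; since $\mathcal D$ cannot be $1$- or $2$-oriented (\cref{p:no12orien}) and \cref{yp:noncoll} forbids the used poles from being collinear, $j\in\{3,4\}$, so \cref{t:3orien}, \cref{t:4nondeg} or \cref{t:222k} already give at least nine arcs. Hence we may assume all five poles are used, so $\mathcal D$ has at least five arcs. By \cref{yt:22swirls} we have $s\ge 4$; and if $s=4$, \cref{l:5nondeg} gives at least nine arcs. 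So from now on I would assume $s\ge 5$ and prove that $\mathcal D$ has at least eight arcs, and at least nine if it is an SOD.

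The SOD case with $s\ge 5$ I expect to settle by a short counting argument. Recall from \cref{sec:previous} that the swirl graph of an SOD is simple, planar and bipartite, and that each arc of an SOD lies in at most two swirls. Let $e$ be the number of arcs of $\mathcal D$ lying in exactly two swirls (precisely the edges of the swirl graph) and $p$ the number lying in exactly one swirl. Since every swirl uses at least three arcs, $2e+p\ge 3s$; and a bipartite planar graph on $s\ge 3$ vertices has $e\le 2s-4$ edges. Therefore the number $n$ of arcs of $\mathcal D$ satisfies $n\ge e+p\ge (2e+p)-e\ge 3s-(2s-4)=s+4\ge 9$.

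For the two-sided case with $s\ge 5$ I would argue by contradiction, assuming $n\le 7$. Counting arc--swirl incidences both ways gives $\sum_{a}(\text{number of swirls containing }a)=\sum_{S}(\text{number of arcs of }S)\ge 3s\ge 15$. I would first rule out any arc lying in four swirls: such an arc $a$ bounds two clockwise and two counterclockwise eyes, and by tracking the arcs these four swirls force around $a$ --- using that contiguous swirls share exactly two arcs (\cref{xp:contiguous}) and that two concordant swirls sharing more than one arc must have antipodal interior eye points (\cref{yp:adjswirl1}) --- one finds at least eight distinct arcs, contradicting $n\le 7$; this is the content of \cref{fig:8arc}. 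Hence every arc lies in at most three swirls, and the incidence bound then forces at least $3s-2n\ge 1$ arcs to lie in exactly three swirls.

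The remaining and, I expect, hardest step is to derive a contradiction from the presence of an arc $a$ lying in exactly three swirls in a $5$-oriented SD with at most seven arcs. Here I would again inspect the local configuration forced around $a$, as in \cref{fig:7arc}: the three eyes incident to $a$ together with \cref{xp:contiguous,yc:noncontiguous,yp:adjswirl1} pin down the adjacencies of the arcs near $a$ tightly enough that \cref{p:poleanti} --- two non-collinear arcs with a common vanishing point lie on great circles missing each other --- leaves room for at most one pair of arcs of $\mathcal D$ to share a vanishing point. Since $\mathcal D$ is $5$-oriented, its arcs are partitioned into at most five classes by their vanishing points, so if $n=7$ at least two such pairs must exist, a contradiction; the borderline sub-cases $n=5$ and $n=6$ I would dispatch using the sharper incidence bounds available there (for $n\le 6$ and $s\ge 5$ at least three arcs lie in three swirls, and for $n=5$ all five do), which over-constrain the local structure. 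The genuine obstacle is precisely this finite case analysis around an arc in three or four swirls: it is the combinatorial heart of the proof and replaces any clean global inequality.
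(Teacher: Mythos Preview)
Your overall architecture matches the paper's: reduce to $s\ge 5$ via \cref{l:5nondeg}, then for SODs count through the bipartite swirl graph, and for general SDs split on whether some arc lies in four, exactly three, or at most two swirls. Your SOD argument is correct (and in fact slightly slicker than the paper's, which only uses the bipartiteness bound on five vertices rather than the planar--bipartite inequality $e\le 2s-4$). The initial reduction to ``all five poles actually used'' is harmless but unnecessary; the paper's argument never needs it.

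The genuine gap is in the SD case: you never invoke \cref{yp:3intercirc} (every great circle is crossed by at least three arcs), and this is the indispensable tool in both subcases. In the four-swirl subcase, the local picture around $a$ yields only \emph{seven} arcs --- $a$ itself, the two arcs it hits, and four arcs hitting it --- and the adjacency lemmas \cref{xp:contiguous,yp:adjswirl1} you cite do not manufacture an eighth; four degree-three swirls around $a$ really can be realized with seven arcs. The paper obtains the eighth arc by observing that, of these seven, only the two blockers cross the great circle $\gamma$ through $a$, so \cref{yp:3intercirc} forces a new one. In the three-swirl subcase the local picture gives six arcs $a,b,c,d,e,f$; it is again \cref{yp:3intercirc} that produces the seventh arc $g$ and pins $n=7$ exactly --- this is why the paper never confronts your ``borderline sub-cases $n=5$ and $n=6$''. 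Moreover, the \cref{p:poleanti} step needs more than the three eyes touching $a$: the paper first uses the hemisphere-contains-a-swirl-eye fact from \cref{sec:previous} to force $c,d,e,f,g$ to assemble into two further swirls with eyes in the two open hemispheres bounded by $\gamma$, and it is the resulting web of incidences among all seven arcs (as sketched in \cref{fig:7arc}) that leaves only the pair $(b,g)$ free to share a vanishing point. Without \cref{yp:3intercirc} and this hemisphere step, neither subcase closes.
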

\begin{proof}
Due to \cref{yt:22swirls,l:5nondeg}, we may assume that there are at least five swirls.

Recall from \cref{sec:swirl} that, since swirls may be contiguous, each arc of an SD may contribute to up to four different swirls. If an arc $a$ contributes to four swirls, it must hit two arcs, and it must block (at least) four arcs, all of which are distinct (see \cref{fig:8arc}). Of these seven arcs, only the two hit by $a$ cross the great circle containing $a$. Thus, by \cref{yp:3intercirc}, there must be another arc crossing this great circle, implying that there are at least eight arcs in the SD.

Consider now an SD where an arc $a$ is shared by exactly three swirls (two of which are contiguous), and assume that there are fewer than eight arcs. As shown in \cref{fig:7arc}, two arcs $b$ and $e$ block $a$ (where $b$ contributes to the two contiguous swirls), and three arcs $c$, $d$, $f$ hit $a$. As in the previous case, there must be a seventh arc $g$ crossing the great circle $\gamma$ containing $a$, otherwise the only two arcs crossing $\gamma$ would be $b$ and $e$. Since there are at least five swirls, the arcs $c$, $d$, $e$, $f$, $g$ must form two swirls away from $a$, with eyes located on opposite sides of $\gamma$ (we recalled in \cref{sec:previous} that the interior of any hemisphere contains the eye of a swirl). We argue that only two of these seven arcs may have a common vanishing point. Indeed, by \cref{p:poleanti}, incident arcs cannot have a common vanishing point. Also, the extensions of some arcs necessarily hit other arcs, as the dashed lines in \cref{fig:7arc} illustrate. Again, \cref{p:poleanti} implies that such pairs of arcs do not have a common vanishing point. In fact, the only two arcs that may have the same vanishing points are $b$ and $g$. Hence, there must be at least six distinct poles, and the SD cannot be $5$-oriented.

Finally, assume that each arc contributes to at most two swirls, and recall that there are at least five swirls, each of which has degree at least three. Then, the arcs that contribute to such swirls are at least $\lceil (5\times 3) / 2 \rceil=8$. Consequently, a $5$-oriented SD has at least eight arcs.

As for SODs, recall that their swirl graphs are simple and bipartite, and each arc contributes to at most two swirls. Since each of the five swirls has degree at least three, and a bipartite graph on five vertices has at most six edges, we conclude that the arcs contributing to the five swirls are at least $5\times 3-6=9$. Thus, a $5$-oriented SOD has at least nine arcs.
\end{proof}

\subsection{6-Oriented SDs and Beyond}\label{sec:6.4}
As it turns out, $6$-oriented SDs and SODs with alignment $(3,3,3,3,3,3)$ already achieve the absolute minimum in terms of the number of swirls, as well as the number of arcs. 

\begin{theorem}\label{t:6orien}
Any SD has at least two clockwise swirls, two counterclockwise swirls, and six arcs. Any SOD has at least eight arcs. Moreover, there are matching examples that are $6$-oriented with alignment $(3,3,3,3,3,3)$.
\end{theorem}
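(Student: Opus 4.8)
The statement bundles four claims, which I address in turn. The ``at least two clockwise and two counterclockwise swirls'' part is exactly \cref{yt:22swirls}, so nothing further is needed there; and the bound of eight arcs for SODs is already among the facts recalled in \cref{sec:previous} (it can also be recovered in one line, as I note below). The items that require genuine work are the six-arc lower bound for arbitrary SDs and the matching $6$-oriented constructions of alignment $(3,3,3,3,3,3)$.

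For the six-arc bound, the plan is to combine the known lower bound of four swirls (\cref{sec:previous}) --- at least two of each handedness by \cref{yt:22swirls} --- with the observation that every swirl has degree at least three: a swirl of degree one would be an arc hitting itself, and one of degree two would consist of two arcs each hitting the other, forcing two distinct common points and contradicting the fact (\cref{sec:previous}) that two arcs of an SD meet in at most one point. Hence there are at least $4\cdot 3=12$ swirl--arc incidences. If no arc lies in three or more swirls, this number is at most $2n$, so $n\ge 6$. Otherwise some arc $a$ lies in at least three swirls; two swirls through $a$ of the same handedness cannot enter $a$ from the same arc (the handedness would then force them to coincide), so the swirls through $a$ have at least two distinct predecessor arcs, each blocked by $a$, and together with $a$ and the two arcs hit by $a$ at its endpoints this exhibits five pairwise distinct arcs (distinctness is again ``two arcs meet in at most one point''); thus $n\ge 5$. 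If $n\ge 6$ we are done, and if $n=5$ these are all the arcs. Here is the key point: the great circle $\gamma$ supporting $a$ is crossed by at least three arcs by \cref{yp:3intercirc}, yet $a$ lies on $\gamma$ and so does not cross it, while an arc $b$ blocked by $a$ does not cross $\gamma$ either --- the supporting great circle of $b$ cannot equal $\gamma$ (the arcs would overlap along their interiors), so it meets $\gamma$ in two antipodal points, one being the endpoint of $b$ that lies on $a$ and the other being too far from that endpoint to lie on $b$, so $\gamma$ avoids the relative interior of $b$ entirely. Thus only the two arcs hit by $a$ can cross $\gamma$, contradicting \cref{yp:3intercirc}; hence $n\ge 6$ always. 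I expect the main obstacle to be exactly this last case: tracking which arcs around $a$ are forced and genuinely distinct --- in particular when $a$ is shared by as many as four swirls --- and recognising that it is precisely the crossing count on $\gamma$ (via the local analysis already used in the proof of \cref{t:5nondeg}, cf.\ \cref{fig:8arc,fig:7arc}) that rules out $n=5$.

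For the SOD bound one may simply quote \cref{sec:previous}; alternatively, the swirl graph $G$ is simple, planar and bipartite (\cref{sec:previous}) on $v\ge 4$ vertices (\cref{yt:22swirls}), hence has at most $2v-4$ edges, each edge being an arc that lies in exactly two swirls; since every arc of an SOD lies in at most two swirls and every swirl has degree at least three, writing $I\ge 3v$ for the incidence count and $e\le 2v-4$ for the number of doubly-used arcs gives $n\ge I-e\ge 3v-(2v-4)=v+4\ge 8$. Finally, for the matching examples I would exhibit an explicit $6$-oriented SD on six geodesic arcs of alignment $(3,3,3,3,3,3)$, each arc blocked at both endpoints, carrying exactly four swirls of degree three --- two clockwise and two counterclockwise --- so that its swirl graph is $K_4$; and an explicit $6$-oriented SOD on eight arcs of the same alignment with four swirls, whose swirl graph is the $4$-cycle $K_{2,2}$. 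Both live naturally on the edge directions of a $4$-face-oriented polyhedron (\cref{fig:poly.c}), and the verification is routine --- checking the blocking relations and identifying the four swirls --- after which all the displayed bounds are attained and the last line of \cref{tab:1} is confirmed.
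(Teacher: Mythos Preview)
Your proof is correct, and the six-arc argument takes a genuinely different route from the paper's.

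The paper fixes two \emph{clockwise} swirls $S_1,S_2$ of degrees $d_1\ge d_2\ge 3$, invokes \cref{yp:adjswirl1} to rule out contiguity (concordant swirls sharing two arcs cannot be contiguous), and then applies \cref{yc:noncontiguous} to bound their overlap by $\lfloor d_2/2\rfloor$; this yields at least $d_1+\lceil d_2/2\rceil$ arcs, which is $\ge 6$ unless $d_1=d_2=3$ with exactly one shared arc $a$, and in that residual case \cref{yp:3intercirc} on the great circle through $a$ supplies a sixth arc (only the two blockers of $a$ among the five cross that circle).

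You instead count swirl--arc incidences across all four swirls, get $n\ge 6$ immediately when no arc lies in three swirls, and otherwise analyse an arc $a$ in three or more swirls: two of those swirls share a handedness, hence (as you correctly argue) have distinct predecessors hitting $a$, giving five pairwise distinct arcs; the same \cref{yp:3intercirc} endgame then rules out $n=5$. Your approach bypasses the swirl-adjacency machinery of \cref{sec:swirl} (\cref{yp:adjswirl1}, \cref{yc:noncontiguous}) entirely, replacing it with an elementary pigeonhole on handedness plus the local ``turn right'' determinism of a swirl. The paper's approach, by contrast, is more structural and reuses results it has developed anyway; it also makes the residual $n=5$ case canonical (two degree-$3$ clockwise swirls sharing one arc) rather than merely ``some arc in three swirls''. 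Both proofs land on the identical crossing-count contradiction, so neither is longer in practice; yours simply needs less imported theory.

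Your alternative derivation of the eight-arc SOD bound via $n\ge I-e\ge 3v-(2v-4)=v+4$ is fine, and your description of the matching $6$-oriented examples (swirl graphs $K_4$ and $C_4$, respectively) agrees with what the paper exhibits in \cref{fig:333333}.
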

\begin{proof}
We recalled in \cref{sec:previous} that any SOD has at least eight arcs; this was proved in~\cite{mini}. Also, we have proved in \cref{yt:22swirls} that any SD has at least two clockwise swirls and two counterclockwise swirls. It remains to prove that any SD has at least six arcs.

Let $S_1$ and $S_2$ be two clockwise swirls of an SD $\mathcal D$, of degrees $d_1\geq 3$ and $d_2\geq 3$, respectively. Without loss of generality, assume that $d_1\geq d_2$. Due to \cref{yp:adjswirl1}, $S_1$ and $S_2$ cannot be contiguous, because they are concordant.

Thus, by \cref{yc:noncontiguous}, $S_1$ and $S_2$ share at most $\lfloor d_2/2\rfloor$ arcs. As a consequence, the arcs involved in these two swirls are at most $d_1+d_2-\lfloor d_2/2\rfloor=d_1+\lceil d_2/2\rceil$. If $d_1\geq 4$, then $d_1+\lceil d_2/2\rceil\geq 6$, and $\mathcal D$ has at least six arcs.

So, assume that $d_1=d_2=3$ and exactly $d_1+\lceil d_2/2\rceil=5$ arcs are involved in $S_1$ and $S_2$. In this case, the two swirls share exactly one arc $a\in\mathcal D$. Let $\gamma$ be the great circle containing $a$. Of the arcs involved in $S_1$ and $S_2$, two hit $a$ and two block $a$; only the latter cross $\gamma$. Due to \cref{yp:3intercirc}, there must be a third arc that crosses $\gamma$, and so $\mathcal D$ has at least six arcs.

Matching examples with alignment $(3,3,3,3,3,3)$ are found in \cref{fig:333333}.
\end{proof}

\begin{figure*}
\centering
\begin{subfigure}{0.49\textwidth}
    \includegraphics[scale=0.5]{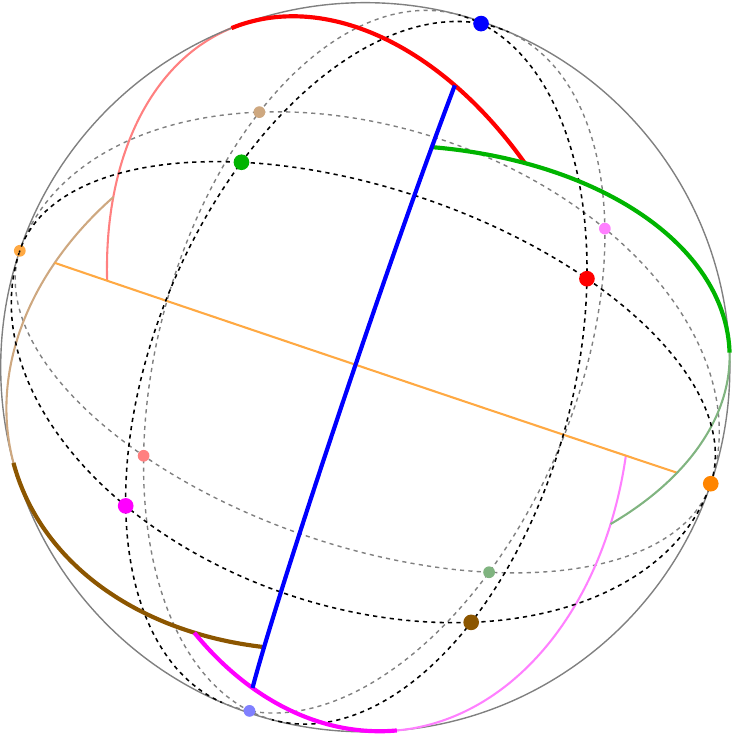}
    \subcaption{}
    \label{fig:333333SD}
\end{subfigure}
\hfill
\begin{subfigure}{0.49\textwidth}
    \includegraphics[scale=0.5]{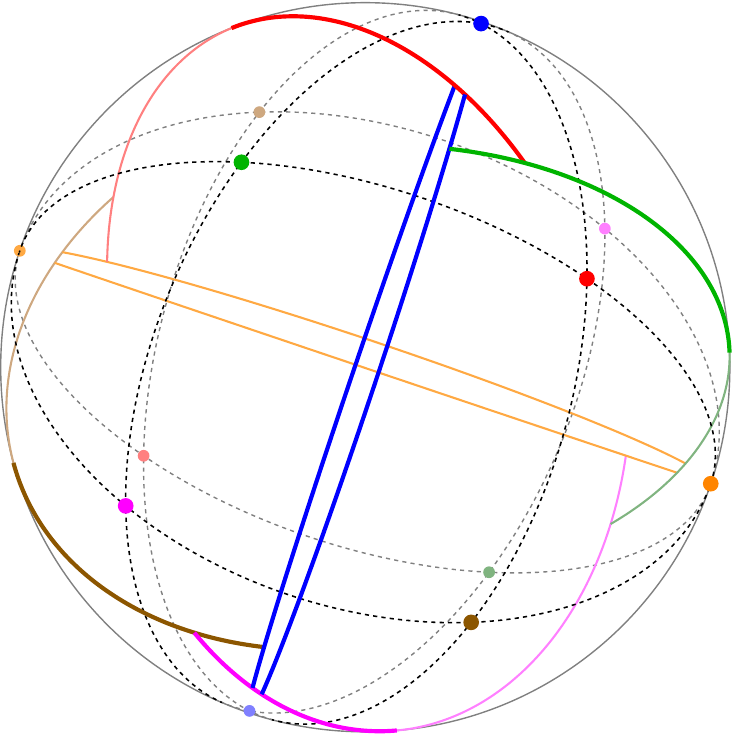}
    \subcaption{}
    \label{fig:333333SOD}
\end{subfigure}
\caption{(a) A $6$-oriented SD with alignment $(3,3,3,3,3,3)$ having exactly four swirls and six arcs. (b) A $6$-oriented SOD with alignment $(3,3,3,3,3,3)$ having exactly four swirls and eight arcs.}
\label{fig:333333}
\end{figure*}

By perturbing the poles of the SDs in \cref{fig:333333}, one obtains a continuum of configurations of $6$-oriented SDs and SODs having the absolute minimum number of swirls and arcs. These include degenerate and non-degenerate configurations of all the alignments listed in \cref{tab:1} for $k=6$ (except for $(2,2,2,2,2,5)$, which is covered in \cref{t:222k}). Adding ``dummy poles'' to these configurations yields similar results for $k>6$. In particular, this covers all non-degenerate configurations for any $k\geq 6$, i.e., those with alignment $(k-1,k-1,\dots,k-1)$.



\bibliography{minspherical}

\newpage

\appendix

\section{Previous Results}\label{asec:previous}
\subsection{Properties of SDs}
We summarize what is currently known about SDs. Proofs of the following statements are  found in~\cite{spherical2}; although they were stated and proved only for SODs, the reader may verify that none of the proofs makes use of the one-sidedness of the arrangements.

\begin{proposition}\label{p:04}
Every arc in an SD hits exactly two distinct arcs, one at each endpoint.\qed
\end{proposition}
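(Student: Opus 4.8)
The plan is to prove the two assertions packaged into the statement separately: (i) at each of its two endpoints, $b$ is blocked by \emph{exactly one} arc of $\mathcal D$; and (ii) the arc blocking $b$ at one endpoint is \emph{different} from the one blocking it at the other endpoint. Together, (i) and (ii) say precisely that $b$ hits exactly two distinct arcs, one at each endpoint.

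For (i), I would argue straight from the definitions. Fix an endpoint $p$ of $b$. By \cref{d:1a}, $p$ lies in the relative interior of at least one arc of $\mathcal D$, so it suffices to rule out that $p$ lies in the relative interiors of two distinct arcs $a_1, a_2 \in \mathcal D$. But in that case the relative interiors of $a_1$ and $a_2$ would share the point $p$, contradicting the fact that the arcs of $\mathcal D$ are pairwise interior-disjoint. Hence $p$ is interior to exactly one arc, and the same holds at the other endpoint of $b$.

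For (ii), I would suppose for contradiction that a single arc $a \in \mathcal D$ contains both endpoints $p$ and $q$ of $b$ in its relative interior. Since $p$ and $q$ are the endpoints of a geodesic arc, they are distinct (indeed non-antipodal), so $a \cap b$ would contain the two distinct points $p$ and $q$, contradicting the fact, recalled in \cref{sec:previous}, that no two arcs of an SD meet in more than one point. A self-contained argument is equally short: if $a$ and $b$ lie on the same great circle, then $b$, being the minor arc between $p, q \in \mathrm{relint}(a)$, is a subarc of $a$ avoiding the endpoints of $a$, so $\mathrm{relint}(a) \cap \mathrm{relint}(b) \neq \emptyset$, again against interior-disjointness; and if $a$ and $b$ lie on distinct great circles, then $a \cap b$ is contained in the two-point intersection of those circles, which forces $p$ and $q$ to be antipodal, impossible for the endpoints of a geodesic arc.

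Combining (i) and (ii) yields the statement. I do not anticipate a genuine obstacle here: part (i) is merely unwinding the definition of interior-disjointness, and the only place that asks for a moment's care is part (ii), where one must remember that the endpoints of a geodesic arc are always distinct and non-antipodal in order to prevent one arc from blocking $b$ at both ends.
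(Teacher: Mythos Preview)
The paper does not actually supply a proof of this proposition; it is stated with a \qed and the reader is referred to~\cite{spherical2} for the argument. Your proof is correct and self-contained. One small remark: your first argument for part~(ii) invokes the fact (\cref{p:05} in the appendix) that two arcs of an SD meet in at most one point, which in the paper's ordering is stated \emph{after} the present proposition; since you also give the direct case split (same great circle versus distinct great circles), there is no circularity and no gap.
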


\begin{proposition}\label{p:05}
No two arcs in an SD intersect in more than one point.\qed
\end{proposition}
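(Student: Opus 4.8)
The plan is to prove the statement directly from the definition of a geodesic arc together with the pairwise interior-disjointness built into the definition of an SD; notably, the blocking condition will not be needed. Let $a$ and $b$ be two distinct arcs of an SD, lying on great circles $\gamma_a$ and $\gamma_b$, and suppose for contradiction that $a$ and $b$ meet in two distinct points $x$ and $y$. I would then split into two cases according to whether $\gamma_a$ and $\gamma_b$ coincide.

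In the case $\gamma_a \neq \gamma_b$, the two distinct great circles meet in exactly one pair of antipodal points, so $\{x,y\}$ must be that pair; in particular $x$ and $y$ are antipodal. But a geodesic arc joins two non-antipodal points by the shortest path on the sphere, hence has length strictly less than $\pi$, so it cannot contain an antipodal pair (such a pair is at arclength $\pi$ along any great circle through it). This contradicts $x,y\in a$.

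In the case $\gamma_a = \gamma_b$, write $\gamma$ for the common great circle; then $a$ and $b$ are both sub-arcs of $\gamma$. Since each has length less than $\pi$, neither contains an antipodal pair, so $x$ and $y$ are non-antipodal and split $\gamma$ into a minor arc $\sigma$ of length less than $\pi$ and a complementary major arc. The sub-arc of $a$ running from $x$ to $y$ has length at most that of $a$, hence less than $\pi$, so it must equal $\sigma$; thus $\sigma\subseteq a$, and by the same argument $\sigma\subseteq b$. As $x\neq y$, the relative interior of $\sigma$ is non-empty and lies in the relative interiors of both $a$ and $b$, contradicting the pairwise interior-disjointness of the arcs of an SD.

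Either way we reach a contradiction, so $a$ and $b$ share at most one point. There is no serious obstacle here, but the step that deserves to be spelled out rather than waved through is the identification of the sub-arc of $a$ between $x$ and $y$ with the minor arc $\sigma$: this is exactly where the ``shortest curve'' clause in the definition of a geodesic arc is used, and it is the spot where a hasty argument could go wrong. Everything else is immediate once the two cases are separated.
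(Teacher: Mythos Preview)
Your argument is correct. The two-case split according to whether the supporting great circles coincide is exactly the right decomposition, and both cases are handled cleanly: in the first you use that distinct great circles meet only in an antipodal pair while a geodesic arc of length $<\pi$ cannot contain one, and in the second you correctly pin down the shared sub-arc as the minor arc $\sigma$ and invoke interior-disjointness. The only point you might make slightly more explicit is why the relative interior of $\sigma$ lies in the relative interior of $a$ (and of $b$): an endpoint of $a$ cannot sit in the interior of $\sigma\subseteq a$, since then $a$ would extend on both sides of it. This is routine, but since you flagged the neighbouring step as the delicate one, it is worth a sentence.

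As for comparison with the paper: the paper does not give its own proof of this proposition but defers to~\cite{spherical2}, so there is no in-paper argument to set yours against. Your proof is the standard elementary one and stands on its own.
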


\begin{proposition}\label{p:semi}
Given an SD $\mathcal D$, the relative interior of any great semicircle on the unit sphere is crossed by at least one arc of $\mathcal D$.\footnote{In~\cite{spherical2}, the word ``intersect'' is used instead of ``cross''. However, the proof therein actually yields this slightly stronger result.}\qed
\end{proposition}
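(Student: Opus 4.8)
The plan is to establish the contrapositive by a topological argument on the tile structure. Assume for contradiction that there is a great semicircle $\sigma$, with antipodal endpoints $u$ and $v$ and relative interior $\sigma^\circ$, that is crossed by no arc of $\mathcal D$; write $C$ for the great circle containing $\sigma$ and $D$ for the union of the arcs of $\mathcal D$.

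The first step is to reduce to the case where $\sigma^\circ$ is disjoint from $D$. Suppose some arc $a$ meets $\sigma^\circ$ at a point $p$. Since two distinct great circles cross transversally, if $p$ lies in the interior of $a$ then either $a$ crosses $C$ — and hence $\sigma^\circ$ — at $p$, contradicting the hypothesis, or else $a\subseteq C$; in the latter case $a$ is a proper sub-arc of $C$ (every arc is strictly shorter than a semicircle), so $a$ has an endpoint in $\sigma^\circ$. In all cases some arc has an endpoint $p'\in\sigma^\circ$, which — since each arc of an SD is blocked at each of its endpoints — lies in the interior of some arc $b$. Interior-disjointness of the arrangement forbids $b$ from lying along $C$: two sub-arcs of $C$ cannot have an endpoint of one in the interior of the other without their interiors overlapping. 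If $b$ does lie along $C$ one applies the same reasoning once more — $b$ then has an endpoint on $\sigma^\circ$, and the arc blocking it there cannot lie along $C$. Either way one obtains an arc that meets $C$ transversally at a point of $\sigma^\circ$, that is, an arc crossing $\sigma^\circ$, a contradiction. Hence $\sigma^\circ\cap D=\emptyset$.

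Now $\sigma^\circ$ is a connected subset of the complement of $D$, so it lies in a single tile $T$, and therefore $\overline T\supseteq\sigma$; in particular $\overline T$ contains the antipodal pair $\{u,v\}$. The closure of a spherically convex set is spherically convex, and $\overline T$ is not the whole sphere, since an SD partitions the sphere into at least three spherically convex tiles and hence no tile is dense. A closed spherically convex proper subset of the sphere that contains two antipodal points is a spherical lune with those points as vertices or a closed hemisphere; in either case $\partial T$ contains a great semicircle $m$, and $m\neq\sigma$ because $\sigma^\circ$ lies in the interior of $\overline T$. To finish, note that the boundary of a tile is contained in $D$ (a boundary point of a tile cannot lie in any open tile), so $m\subseteq D$: the semicircle $m$ is covered by arcs of $\mathcal D$. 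The points of $m$ not interior to an arc lying on the great circle through $m$ form a finite set, so $m$ is covered by finitely many interior-disjoint sub-arcs of that circle; since $|m|=\pi$ exceeds the length of a single arc, there are at least two such sub-arcs, and (as they cannot overlap) two consecutive ones meet at a common endpoint $x$, which lies strictly inside $m$. The sub-arc ending at $x$ is blocked there by an arc $b$ with $x$ in its interior, and interior-disjointness forces $b$ not to lie on the great circle through $m$; hence $b$ crosses that circle transversally at $x$. Since near $x$ the boundary $\partial T$ agrees with $m$ and $T$ lies on one side of it, $b$ enters $T$ — contradicting that the tile $T$ is disjoint from $D$. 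Because the assumption being contradicted is precisely that no arc \emph{crosses} $\sigma^\circ$, this yields the sharper statement directly.

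The main obstacle is the reduction in the second paragraph: arcs of $\mathcal D$ may run along the great circle $C$, so $\sigma^\circ$ need not meet $D$ transversally, and one must chase blocking relations — relying on interior-disjointness and on arcs being strictly shorter than semicircles — to manufacture an honest transversal crossing. The geometric fact that a closed spherically convex proper subset of the sphere containing an antipodal pair is a lune or a hemisphere is routine but should be recorded explicitly, as should the observation that the boundary of a tile lies in $D$.
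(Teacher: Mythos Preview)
The paper does not supply its own proof of this proposition; it defers to~\cite{spherical2} and marks the statement with a \qed, so there is no in-paper argument to compare against line by line.

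Your argument is essentially sound, but it rests on Proposition~\ref{p:06} (spherical convexity of tiles), which in the paper's listing comes \emph{after} Proposition~\ref{p:semi}. You should verify that the proof of tile convexity in~\cite{spherical2} does not itself invoke the semicircle-crossing property; otherwise the reasoning is circular. This is not idle worry: your route is effectively a proof of Corollary~\ref{p:tileanti} (no tile closure contains antipodal points), which the paper also places after Proposition~\ref{p:semi}.

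There is also a genuine soft spot in the final step. You assert that ``near $x$ the boundary $\partial T$ agrees with $m$'' and hence that $b$ enters $T$. What you have actually shown is that $\overline T$ is a closed lune $L$, so $b$ enters $L^\circ$; but $T\subseteq L^\circ$ may a priori be strict, with $L^\circ\setminus T\subseteq D$, in which case $b\subseteq D$ entering $L^\circ$ is no contradiction at all. The missing observation is that convexity of $T$ forces $T=L^\circ$: if some arc met $L^\circ$, two points of $T$ on opposite sides of it (which exist, since no other tile meets $\overline T$ and $D$ has empty interior) would violate convexity. This is a one-line fix, but it should be said explicitly --- and it again routes through Proposition~\ref{p:06}. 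Your reduction in the first paragraph, by contrast, uses only the SD axioms and is clean.
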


\begin{proposition}\label{p:06}
An SD partitions the unit sphere into spherically convex regions.\qed
\end{proposition}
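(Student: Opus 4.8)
The plan is to use the following sufficient condition for a region $R$ of the unit sphere to be spherically convex: \emph{if, for every great circle $\gamma$, the set $R\cap\gamma$ is a connected subset of $\gamma$ of angular length less than $\pi$, then $R$ is spherically convex.} This condition does suffice, because for any $x,y\in R$ the great circle $\gamma$ through $x$ and $y$ meets $R$ in a connected arc of length less than $\pi$ containing both points, so this arc contains the shorter of the two $\gamma$-arcs joining $x$ to $y$, that is, the minimal geodesic $[x,y]$; hence $R$ is geodesically convex and contains no antipodal pair, which is precisely spherical convexity. Writing $D$ for the union of the arcs of $\mathcal D$ — a closed set that does not cover the sphere, since $\mathcal D$ is finite — the tiles are the connected components of the complement of $D$, so it is enough to prove that $F\cap\gamma$ is connected and has length less than $\pi$ for every tile $F$ and every great circle $\gamma$.

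The length bound is immediate from \cref{p:semi}: if $F\cap\gamma$ had length at least $\pi$, it would contain the relative interior of some great semicircle, and that semicircle's interior is crossed by an arc of $\mathcal D$, contradicting $F\cap D=\emptyset$.

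The heart of the matter is connectedness. Each connected component of $\gamma\setminus D$ is disjoint from $D$ and hence lies in a single tile; so if $F\cap\gamma$ were disconnected, $F$ would contain two distinct components $I_1,I_2$ of $\gamma\setminus D$, and since a point of $I_1$ and a point of $I_2$ lie in different components of $\gamma\setminus D$, both arcs of $\gamma$ joining them would meet $D$. Picking $p_i\in I_i$ and, using that the open connected set $F$ is path-connected, a simple path $P\subseteq F$ from $p_1$ to $p_2$, the union of $P$ with one of the two $\gamma$-arcs from $p_1$ to $p_2$ is (after a harmless perturbation) a simple closed curve $C$ that meets $D$ only along $\gamma$. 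A point of $D$ on that $\gamma$-arc lies in the relative interior of some arc $b\in\mathcal D$ which, by \cref{p:05}, either is contained in $\gamma$ or crosses $\gamma$ transversally; either way a branch of $D$ enters the open disk bounded by $C$, and tracing $D$ from there — using that $D$ has no loose ends (every arc is blocked at both endpoints), that $D$ is connected, and that $P$ avoids $D$ — forces a contradiction via the Jordan curve theorem. I expect this topological bookkeeping, with the case distinction according to whether the $D$-points separating $I_1$ from $I_2$ lie on arcs contained in $\gamma$ or on arcs transversal to $\gamma$, to be the only real obstacle, and it is exactly here that the defining axioms of an SD (rather than the mere disjointness of the arcs) are used. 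Alternatively, one may argue that the arrangement has no reflex vertex — at any endpoint the blocking arc passes straight through, so every face angle is at most $\pi$ — and that each tile is an open disk (its boundary lies in the connected graph $D$), and then invoke the local-to-global fact that a simply connected spherical polygon with all interior angles at most $\pi$ and contained in an open hemisphere is spherically convex. In either case, once $F\cap\gamma$ is a connected arc of length less than $\pi$ for every great circle $\gamma$, the criterion above shows that $F$ is spherically convex, completing the proof.
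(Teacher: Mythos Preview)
The paper does not supply its own proof of this proposition: it appears in \cref{asec:previous} among the previously known results, stated with a terminal \qed and attributed to~\cite{spherical2}. There is therefore no in-paper argument to compare your attempt against.

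On the merits of your sketch: the great-circle criterion is valid, but the two verifications are not independent in the order you give them. Your length bound (``if $F\cap\gamma$ had length at least $\pi$, it would contain the relative interior of some great semicircle'') tacitly assumes that $F\cap\gamma$ is a single arc; a disconnected open subset of $\gamma$ of total length $\geq\pi$ need not contain any semicircle, so connectedness has to come first. The connectedness argument itself is, as you concede, only an outline: the step ``tracing $D$ from there forces a contradiction via the Jordan curve theorem'' is where the real work lies, since $D$ may re-exit the Jordan region through the $\gamma$-portion of $C$ repeatedly, and nothing you have said yet rules that out. You also invoke the connectedness of $D$, which in this paper is \cref{c:conn} and is listed \emph{after} \cref{p:06}; you would want to check that no circularity is introduced in the source. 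Your alternative ``no reflex vertex'' route is cleaner and plausibly closer to the original argument, but it too leans on each tile being a topological disk (hence on $D$ being connected) and on a local-to-global convexity lemma for spherical geodesic polygons with interior angles at most $\pi$, which you have asserted rather than proved---and which, as you note, requires first confining the tile to an open hemisphere, a fact you have not established independently of convexity.
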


\begin{definition}
A \emph{tile} is each of the (spherically convex) regions into which the unit sphere is partitioned by an SD.
\end{definition}

\begin{corollary}\label{p:tileanti}
In an SD, no tile (including its boundary) contains two antipodal points.\qed
\end{corollary}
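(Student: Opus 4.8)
The plan is to argue by contradiction, using the spherical convexity of tiles together with the fact that the relative interior of every great semicircle meets an arc of $\mathcal D$ (\cref{p:semi}). Suppose some tile $T$ of an SD $\mathcal D$ contains two antipodal points $x$ and $-x$. Being one of the regions into which $\mathcal D$ partitions the sphere, $T$ is the closure of a connected component of the complement of the (finitely many) arcs of $\mathcal D$; hence the interior of $T$ is a nonempty open set disjoint from every arc of $\mathcal D$. Moreover, $T$ is spherically convex by \cref{p:06}. The goal is to exhibit a great semicircle whose relative interior lies entirely inside the interior of $T$: since no arc of $\mathcal D$ meets the interior of $T$, the relative interior of that semicircle would be crossed by no arc of $\mathcal D$, contradicting \cref{p:semi}.

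To produce such a semicircle, first pick a point $z$ in the interior of $T$ with $z\notin\{x,-x\}$ (possible since the interior of $T$ is a nonempty open set, hence infinite). Let $C$ be the unique great circle through $z$ and $x$; as a great circle through $x$, it also passes through $-x$, so $x$ and $-x$ split $C$ into two great semicircles, exactly one of which, say $\bar\sigma$, contains $z$. Writing $[p,q]$ for the minimizing geodesic arc between non-antipodal points $p$ and $q$, a short check shows that $[z,x]$ and $[z,-x]$ are precisely the two sub-arcs of $\bar\sigma$ determined by $z$ (the complementary arcs of $C$ from $z$ to $x$ and from $z$ to $-x$ pass through $-x$ and $x$ respectively, hence are not shorter), so $[z,x]\cup[z,-x]=\bar\sigma$. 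Now I would invoke the spherical analogue of the standard convexity fact that the segment joining an interior point of a convex body to one of its points stays, apart from that endpoint, in the interior. Applied to $T$ with the interior point $z$, this places the arc $[z,x]$ minus its endpoint $x$, and likewise $[z,-x]$ minus $-x$, inside the interior of $T$; their union is exactly the relative interior of $\bar\sigma$. Thus the interior of $T$ contains the relative interior of the great semicircle $\bar\sigma$, which yields the promised contradiction. Since $T$ is closed, the same argument applies to \emph{any} pair of antipodal points of $T$, including pairs on its boundary, which is precisely the assertion.

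The main obstacle is justifying the convexity fact ``$z$ in the interior of $T$ and $y\in T$ with $y\neq -z$ imply that $[z,y]$ minus the endpoint $y$ lies in the interior of $T$'', since spherical convexity only constrains non-antipodal pairs and one must avoid tacitly assuming that $T$ lies in an open hemisphere — essentially the property being proved. I would establish it directly: choose a small spherical cap $B\subseteq T$ centered at $z$; by convexity $[b,y]\subseteq T$ for every $b\in B$ (shrinking $B$ so that no such $b$ is antipodal to $y$), and because $y$ is not the antipode of $z$, the geodesics $\{[b,y]:b\in B\}$ fan out and cover a full spherical neighborhood of each point of $[z,y]$ other than $y$ itself, placing all such points in the interior of $T$. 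The only other point to record is that tiles are two-dimensional with interiors disjoint from $\mathcal D$, which is immediate from the description of a tile as the closure of a connected component of the complement of the arcs, together with \cref{p:06}.
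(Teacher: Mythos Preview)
Your argument is correct. The paper itself supplies no proof here: the corollary is listed among the results imported from an external reference and carries only a \qed. Its placement immediately after \cref{p:06} suggests that the intended one-line justification is simply that, under the usual convention for spherical convexity (a set containing two points contains \emph{every} minimizing geodesic between them), a spherically convex set containing antipodal points $x$ and $-x$ would have to contain every great semicircle from $x$ to $-x$ and hence be the whole sphere---impossible for a tile of a non-empty SD.

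Your route through \cref{p:semi} is a genuine alternative. By manufacturing a single great semicircle whose relative interior sits inside the interior of $T$, you only ever apply convexity to pairs of \emph{non}-antipodal points (namely $(z,x)$, $(z,-x)$, and the pairs $(b,y)$ in your fanning argument), so your proof does not depend on how spherical convexity is interpreted at antipodal pairs. The auxiliary lemma you isolate---that the minimizing geodesic from an interior point of a spherically convex body to any of its points remains, except possibly at the far endpoint, in the interior---and your justification via geodesics emanating from a small cap $B$ around $z$ are both sound.
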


\begin{proposition}\label{p:07a}
Removing any one arc from an SD and taking the union of the remaining arcs yields a connected subset of the unit sphere.\qed
\end{proposition}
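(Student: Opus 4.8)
The plan is to translate the statement into graph connectivity and then use the tile decomposition. Fix $a\in\mathcal D$, and let $\Gamma'$ be the graph whose vertices are the arcs in $\mathcal D\setminus\{a\}$, with an edge between any two arcs that meet. Since each arc is connected, the union $\bigcup_{c\ne a}c$ is connected exactly when $\Gamma'$ is connected, so I would aim to prove the latter. The main tool is \cref{p:06}: each tile $T$ is spherically convex, and by \cref{p:tileanti} its closure $\overline T$ contains no antipodal pair, hence $\overline T$ is genuinely spherically convex and $\partial T$ is a simple closed curve built from geodesic sub-arcs of $\mathcal D$. Listing $\partial T$ as a cyclic sequence of arcs of $\mathcal D$ (consecutive ones meeting) exhibits it as a closed walk in the intersection graph of $\mathcal D$.

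The heart of the argument is the claim that, for every tile $T$, the arc $a$ occupies at most one sub-arc of $\partial T$. Suppose instead that $\partial T$ contained two disjoint maximal sub-arcs $\sigma_1,\sigma_2$ of $a$; pick $m_i$ in the relative interior of $\sigma_i$. Both $m_1,m_2$ lie in $\overline T$, so the geodesic segment $[m_1,m_2]$ does too. But $m_1$ and $m_2$ lie on the great circle carrying $a$, and the sub-arc of $a$ between them has length at most $|a|<\pi$; therefore this sub-arc of $a$ \emph{is} the geodesic $[m_1,m_2]$, and so it lies in $\overline T$. As $T$ is open and disjoint from all arcs, this sub-arc of $a$ actually lies in $\partial T$, contradicting the maximality of $\sigma_1$ and $\sigma_2$. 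Hence in the closed walk describing $\partial T$ the arc $a$ appears in a single contiguous block; removing that block (when it occurs) leaves a connected path in $\Gamma'$ through the remaining boundary arcs of $T$.

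To finish, I would consider the graph on the tiles of $\mathcal D$ in which two tiles are joined whenever they share an arc other than $a$. This graph is connected: the unit sphere minus the simple arc $a$ is path-connected, so any two tiles can be joined by a path avoiding $a$ and crossing the other arcs transversally, which yields a walk in this graph. Given arcs $c,c'\in\mathcal D\setminus\{a\}$, choose tiles $T_0$ and $T_\ell$ with $c$, respectively $c'$, on their boundary, and take such a walk $T_0,T_1,\dots,T_\ell$; each pair $T_{i-1},T_i$ shares an arc $d_i\ne a$, lying on the path ``$\partial T_i$ with the block of $a$ deleted''. Concatenating these paths gives a walk in $\Gamma'$ from $c$ to $c'$, so $\Gamma'$ is connected, and hence so is $\bigcup_{c\ne a}c$. (Adding $a$ back shows, in particular, that $\bigcup\mathcal D$ is connected.)

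The step I expect to be the main obstacle is the claim about $\partial T$: it relies on $\overline T$ being honestly spherically convex, which is precisely where \cref{p:tileanti} enters (to keep geodesic segments between points of $\overline T$ unique and inside $\overline T$), together with the elementary fact that an arc of a great circle shorter than $\pi$ realizes the geodesic between its endpoints. A secondary, purely bookkeeping complication is when several arcs are collinear or share an endpoint, which only affects how $\partial T$ is written as a sequence of arcs; it can also be sidestepped by perturbing to general position, since a Hausdorff limit of connected compact sets is connected.
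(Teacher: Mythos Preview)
The paper does not supply its own proof of this proposition; both in \cref{sec:previous} and in \cref{asec:previous} it is stated with a \qed and the argument is deferred to the reference~\cite{spherical2}. So there is no in-paper proof to compare your attempt against.

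That said, your argument is correct and self-contained. The crux---that a fixed arc $a$ can contribute at most one maximal sub-arc to $\partial T$ for any tile $T$---is handled properly: \cref{p:tileanti} guarantees $\overline T$ contains no antipodal pair, so $\overline T$ is honestly spherically convex; since $|a|<\pi$, the sub-arc of $a$ between any two points $m_1,m_2\in a\cap\partial T$ \emph{is} the geodesic $[m_1,m_2]$ and hence lies in $\overline T$, forcing it into $\partial T$ and contradicting maximality. The remainder is routine: the sphere minus a simple (open-ended) arc is path-connected, so the tile-adjacency graph using edges other than $a$ is connected, and within each tile the cyclic boundary sequence with the single block of $a$ removed is a path in $\Gamma'$; chaining these paths along a tile walk connects any $c,c'\in\mathcal D\setminus\{a\}$.

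Two small remarks. First, your use of \cref{p:06} and \cref{p:tileanti} is non-circular, as those are established in~\cite{spherical2} independently of the present statement. Second, the general-position/Hausdorff-limit aside at the end is unnecessary---your main argument already handles overlaps and collinear arcs without modification, since it only uses that $\partial T$ is a Jordan curve made of sub-arcs of $\mathcal D$ and that consecutive sub-arcs meet.
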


\begin{corollary}\label{c:conn}
The union of all the arcs in an SD is a connected set.\qed
\end{corollary}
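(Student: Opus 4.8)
The plan is to derive \cref{c:conn} immediately from \cref{p:07a}. First I would observe that an SD must contain at least two arcs: by \cref{d:1a} every arc of $\mathcal D$ is blocked at each of its two endpoints by an arc of $\mathcal D$, and an arc can never block itself, since an endpoint of a geodesic arc does not lie in that arc's relative interior. Hence, for any SD $\mathcal D$ and any arc $a\in\mathcal D$, the subcollection $\mathcal D\setminus\{a\}$ is nonempty.

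Next, fix an arbitrary arc $a\in\mathcal D$ and set $D':=\bigcup(\mathcal D\setminus\{a\})$. By \cref{p:07a}, $D'$ is a connected subset of the unit sphere. Now let $x$ be an endpoint of $a$; by \cref{d:1a}, $x$ lies in the relative interior of some arc $b\in\mathcal D$, and $b\neq a$ by the remark above, so $b\in\mathcal D\setminus\{a\}$ and therefore $x\in a\cap b\subseteq a\cap D'$. Thus $a$ and $D'$ have a point in common. Since $a$ is itself a connected set and the union of two connected sets with nonempty intersection is connected, we conclude that $\bigcup\mathcal D=D'\cup a$ is connected.

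I expect no genuine obstacle here; the only two points requiring a moment's care are the base case $|\mathcal D|=1$ and the claim that the arc blocking $a$ at a given endpoint is distinct from $a$, and both follow at once from the elementary fact that an endpoint of a geodesic arc is never interior to that same arc.
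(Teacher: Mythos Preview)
Your argument is correct and is essentially the same as the paper's: the paper presents \cref{c:conn} as an immediate corollary of \cref{p:07a} (it is marked with a bare \textsf{qed}), and you have simply spelled out the one-line deduction, together with the observation that an SD has at least two arcs so that $\mathcal D\setminus\{a\}$ is nonempty.
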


\begin{proposition}\label{p:08}
An SD with $n$ arcs partitions the unit sphere into $n+2$ tiles.\qed
\end{proposition}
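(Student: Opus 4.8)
The plan is to obtain the count as a direct consequence of Euler's formula for a connected plane graph on the sphere. First I would make precise the graph $G$ associated with the arrangement: its vertex set $V$ consists of all endpoints of arcs of $\mathcal D$, viewed as a set of points on the unit sphere, and its edge set $E$ consists of the maximal sub-arcs obtained by cutting each arc of $\mathcal D$ at every point of $V$ contained in its relative interior. By construction, the faces of $G$ are exactly the connected components of the complement of $\bigcup\mathcal D$, that is, the tiles of $\mathcal D$. Since $\bigcup\mathcal D$ is connected by \cref{c:conn} and $V$ is non-empty, $G$ is a connected plane graph embedded in the sphere, so Euler's formula gives $|V|-|E|+f=2$, where $f$ denotes the number of tiles.

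It remains to express $|E|$ through $|V|$ and $n$. The crucial point is that every endpoint $p$ of every arc lies in the relative interior of \emph{exactly one} arc of $\mathcal D$: at least one, since every arc is blocked at each of its endpoints (\cref{d:1a}); at most one, since two distinct arcs both having $p$ in their relative interiors would have a non-empty intersection of relative interiors, contradicting the pairwise interior-disjointness required by \cref{d:1a}. Writing $m_c$ for the number of vertices lying in the relative interior of an arc $c\in\mathcal D$, cutting $c$ at those $m_c$ points splits it into $m_c+1$ edges, so $|E|=\sum_{c\in\mathcal D}(m_c+1)=n+\sum_{c\in\mathcal D}m_c$. On the other hand, $\sum_{c\in\mathcal D}m_c=|V|$, because this double sum counts each vertex precisely once, namely for the unique arc whose relative interior contains it. Hence $|E|=n+|V|$, and substituting into Euler's formula yields $f=2-|V|+(n+|V|)=n+2$.

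I expect the argument to be essentially bookkeeping, so the only steps demanding care are structural: confirming that $G$ really is a well-defined connected plane graph, and that the claim that each endpoint lies in the relative interior of exactly one arc, together with the identity $\sum_{c}m_c=|V|$, survives whatever mild degeneracies an SD might exhibit — for instance, several arcs sharing a common endpoint (such a point is simply one vertex of larger degree, still counted once on the right-hand side), or an arc whose relative interior contains the endpoints of many others. One should also note in passing that no edge can collapse to a single point, since the two endpoints of a geodesic arc are distinct and the subdivision points are interior to it; here \cref{p:04} is convenient, as it guarantees that the two blocking arcs at the endpoints of a given arc are themselves distinct. Once these routine verifications are in place, the proposition follows.
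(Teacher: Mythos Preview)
Your argument is correct. The present paper does not actually supply its own proof of this proposition; it is quoted from~\cite{spherical2}, to which the surrounding text defers for all proofs in that subsection. Your Euler-formula computation---with the key identity $|E|=|V|+n$ coming from the fact that every vertex lies in the relative interior of exactly one arc---is the standard way to obtain the face count and is presumably what appears in the cited reference.
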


\begin{proposition}\label{p:eyeconv1}
In an SD $\mathcal D$, let $\mathcal S$ be a swirl of degree $d$.
\begin{itemize}
\item The union of the $d$ arcs of $\mathcal S$ separates the unit sphere in two regions, exactly one of which is spherically convex; this region is a spherical $d$-gon called the \emph{eye} $E$ of $\mathcal S$.
\item The only points of intersection between pairs of arcs of $\mathcal S$ are the vertices of $E$.\qed
\end{itemize}
\end{proposition}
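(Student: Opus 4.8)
The plan is to realize the eye as an explicit spherical $d$-gon and then verify the two assertions separately. Write the swirl as $a_1,\dots,a_d$, indexed cyclically so that $a_i$ hits $a_{i+1}$ at the point $v_i$, which is an endpoint of $a_i$ lying in the relative interior of $a_{i+1}$; assume without loss of generality that $\mathcal S$ is clockwise (the counterclockwise case is symmetric). Since $v_{i-1}$ lies in the interior of $a_i$, split $a_i=b_i\cup\ell_i$, where $b_i$ is the sub-arc of $a_i$ from $v_{i-1}$ to $v_i$ and $\ell_i$ is the sub-arc from $v_{i-1}$ to the other endpoint $w_i$ of $a_i$, and put $C=b_1\cup\cdots\cup b_d$. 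I would first record the elementary facts: $d\ge 3$ (no arc can hit itself, and $d=2$ would demand two distinct geodesic arcs between the same pair of non-antipodal points); consecutive $b_i$ and $b_{i+1}$ are not collinear, since otherwise $a_i$ and $a_{i+1}$ would overlap near $v_i$, contradicting the pairwise interior-disjointness of the arcs of $\mathcal D$; and $C$ is a simple closed $d$-gon. For the last point, note that every point of $b_i$ other than $v_i$ lies in $\operatorname{int}(a_i)$, so a point of $b_i\cap b_j$ distinct from $v_i$ and $v_j$ would lie in $\operatorname{int}(a_i)\cap\operatorname{int}(a_j)=\emptyset$; and the same observation shows $v_i\in b_j$ can occur only for $j\in\{i,i+1\}$ (and $v_i=v_j$ would force $a_{i+1}=a_{j+1}$), so non-consecutive edges are disjoint while consecutive ones meet only at their shared vertex.

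Next, by the Jordan curve theorem $C$ separates the sphere into two regions, $E$ (the region to the right of the clockwise traversal $v_d\to v_1\to\cdots\to v_{d-1}\to v_d$ of $C$) and $E'$ (to its left). Because $\mathcal S$ is clockwise, every turn of this traversal is a right turn, so at each $v_i$ the interior angle of $E$ is strictly less than $\pi$ and hence that of $E'$ is strictly greater than $\pi$. A spherically convex region has all of its boundary angles at most $\pi$, so $E'$ is not convex; therefore at most one of the two regions is convex, and it can only be the spherical $d$-gon $E$.

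The substantive step — and the one I expect to be the main obstacle — is to show that $E$ is actually convex. The approach I would take is to prove that $E$ coincides with the intersection of the closed hemispheres $H_i$ bounded by the great circles $\gamma_i\supseteq b_i$ on the side containing $E$; since $\bigcap_i H_i$ is spherically convex with boundary contained in $C=\partial E$, this identification holds as soon as $\gamma_i\cap\operatorname{int}(E)=\emptyset$ for every $i$. The angle computation of the previous paragraph shows that the complementary arc $\gamma_i\setminus b_i$ leaves both endpoints of $b_i$ into $E'$, so the remaining issue is that it must not curl back into $E$; equivalently, the continuations of the swirl arcs past their eye-boundary sub-arcs $b_i$ — in particular the arcs $\ell_i$ — stay in $\overline{E'}$ and touch $C$ again only at the vertex $v_{i-1}$. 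To establish this I would combine three ingredients: pairwise interior-disjointness, which forces every arc of $\mathcal D$ to land on $C$ rather than cross it transversally; \cref{p:05}; and the Gauss--Bonnet identity $\operatorname{Area}(E)=\sum_i(\text{interior angle at }v_i)-(d-2)\pi<2\pi$, which prevents $C$, hence each $\gamma_i$, from wrapping around the sphere. A viable alternative is induction on $d$, cutting $E$ along a diagonal that lies in $\overline E$ into two locally convex polygons of smaller degree and gluing the resulting convex pieces, with base case $d=3$, which is a non-degenerate spherical triangle and therefore convex.

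Finally, granting that step, the second assertion follows. Each far endpoint $w_i$ then lies in the interior of $E'$ and on no edge $b_m$. Now fix $i\ne j$: by pairwise interior-disjointness any point of $a_i\cap a_j$ must be a common endpoint of the two arcs, and by \cref{p:05} there is at most one. If $j=i\pm1$ this point is $v_i$ or $v_{i-1}$, a vertex of $E$. If $i,j$ are non-consecutive, a case check on which endpoints can coincide — using that $v_i=v_j$ is impossible, that $w_i$ lies on no $b_m$, and a short local analysis at a putative shared endpoint via \cref{p:04,p:05} — shows that the only surviving possibility is that the common point is some vertex $v_k$. Hence every point of intersection between two arcs of $\mathcal S$ is a vertex of $E$, and the proof is complete.
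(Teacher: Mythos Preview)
The paper does not prove this proposition: it is listed among the preliminary results in \cref{asec:previous} with a \qed and attributed to~\cite{spherical2}, with the remark that the argument there (stated for SODs) goes through verbatim for SDs. There is therefore no in-paper proof to compare against.

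On its own merits, your plan is structurally reasonable --- build the boundary cycle $C=\bigcup b_i$, identify $E$ as the side where all interior angles are below~$\pi$, and realize $E$ as $\bigcap_i H_i$ --- and you correctly flag convexity of $E$ as the crux. A few points need care. First, interior-disjointness does not force a point of $a_i\cap a_j$ to be a \emph{common} endpoint; it only says the point is an endpoint of at least one of the two arcs (indeed $v_i$ is interior to $a_{i+1}$). Second, your claim that each $w_i$ lies in the interior of $E'$ can fail when the SD has an overlap: it may happen that $w_i=v_k$ for some $k$, which is still consistent with the proposition since $v_k$ is a vertex of~$E$, but it breaks the case split as you wrote it. Third, and more substantively, your case analysis for non-consecutive $i,j$ does not visibly exclude the possibility that $w_i$ lands in the relative interior of some tail $\ell_m$; that would give $a_i\cap a_m\ni w_i$ at a non-vertex of~$E$, and ruling it out is exactly where the convexity of $E$ (and the geometry of the great circles $\gamma_i$) must be used, not just cited. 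Finally, note that the first bullet concerns the union of the \emph{full} arcs $a_i$, not just $C$; you should make explicit why the tails $\ell_i$ do not subdivide $E'$, which follows once the second bullet is in hand.
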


\begin{proposition}\label{p:eyebound}
In an SD, if the eyes of two distinct swirls have intersecting interiors, then their boundaries are disjoint.\qed
\end{proposition}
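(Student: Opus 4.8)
I would prove the contrapositive by contradiction. Suppose $S_1$ and $S_2$ are distinct swirls whose eyes $E_1$ and $E_2$ have overlapping interiors but whose boundaries meet, say at a point $x$. By \cref{p:eyeconv1}, $E_1$ and $E_2$ are closed spherically convex polygons with non-empty interior, so $E_1\cap E_2$ is closed, spherically convex, and has non-empty interior; hence it equals the closure of its interior, and since $x\in E_1\cap E_2$ it follows that points of $\operatorname{int}E_1\cap\operatorname{int}E_2$ accumulate at $x$. I would then repackage the situation topologically. The closed curve $\partial E_2$ separates the sphere into $\operatorname{int}E_2$ and its exterior; the connected curve $\partial E_1$ meets $\partial E_2$, so it lies in neither open region, and therefore $\partial E_1$ and $\partial E_2$ genuinely \emph{cross}, at $x$ and at a second point at least. (Equivalently: if the boundaries were disjoint, one eye would be nested inside the interior of the other, which is exactly the situation the statement permits.) So it suffices to prove that the eyes of two distinct swirls cannot have crossing boundaries while their interiors overlap.

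Next I would carry out a local analysis at a crossing point $y$ of $\partial E_1$ and $\partial E_2$. By \cref{p:eyeconv1}, $y$ lies on an arc $a$ of $S_1$ --- in the relative interior of an edge of $E_1$, or at a vertex of $E_1$ where two consecutive arcs of $S_1$ meet --- and, symmetrically, on an arc $b$ of $S_2$; since the two curves cross at $y$ we have $a\neq b$, so by \cref{p:05} the arcs $a$ and $b$ meet only at $y$. I would also use the shape of an eye near its edges and vertices: from \cref{p:eyeconv1} together with the definition of a swirl, each edge of $E_i$ is a proper sub-arc of an arc of $S_i$, one of whose endpoints is an endpoint of that arc, so past every edge the carrying arc protrudes as a ``spike'' into the exterior of $E_i$. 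Tracking these spikes at the (at least two) crossing points, and invoking \cref{p:05} to control which arcs of $\mathcal D$ can meet which, I would argue that the ways $S_1$ traverses $\partial E_1$ and $S_2$ traverses $\partial E_2$ cannot be reconciled with the lens-shaped region $E_1\cap E_2$, thereby reaching a contradiction. The ``$y$ is a vertex'' refinements of this analysis follow the same pattern once the generic case is handled.

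The conceptual skeleton is short: contradiction, convexity making the overlap accumulate at a common boundary point, the topological reduction to crossing curves, and \cref{p:05} bounding how arcs of $\mathcal D$ may meet. I expect the real difficulty --- the main obstacle --- to be this crossing analysis, where one must treat each crossing point being in the interior of an edge versus at a vertex, for each of the two eyes, and carefully follow the spikes and the winding directions of the two swirls. A more global alternative would bound the total number of crossings of the two convex polygonal curves directly via \cref{p:05} and rule out the configuration by producing a tile containing two antipodal points, in violation of \cref{p:tileanti}; but this seems to require essentially the same local input, so I would pursue the direct argument.
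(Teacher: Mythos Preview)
The paper does not prove \cref{p:eyebound}: it appears in \cref{asec:previous} with a bare \qed, and the surrounding text defers all proofs in that section to~\cite{spherical2}. There is therefore no in-paper argument to compare your proposal against.

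Evaluating your plan on its own, two gaps stand out. First, the reduction to a ``genuine crossing'' is incomplete: from $\partial E_1\cap\partial E_2\neq\emptyset$ you only get that $\partial E_1$ is not contained in either \emph{open} complementary region of $\partial E_2$, which does not exclude $\partial E_1\subset\overline{\operatorname{int} E_2}$ touching $\partial E_2$ without crossing (the configuration $E_1\subseteq E_2$ sharing a boundary point or an edge). That tangent case must be ruled out separately, and it is not obviously easier than the transverse one. Second, even granting a crossing, you stop before the part you yourself call the main obstacle. One sharpening worth recording: the decisive input is the interior-disjointness built into \cref{d:1a}, not merely \cref{p:05}. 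If $y$ lies in the relative interior of an edge of $E_1$ and also in the relative interior of an edge of $E_2$, then $y$ is interior to both carrying arcs, which forces $a=b$; the two boundaries are then collinear near $y$ and cannot cross transversally there. Hence every genuine crossing point is a vertex of at least one eye---exactly the case you defer. The outline may be completable, but as written it halts precisely where the substantive argument begins.
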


\begin{definition}
The \emph{swirl graph} of an SD $\mathcal D$ is the undirected multigraph on the set of swirls of $\mathcal D$ having an edge between two swirls for every arc in $\mathcal D$ shared by the two swirls.
\end{definition}

\begin{theorem}\label{t:graph1}
The swirl graph of any SD is planar; moreover, any SD has at least one clockwise swirl and at least one counterclockwise swirl.\qed
\end{theorem}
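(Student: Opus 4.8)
The plan is to prove the two assertions separately; the existence of a clockwise and a counterclockwise swirl is routine, whereas the planarity of the swirl graph is the real work and is where I expect the main obstacle to lie.

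\emph{Existence of swirls.} I would run a \emph{right-handed walk} $w$ on the SD, i.e., a sliding walk that, upon reaching the endpoint of the arc it is traversing, turns right onto the arc hit there. Since the rule ``slide to the next endpoint, then turn right'' is a deterministic function of the walk's local state (current arc, direction, endpoint being approached) and an SD has finitely many states, $w$ is eventually periodic; as the rule is also invertible (the previous state is recovered by reversing the arc and turning left), $w$ is periodic from the start and traces a closed curve whose cyclic sequence of arcs has each arc hitting the next with every turn to the right --- a clockwise swirl in the sense of \cref{d:swirl}, whose eye is the region lying consistently on its right. A left-handed walk yields a counterclockwise swirl symmetrically. (This is, in substance, \cref{yobs:right}.) The only delicate point is that the periodic part traces a \emph{simple} closed curve, so that the arcs it uses genuinely form a cycle; I would argue this from the determinism of the turning rule, noting that a self-intersection would force $w$ to re-enter a previously-visited state before closing up.

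\emph{Planarity.} I would exhibit a planar embedding on the unit sphere. For each swirl $S$, whose eye $E_S$ is a spherically convex polygon with non-empty interior (\cref{p:eyeconv1}), place the vertex representing $S$ at an interior point $c_S$ of $E_S$. The pivotal observation is that if two swirls $S$ and $S'$ share an arc $a$ --- hence are joined by an edge of the swirl graph --- then their eyes have \emph{disjoint interiors}: both eyes carry a portion of $a$ on their boundary, so the boundaries meet, and by \cref{p:eyebound} two eyes with meeting boundaries cannot have intersecting interiors. Thus $E_S$ and $E_{S'}$ lie on opposite sides of a common sub-arc $\sigma\subseteq a$; for each such edge I would choose a point $m$ in the relative interior of $\sigma$, distinct for distinct edges (even those arising from the same arc), and draw the edge as a simple curve from $c_S$ through $m$ to $c_{S'}$ confined to $E_S\cup E_{S'}$ and meeting $a$ only at $m$. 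Since each eye is a topological disk, the portions of these curves inside a fixed eye $E_S$, running from $c_S$ to distinct boundary points, can be taken pairwise disjoint except at $c_S$, so edges sharing an endpoint do not cross.

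The crux --- and the step I expect to require the most care --- is ruling out crossings between edges with no common endpoint. By \cref{p:eyebound}, two eyes either coincide, are interior-disjoint, or have disjoint boundaries, and in the last case spherical convexity forces one to lie inside the other; so the eyes form a laminar family, and the dangerous case is a pair of nested eyes $E_T\subset E_S$ carrying two distinct edges, which a routing through the bulk of $E_S$ might make cross. I would handle this by routing every edge inside an arbitrarily thin collar of its sub-arc $\sigma$ and then hugging the boundaries of the two eyes it joins on the way to $c_S$ and $c_{S'}$: laminarity makes these boundary-hugging routes nest consistently, so a route pressed against $\partial E_T$ stays inside $E_T$ (hence inside $E_S$) and never meets a route pressed against $\partial E_S$. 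Combined with the fact that each arc lies in at most four swirls --- one per side of each endpoint --- which fixes a consistent cyclic order of the edges along each arc and around each eye, this produces a crossing-free drawing and establishes planarity.
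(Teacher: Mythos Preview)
This theorem is not proved in the paper (it is quoted from~\cite{spherical2}), so there is no in-paper argument to compare against; on its own merits, your proposal has gaps in both halves. For existence, the invertibility claim is false: if two distinct arcs $c_1, c_2$ hit an arc $b$ from the same side, then turning right from either $c_i$ sends the walk along $b$ toward the \emph{same} endpoint of $b$, so the right-handed transition is not injective. Your proposed inverse (``reverse the arc and turn left'') lands on the arc through the \emph{opposite} endpoint of $b$, not on the predecessor, and the walk can have a genuine transient. The eventually-periodic part does bound the eye of a clockwise swirl (this is \cref{yobs:right}), but your argument as written does not deliver that.

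For planarity, the assertion that two swirls $S,S'$ sharing an arc $a$ have eyes meeting along a common sub-arc $\sigma\subseteq a$ is unjustified. When $S$ and $S'$ are concordant and use $a$ toward \emph{opposite} endpoints, the segments $\partial E_S\cap a$ and $\partial E_{S'}\cap a$ run toward opposite ends of $a$ and need not overlap; then $\partial E_S\cap\partial E_{S'}$ may be empty, your appeal to \cref{p:eyebound} yields nothing, and there is no point $m$ at which to cross. The interiors \emph{are} still disjoint (each eye lies in a closed hemisphere determined by the great circle through $a$), so an edge can in principle be drawn along $a$ outside both eyes---but a single arc can lie in up to four swirls and hence carry several swirl-graph edges, and your laminar boundary-hugging scheme does not obviously keep these disjoint once routes are allowed to leave the eyes.
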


The following results on SDs were proved in~\cite{mini}. Again, they were stated only for SODs, but their proofs do not require the one-sidedness of the arrangements.

\begin{lemma}\label{l:hemilemma}
Given an SD $\mathcal D$, the relative interior of any hemisphere contains the eye of at least one swirl of $\mathcal D$.\qed
\end{lemma}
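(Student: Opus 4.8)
The plan is to produce a sliding walk on $\mathcal D$ that stays entirely inside the open hemisphere in question, and then invoke \cref{ylemma:right}. Write $H$ for the given hemisphere, $H^{\circ}$ for its relative interior, and $\gamma$ for the bounding great circle. The one geometric fact I would record first is elementary: since every arc of $\mathcal D$ is a geodesic arc, it lies in some great circle $\delta$, and either $\delta=\gamma$ (the arc lies in $\gamma$), or $\delta\cap\gamma$ is a pair of antipodal points, at most one of which can belong to an arc of length less than $\pi$. Hence each arc of $\mathcal D$ is contained in $\gamma$, is disjoint from $\gamma$, or crosses $\gamma$ transversally at exactly one interior point; and in the last case, deleting that crossing point splits the arc into two sub-arcs, exactly one of which lies in $H^{\circ}$ and contains an endpoint of the arc.

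Next I would build the walk. By \cref{p:semi}, some arc of $\mathcal D$ crosses $\gamma$, hence has a portion in $H^{\circ}$; start the walk at a relative-interior point of that arc lying in $H^{\circ}$, moving toward the endpoint of the arc on the $H^{\circ}$-side of the crossing point. Now argue inductively: suppose the walk has just reached an endpoint $e\in H^{\circ}$ of the arc it was traversing, so that $e$ lies in the relative interior of the next arc $a$; then $a$ is not contained in $\gamma$. If $a$ is disjoint from $\gamma$, it lies wholly in $H^{\circ}$ and either choice of turn keeps the walk inside; if $a$ crosses $\gamma$ at its unique point $x\neq e$, then turning so as to follow $a$ from $e$ away from $x$ keeps the walk in $H^{\circ}$ and again reaches an endpoint of $a$ in $H^{\circ}$. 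So the walk continues forever within $H^{\circ}$, and in particular the simple closed curve $C$ it eventually traces (which exists because $\mathcal D$ is finite) lies in $H^{\circ}$.

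Finally I would conclude. The closed complementary hemisphere $K=\mathbb S^2\setminus H^{\circ}$ is connected and disjoint from $C$, so it lies entirely in one of the two regions bounded by $C$; without loss of generality it lies in the left-side region of the walk. Then the right-side region is contained in $\mathbb S^2\setminus K=H^{\circ}$, and by \cref{ylemma:right} it contains the eye of a clockwise swirl, which therefore lies in $H^{\circ}$. (If instead $K$ lies in the right-side region, then the left-side region lies in $H^{\circ}$ and contains the eye of a counterclockwise swirl by the same lemma.) In all cases $H^{\circ}$ contains the eye of a swirl of $\mathcal D$, as required.

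I expect the main obstacle to be the bookkeeping in the inductive construction of the walk: one must check that a legal turn preserving containment in $H^{\circ}$ is always available, and that the degenerate possibility of an arc contained in $\gamma$ genuinely cannot occur once one knows the walk passes only through interior points of $H$. Everything else is a direct application of \cref{p:semi} and \cref{ylemma:right} together with the separation properties of a simple closed curve on the sphere.
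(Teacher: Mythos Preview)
Your argument is correct. The paper does not supply its own proof of this lemma; it is quoted as a known result from~\cite{mini} and marked with a bare \qed, so there is no in-paper proof to compare against line by line. What can be said is that your route is a pleasant inversion of the paper's logical flow: the paper remarks that its proof of \cref{ylemma:right} is ``inspired by'' the proof of \cref{l:hemilemma} in~\cite{mini}, whereas you run the implication the other way, deducing \cref{l:hemilemma} from \cref{ylemma:right}. Since \cref{ylemma:right} is established in the paper using only \cref{yobs:right}, there is no circularity, and your derivation stays entirely within the framework developed here.

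One small imprecision: your trichotomy ``contained in $\gamma$, disjoint from $\gamma$, or crosses $\gamma$ transversally at exactly one interior point'' omits the case where an arc meets $\gamma$ only at one of its endpoints (touching without crossing). This does not damage the inductive step, because the same rule---from $e$ follow $a$ away from its unique intersection point $x$ with $\gamma$---still lands at an endpoint of $a$ in $H^\circ$ when $x$ happens to be an endpoint of $a$. Replacing ``crosses'' by ``meets'' in that sentence makes the case analysis airtight.
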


\begin{theorem}\label{thm:swirls2}
Every SD has at least four swirls.\qed
\end{theorem}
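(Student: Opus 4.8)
\textbf{Proof plan for \cref{thm:swirls2}.}
The plan is to prove that every SD has at least four swirls by combining \cref{t:graph1}, \cref{l:hemilemma}, and \cref{thm:swirls2}'s hypotheses in a double-counting argument on the swirl graph. First I would recall that by \cref{t:graph1} there is at least one clockwise swirl and at least one counterclockwise swirl, so it suffices to rule out the cases of exactly one, two, or three swirls in total; equivalently, I want to produce four swirls with pairwise distinct eyes. The natural engine for this is \cref{l:hemilemma}: the relative interior of \emph{any} hemisphere contains the eye of some swirl. Since every hemisphere has an antipodal hemisphere, and by \cref{p:tileanti} (or rather \cref{p:eyeconv1} together with the fact that no tile contains antipodal points) the eye of a swirl cannot meet its own antipode, applying \cref{l:hemilemma} to a hemisphere and to its antipode should already yield two swirls with eyes separated by a great circle.

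The key step is then to iterate this separation. I would pick a great circle $\gamma_1$ with a swirl eye $E_1$ strictly on one side and a swirl eye $E_2$ strictly on the other side; these are distinct. Next, choose a great circle $\gamma_2$ that slices the sphere differently — for instance, a great circle through an interior point of $E_1$ — so that one of the two open hemispheres it bounds avoids $E_1$ entirely. That hemisphere's interior contains the eye of a swirl $E_3 \neq E_1$ by \cref{l:hemilemma}; a careful choice of $\gamma_2$ (e.g., also cutting through $E_2$, or arranging the two hemispheres so that each avoids one of $E_1, E_2$) forces $E_3$ to differ from $E_2$ as well. A symmetric application on the complementary hemisphere produces a fourth eye $E_4$ distinct from $E_1$, $E_2$, $E_3$. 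The bookkeeping here — making sure the two ``generic'' great circles are in sufficiently general position relative to the finitely many existing swirl eyes, using the convexity of eyes from \cref{p:eyeconv1} to conclude that a small perturbation of the cut still strictly separates — is where the care is needed.

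The main obstacle I anticipate is ensuring the four eyes obtained this way are genuinely \emph{distinct} rather than merely ``contained in overlapping regions''. The eyes are spherically convex $d$-gons that may be large, so a single hemisphere could in principle contain several of them, and naively invoking \cref{l:hemilemma} four times on four hemispheres could return the same swirl repeatedly. The resolution is to exploit \cref{p:eyebound} (if two eyes have intersecting interiors, their boundaries are disjoint, hence one contains the other or they are nested in a controlled way) together with convexity: no spherically convex region can contain a great circle, so for any swirl eye $E$ there is a great circle disjoint from $E$, and the open hemisphere on the far side must contain a \emph{different} swirl's eye. Chaining this — start with $E_1$, find $\gamma$ disjoint from $\overline{E_1}$ yielding $E_2$; find a great circle disjoint from $\overline{E_1 \cup E_2}$ if possible, or argue via \cref{p:eyebound} that if every hemisphere avoiding $E_1$ contains only $E_1$-or-$E_2$-type eyes then those eyes are nested and one can still extract a third — is the crux of the argument, and I would expect the cleanest writeup to proceed by contradiction: assume at most three swirls, take their (at most three) convex eyes, and derive a great circle whose two open hemispheres together miss all three eyes' interiors, contradicting \cref{l:hemilemma}.
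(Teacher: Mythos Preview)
The paper does not actually prove \cref{thm:swirls2}: it is stated with a \qed\ and attributed to~\cite{mini}, so there is no in-paper argument to compare against directly. (The paper does prove the sharper \cref{yt:22swirls}---two clockwise and two counterclockwise swirls---via the sliding-walk machinery of \cref{sec:walks}, which is an entirely different route from yours.)

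Your final contradiction strategy is the right idea, but the key sentence is garbled, and that garbling hides the one step that actually needs an argument. You write that you would ``derive a great circle whose two open hemispheres together miss all three eyes' interiors''; that is impossible, since the two open hemispheres together cover the whole sphere minus the great circle and every eye has nonempty interior. What you actually need---and what makes the proof go through---is a \emph{single} open hemisphere that fully contains \emph{none} of the at most three eyes, directly contradicting \cref{l:hemilemma}. Such a hemisphere always exists: pick one point $e_i$ in each eye $E_i$; the intersection of the three closed half-spaces $\{x\in\mathbb R^3:\langle x,e_i\rangle\le 0\}$ always contains a nonzero vector (three vectors cannot positively span $\mathbb R^3$, so the polar cone is never trivial---equivalently, if the $e_i$ are independent this cone is a full octant, and if dependent any vector orthogonal to their span works). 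Normalizing gives a center $c$ whose open hemisphere omits each $e_i$ and hence cannot contain any $E_i$.

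Once that lemma is in hand, the earlier iterative choice of great circles, the worry about large or nested eyes, and the appeal to \cref{p:eyebound} are all unnecessary detours; the whole proof is two lines on top of \cref{l:hemilemma}.
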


\subsection{Properties of SODs}
In addition to the previous properties, SODs also enjoy the following ones. All proofs are found in~\cite{spherical2}.

\begin{proposition}\label{p:tileedge}
In an SOD, any arc coincides with an edge of a tile.\qed
\end{proposition}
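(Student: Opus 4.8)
The plan is to produce, for each arc $a$ of an SOD $\mathcal D$, an explicit tile having $a$ as one of its edges: the tile lying on the side of $a$ from which $a$ is never hit. First I would fix an arc $a$ with endpoints $p$ and $q$. By the defining property of SODs (\cref{d:1b}), all arcs of $\mathcal D$ that hit $a$ (those having an endpoint in the relative interior of $a$) reach $a$ from one common side; call the other side the \emph{free side} $F$ of $a$ (if no arc hits $a$, take $F$ to be either side). Since the arcs of $\mathcal D$ are pairwise interior-disjoint (\cref{d:1a}), no arc crosses $a$, so the only points of the relative interior of $a$ lying on another arc are the finitely many points where arcs hit $a$ --- and all of these are approached from the side opposite $F$. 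Thus no arc of $\mathcal D$ meets the relative interior of $a$ on the side $F$.

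Next I would pin down the tile on the free side. By \cref{p:06,p:08}, $\mathcal D$ partitions the sphere into finitely many spherically convex tiles, so every point $x$ in the relative interior of $a$ lies on the boundary of exactly two tiles, one on each side of $a$; let $T_x$ be the one on side $F$. By the previous paragraph, in a small neighbourhood of $x$ the region on side $F$ is bounded solely by $a$, whether or not $x$ happens to be a point where some arc hits $a$ from outside $F$; hence the assignment $x\mapsto T_x$ is locally constant along the connected relative interior of $a$, and there is therefore a single tile $T$ having all of $a$ on its boundary, on side $F$.

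It then remains to verify that $a$ is an entire edge of $T$, i.e., that the maximal geodesic sub-arc of $\partial T$ containing the relative interior of $a$ equals $a$. Along the relative interior of $a$, $\partial T$ lies on the great circle $C_a$ supporting $a$, so I only need $\partial T$ to turn at each endpoint. At $p$: since $a$ is blocked at $p$, the point $p$ lies in the relative interior of some arc $b\in\mathcal D$, and $b$ is not collinear with $a$ (else a ray of $b$ at $p$ would overlap the relative interior of $a$, violating interior-disjointness), so $C_b\neq C_a$ and the two rays of $b$ emanating from $p$ lie on opposite sides of $C_a$; in particular one of them enters the free side, so near $p$ the tile $T$ is a circular sector bounded by a ray of $a$ and a ray of $b$, of interior angle strictly less than $\pi$. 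Hence $p$ is a genuine vertex of $T$, and likewise $q$ is; therefore $a$ coincides with an edge of $T$. The one delicate point is the uniqueness of the free-side tile $T$ in the middle paragraph, and this is exactly where one-sidedness enters: in a general two-sided SD an arc may be hit from both sides, so that several tiles are strung along it and no single tile need have the whole arc as one of its edges.
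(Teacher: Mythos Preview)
The paper does not prove this proposition; it is imported from \cite{spherical2} with no argument given here. Your approach is correct and is the natural one: exploit one-sidedness to identify a free side of $a$, argue that a single tile $T$ runs along that whole side because nothing meets the relative interior of $a$ from there (so the locally-defined free-side tile $T_x$ is constant along the connected relative interior), and then check that $\partial T$ turns at each endpoint since the blocking arc $b$ is transversal to $C_a$ and sends a ray into the free side, capping the sector of $T$ at an angle strictly below~$\pi$.

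One small refinement worth noting: at an endpoint $p$ there may also be an \emph{overlap} arc $c$ (another arc sharing the endpoint $p$, hence also hitting $b$), in which case the sector of $T$ at $p$ could be bounded by $c$ rather than by the ray of $b$ you name. This is harmless: by the SOD property $c$ reaches $b$ from the same side as $a$, and the free-side ray of $b$ still lies at angle strictly less than $\pi$ from $a$, so the sector is capped below $\pi$ regardless. Your conclusion that $p$ and $q$ are genuine vertices of $T$, and hence that $a$ is an entire edge, stands.
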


\begin{proposition}\label{p:eyeconv2}
In an SOD $\mathcal D$, let $\mathcal S$ be a swirl of degree $d$, and let $E$ be its eye. Then, the tiles of $\mathcal D$ adjacent to $E$ are exactly $d$; any two such tiles are either disjoint or intersect only along a single arc of $\mathcal S$.\qed
\end{proposition}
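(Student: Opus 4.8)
The plan is to read off the $d$ surrounding tiles from the boundary of the eye and to use one-sidedness to pin down their structure. By \cref{p:eyeconv1}, $E$ is a spherically convex $d$-gon whose $d$ edges $g_1,\dots,g_d$ lie on the $d$ distinct arcs $a_1,\dots,a_d$ of $\mathcal S$, where $g_i\subseteq a_i$ and the vertices of $E$ are the points $f_i$ at which the endpoint of $a_{i-1}$ lands in the interior of $a_i$. Since each $f_i$ is interior to $a_i$, the edge $g_i$ is a \emph{proper} sub-arc of $a_i$: the arc $a_i$ continues past $f_i$ beyond $E$.

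First I would identify the tile lying across each edge $g_i$. By \cref{p:tileedge}, the whole arc $a_i$ is a single edge of some tile $T_i$, necessarily situated on the side of $a_i$ from which no arc lands (the ``clean'' side); on the opposite side, $a_i$ is subdivided at its landing points. I claim $E$ lies on this opposite, ``hit'' side of every $a_i$. Indeed, if $E$ were on the clean side, then $E$ would be the unique tile bordering $a_i$ there, forcing $E$ to border all of $a_i$ and contradicting $g_i\subsetneq a_i$. Hence across $g_i$ the eye is adjacent exactly to the clean-side tile $T_i$. This already exhibits at most $d$ candidate neighbours $T_1,\dots,T_d$, and it is precisely here that one-sidedness is essential: in a general SD the clean side could itself be hit, so the neighbour across $g_i$ need not be unique.

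Next I would analyse the local picture at each vertex $f_{i+1}$, where $a_i$ terminates and $a_{i+1}$ passes through. The three arc-germs emanating from $f_{i+1}$ (the two eye-edges $g_i,g_{i+1}$ and the continuation of $a_{i+1}$ beyond $f_{i+1}$) cut a neighbourhood of $f_{i+1}$ into three sectors, occupied respectively by $E$, by $T_i$, and by $T_{i+1}$. Thus no tile other than $T_i,T_{i+1}$ meets $E$ at $f_{i+1}$, so the tiles adjacent to $E$ are exactly $T_1,\dots,T_d$; moreover $T_i$ and $T_{i+1}$ lie on opposite sides of the continuation of $a_{i+1}$, whence they are distinct and their intersection is contained in the single swirl arc $a_{i+1}$.

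Finally I would prove that the $T_i$ are pairwise distinct and that non-consecutive ones are disjoint, which is the crux. The idea is that each $T_i$, being spherically convex (\cref{p:06}) with $a_i$ as a full edge, is confined to the closed clean-side hemisphere of the great circle through $a_i$, while the interior of $E$ sits strictly on the complementary (hit) side; the tiles therefore wrap once around the convex eye as a simple ``ring''. I expect the main obstacle to be ruling out a ``pinch'' in this ring, i.e.\ a contact between two non-consecutive tiles $T_i,T_j$ (the equality $T_i=T_j$ being the extreme case). I would argue by contradiction in the style of the region analysis in \cref{l:5nondeg}: a non-consecutive contact would split the remaining edges of $E$ into two groups whose surrounding tiles become trapped, and combining spherical convexity with the fact that no tile contains two antipodal points (\cref{p:tileanti}) yields a contradiction. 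Establishing this separation argument cleanly is the delicate part; once it is in place, the claim that consecutive tiles intersect only along a single arc of $\mathcal S$ follows directly from the vertex analysis above.
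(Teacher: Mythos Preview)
The paper does not actually prove this proposition: it is listed among the ``Properties of SODs'' in \cref{asec:previous} with a bare \textsf{\qedsymbol} and the blanket remark ``All proofs are found in~[spherical2].'' So there is no in-paper argument to compare your proposal against.

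That said, your outline is a reasonable reconstruction of how such a proof would go, and the first two thirds are essentially complete: using \cref{p:tileedge} to attach a single ``clean-side'' tile $T_i$ to each swirl arc $a_i$, arguing that $E$ must sit on the hit side (so that the neighbour across $g_i$ is exactly $T_i$), and doing the local three-sector analysis at each vertex $f_{i+1}$ are all correct and are indeed where one-sidedness enters.

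Where your write-up remains a genuine sketch is the final step: showing that the $T_i$ are pairwise distinct and that non-consecutive $T_i,T_j$ are disjoint. You acknowledge this is ``the delicate part'' and gesture at a ring/pinch contradiction ``in the style of \cref{l:5nondeg},'' but you do not carry it out; invoking \cref{p:tileanti} and convexity alone does not obviously rule out a non-consecutive contact, and the analogy with \cref{l:5nondeg} (which concerns attractor hulls in $5$-oriented SDs, not tiles around an eye) is loose. If you want a self-contained proof, this is the piece that still needs to be written; since the paper defers the whole proposition to~[spherical2], you would need to consult that reference to see how the separation argument is actually executed.
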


\begin{theorem}\label{t:graph2}
The swirl graph of any SOD is a simple planar bipartite graph with non-empty partite sets.\qed
\end{theorem}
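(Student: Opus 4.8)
The plan is to establish one geometric lemma about how arcs are hit inside a swirl and then read off all four required properties (simple, planar, bipartite, non-empty partite sets). Planarity and non-emptiness of both partite sets come essentially for free: \cref{t:graph1} already asserts that the swirl graph of any SD is planar and that every SD has at least one clockwise and at least one counterclockwise swirl. So once bipartiteness is proved using the clockwise/counterclockwise partition, both classes are automatically non-empty. The real content is therefore \emph{bipartiteness} and \emph{simplicity}, both of which I would derive from the same lemma, the only place where the one-sidedness of SODs is used.

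First I would prove the key lemma: in any SD, if $S$ is a swirl with eye $E$ and $a$ is one of its arcs, then the preceding arc of $S$ hits $a$, at an interior point of $a$, from the side on which $E$ lies. This follows from the structure of the eye in \cref{p:eyeconv1}. Writing the arcs of $S$ cyclically as $a_1,\dots,a_d$ with $v_i=a_i\cap a_{i+1}$ the endpoint of $a_i$ interior to $a_{i+1}$, the eye is the convex spherical polygon $v_1\cdots v_d$, each edge $v_{i-1}v_i$ lying on $a_i$; the preceding arc $a_{i-1}$ reaches $v_{i-1}$ along the eye-edge $v_{i-2}v_{i-1}$, i.e.\ from the eye side of $a_i$. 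Combined with one-sidedness, this yields the crucial consequence for an SOD: since all arcs hit a given arc $a$ from the same side, \emph{the eye of every swirl containing $a$ lies on that same fixed side of $a$}.

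Next I would record the orientation fact. Traversing the arcs of a clockwise swirl in cyclic order is exactly a right-handed walk, whose right-side region is the eye (\cref{yobs:right}); hence the eye lies to the right of the traversal direction on each arc. Because a right-handed walk is forward-deterministic, two clockwise swirls traversing a common arc in the same direction trace the same forward orbit and must coincide; thus two \emph{distinct} clockwise swirls sharing an arc $a$ traverse it in opposite directions and so have eyes on \emph{opposite} sides of $a$, contradicting the consequence above. Therefore no two distinct clockwise swirls share an arc, and symmetrically for counterclockwise swirls. Coloring each swirl by its orientation, every shared arc then joins a clockwise swirl to a counterclockwise one, giving a proper $2$-coloring: the swirl graph is bipartite, with both classes non-empty by \cref{t:graph1}.

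Finally, for simplicity I would rule out loops and multiple edges. No arc lies in a swirl twice, since the eye is a simple polygon by \cref{p:eyeconv1}, so there are no loops. For multiple edges, suppose two swirls shared two arcs; by the previous paragraph they are discordant, and by \cref{yp:adjswirl1} two discordant swirls sharing more than one arc are contiguous, hence have eyes on opposite sides of a common arc — again contradicting the consequence that both eyes lie on the same side of every shared arc. Hence the swirl graph is simple. I expect the main obstacle to be the careful geometric verification of the eye-side lemma and its interaction with one-sidedness (pinning down the ``same side'' conclusion exactly), together with making the forward-determinism step precise enough to conclude that equal traversal direction forces equal swirls; the remaining deductions are then routine bookkeeping.
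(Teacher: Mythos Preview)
The paper does not give its own proof of \cref{t:graph2}; the result is quoted from~\cite{spherical2} and closed with a bare~\qed. So there is nothing in the paper to compare your argument against directly.

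Your outline is sound. The eye-side lemma (the predecessor of $a$ in any swirl hits $a$ from the eye side) is correct and is the one place where one-sidedness enters; the consequence that every swirl-eye through $a$ lies on the single ``hit side'' of $a$ then drives both bipartiteness and simplicity. Your route to simplicity via \cref{yp:adjswirl1} is also valid: two discordant swirls sharing two arcs would be contiguous, hence have eyes on opposite sides of the arc along which their eyes are adjacent, contradicting the eye-side consequence. In particular this shows SODs have no contiguous swirls at all, consistent with the paper's remark that in SODs each arc lies in at most two swirls.

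The only step that deserves tightening is the forward-determinism claim. \cref{yobs:right} asserts that every right-handed walk encloses \emph{some} clockwise eye on its right; it does not by itself say that traversing a \emph{given} clockwise swirl in cyclic order is a right-handed walk. You still need the local check that at each eye-vertex the right turn continues along the eye boundary (which is true, but is a separate geometric observation). Once that is in place your argument goes through. Alternatively, you can bypass right-handed walks entirely: since in an SOD the eye of every swirl through $a$ sits on the fixed hit side, and ``clockwise'' means eye-on-right, the traversal direction of any clockwise swirl on $a$ is already forced; hence so is its successor arc, and uniqueness of the clockwise swirl through $a$ follows by induction along the cycle without invoking determinism of walks.
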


Since there are at least four swirls in an SOD (\cref{thm:swirls2}) and the swirl graph of an SOD is simple, planar and bipartite (\cref{t:graph2}), it easily follows that any SOD has at least eight arcs. A proof is found in~\cite{mini}.

\begin{corollary}\label{cor:8arcs}
Every SOD has at least eight arcs.\qed
\end{corollary}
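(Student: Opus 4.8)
The plan is to bound the number of arcs from below by a double-counting argument on the swirl graph. Let $\mathcal D$ be an SOD with $n$ arcs, and let $G$ be its swirl graph, whose vertices are the $s$ swirls of $\mathcal D$ and whose edges are the arcs shared by pairs of swirls. By \cref{thm:swirls2} we have $s\geq 4$, and by \cref{t:graph2} the graph $G$ is simple, planar, and bipartite, so it has at most $2s-4$ edges.

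Next I would record two elementary facts about swirls. Every swirl has degree at least $3$: a swirl of degree $1$ would be a single arc hitting itself, impossible by \cref{p:04}, and a swirl of degree $2$ would consist of two arcs each hitting the other, which forces two distinct common points and contradicts \cref{p:05}. Moreover, in an SOD every arc lies in at most two swirls (recalled in \cref{sec:swirl}), and since $G$ is simple, an arc gives an edge of $G$ precisely when it lies in exactly two swirls; hence $|E(G)|$ equals the number of arcs that lie in exactly two swirls.

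Then I would count swirl--arc incidences. Writing $e=|E(G)|$ and $e_1$ for the number of arcs lying in exactly one swirl, the sum of the degrees of all swirls equals $2e+e_1$; on the other hand it is at least $3s$ because each swirl has degree at least $3$. Therefore $e_1\geq 3s-2e$, and since $n$ is at least the number of arcs contained in at least one swirl, $n\geq e+e_1\geq 3s-e\geq 3s-(2s-4)=s+4\geq 8$.

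The only point needing a little care — and the closest thing to an obstacle — is the possibility that $3s-2e$ is negative, i.e.\ that the swirl graph is unusually dense relative to the number of arcs. In that regime the chain $n\geq e+e_1$ only yields $n\geq e$, but combining $e>3s/2$ with $e\leq 2s-4$ forces $s\geq 9$, whence $n\geq e>3s/2\geq 14$. Apart from this bookkeeping there is no real difficulty: the whole argument rests on the already-established facts that an SOD has at least four swirls and that its swirl graph is simple, bipartite, and planar, which is why the statement ``easily follows.''
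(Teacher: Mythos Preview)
Your proof is correct and follows exactly the approach the paper intends: the paper does not spell out a proof of \cref{cor:8arcs} but remarks that it ``easily follows'' from $s\geq 4$ (\cref{thm:swirls2}) and the swirl graph being simple, planar, and bipartite (\cref{t:graph2}), and the very same counting $n\geq 3s-e\geq s+4$ is used verbatim in the proofs of \cref{t:4nondeg} and \cref{t:5nondeg}. One small remark: your final ``bookkeeping'' case is unnecessary, since the inequality $e_1\geq 3s-2e$ holds unconditionally (it is just $2e+e_1\geq 3s$ rearranged), so the chain $n\geq e+e_1\geq 3s-e\geq s+4\geq 8$ is valid regardless of the sign of $3s-2e$.
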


\section{Sliding Walks}\label{asec:walks}
We will prove \cref{ylemma:right}; our technique is inspired by the proof of \cref{l:hemilemma} (cf.~\cite[Lemma~9]{mini}).

\begin{lemma}\label{lemma:right}
The right-side region (resp., left-side region) of any sliding walk on an SD $\mathcal D$ contains the eye of a clockwise swirl (resp., counterclockwise swirl) of $\mathcal D$.
\end{lemma}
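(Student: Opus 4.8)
The plan is to reduce the claim to \cref{yobs:right} by \emph{refining} the given sliding walk into a right-handed one that gets trapped inside the right-side region. By reflecting the sphere through a plane — which turns clockwise swirls into counterclockwise swirls and exchanges the two sides of a closed curve — it suffices to prove the statement for right-side regions and clockwise swirls. So let $w$ be a sliding walk on $\mathcal D$, let $C$ be its eventual simple closed sub-curve, and let $R$ be the right-side region of $C$. If $w$ turns right at every corner of $C$, then $w$ restricted to $C$ is a right-handed walk, and \cref{yobs:right} already tells us that $R$ is the eye of a clockwise swirl; hence we may assume that $C$ has at least one corner at which $w$ turns left. Now define an auxiliary right-handed walk $w'$ as follows: start it on $C$, moving in the same direction as $w$, and let it run along $C$ until it first reaches a corner of $C$ at which $w$ turns left. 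Since $w'$ is right-handed, there it turns right rather than left, and a short computation with the local picture at such a corner (which is a reflex corner of $R$) shows that $w'$ steps into the interior of $R$. From this moment on, $w'$ is an ordinary right-handed walk on $\mathcal D$.

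The core of the proof is the invariant that $w'$ never leaves $\overline R$. The key observation is that, because the arcs of $\mathcal D$ are internally disjoint, the trajectory of $w'$ can meet $C=\partial R$ only at a point that is either an endpoint of the arc currently traversed by $w'$ lying in the relative interior of an arc of $C$, or an interior point of the current arc — but the latter forces the current arc to be an arc of $C$ itself, so $w'$ is momentarily running along $C$. In the first situation $w'$ turns (to the right) onto an arc of $C$; a case analysis of the local geometry — distinguishing the convex corners of $R$, which are the right turns of $w$, from the reflex corners, which are its left turns, and using that $w'$ reaches $C$ from the $R$-side — shows that this right turn either continues along $C$ in the same direction as $w$ (so $R$ remains on $w'$'s right) or re-enters the interior of $R$. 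In the second situation the same bookkeeping shows that $w'$ either continues along $C$ consistently with $w$ or peels back off $C$ into the interior of $R$. In all cases $w'$ stays inside $\overline R$; and since $\mathcal D$ is finite, $w'$ eventually settles into a simple closed cycle $C'\subseteq\overline R$, which is distinct from $C$ because $w'$ turns right at the first left corner of $w$ whereas $C$ turns left there.

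By \cref{yobs:right}, the right-side region of $C'$ is the eye $E'$ of a clockwise swirl of $\mathcal D$. Finally, since $C'\subseteq\overline R$ and $w'$ traverses every stretch of $C'$ that lies on $C$ in the same direction as $w$ — hence with $R$ to its right — the bounded component that $C'$ cuts off on the right-hand side of $w'$ is contained in $R$; that is, $E'\subseteq R$. This exhibits a clockwise swirl whose eye is contained in $R$, completing the proof.

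I expect the main obstacle to be the local case analysis underpinning the trapping invariant: one must rule out $w'$ ever slipping across $C$, and the genuinely delicate configurations are the reflex corners of $R$, where $R$ occupies more than a half-disk locally and the two arcs meeting at the corner are the only nearby obstacles. The subtlety there is purely one of orientation bookkeeping — which direction along the hit arc counts as the right turn, and on which side of that arc $R$ lies — and once coordinates are fixed at such a corner, each subcase reduces to a routine angle comparison.
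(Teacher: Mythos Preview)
Your proposal is correct and follows essentially the same approach as the paper: launch a right-handed walk $w'$ from the boundary of the right-side region $R$, argue that $w'$ is trapped in $\overline R$, and then invoke \cref{yobs:right} to conclude that its eventual cycle bounds the eye of a clockwise swirl inside $R$. The paper's version is considerably terser---it dispatches the trapping invariant in a single sentence (``upon reaching the boundary of $A$, $w'$ follows it clockwise until it reaches one of its vertices; then it turns right, either remaining on the boundary of $A$ or entering its interior'') and does not separate out the case where $w$ is already right-handed---but the argument is the same, and your more explicit case analysis at reflex versus convex corners is exactly what underlies that sentence.
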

\begin{proof}
Let $w$ be any sliding walk and let $A$ be its right-side region. Consider a right-handed walk $w'$ starting from any point on the boundary of $A$, as shown in \cref{fig:1a}. Note that $w'$ never leaves $A$, because it is a right-handed walk: upon reaching the boundary of $A$, $w'$ follows it clockwise until it reaches one of its vertices. Then it turns right, either remaining on the boundary of $A$ or entering its interior. In particular, the right-side region $E$ of $w'$ is contained in $A$. By \cref{yobs:right}, $E$ is the eye of a clockwise swirl.

For a similar reason, the left-side region of $w$ contains the eye of a counterclockwise swirl.
\end{proof}

\section{Arc Doubling}\label{asec:doubling}
An \emph{overlap} in an SD is any endpoint shared by two arcs.\footnote{In~\cite{spherical2}, SODs with overlaps are called ``degenerate''. We adopted a different terminology in this paper to avoid confusion with degenerate $k$-oriented SDs.} Observe that an overlap must lie in the relative interior of some arc.

Let $\mathcal D$ be an SD, and let $a\in \mathcal D$. For a sufficiently small positive real number $\epsilon$, we define the \emph{$\epsilon$-doubling} of $a$. This operation consists of replacing $a$ with two disjoint arcs $a'$ and $a''$ defined as follows. Let $x$ and $y$ be the endpoints of $a$, located in the relative interiors of arcs $b\in\mathcal D$ and $c\in\mathcal D$, respectively. Then, $a'$ and $a''$ also hit $b$ at $x'$ and $x''$, respectively, such that the geodesic arc $x'x''$ has length $\epsilon$ and its midpoint is $x$. Similarly, $a'$ and $a''$ hit $c$ at points $\epsilon$ apart with midpoint $y$. Moreover, any arc of $\mathcal D$ that hit $a$ or shared and endpoint with $a$ is now slightly shortened to hit either $a'$ or $a''$.

\begin{observation}\label{obs:doubling}
$\epsilon$-doubling an arc of an SD preserves the number of its swirls.\qed
\end{observation}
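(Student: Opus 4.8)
The plan is to prove that, for every sufficiently small $\epsilon>0$, the $\epsilon$-doubling of an arc $a$ of an SD $\mathcal D$ induces a bijection between the swirls of $\mathcal D$ and those of the resulting SD $\mathcal D'$ that preserves both degree and orientation; preservation of the total number of swirls is then immediate. The first step is to record that $\epsilon$-doubling is a \emph{local} surgery: if $\epsilon$ is smaller than the least of the (finitely many, positive) distances in $\mathcal D$ between an arc and an arc-endpoint not lying on it, and between an arc-endpoint and the endpoints of the arc carrying it, then the $O(\epsilon)$-shortening of the finitely many arcs that hit $a$ or form an overlap with $a$ keeps every incidence point in the relative interior of its arc, changes no left/right turn, and replaces $a$ by two disjoint arcs $a'$, $a''$ lying on the two sides of the great circle through $a$, each arc that reached $a$ from one side now reaching the copy on that side. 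Hence every arc $e\neq a$ of $\mathcal D$ has a counterpart $\tilde e$ in $\mathcal D'$ with exactly the same hitting relations and turns against all arcs other than $a$, while $a$ corresponds to the unordered pair $\{a',a''\}$; let $\Sigma$ denote the single new tile of $\mathcal D'$ (cf.\ \cref{p:08}), enclosed between $a'$ and $a''$ and bounded by $a'$, $a''$, and short sub-arcs of the arcs $b$, $c$ that carry the endpoints of $a$.

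Given this, the forward direction is routine. If $S=(e_1,\dots,e_d)$ is a swirl of $\mathcal D$ (\cref{d:swirl}) not containing $a$, then $(\tilde e_1,\dots,\tilde e_d)$ carries the same cyclic data in $\mathcal D'$, hence is a swirl of the same degree and orientation; this is a bijection onto the swirls of $\mathcal D'$ avoiding $a'$ and $a''$. If $S$ contains $a$, traverse the boundary of its eye as $\cdots\to e\to a\to e'\to\cdots$; since $a$ hits only $b$ and $c$, we have $e'\in\{b,c\}$, and $e$ reaches $a$ from the side of $a$ on which the eye of $S$ lies. Let $a^\star\in\{a',a''\}$ be the copy on that side; then in $\mathcal D'$ the arc $e$ hits $a^\star$, $a^\star$ hits $e'$, and the two turns at the corners $e\,a^\star$ and $a^\star e'$ are unchanged, so $(S\setminus\{a\})\cup\{a^\star\}$ is a swirl of $\mathcal D'$ of the same degree and orientation, with its eye on the side of $a^\star$ facing away from $\Sigma$. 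Distinct swirls of $\mathcal D$ are sent to distinct swirls of $\mathcal D'$.

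It remains to prove surjectivity, i.e.\ that the surgery creates no new swirl, which amounts to two claims: (i) no swirl of $\mathcal D'$ uses both $a'$ and $a''$; and (ii) no swirl of $\mathcal D'$ has its eye on the side of $a'$, or of $a''$, facing $\Sigma$. Both are proved by analysing the local picture near $\Sigma$, using three ingredients: a swirl eye is a union of tiles of $\mathcal D'$ whose boundary is made up of the swirl's own arcs, so no arc of a swirl lies in the interior of its eye; eyes are spherically convex (\cref{p:eyeconv1}); and only the four arcs $a',a'',b,c$ are relevant near $\Sigma$. In particular, $\Sigma$ is not itself an eye, because along its boundary the hits are not consistently oriented ($a'$ hits $c$, but so does $a''$, rather than $c$ hitting $a''$); a swirl with eye facing $\Sigma$ across $a'$ would be forced to contain $\Sigma$, hence to bump into $a''$, the arcs $b$, $c$, and the constraints just listed; and claim (i) then follows because two disjoint $O(\epsilon)$-close arcs $a'$, $a''$ can bound a common region on one side each only if that region lies in the closure of $\Sigma$. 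Granting (i) and (ii), every swirl of $\mathcal D'$ through $a'$ has its eye on the far side of $a'$, and un-doubling $a'\mapsto a$ reverses the forward construction, yielding a swirl of $\mathcal D$ through $a$; this completes the bijection.

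The step I expect to be the main obstacle is exactly this surjectivity: carrying out the local case analysis behind claims (i) and (ii) — that the thin new tile $\Sigma$ and the near sides of $a'$ and $a''$ contain no swirl eye — together with the bookkeeping of which endpoint and side of $a$ a given swirl uses (an arc of an SD may belong to as many as four swirls) and the consistent rerouting of overlap arcs at the endpoints of $a$.
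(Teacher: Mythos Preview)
The paper gives no proof of this statement: it is recorded as an ``Observation'' and closed immediately with \qed, so there is nothing to compare your argument against. Your approach---setting up an explicit bijection between swirls of $\mathcal D$ and of $\mathcal D'$ by replacing $a$ with the copy $a^\star$ on the side of the eye---is the natural one and is already far more detailed than anything the paper offers.

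Your own diagnosis of the weak spot is accurate: the surjectivity direction, claims (i) and (ii), is where the content lies, and your present write-up only sketches it. One clean way to dispatch (i) is to note that in a swirl each arc hits exactly one other arc of the swirl and is hit by exactly one; since $a'$ and $a''$ both hit only $b$ and $c$, a swirl containing both would force $\{b,c\}$ into the swirl as the targets of $a'$ and $a''$, and then the eye, being spherically convex and having edges along all four of $a',a'',b,c$, would be trapped in the $O(\epsilon)$-thin quadrilateral $\Sigma$---but you have already shown $\Sigma$ cannot be an eye because the hits along its boundary are not consistently cyclic. Claim (ii) then follows similarly: an eye on the $\Sigma$-side of $a'$ is bounded by $a'$ and must be contained in the half-sphere on that side of the great circle through $a'$, hence meets $a''$ only along $\Sigma$, forcing the same contradiction. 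With those two paragraphs filled in, your proof is complete and, again, strictly more than the paper provides.
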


\begin{proposition}\label{p:doubling}
Any SD, possibly with overlaps, can be converted into an SOD without overlaps by $\epsilon$-doubling of arcs.
\end{proposition}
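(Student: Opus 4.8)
The plan is to remove all overlaps first and then all violations of one-sidedness, each in a finite sequence of $\epsilon$-doublings controlled by a strictly decreasing nonnegative integer. The facts about $\epsilon$-doubling an arc $a$ (for $\epsilon$ small enough) that I would rely on are: the result is again an SD; the two new arcs $a'$, $a''$ hit the same two arcs that $a$ hit, at fresh points in their relative interiors within distance $\epsilon$ of the old endpoints of $a$ and on opposite sides of $a$; every arc that hit $a$ or shared an endpoint with $a$ is shortened so as to hit $a'$ or $a''$ at a fresh point in its relative interior; and, by \cref{obs:doubling}, the number of swirls is unchanged, so the final SOD without overlaps has the same number of swirls as the original SD. I would also use that, since the arcs of an SD are pairwise interior-disjoint, every point of their union lies in the relative interior of a \emph{unique} arc, and that, by \cref{p:04}, each arc hits two \emph{distinct} arcs.

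Phase~1 removes overlaps. While the current SD has an overlap, I would pick an overlap point $x$; it lies in the relative interior of a unique arc $b$, so every arc with an endpoint at $x$ hits $b$ there. Picking one such arc $a$ and $\epsilon$-doubling it replaces $a$ by $a'$, $a''$ hitting $b$ at two fresh points near $x$, and every other arc that ended at $x$ (having shared that endpoint with $a$) now hits $a'$ or $a''$; by \cref{p:05}, distinct such arcs reach $a'$ and $a''$ along distinct great circles, hence at distinct points, and for $\epsilon$ small enough all the new endpoints are pairwise distinct and distinct from every pre-existing endpoint. Thus no arc ends at $x$ any more, the same happens at the other endpoint of $a$, and no new overlap appears anywhere; so this step strictly decreases the number of overlap points, and Phase~1 terminates with an SD $\mathcal D'$ free of overlaps.

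Phase~2 removes one-sidedness violations. While $\mathcal D'$ has an arc $b$ hit from both sides, I would $\epsilon$-double $b$. Because $\mathcal D'$ has no overlaps, the arcs hitting $b$ do so at pairwise distinct interior points, and each of them is shortened to hit the copy of $b$ lying on its own side; hence one copy of $b$ inherits exactly the arcs hitting $b$ from one side and the other copy exactly those hitting it from the other side, so each copy is hit from a single side only. No overlap is created: the endpoints of $b$ lie in the relative interiors of two distinct arcs $f\neq g$, not shared with any other arc, and after the doubling $f$ and $g$ are each hit by the two copies of $b$ at fresh points, while the hitters of $b$ land on the copies of $b$ at fresh, pairwise distinct points. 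No new arc is hit from both sides either: the two copies of $b$ are not, and every other arc keeps the same set of sides from which it is hit, since $f$ and $g$ are now hit by the two copies of $b$ on the side from which $b$ used to hit them, and the set of arcs hitting any hitter of $b$ is unaffected. So this step strictly decreases the number of arcs hit from both sides, Phase~2 terminates, and the outcome is an SOD without overlaps.

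The part I expect to require the most care is the local geometry around a doubled arc: verifying, for all sufficiently small $\epsilon$, that every newly created endpoint is fresh and distinct from the others, that the arcs hitting a doubled arc genuinely split between its two copies according to the side they come from, and that the arrangement remains an SD at each step. Granting these local facts, each phase is driven by a strictly decreasing nonnegative integer and therefore halts, which completes the argument.
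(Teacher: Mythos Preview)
Your argument is correct and follows essentially the same approach as the paper: eliminate overlaps by $\epsilon$-doubling arcs that participate in them, then eliminate two-sidedness by $\epsilon$-doubling each arc that is hit from both sides. The paper's proof is a two-sentence sketch of exactly these two observations; you supply the explicit two-phase structure, the termination counters, and the verification that no new overlaps or two-sided arcs are introduced along the way.
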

\begin{proof}
Observe that $\epsilon$-doubling an arc eliminates any overlaps involving that arc without introducing new overlaps, provided that $\epsilon$ is sufficiently small. Furthermore, if an arc is hit from both sides, the $\epsilon$-doubling operation replaces it with two arcs, each of which is hit only from one side.
\end{proof}

\section{Swirl Adjacency}\label{asec:swirl}
These are the missing proofs from \cref{sec:swirl}.

\begin{proposition}\label{p:contiguous}
In any SD, pairs of contiguous swirls share exactly two arcs.
\end{proposition}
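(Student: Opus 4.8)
The plan is to analyze the local structure at the shared endpoint. By definition, two contiguous swirls $S_1$ and $S_2$ have eyes $E_1$ and $E_2$ that are adjacent along a common arc $a\in\mathcal D$ and also share an endpoint of $a$; call this endpoint $v$. Since $v$ is an endpoint of $a$, there is a (unique) arc $b\in\mathcal D$ blocked-from $a$ at $v$, i.e.\ $v$ lies in the relative interior of $b$, and $a$ hits $b$ at $v$. First I would observe that $b$ is hit at $v$ and, since the eyes $E_1$ and $E_2$ both have $v$ as a vertex and lie on opposite sides of $a$ near $v$, the boundary of each eye near $v$ must continue along $b$: more precisely, one of the two sub-arcs of $b$ emanating from $v$ bounds $E_1$, and the other bounds $E_2$. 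This is forced because $E_1$ and $E_2$ are spherically convex (\cref{p:eyeconv1}), so their boundaries cannot bend back, and the only arcs incident to $v$ are $a$ and $b$ (recall from \cref{p:04} that $a$ hits exactly one arc at $v$). Hence $a$ and $b$ are two arcs shared by $S_1$ and $S_2$.

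Next I would show there are no further shared arcs. Suppose $S_1$ and $S_2$ shared a third arc $c\neq a,b$. Then $c$ is an edge of both $E_1$ and $E_2$, so $E_1$ and $E_2$ both touch $c$; combined with the fact that they already share the edge $a$, the union $E_1\cup E_2\cup a$ would have a boundary that meets $c$ in a way that either violates convexity of one of the eyes or forces $E_1$ and $E_2$ to overlap in their interiors. The cleanest way to rule this out is via \cref{p:eyebound}: if $E_1$ and $E_2$ have intersecting interiors then their boundaries are disjoint (contradicting that they share $a$), so their interiors are disjoint; then $E_1\cup E_2$ is a region bounded by arcs, and the two eyes meet exactly along the common edge $a$ together with possibly the single vertex $v$ — a third common edge $c$ would create a second interface, which is incompatible with $E_1$ and $E_2$ being on opposite sides of $a$ and meeting only at $a$ (and $v$). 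Making this precise is the step that needs the most care: I would argue that walking along $\partial E_1$ starting along $a$, one immediately leaves $a$ at an endpoint of $a$ into $b$ (on both ends, or wraps via $v$), and the same for $\partial E_2$, so the shared portion of $\partial E_1$ and $\partial E_2$ is exactly $a\cup\{v\}$, hence the only common arcs are $a$ and $b$.

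Finally I would assemble the count: $a$ and $b$ are both shared, and nothing else is, giving exactly two shared arcs. I expect the main obstacle to be the rigorous verification that the two eye-boundaries do not share a third arc; this is intuitively clear from convexity and the picture in \cref{fig:adjacency}, but turning ``the eyes sit on opposite sides of $a$ and touch at $v$'' into a watertight combinatorial argument requires carefully tracking which sub-arcs of $b$ bound which eye and invoking \cref{p:eyeconv1,p:eyebound} to forbid any additional coincidence of the boundaries.
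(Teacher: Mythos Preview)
Your ``at least two'' direction is fine: from the definition of contiguous, the shared vertex $v$ is an endpoint of $a$, hence the point where $a$ hits $b$, so $b$ is the successor of $a$ in both swirl cycles and therefore belongs to both swirls. The paper takes this for granted and starts directly from ``$S_1$ and $S_2$ share $a$ and $b$''.

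The gap is in your ``at most two'' direction. You try to control the set $\partial E_1\cap\partial E_2$ and then infer the set of shared arcs from it, but these are different objects. The arc $b$ itself shows why: $b$ is shared, yet its contribution to $\partial E_1\cap\partial E_2$ is just the single point $v$, because the $b$-edge of $E_1$ and the $b$-edge of $E_2$ are disjoint sub-arcs of $b$ on opposite sides of $v$. A hypothetical third shared arc $c$ could behave the same way --- contributing two disjoint sub-arcs to the two eye boundaries --- so establishing $\partial E_1\cap\partial E_2=a\cup\{v\}$ (which you also do not fully prove, only locally near $v$) would not rule $c$ out. Your appeal to \cref{p:eyebound} only shows the interiors of the eyes are disjoint, which does not help with this issue.

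The paper's argument is different and avoids this trap entirely. It passes to the four spherical lunes determined by the great circles through $a$ and $b$; by convexity each eye lies in a single lune, and the two eyes lie in the pair of lunes adjacent along $a$. From this geometric constraint one sees that a third shared arc $c$ would have to be the arc that $a$ hits at its endpoint $q$ opposite to $p$. But in any swirl the arc $a$ hits exactly one other arc of that swirl (its successor in the cycle), and that arc is already $b$, hit at $p$. Hence $c$ cannot belong to $S_1$, a contradiction. The key missing idea in your attempt is this combinatorial fact about swirl cycles --- that $a$ has a \emph{unique} successor in each swirl --- combined with the lune decomposition to pin down where $c$ would have to sit.
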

\begin{proof}
Let $S_1$ and $S_2$ be contiguous swirls sharing two arcs $a$ and $b$, with $a$ hitting $b$ at point $p$. The great circles containing $a$ and $b$ subdivide the unit sphere into four spherical lunes. Each of the eyes of $S_1$ and $S_2$, being spherically convex, lies in one such spherical lune; moreover, the two eyes lie in lunes that are adjacent along $a$. Thus, if there is a third arc $c$ shared by $S_1$ and $S_2$, then $a$ must hit $c$ at the endpoint opposite to $p$. However, any arc of a swirl hits exactly one other arc in the same swirl, meaning that $c$ cannot be shared by $S_1$ and $S_2$.
\end{proof}

\begin{corollary}\label{c:contiguous}
If two swirls $S_1$ and $S_2$ of an SD $\mathcal D$ are contiguous along an arc $a\in\mathcal D$, then $\epsilon$-doubling $a$ separates the eyes of $S_1$ and $S_2$ and results in a swirl graph where $S_1$ and $S_2$ are connected by a single edge.
\end{corollary}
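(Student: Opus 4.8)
The plan is to combine \cref{p:contiguous}, \cref{obs:doubling}, and a local analysis of the doubling operation around the shared arc. First I would fix notation via \cref{p:contiguous}: $S_1$ and $S_2$ share exactly two arcs, $a$ and $b$, with $a$ hitting $b$ at a point $p$. As in the proof of that proposition, the great circles containing $a$ and $b$ cut the sphere into four lunes, and the eyes $E_1$ and $E_2$ of $S_1$ and $S_2$ lie in two lunes adjacent along $a$; thus $E_1$ and $E_2$ are on opposite sides of the great circle containing $a$ and on the same side of the great circle containing $b$. Using that the arcs of a swirl meet only at the vertices of its eye (\cref{p:eyeconv1}), this forces $\partial E_1\cap\partial E_2$ to be exactly a closed sub-arc of $a$ having $p$ as an endpoint: along $a$ the two eyes abut on opposite sides, while at $p$ their $b$-edges run along the two opposite rays of $b$ issuing from $p$, so the eyes share no boundary point of $b$ other than $p$, and none away from $a$.

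Next I would trace the $\epsilon$-doubling of $a$. By \cref{obs:doubling} it preserves the number of swirls; concretely, every swirl avoiding $a$ is unchanged, and every swirl through $a$ turns into the same cyclic sequence of arcs with $a$ replaced by whichever of the two new disjoint arcs $a'$, $a''$ lies on the side of its (unchanged) eye. Since $E_1$ and $E_2$ are on opposite sides of $a$, one of them is now bounded by $a'$ and the other by $a''$; say $a'\in S_1$ and $a''\in S_2$, with $a'\notin S_2$ and $a''\notin S_1$. The only arcs created or altered by the doubling are $a'$, $a''$ and the (slightly shortened) arcs incident to $a$, none of which changes its swirl membership, and $b$ is left untouched. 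Hence, in the resulting SD, $S_1\cap S_2=\{b\}$, so the new swirl graph has exactly one edge between $S_1$ and $S_2$.

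Finally I would verify that the eyes become disjoint. The doubling replaces the sub-arc of $a$ that constituted $\partial E_1\cap\partial E_2$ by the two disjoint arcs $a'\subseteq\partial E_1$ and $a''\subseteq\partial E_2$, and it splits the former common vertex $p$ into two distinct points $x'$ and $x''$ on $b$, a distance $\epsilon$ apart, where $a'$ and $a''$ respectively hit $b$, with $x'\in\partial E_1$ and $x''\in\partial E_2$; hence $\partial E_1\cap\partial E_2=\emptyset$, the two eyes being separated by the thin new tile that the doubling inserts between them. I expect the only delicate step to be the bookkeeping near $p$ — checking that the ``unzipping'' of $a$ genuinely detaches the two eyes and creates no new incidence between arcs of $S_1$ and arcs of $S_2$ — but this follows from the explicit description of the $\epsilon$-doubling operation together with \cref{p:eyeconv1}.
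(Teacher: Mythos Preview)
Your proposal is correct and follows essentially the same approach as the paper: invoke \cref{p:contiguous} to conclude that $S_1$ and $S_2$ share exactly the two arcs $a$ and $b$, then observe that $\epsilon$-doubling $a$ leaves only $b$ in common. The paper's proof is just those two sentences; your version is considerably more detailed, explicitly verifying the ``separates the eyes'' clause (which the paper leaves implicit) and tracking the local effect of the doubling near $p$ via \cref{p:eyeconv1} and \cref{obs:doubling}, but the underlying argument is the same.
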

\begin{proof}
By \cref{p:contiguous}, the swirls $S_1$ and $S_2$ share exactly two arcs $a$ and $b$. Thus, $\epsilon$-doubling $a$ yields two swirls that share only $b$.
\end{proof}

\begin{proposition}\label{p:noncontiguous}
In any SD, non-contiguous swirls may only share arcs that are not consecutive in either swirl.
\end{proposition}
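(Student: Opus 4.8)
The plan is to prove the contrapositive: if two swirls $S_1$ and $S_2$ of an SD share two arcs $a$ and $b$ that are consecutive in $S_1$, then $S_1$ and $S_2$ are contiguous; the case of arcs consecutive in $S_2$ then follows by symmetry. After relabeling, ``consecutive in $S_1$'' means that $a$ hits $b$ at a point $p$, so $p$ is an endpoint of $a$ lying in the relative interior of $b$, and, by \cref{p:eyeconv1}, $p$ is a vertex of the eye $E_1$ of $S_1$, incident there to edges of $E_1$ running along $a$ and along $b$. By \cref{p:04}, $a$ hits exactly two arcs: $b$ (at $p$) and some arc $c$ (at the other endpoint $s$ of $a$). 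Since overlaps can be removed by $\epsilon$-doubling without changing the number of swirls or the swirl-adjacency pattern near $a$ and $b$ (\cref{obs:doubling,p:doubling}), it is harmless to assume $p$ lies on no arc other than $a$ and $b$.

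The first main step is to show that $a$ and $b$ are consecutive in $S_2$ as well, i.e.\ that $p$ is a vertex of the eye $E_2$ of $S_2$. Both $a$ and $b$ contribute an edge to $E_2$ — call them $e_a \subseteq a$ and $e_b \subseteq b$ — and, $E_2$ being spherically convex, the great circle carrying $b$ meets $E_2$ exactly along $e_b$ (similarly for $a$ and $e_a$). Suppose for contradiction that $a$ and $b$ are \emph{not} consecutive in $S_2$. Then the successor of $a$ in $S_2$ is $c$, so $e_a$ runs from $s$ to an interior point of $a$ and misses $p$; hence $p \notin E_2$, and therefore $e_b$ lies strictly between $p$ and one endpoint $u'$ of $b$, where $u'$ is the vertex of $E_2$ at which $b$ hits its successor in $S_2$. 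Comparing the position of $e_b$ along $b$ with that of the edge of $E_1$ incident to $p$ along $b$ (which runs from $p$ to an endpoint $u$ of $b$), and using that eyes with intersecting interiors have disjoint boundaries (\cref{p:eyebound}), one is forced into a configuration in which $E_1$ and $E_2$ are adjacent along a sub-arc of $b$ and share a vertex at an endpoint of $b$; but then \cref{p:contiguous} forces $S_1$ and $S_2$ to share exactly two arcs, one of which is the successor of $b$ in $S_1$ — an arc distinct from $a$, since swirls have degree at least three — contradicting that $\{a,b\}$ is already the shared pair. Hence $p$ is a vertex of $E_2$.

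With $p$ a common vertex of $E_1$ and $E_2$, the argument finishes by a local analysis at $p$: the arc $b$ passes through $p$ and the arc $a$ emanates from $p$, cutting a neighborhood of $p$ into three sectors, exactly two of which have the ray of $a$ on their boundary. The corner of $E_1$ at $p$ (bounded by its edges along $a$ and $b$) must be one of these two sectors, and so must the corner of $E_2$; if they were the same sector, the interiors of $E_1$ and $E_2$ would overlap near $p$ while both boundaries contain $p$, contradicting \cref{p:eyebound}. So the two corners lie on opposite sides of the ray of $a$, which means precisely that $E_1$ and $E_2$ are adjacent along an initial sub-arc of $a$ while both having the endpoint $p$ of $a$ as a vertex; that is, $S_1$ and $S_2$ are contiguous.

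I expect the real obstacle to be the first step — excluding the possibility that $a$ and $b$ are non-consecutive in $S_2$ — since there one must track the positions of both eyes relative to the shared arc $b$ and dispatch the cases according to which endpoints of $a$ and of $b$ play the role of eye vertices; the concluding sector argument at $p$ is then routine. The reduction to SDs without overlaps is only a bookkeeping point, handled by \cref{obs:doubling,p:doubling}.
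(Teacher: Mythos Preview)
The overall shape --- prove the contrapositive, first show that $p$ is a vertex of $E_2$, then deduce contiguity at $p$ --- matches the paper, but you have vastly overworked the first step and left it incomplete. The paper dispatches it in one line via the second bullet of \cref{p:eyeconv1}: since $a$ and $b$ both belong to the swirl $S_2$ and intersect at $p$, and since \emph{the only points of intersection between pairs of arcs of a swirl are the vertices of its eye}, $p$ is automatically a vertex of $E_2$. No case analysis, no appeal to \cref{p:eyebound} or \cref{p:contiguous}, no tracking of which endpoint of $b$ serves as which eye-vertex.

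Your substitute argument for this step is also not airtight as written. When you say ``one is forced into a configuration in which $E_1$ and $E_2$ are adjacent along a sub-arc of $b$ and share a vertex at an endpoint of $b$'', you have not handled the case $u\neq u'$, in which the $E_1$-edge and the $E_2$-edge along $b$ lie on opposite sides of $p$ and need not overlap at all; nor have you shown that the two eyes lie on \emph{opposite} sides of $b$ near that shared vertex, which is what ``adjacent'' (and hence ``contiguous'') requires. The concluding contradiction also leans on $\{a,b\}$ being the complete set of shared arcs, which was never assumed. These gaps could perhaps be patched, but all of the work is avoided by the one-line use of \cref{p:eyeconv1}. The $\epsilon$-doubling reduction is likewise unnecessary here. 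Your second step --- the local sector argument at $p$ using \cref{p:eyebound} --- is correct and makes explicit what the paper's terse ``It follows that $S_1$ and $S_2$ are contiguous'' leaves to the reader.
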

\begin{proof}
Assume that two swirls $S_1$ and $S_2$ share two arcs $a$ and $b$ that are consecutive in $S_1$, with $a$ hitting $b$ at a vertex $p$ of the eye of $S_1$. By \cref{p:eyeconv1} applied to $S_2$, we have that $p$ must be a vertex of the eye of $S_2$, as well. It follows that $S_1$ and $S_2$ are contiguous.
\end{proof}

\begin{corollary}\label{c:noncontiguous}
In any SD, a swirl of degree $d$ may share at most $\lfloor d/2\rfloor$ arcs with the same non-contiguous swirl.
\end{corollary}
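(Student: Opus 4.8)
The plan is to derive this purely combinatorially from \cref{p:noncontiguous}. Fix an SD, a swirl $S_1$ of degree $d$, and a non-contiguous swirl $S_2$. By \cref{d:swirl}, the arcs of $S_1$ form a cyclic sequence $a_1, a_2, \dots, a_d$ (indices mod $d$) in which $a_i$ hits $a_{i+1}$; by \cref{p:eyeconv1}, the only intersection points among arcs of $S_1$ are the vertices of its eye, so this cyclic structure is unambiguous and each $a_i$ is ``consecutive'' in $S_1$ precisely with $a_{i-1}$ and $a_{i+1}$.

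Next I would invoke \cref{p:noncontiguous}: since $S_1$ and $S_2$ are non-contiguous, any two arcs shared by $S_1$ and $S_2$ are not consecutive in $S_1$. In other words, the set of shared arcs, viewed as a subset of $\{a_1,\dots,a_d\}$, contains no two cyclically adjacent elements — it is an independent set in the cycle graph $C_d$ on these $d$ arcs.

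The final step is the elementary fact that a maximum independent set in $C_d$ has size $\lfloor d/2\rfloor$: one cannot pick more than every other vertex around a cycle of length $d$. Hence $S_1$ shares at most $\lfloor d/2\rfloor$ arcs with $S_2$, which is the claim.

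There is essentially no obstacle here; the statement is an immediate consequence of \cref{p:noncontiguous} together with the independent-set bound for cycles. The only point requiring a moment's care is making sure the notion of ``consecutive in $S_1$'' used in \cref{p:noncontiguous} really corresponds to cyclic adjacency in $C_d$, which is guaranteed by the structure of a swirl and its eye from \cref{p:eyeconv1}.
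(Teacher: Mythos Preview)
Your proposal is correct and is essentially the paper's own argument: both invoke \cref{p:noncontiguous} to rule out consecutive shared arcs and then use the elementary bound on independent sets in $C_d$ (the paper phrases it contrapositively as ``any subset of more than $\lfloor d/2\rfloor$ arcs contains a pair of consecutive arcs'').
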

\begin{proof}
In a cycle of $d$ arcs, any subset of more than $\lfloor d/2\rfloor$ arcs contains a pair of consecutive arcs. If these arcs were shared with the same non-contiguous swirl, \cref{p:noncontiguous} would be contradicted.
\end{proof}

\begin{proposition}\label{p:adjswirl1}
In an SD, let $S_1$ and $S_2$ be two swirls that share more than one arc. The following statements are equivalent.
\begin{itemize}
\item $S_1$ and $S_2$ are not contiguous.
\item $S_1$ and $S_2$ are concordant.
\item The eyes of $S_1$ and $S_2$ have antipodal interior points.
\end{itemize}
\end{proposition}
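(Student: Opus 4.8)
Every pair of swirls sharing at least two arcs is either contiguous or not, so it suffices to show that in the contiguous case all three conditions fail, and in the non-contiguous case all three hold. Write $E_1,E_2$ for the eyes of $S_1,S_2$.

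\emph{Contiguous case.} By \cref{p:contiguous} the shared arcs are exactly two, say $a$ and $b$, with $a$ hitting $b$ at a point $p$; by \cref{p:05,p:eyeconv1}, $p$ is the unique intersection of $a$ and $b$, and since $a,b$ lie in both swirls it is a common vertex of $E_1$ and $E_2$. The great circles $\gamma_a,\gamma_b$ carrying $a$ and $b$ cut the sphere into four lunes with apexes $p$ and $-p$, and (as in the proof of \cref{p:contiguous}) $E_1$ and $E_2$ lie in the two lunes adjacent along $a$: hence on opposite closed sides of $\gamma_a$ but on one common closed side of $\gamma_b$. Since each eye meets $\gamma_b$ only along its edge on $b$, their interiors lie in one open hemisphere, so they have no antipodal interior points. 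Moreover, as $p$ is a vertex of both eyes at which the arcs $a,b$ meet, both swirls are traversed from $a$ to $b$, hence along $a$ toward $p$; the side of $\gamma_a$ on which an eye lies relative to this common direction determines the swirl's chirality, so --- the two eyes being on opposite sides of $\gamma_a$ --- the swirls are discordant.

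\emph{Non-contiguous case.} By \cref{p:noncontiguous} the shared arcs $a,b$ are non-consecutive in each swirl, hence disjoint by \cref{p:eyeconv1}. The workhorse is a \emph{successor analysis}: the arc hit by $a$ inside $S_1$ cannot belong to $S_2$, for otherwise it would be a shared arc consecutive to $a$ in $S_1$, contradicting \cref{p:noncontiguous}; symmetrically for predecessors and with $S_1,S_2$ interchanged. Since $a$ hits exactly two arcs (\cref{p:04}), $S_1$ and $S_2$ must exit $a$ at opposite endpoints, and likewise exit $b$ at opposite endpoints, so they traverse $a$ (and $b$) in opposite directions. I claim $E_1$ and $E_2$ lie on opposite sides of $\gamma_a$. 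If not, then by the side-versus-direction relation and the opposite traversals the swirls would be discordant, and the same argument applied to $b$ would force $E_1,E_2$ onto the same side of $\gamma_b$ as well, so that both eyes would sit in a single lune $L$ of $\gamma_a,\gamma_b$; since the exit endpoints of $a$ (resp.\ $b$) are eye-vertices (\cref{p:eyeconv1}) and the boundary semicircle $\gamma_a\cap L$ is geodesically convex, we would get $a\subseteq\gamma_a\cap L$ and $b\subseteq\gamma_b\cap L$, and one must then derive a contradiction from this rigid "two eyes spanning one lune" configuration. Granting the claim, the opposite traversals make $S_1$ and $S_2$ concordant. For the last condition, with $E_1,E_2$ on opposite sides of $\gamma_a$ and of $\gamma_b$, set $\{q,-q\}=\gamma_a\cap\gamma_b$: since $a$ carries edges of both eyes lying in opposite half-spaces of $\gamma_b$ it must cross $\gamma_b$, likewise $b$ crosses $\gamma_a$, and as $a\cap b=\emptyset$ one of $q,-q$ lies in the interior of $a$ and the other in the interior of $b$. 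Thus $E_1$ sits in a lune $L$ and $E_2$ in the antipodal lune $-L$; using that each arc is shorter than a great semicircle, together with the successor analysis identifying which endpoints of $a,b$ are eye-vertices, one checks that $E_1$ runs across $L$ from near one apex of $L$ on its $\gamma_a$-side to near the other apex on its $\gamma_b$-side, while the antipodal copy $-E_2\subseteq L$ crosses $L$ in the opposite sense; two such regions must intersect, so the interiors of $E_1$ and $-E_2$ meet, which says precisely that $E_1$ and $E_2$ have antipodal interior points.

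The main obstacle is the claim inside the non-contiguous case that the eyes lie on opposite sides of $\gamma_a$ --- equivalently, that a discordant non-contiguous pair cannot exist --- where the successor analysis reduces matters to ruling out two eyes spanning one lune of $\gamma_a,\gamma_b$ whose two boundary semicircles carry $a$ and $b$. The remaining ingredients (the lune bookkeeping, the crossing argument, and the orientation computations) are routine given \cref{p:04,p:05,p:eyeconv1,p:eyebound}.
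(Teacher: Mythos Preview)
Your argument has a genuine gap, and you flag it yourself: in the non-contiguous case you reduce the claim ``the eyes lie on opposite sides of $\gamma_a$'' to ruling out the configuration where both eyes sit in the \emph{same} lune $L$ (with $a\subseteq\gamma_a\cap\overline{L}$ and $b\subseteq\gamma_b\cap\overline{L}$), and then stop, saying only that ``one must then derive a contradiction.'' That missing step is the crux. One way to close it with your own tools: the chirality computation (same side $+$ opposite traversals $\Rightarrow$ discordant), applied to $a$ and to $b$, forces $p_1,q_1$ to lie near \emph{opposite} apexes of $L$ while $p_2,q_2$ lie near the other pair; hence the chords $p_1q_1\subseteq E_1$ and $p_2q_2\subseteq E_2$ alternate along $\partial L$ and must cross at some $x$. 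Since $a,b$ are non-consecutive in each swirl, $p_i$ and $q_i$ are non-adjacent vertices of $E_i$, so $x$ lies in the interior of both eyes; then \cref{p:eyebound} forces $\partial E_1\cap\partial E_2=\emptyset$, which for convex bodies in a hemisphere means one eye contains the other---impossible, as their edges on $a$ reach opposite endpoints of $a$.

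Your antipodal-points paragraph is likewise not a proof as written (``one checks that \dots two such regions must intersect''). The paper's argument there is concrete and short, and it is really the same crossing-diagonals idea: with $E_1$ in a lune $L_1$ and $E_2$ in the opposite lune $L_2=-L_1$, the endpoints $p_1,q_1$ of $a,b$ that are vertices of $E_1$ lie in $\overline{L_1}$, and the antipodes $p_2',q_2'$ of the other endpoints also lie in $\overline{L_1}$ (because $a,b$ are shorter than semicircles); the chords $p_1q_1\subseteq E_1$ and $p_2'q_2'$ cross at an interior point of $E_1$ whose antipode lies on $p_2q_2\subseteq E_2$. Note also that the paper reaches ``opposite lunes'' by a different route than your successor/chirality bookkeeping: it argues directly that eyes on opposite sides of $b$ but the same side of $a$ would make the swirls contiguous along $b$.
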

\begin{proof}
Contiguous swirls are obviously discordant. Also, their eyes lie on the same side of one of the two arcs shared by the two swirls. Hence, both eyes lie in the same hemisphere determined by that arc, and therefore cannot have antipodal (interior) points.

We have proved that the first statement is implied by the second and by the third. We will now prove the converse. Let $S_1$ be a swirl with eye $E_1$, and let $E'_1$ be the spherical polygon antipodal to $E_1$. Let $a$ and $b$ be two arcs that are shared between $S_1$ and a non-contiguous swirl $S_2$; by \cref{p:noncontiguous}, $a$ and $b$ are disjoint.

The two great circles containing $a$ and $b$ subdivide the unit sphere into four spherical lunes, one of which, say $L_1$, contains $E_1$. Assume that the lune containing the eye $E_2$ of $S_2$ is adjacent to $L_1$. Without loss of generality, $E_1$ and $E_2$ lie on the same side of $a$ and on opposite sides of $b$. However, this implies that $S_1$ and $S_2$ are contiguous along $b$, which is a contradiction.

We conclude that $E_2$ lies in the lune $L_2$ opposite to $L_1$. Note that $a$ and $b$, departing from $E_1$, follow the edges of $L_1$ in opposite directions. Then, they continue along the edges of $L_2$ in the same directions, implying that $S_1$ and $S_2$ are concordant swirls.

Let $p_1\in L_1$ and $p_2\in L_2$ be the endpoints of $a$, and let $q_1\in L_1$ and $q_2\in L_2$ be the endpoints of $b$. Since $p_1$ and $q_1$ are vertices of $E_1$, the segment $p_1q_1$ is contained in $E_1$. Likewise, $p_2q_2$ is contained in $E_2$. Since $a$ and $b$ are shorter than great semicircles, the points $p_2'$ and $q_2'$, antipodal to $p_2$ and $q_2$ respectively, lie outside of $a$ and $b$. Nonetheless, $p_2',q_2'\in L_1$. It follows that $p_1q_1$ and $p_2'q_2'$ cross each other at a point $x$ internal to $E_1$. The point $x'$ antipodal to $x$ lies in $p_2q_2$, and is therefore internal to $E_2$.
\end{proof}

\section{Attractors}\label{asec:attractors}
These are the missing proofs from \cref{sec:attractors}.

\begin{observation}\label{o:attractor}
Given any \emph{attractor} $A$ of a $k$-oriented SD $\mathcal D$, each arc of $\mathcal D$ has a unique vanishing point in $A$, and is collinear with it.\qed
\end{observation}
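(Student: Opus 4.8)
The plan is to simply unwind the definitions, the key observation being that the $k$ poles and their $k$ anti-poles organize into exactly $k$ antipodal pairs, and an attractor selects precisely one representative from each pair.

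First I would recall that, since no two of the poles $p_1,\dots,p_k$ of $\mathcal D$ are antipodal, the $2k$ points $p_1,\dots,p_k,-p_1,\dots,-p_k$ are distinct and form exactly the $k$ antipodal pairs $\{p_i,-p_i\}$, $1\le i\le k$. An attractor $A$ is, by definition, a $k$-element subset of these $2k$ points containing no two antipodal points. I would then argue by a counting/pigeonhole step that $A$ must meet each pair $\{p_i,-p_i\}$ in exactly one point: it cannot contain both (they are antipodal), and if it missed both for some $i$ it would be a subset of at most $k-1$ points, contradicting $|A|=k$.

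Next I would fix an arc $a\in\mathcal D$ and identify its vanishing points. By \cref{d:orient}, $a$ is collinear with its pole $f(a)=p_j$ for some $j$, so the vanishing points of $a$ are exactly $p_j$ and $-p_j$, which constitute one of the antipodal pairs above. By the previous paragraph, exactly one of $p_j,-p_j$ lies in $A$; call it $q$. This establishes that $a$ has a unique vanishing point in $A$, namely $q$. For the collinearity claim, I would note that $a$ lies on a great circle $\gamma$ through $f(a)=p_j$ (this is precisely the meaning of ``collinear with $f(a)$''); since every great circle through a point also passes through that point's antipode, $\gamma$ passes through $-p_j$ as well. Hence $a$ is collinear with both of its vanishing points, and in particular with $q$, completing the argument.

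I do not anticipate any substantive obstacle: the statement follows directly from the definition of an attractor together with the elementary fact that a great circle through a point contains its antipode. The only point deserving an explicit word is the ``exactly one representative per antipodal pair'' claim, and that is immediate by the counting argument above.
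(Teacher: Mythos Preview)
Your proposal is correct and matches the paper's intent: the paper treats this observation as immediate from the definitions (it is stated with a \qed and no argument), and what you have written is precisely the explicit unwinding of those definitions. The only content is your pigeonhole remark that an attractor selects exactly one point from each antipodal pair $\{p_i,-p_i\}$, which is indeed the one thing worth saying aloud.
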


\begin{proposition}\label{p:noncoll}
The $k$ poles of a $k$-oriented SD cannot be all collinear. Thus, all attractor hulls have non-empty interiors.
\end{proposition}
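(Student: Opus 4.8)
The statement to prove is Proposition~\ref{p:noncoll} (equivalently \cref{yp:noncoll}): the $k$ poles of a $k$-oriented SD cannot all be collinear, and consequently every attractor hull has non-empty interior.

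The plan is to argue by contradiction. Suppose all $k$ poles of a $k$-oriented SD $\mathcal D$ lie on a single great circle $\gamma$. Then for every arc $a\in\mathcal D$, the pole $f(a)$ lies on $\gamma$, and since $a$ is collinear with $f(a)$ but contains neither $f(a)$ nor its anti-pole, the great circle supporting $a$ passes through the two points $f(a)$ and $-f(a)$, both of which lie on $\gamma$. Two distinct great circles meet in exactly two antipodal points, so either the great circle supporting $a$ equals $\gamma$, or it meets $\gamma$ only at $\{f(a),-f(a)\}$. In the first case $a\subseteq\gamma$; in the second case $a$ lies in one of the two open hemispheres bounded by $\gamma$, since its supporting circle crosses $\gamma$ only at the two vanishing points, which $a$ avoids.

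Next I would extract a contradiction from the connectivity and crossing properties recalled in \cref{sec:previous}. Let $H$ and $H'$ be the two closed hemispheres bounded by $\gamma$. Every arc of $\mathcal D$ either lies along $\gamma$ or lies entirely in the open part of $H$ or of $H'$; in particular, no arc of $\mathcal D$ crosses $\gamma$ transversally. This already contradicts \cref{yp:3intercirc} (Proposition~\ref{yp:3intercirc}), which asserts that any great circle is crossed by at least three arcs of $\mathcal D$ — so no arc crossing $\gamma$ is impossible. If one prefers not to invoke \cref{yp:3intercirc}, the same conclusion follows from the more elementary \cref{p:semi}: the relative interior of each of the two great semicircles of $\gamma$ must be crossed by some arc, but an arc crossing a semicircle of $\gamma$ in its relative interior is an arc crossing $\gamma$, which we have just ruled out. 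Either way we reach a contradiction, so the $k$ poles cannot all be collinear.

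Finally, for the ``Thus'' clause: an attractor is a set of $k$ points chosen from the poles and anti-poles, one from each antipodal pair. I would note that if an attractor $A$ had all its points on a single great circle $\delta$, then each pole $p_i$ would lie on $\delta$ (since $p_i$ is either the chosen point of $A$ or antipodal to it, and $\delta$ is symmetric under the antipodal map), contradicting what we just proved. Hence no attractor is collinear; the spherical convex hull of $k\ge 3$ non-collinear points on the sphere is a spherical polygon with non-empty interior, so every attractor hull has non-empty interior. The only mild subtlety — and the step I would be most careful about — is the case analysis in the first paragraph distinguishing arcs supported on $\gamma$ from arcs meeting $\gamma$ only at their vanishing points; once that dichotomy is cleanly established, the contradiction via \cref{p:semi} or \cref{yp:3intercirc} is immediate.
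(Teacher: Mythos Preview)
Your proof is correct and follows essentially the same line as the paper's: both argue that if all poles lay on a great circle $\gamma$, then no arc could cross $\gamma$ (since any arc's supporting great circle already meets $\gamma$ at the two vanishing points, which the arc avoids), contradicting \cref{p:semi}. The paper phrases this more directly---it takes a crossing arc from \cref{p:semi} and shows its supporting circle must equal $\gamma$---while you first set up the dichotomy and then invoke the crossing lemma; note also that \cref{yp:3intercirc} appears after \cref{yp:noncoll} in the paper, so your fallback to \cref{p:semi} is the safe citation.
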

\begin{proof}
Assume for a contradiction that all poles of a $k$-oriented SD $\mathcal D$ (and therefore all its anti-poles) lie on the same great circle $\gamma$. Due to \cref{p:semi}, there is an arc $a\in\mathcal D$ that crosses $\gamma$, say at $x$. By definition of $k$-oriented SD, $a$ is collinear with a pole $f(a)\neq x$. However, since $f(a)$ and $x$ lie on $\gamma$, then so does $a$, contradicting the fact that $a$ crosses $\gamma$ at $x$.

Thus, no $k$ points among poles and anti-poles can be collinear, and therefore their spherical convex hull must have a non-empty interior. It follows that all attractors hulls have non-empty interiors.
\end{proof}

\begin{proposition}\label{p:2cross}
If an arc of a $k$-oriented SD intersects the interior of an attractor hull $H$, it intersects the boundary of $H$ in at most one point.
\end{proposition}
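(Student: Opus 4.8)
The plan is to exploit the only structural link between an arc and an attractor hull, namely \cref{yo:attractor}: every arc $a$ of $\mathcal D$ has a unique vanishing point $v$ belonging to the chosen attractor $A$, and $a$ is collinear with $v$, so $v$ lies on the great circle $\gamma$ supporting $a$; moreover, by \cref{d:orient}, $a$ contains neither $v$ nor its antipode. Since $v\in A$, we also have $v\in H$, and hence $v\in\gamma\cap H$. This single point is what will produce the contradiction.

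First I would dispose of the case in which $H$ is total: then $\partial H=\emptyset$ and there is nothing to prove. Otherwise $H$ is contained in the interior of a hemisphere, so its spherical convexity behaves like ordinary convexity and, in particular, $\gamma$ (having length $2\pi$) is not contained in $H$. Assuming $a$ — and hence $\gamma$ — meets $\operatorname{int}(H)$, convexity of $H$ gives that $\gamma\cap H$ is a nondegenerate closed geodesic arc $[p,q]$ whose relative interior lies in $\operatorname{int}(H)$; consequently $\partial H\cap\gamma=\{p,q\}$, and since $[p,q]\subseteq H$ sits inside an open hemisphere it has length less than $\pi$, hence it is the minor arc joining $p$ and $q$.

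Now $\partial H\cap a\subseteq\partial H\cap\gamma=\{p,q\}$, so the conclusion can fail only if both $p$ and $q$ lie on $a$. But $a$ is a connected sub-arc of $\gamma$ of length less than $\pi$ containing $p$ and $q$, hence it must contain the minor arc between them, which is exactly $[p,q]=\gamma\cap H$; in particular $v\in[p,q]\subseteq a$, contradicting \cref{d:orient}. Therefore $a$ meets $\partial H$ in at most one point.

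The step that needs the most care — and the only real obstacle — is the convexity argument that $\gamma\cap H$ is an arc whose relative interior is contained in $\operatorname{int}(H)$ and whose boundary trace on $\gamma$ is precisely its two endpoints. This is the standard fact that a line through the interior of a convex body meets it in a segment with interior relative interior; on the sphere it is legitimate only after reducing to the case where $H$ lies in an open hemisphere, so that ``spherically convex'' is the faithful analogue of affine convexity and $\gamma$ cannot be swallowed by $H$. Everything after that reduction is bookkeeping.
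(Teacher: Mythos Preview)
Your proof is correct and follows essentially the same approach as the paper's: both hinge on the vanishing point $v\in A\subseteq H$ lying on the supporting great circle $\gamma$, so that if $a$ met $\partial H$ at two points then $v$ would be trapped between them on $a$, contradicting \cref{d:orient}. The paper compresses the convexity step into the single phrase ``such points would be on opposite sides of $p$ (or coincident with $p$)'', whereas you spell out explicitly that $\gamma\cap H$ is a minor arc $[p,q]$ containing $v$ and that $a$, having length less than $\pi$, must contain this minor arc; this extra care is welcome but not a different idea.
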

\begin{proof}
By \cref{o:attractor}, any arc $a$ is collinear with a vanishing point $p$ within $H$. Also, by definition of $k$-oriented SD, $a$ does not contain $p$. It follows that, if $a$ intersects the interior of $H$, it cannot intersect its boundary in two points, because such points would be on opposite sides of $p$ (or coincident with $p$).
\end{proof}

\begin{proposition}\label{p:1inters}
If an arc of a $k$-oriented SD $\mathcal D$ intersects the boundary of an attractor hull $H$ at a point $x$, then there is an arc of $\mathcal D$ that crosses the boundary of $H$ at a point $y$, such that $x$ and $y$ are $H$-connected (with respect to $\mathcal D$) and lie on a same edge of $H$. Moreover, if $x\neq y$, then $y$ is not a vertex of $H$.
\end{proposition}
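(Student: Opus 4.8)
The plan is to perform a walk along the arcs of $\mathcal D$ that never leaves the (closed, spherically convex) region $H$, exploiting the convexity of $H$ together with \cref{p:2cross} --- an arc meeting the interior of $H$ meets its boundary in at most one point --- to force the walk to terminate at a genuine, transversal crossing of $\partial H$. I begin with the easy reductions. If $H$ is total then $\partial H$ is empty and the claim is vacuous, so I assume $H$ is non-total; by \cref{p:noncoll} it is then a spherically convex polygon contained in the interior of a hemisphere whose vertices are attractor points. Since $H$ is convex and every arc of $\mathcal D$ is a geodesic arc (hence spherically convex, being shorter than a semicircle), the intersection of any arc with $H$ is a single, possibly degenerate, sub-arc, and an arc meets any great circle not containing it in at most one point. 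I may also assume that $x$ lies in the relative interior of the arc $a$ meeting $\partial H$ at $x$: otherwise $x$ is an endpoint of $a$, hence lies in the relative interior of some arc blocking $a$, with which I continue.

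The second step is the transversal case, which yields $y=x$. Two distinct great circles cross transversally. Thus, if $x$ lies in the relative interior of an edge $e$ of $H$ and $a$ is not contained in the great circle $\gamma_e$ supporting $e$, then $a\cap\gamma_e=\{x\}$ and $a$ crosses $\gamma_e$ at $x$; since near $x$ the circle $\gamma_e$ coincides with $\partial H$ while the closed hemisphere bounded by $\gamma_e$ that contains $H$ coincides with $H$, the arc $a$ crosses $\partial H$ at $x$. Taking this $a$ as the witness and $y=x$ settles the case. I am left with two configurations: (a) $a\subseteq\gamma_e$, where $e$ is the edge containing $x$; or (b) $x$ is a vertex of $H$.

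In configuration (a) the plan is to slide along $e$. As $a$ and $e$ are sub-arcs of $\gamma_e$ with $x$ interior to both, $a\cap e$ is a non-degenerate sub-arc of $e$. If one endpoint $z$ of $a\cap e$ is interior to $e$, then $z$ is an endpoint of $a$, hence lies in the relative interior of a blocking arc $b$; note that $z$ is $H$-connected to $x$ along the sub-arc of $a$ joining them (which lies on $e\subseteq H$) and that $z$ and $x$ lie on the same edge $e$. If $b\not\subseteq\gamma_e$, the transversal case applies to $b$ at $z$ and gives the desired non-vertex crossing $y=z$; if $b\subseteq\gamma_e$, then $b$ extends the portion of $e$ covered by arcs of $\mathcal D$, and I repeat the step, a process that halts because $\mathcal D$ is finite. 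The only remaining possibility is that the covered portion of $e$ reaches a vertex of $H$, which I fold into configuration (b).

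Configuration (b) --- where $x$, or the slide above, reaches a vertex $v$ of $H$ --- is where I expect the main difficulty, because a great circle through $v$ and a second attractor point can pass through the interior of $H$, so an arc may cross $\partial H$ exactly at $v$, and one must move off $v$ to a non-vertex crossing on an edge incident to $v$. The key device is \cref{o:attractor}: every arc is collinear with, and avoids, a unique vanishing point of the attractor, and all these points lie in $H$. Following an arc through $v$, the side pointing toward its vanishing point runs into $H$ along the chord to that point while the opposite side exits the complement of $H$; by \cref{p:2cross} the arc, once inside, cannot meet $\partial H$ again and so ends at an endpoint interior to $H$, from which a blocking arc continues the walk --- always within $H$, by \cref{p:2cross}, and terminating by the finiteness of $\mathcal D$ --- until some arc crosses $\partial H$ transversally through the relative interior of an edge, which serves as the required $y$. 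The most delicate point, and the one I expect to be the real obstacle, is the bookkeeping that keeps this walk from wandering to an edge of $H$ not incident to the original edge through $x$, so that ``$x$ and $y$ lie on a same edge'' is preserved; intuitively each step of the walk either stays on a single edge or crosses the interior of $H$ between two points of one edge, but turning this into a clean invariant --- and ruling out stray returns to $\partial H$ at vertices along the way --- is where the care is needed.
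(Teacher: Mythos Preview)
Your reduction and the transversal case $y=x$ are fine; the gap is in configuration (b), which you yourself flag as the obstacle. The walk you propose there---following arcs toward their vanishing points once inside $H$---does not terminate at $\partial H$: by precisely the mechanism of \cref{p:attractor}, such a walk remains in the interior of $H$ forever, eventually cycling around the eye of a swirl. ``Terminating by the finiteness of $\mathcal D$'' yields periodicity, not a boundary crossing, and even a variant that did exit $H$ would carry no guarantee of landing on the original edge, so the ``same edge'' clause is unprotected.

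The missing observation makes configuration (b) unnecessary and collapses (a) to a single step. If the arc $a$ through $x$ lies on $\gamma_e$, then the vanishing point $v$ of $a$---which lies in $H$ (as an attractor point) and on $\gamma_e$---must lie on $e$ itself, since $\gamma_e\cap H=e$ by convexity. As $a$ cannot contain $v$, the endpoint of $a$ in the direction of $v$ is a point $y$ strictly between $x$ and $v$, hence in the relative interior of $e$; the blocking arc $a'$ at $y$ cannot lie on $\gamma_e$ (else $a'$ would overlap $a$ near $y$), so $a'$ crosses $\partial H$ at $y$. Thus your ``remaining possibility'' in (a)---the slide reaching a vertex---never occurs once you slide toward $v$, and the vertex case you isolate as (b) reduces to (a) as well: if $x$ is a vertex and $a$ does not cross $\partial H$ there, the direction of $a$ toward its vanishing point in $H$ must run along one of the two incident edges.
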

\begin{proof}
If $\mathcal D$ has an arc that crosses $H$ at $x$, there is nothing to prove. Otherwise, \cref{o:attractor} implies that $\mathcal D$ has an arc $a$ collinear with an edge $e$ of $H$ containing $x$. Moreover, $x$ lies in the interior of $a$; thus, $a$ partially overlaps with $e$. Hence, there is a vanishing point $v\in e$ that $a$ cannot reach, by \cref{o:attractor}. So, $a$ hits an arc $a'\in\mathcal D$ at a point $y\in e$, strictly between $x\in e$ and $v\in e$. Therefore, $y$ is internal to $e$, it is $H$-connected with $x$, and $a'$ crosses $e$ at $y$.
\end{proof}

\begin{proposition}\label{p:attractor}
Let $H$ be an attractor hull of a $k$-oriented SD $\mathcal D$. Then, there exist arcs of $\mathcal D$ that intersect the interior of $H$. Moreover, any point of intersection between an arc of $\mathcal D$ and the interior of $H$ is internally $H$-connected with the vertices of the eye of a swirl of $\mathcal D$.
\end{proposition}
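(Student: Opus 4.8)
The plan is to establish the two assertions in turn, after disposing of the trivial case. If the attractor hull $H$ is total, then every arc of the (non-empty) diagram meets its interior, and since $\mathcal D$ is connected with at least one swirl (\cref{sec:previous}), the second assertion holds vacuously. So I would henceforth assume $H$ is non-total, hence contained in the interior of a hemisphere, and with non-empty interior by \cref{p:noncoll}.

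For the first assertion, I would argue by contradiction that some arc meets $H$. If no arc of $\mathcal D$ met $H$, then $H$ would lie inside a single tile $T$, which is spherically convex (\cref{p:06}); picking an arc $a$ on the boundary of $T$ and letting $\gamma$ be the great circle through $a$, the tile $T$ — and therefore $H$ — would lie strictly on one side of $\gamma$, so $\gamma$ would miss $H$ entirely. But $a$ is collinear with one of the vanishing points of the chosen attractor $A\subseteq H$ by \cref{o:attractor}, which would place that point on $\gamma$, a contradiction. Once some arc meets the boundary of $H$, \cref{p:1inters} upgrades this to an arc actually crossing into the interior of $H$.

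For the second assertion, I would fix a point $x$ where an arc meets the interior of $H$ and run a sliding walk $w$ from $x$ that, on each arc it traverses, moves toward the unique vanishing point of that arc lying in $A$ (this point exists and is unique by \cref{o:attractor}). Since every such target lies in the spherically convex set $H\supseteq A$, and the walk always turns onto a new arc before ever reaching its target, the walk can never cross the boundary of $H$; hence $w$ remains in the interior of $H$ forever, and in particular the simple closed curve it eventually traces is interior to $H$. One of the two side regions of that curve is then entirely contained in the interior of $H$, and by \cref{lemma:right} this region contains the eye of a swirl $S$. The closed curve is exactly the union of the arcs of $S$, so its vertices are the vertices of the eye of $S$; the initial portion of $w$, which reaches this closed curve and then traverses it, passes through every vertex of the eye of $S$ along a path that stays interior to $H$, which is precisely the internal $H$-connection required.

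The step I expect to be the main obstacle is the convexity argument in the last paragraph: verifying rigorously that a sliding walk which starts in the interior of the closed spherically convex region $H$ and, arc after arc, always heads toward some point of $H$ cannot touch the boundary of $H$. The subtle point is that ``heading toward a point of $H$ along a geodesic and then switching to another geodesic still aimed at a point of $H$'' keeps the walk interior; this relies on the convexity of $H$ together with the fact that the walk never reaches a vanishing point, so it never meets the boundary of $H$ from inside along an arc collinear with an edge of $H$. The tile argument and the appeal to \cref{lemma:right} are routine given the earlier results.
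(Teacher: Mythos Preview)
Your proposal follows the paper's proof essentially verbatim: the total case, the tile-contradiction to force an arc into $H$, the upgrade via \cref{p:1inters}, and the sliding walk toward vanishing points in $A$ that is trapped in the interior of $H$ by convexity, with \cref{lemma:right} then supplying a swirl eye inside one side region.

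There is one incorrect sentence, however. You write that ``the closed curve is exactly the union of the arcs of $S$.'' This need not be true: the sliding walk $w$ turns left at some endpoints and right at others (it always heads toward the vanishing point in $A$, whichever way that is), so the simple closed curve it traces is a cycle of arcs each hitting the next, but not in general all clockwise or all counterclockwise---hence not a swirl. What \cref{lemma:right} actually gives is a swirl whose eye lies \emph{inside} one of the two side regions of that curve, found by a further right-handed (or left-handed) walk launched from the curve and staying within that region. The internal $H$-connectivity of $x$ with the eye's vertices then follows because both $w$ and this auxiliary walk remain in the interior of $H$. The paper's proof leaves this last connectivity step implicit; your attempt to make it explicit is a good instinct, but the justification needs the correction above.
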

\begin{proof}
Let $A$ be an attractor of $\mathcal D$, and let $H$ be the spherical convex hull of $A$. If $H$ is total, there is nothing to prove, because $\mathcal D$ is connected and it has at least a swirl (cf.~\cref{c:conn,yt:22swirls}).

Otherwise, assume for a contradiction that no arc of $\mathcal D$ intersects $H$. Then, $H$ is contained in the interior of a spherically convex tile $T$ (cf.~\cref{p:06}). If $a$ is any arc of $\mathcal D$ bounding $T$, then $T$ lies on one side of the great circle $\gamma$ containing $a$, and therefore $\gamma$ does not intersect $H$. Hence, $a$ is not collinear with any point in $A\subset H$, contradicting \cref{o:attractor}. Consequently, there is an arc of $\mathcal D$ that intersects $H$, and by \cref{p:1inters} there is also an arc that intersects the interior of $H$ (note that $H$ has an interior, due to \cref{p:noncoll}).

Now, let $x$ be any point of intersection between an arc of $\mathcal D$ and the interior of $H$, and let $w$ be a sliding walk that starts from $x$ and follows each arc in the direction of its vanishing point in $A$, which exists due to \cref{o:attractor}. An example of such a sliding walk is shown in \cref{fig:walk.a}. Since $H$ is convex and $w$ always moves toward a point of $A\subset H$ without ever reaching it, we conclude that $w$ never leaves the interior of $H$. Thus, either the left-side or the right-side region of $w$ is entirely contained in the interior of $H$. In turn, this region contains the eye of a swirl, due to \cref{lemma:right}.
\end{proof}

\begin{corollary}\label{c:attractoreye}
In any $k$-oriented SD $\mathcal D$, the interior of any attractor hull contains the eye of a swirl of $\mathcal D$.
\end{corollary}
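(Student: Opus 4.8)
The plan is to read off this corollary directly from \cref{p:attractor}, which has just been established. Fix an arbitrary attractor hull $H$ of $\mathcal D$, arising from an attractor $A$. The first assertion of \cref{p:attractor} already guarantees that some arc of $\mathcal D$ meets the interior of $H$, so there is a point $x$ lying in both an arc of $\mathcal D$ and the interior of $H$.

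From here I would reuse the sliding walk $w$ built in the proof of \cref{p:attractor}: the walk that starts at $x$ and, along each arc it enters, proceeds toward that arc's unique vanishing point in $A$ (which exists by \cref{o:attractor}). The decisive point, already observed in that proof, is that $H$ is spherically convex and $w$ forever moves toward a point of $A\subseteq H$ without attaining it; hence $w$ never escapes the interior of $H$. In particular, the simple closed subcurve traced by $w$ lies in the interior of $H$, so one of its two side-regions --- the right-side region or the left-side region --- is contained entirely in the interior of $H$. Applying \cref{lemma:right} to that side-region yields the eye of a swirl of $\mathcal D$ sitting inside the interior of $H$, which is exactly the claim.

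I do not anticipate a genuine obstacle: the corollary is in essence a repackaging of the conclusion of the proof of \cref{p:attractor}. The only point requiring a little care is that the \emph{statement} of \cref{p:attractor} only records that the intersection point is internally $H$-connected with the vertices of some eye, not that the eye itself lies in the interior of $H$; so one should either invoke the proof rather than the bare statement, or --- as above --- rerun the short sliding-walk argument to obtain the stronger conclusion directly.
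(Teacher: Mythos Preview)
Your proposal is correct. The paper's own proof is a touch more economical: rather than rerunning the sliding walk, it invokes the \emph{statement} of \cref{p:attractor} to place the vertices of some eye $E$ in the interior of $H$, and then finishes with the one-line observation that, since $H$ is spherically convex, its interior (itself convex) must contain the convex polygon $E$. You explicitly anticipated this alternative in your final paragraph; the paper simply chooses that route instead of redoing the walk.
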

\begin{proof}
If $H$ is an attractor hull, by \cref{p:attractor} there is a point $x$ of intersection between an arc of $\mathcal D$ and the interior of $H$. Also, there is an eye $E$ of a swirl of $\mathcal D$ whose vertices lie in the interior of $H$ and are $H$-connected with $x$. Since $H$ is spherically convex, its interior contains $E$.
\end{proof}

It is easy to see that, for a fixed $k$-oriented SD, any hemisphere contains an attractor hull. Such an attractor hull is unique, provided that no poles lie on the hemisphere's boundary. Thus, we may view \cref{c:attractoreye} as a stronger version of \cref{l:hemilemma} for $k$-oriented SDs.

\begin{proposition}\label{p:2linesides}
Let $R$ be a spherical polygon contained in the interior of a hemisphere of the unit sphere, and let $x$ be a point of an SD $\mathcal D$ that lies in the interior of $R$. Then, for every great circle $\gamma$ through $x$, there are arcs of $\mathcal D$ that thrust the boundary of $R$ at two distinct points $y$ and $z$ located on opposite sides of $\gamma$ (or on $\gamma$). Moreover, $y$ and $z$, are internally $R$-connected with $x$.
\end{proposition}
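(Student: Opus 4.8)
The plan is to mimic the proof of \cref{yp:attractor} but with a sliding walk that is pinned to one side of $\gamma$, using a fulcrum argument. First I would fix the great circle $\gamma$ through $x$, let $p$ be either of the two points where $\gamma$ meets the boundary of the hemisphere containing $R$ (so that $\gamma$ is a great semicircle issuing from $p$ on the visible hemisphere, and $p\notin R$), and consider a clockwise walk $w^+$ with fulcrum $p$ starting at $x$, together with the counterclockwise walk $w^-$ with fulcrum $p$ starting at $x$. Each of these walks is weakly monotone around $p$ and never touches $p$ or its antipode, so neither can cross $\gamma$ on the hemisphere side; consequently $w^+$ stays in the closed region $R^+$ on one side of $\gamma$ and $w^-$ stays in the closed region $R^-$ on the other side.

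Next I would argue that each of these two walks must leave $R$: since $w^+$ is a sliding walk on a finite SD it eventually traces a simple closed curve, and that curve cannot be contained in $R$ because $R$ is a spherical polygon (strictly smaller than a hemisphere) while a closed sliding walk bounds an eye of a swirl together with its complement, and one of those two regions must contain antipodal points, contradicting containment of the whole closed walk in $R$ — more directly, $w^+$ cannot be contained in the interior of $R$ forever because then, as in \cref{yp:attractor}, one of its side regions would be contained in $\mathrm{int}(R)$ and would contain the eye of a swirl, which is fine, but the point is rather that $w^+$ must actually meet $\partial R$: if it never did, it would stay in $\mathrm{int}(R)$, but a walk pinned by a fulcrum and confined to $\mathrm{int}(R)$ cannot close up into a loop that winds weakly monotonically around a point outside $\mathrm{int}(R)$ unless it first reaches $\partial R$. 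So let $y$ be the first boundary point of $R$ reached by $w^+$ and $z$ the first reached by $w^-$; the portions of $w^+$ and $w^-$ up to those times are internal to $R$ except at $y$, $z$, so $y$ and $z$ are internally $R$-connected with $x$. Because $w^+\subseteq R^+$ and $w^-\subseteq R^-$, the points $y$ and $z$ lie on opposite sides of $\gamma$ (or on $\gamma$).

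It remains to upgrade ``meets $\partial R$'' to ``thrusts $\partial R$''. Here I would invoke the local structure of sliding walks: when $w^+$ first hits $\partial R$ at $y$, the arc $a$ of $\mathcal D$ carrying $w^+$ at that instant has a portion strictly inside $R$ (the part just traversed) and continues past $y$; if $y$ is interior to $a$, then $a$ itself both meets $\partial R$ at $y$ and meets $\mathrm{int}(R)$, so $a$ thrusts $\partial R$ at $y$. If instead $y$ is an endpoint of $a$, then $a$ lies in the interior of $R$ and $y$ is interior to the next arc $a'$ that $a$ hits; since $w^+$ proceeds along $a'$ and $a'$ has points arbitrarily near $y$, at least one of $a$, $a'$ both touches $\partial R$ at $y$ and enters $\mathrm{int}(R)$, giving the required thrust (one can also quote \cref{yp:1inters} / \cref{p:1inters} to relocate $y$ to an interior point of an edge of $R$ that is genuinely crossed). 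The same reasoning applies to $w^-$ at $z$.

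The main obstacle I expect is the confinement claim for the fulcrum walks, i.e.\ rigorously showing that a clockwise walk with fulcrum $p$ starting inside $R$ and forbidden to touch $p$ or $-p$ cannot escape the half $R^+$ of the sphere bounded by the great circle through $x$ and $p$: one has to check that ``weakly monotone clockwise around $p$'' plus ``starts on $\gamma$ or on the $R^+$ side'' really does imply staying in $\overline{R^+}$, which is where the antipodal-point geometry of semicircles is used, and then that the walk is forced to reach $\partial R$ rather than spiral forever inside $\mathrm{int}(R)$ — the latter following because $R$ being strictly inside a hemisphere means $p\notin R$, so weak monotonicity around $p$ cannot be sustained indefinitely within $\mathrm{int}(R)$ without the walk's closing curve enclosing $p$, contradicting that the enclosed eye lies in $\mathrm{int}(R)\not\ni p$. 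Everything else is a routine adaptation of the arguments already used for \cref{yp:attractor,yp:3interhull}.
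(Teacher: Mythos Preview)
Your approach is essentially the same as the paper's: choose a fulcrum $p$ where $\gamma$ meets the boundary of the hemisphere containing $R$, run a clockwise and a counterclockwise walk with fulcrum $p$ from $x$, and let $y$ and $z$ be the first points where these walks meet $\partial R$. The paper's proof is three sentences long and treats as evident the points you labor over (that the walks must eventually hit $\partial R$, that the hitting arc thrusts $\partial R$, and that the walks stay on their respective sides of $\gamma$); your elaborations on these points are mostly fine, though the paragraph arguing that the walk must leave $R$ is muddled---the clean reason is that a closed weakly-monotone curve around $p$ must separate $p$ from $-p$, while neither lies in $R$. Your suggestion to quote \cref{yp:1inters} is misplaced, since that proposition is about attractor hulls, not arbitrary spherical polygons; but you do not actually need it, as the arc carrying the walk when it first meets $\partial R$ already thrusts $\partial R$ by definition.

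The one point the paper handles explicitly that you omit is the distinctness of $y$ and $z$. In the special case where $x$ lies on an arc $a\in\mathcal D$ contained in $\gamma$, both walks can legitimately begin along $\gamma$, and nothing you wrote prevents $y=z$. The paper resolves this by stipulating that in this case the two walks start in opposite directions along $a$, forcing $y\neq z$.
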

\begin{proof}
Let $p$ be a point of intersection between $\gamma$ and the boundary of a hemisphere containing $R$. Let $y$ (resp., $z$) be the first point where a clockwise (resp., counterclockwise) walk with fulcrum $p$ starting from $x$ intersects the boundary of $R$. In the special case where $x$ lies on an arc of $\mathcal D$ contained in $\gamma$, we choose the two walks to go in different directions, so $y$ and $z$ are necessarily distinct. Also, $y$ and $z$ clearly satisfy the desired conditions.
\end{proof}

\begin{proposition}\label{p:3inter}
Let $R$ be a spherical polygon contained in the interior of a hemisphere of the unit sphere, and let $\mathcal D$ be an SD having an arc with an endpoint $x$ internal to $R$. Then, there are arcs of $\mathcal D$ that thrust the boundary of $R$ in at least three distinct points that are all internally $R$-connected with $x$.
\end{proposition}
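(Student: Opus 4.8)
The statement strengthens \cref{p:2linesides} by one thrust point, and the extra point comes from the extra arc‑ray that an endpoint of an arc enjoys. The first step is to pin down the local picture at $x$. Since $x$ is an endpoint of an arc $a\in\mathcal D$, the definition of an SD places $x$ in the relative interior of a second arc $b\in\mathcal D$; moreover $a$ and $b$ are not collinear, since otherwise $a$ would share part of its relative interior with $b$ on the side of $x$ into which it extends, contradicting interior‑disjointness of the arcs of an SD. Hence three arc‑rays of $\mathcal D$ leave $x$: the collinear pair $b^{+},b^{-}$ along $b$, which spans the great circle $\gamma_b$, and a transverse ray $a^{+}$ along $a$; and $a\cap b=\{x\}$ by \cref{p:05}. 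If $\mathcal D$ has overlaps, I would first remove them using \cref{p:doubling} and adapt the argument; I henceforth assume there are none, so $a$ is the only arc of which $x$, or the far endpoint of $a$, is an endpoint.

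Two of the three thrust points come directly from \cref{p:2linesides} applied at $x$ with $\gamma=\gamma_b$. Since $x$ lies on $b\subseteq\gamma_b$, this is the special case of that proposition, so its two monotone walks leave $x$ along $b^{+}$ and $b^{-}$ and reach $\partial R$ at distinct points $y$ and $z$, internally $R$‑connected with $x$ and lying on opposite sides of $\gamma_b$. Observe that neither of these walks ever runs along $a$: a monotone walk has an injective trace (its angular coordinate about the fulcrum is weakly monotone, so it cannot close a loop), hence it never returns to $x$, and, absent overlaps, a walk not already on $a$ can never turn onto $a$.

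The third point is obtained by leaving $x$ along $a^{+}$. If $a$ meets $\partial R$, the first such point $t$ is a thrust point (since $a$ also meets the interior of $R$ near $x$), is internally $R$‑connected with $x$ along $a$, and satisfies $t\neq y,z$ because $t$ lies on $a$ whereas $y,z$ do not. Otherwise $a$ lies entirely in the interior of $R$, so its far endpoint $x'$ lies in the interior of $R$; here I would run a monotone walk $w$ from $x$ starting along $a^{+}$, around a fulcrum chosen so that $a^{+}$ is an admissible starting direction, and note that $w$, confined to the interior of a hemisphere, must meet $\partial R$, at a first point $t$ that is a thrust point internally $R$‑connected with $x$ and lies strictly on one side of the great circle $\gamma'$ through $x$ and the fulcrum. \textbf{The main obstacle is proving $t\neq y,z$ in this last branch.} My plan is to choose the fulcrum so that $y$ and $z$ both lie strictly on the side of $\gamma'$ opposite to $t$; such a $\gamma'$ through $x$ fails to exist only when $x$ lies on the geodesic through $y$ and $z$ strictly between them, i.e.\ only when $x$, $y$, $z$ are collinear on a common great circle $\gamma_{yz}$. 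In that situation, if $\gamma_{yz}\neq\gamma_b$ I would instead apply \cref{p:2linesides} at $x$ with $\gamma=\gamma_{yz}$, obtaining a thrust point strictly off $\gamma_{yz}$ and hence distinct from $y$ and $z$; the remaining sub‑case $\gamma_{yz}=\gamma_b$ (so $y$ and $z$ lie on the great circle of $b$ with $x$ between them) I would settle by a separate direct argument, exploiting that $a^{+}$ leaves $x$ transversally to $\gamma_b$ and allowing a small perturbation of the auxiliary fulcrum. This final case analysis is where essentially all of the work lies; everything else is immediate from \cref{p:2linesides} and the elementary properties of monotone walks.
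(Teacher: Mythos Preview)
Your argument has a genuine gap. The assertion that ``absent overlaps, a walk not already on $a$ can never turn onto $a$'' is false: a sliding walk turns whenever it reaches an endpoint of the arc it is \emph{currently} traversing, and that endpoint may lie in the relative interior of $a$ (any arc that hits $a$ supplies such a point). Overlaps are shared endpoints of two arcs; their absence does not prevent other arcs from hitting $a$. Hence the walks you launch along $b^{+}$ and $b^{-}$ may well enter $a$ later, and your distinctness claim ``$t$ lies on $a$ whereas $y,z$ do not'' is unsupported. The related claim that a monotone walk has injective trace is also shaky: weak monotonicity of the angular coordinate permits radial segments and does not by itself rule out revisiting points.

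The paper's proof avoids all arc-tracking. It applies \cref{p:2linesides} once (with an arbitrary great circle through $x$) to obtain $y$ and $z$, and then a second time with a great circle $\gamma'$ chosen so that $y$ and $z$ lie \emph{strictly} on the same side of $\gamma'$; the new thrust point returned by \cref{p:2linesides} lies on the opposite closed half-sphere and is therefore automatically distinct from both. The only obstruction is when $x,y,z$ are collinear, so that no such $\gamma'$ through $x$ exists. Here the paper does not perturb a fulcrum or branch further: since $x$ lies on two non-collinear arcs $a$ and $b$, there is a point $x'\in R\cap(a\cup b)$ near $x$ that is not collinear with $y$ and $z$; one then picks $\gamma'$ through $x'$ and applies \cref{p:2linesides} from $x'$. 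Because $x'$ is internally $R$-connected with $x$ along $a\cup b$, connectivity is preserved. This single shift of basepoint replaces your entire final case analysis, including the unresolved sub-case $\gamma_{yz}=\gamma_b$.
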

\begin{proof}
Let $x$ be an internal point of $R$ where an arc of $\mathcal D$ hits another arc. Let $\gamma$ be any great circle through $x$; by \cref{p:2linesides}, there are two distinct points $y$ and $z$ where arcs of $\mathcal D$ thrust the boundary of $R$. Moreover, $x$, $y$, and $z$ are $R$-connected.

Assume that $y$ and $z$ are not collinear with $x$. Then there is a great circle $\gamma'$ through $x$ such that $y$ and $z$ are strictly on the same side of $\gamma'$. By \cref{p:2linesides}, there is a third point $w$ lying on the opposite side of $\gamma'$ (or on $\gamma'$) where an arc of $\mathcal D$ thrusts the boundary of $R$. Moreover, $x$ and $w$ are internally $R$-connected, implying that $y$, $z$, and $w$ are internally $R$-connected.

Assume that $y$ and $z$ are collinear with $x$. Because $x$ is shared by two arcs $a,b\in\mathcal D$, there is a point $x'$ in $R\cap (a\cup b)$ such that $x'$ is not collinear with $y$ and $z$. Hence, there is a great circle $\gamma'$ through $x'$ such that $y$ and $z$ are strictly on the same side of $\gamma'$ (because $y$ and $z$ are not antipodal, since $R$ lies in the interior of a hemisphere). Now we can conclude the proof as in the previous case, observing that $x$ and $x'$ are internally $R$-connected.
\end{proof}

Observe that \cref{p:2linesides,p:3inter} do not require $R$ to be spherically convex or even simply connected.

We can also prove a version of \cref{p:3inter} in the limit case where $R$ is an entire hemisphere.

\begin{proposition}\label{p:3intercirc}
Given an SD $\mathcal D$, any great circle on the unit sphere is crossed by at least three arcs of $\mathcal D$.
\end{proposition}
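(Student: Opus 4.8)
The plan is to deduce this from \cref{p:semi} --- which guarantees that the relative interior of every great semicircle on the sphere is crossed by at least one arc of $\mathcal D$ --- via a short counting argument on the great circle $\gamma$ in question. First I would set up notation: let $X\subseteq\gamma$ be the set of points at which some arc of $\mathcal D$ crosses $\gamma$. Because the arcs of an SD have pairwise disjoint relative interiors, each point of $X$ is internal to exactly one arc of $\mathcal D$ and distinct points of $X$ lie on distinct arcs; hence the number of arcs crossing $\gamma$ is exactly $|X|$, and it suffices to show $|X|\geq 3$.

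Next I would argue by contradiction, assuming $|X|\leq 2$. The elementary geometric fact I would use is that any set of at most two points on a circle avoids the relative interior of some great semicircle of that circle: if the two points of $X$ happen to be antipodal, take either great semicircle having them as endpoints; otherwise $X$ is contained in an open subarc of $\gamma$ of length strictly less than $\pi$ (immediate if $|X|\leq 1$, and the shorter of the two arcs joining the two points if $|X|=2$), and this subarc lies in the relative interior of some great semicircle $\sigma'$, so the complementary great semicircle $\sigma$ has relative interior disjoint from $X$.

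To finish, I would apply \cref{p:semi} to $\sigma$: it yields an arc of $\mathcal D$ crossing the relative interior of $\sigma$ at some point $y$. Since the relative interior of $\sigma$ is part of $\gamma$ and coincides with $\gamma$ in a neighbourhood of $y$, this arc in fact crosses $\gamma$ at $y$, so $y\in X$; but $y$ also lies in the relative interior of $\sigma$, which was chosen disjoint from $X$ --- a contradiction. Hence $|X|\geq 3$, i.e., $\gamma$ is crossed by at least three arcs of $\mathcal D$.

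I do not anticipate a real obstacle here; the two steps that need a little care are the observation that ``number of crossing arcs $=$ number of crossing points'' genuinely relies on the interior-disjointness of the arcs of an SD (so that no two arcs share an interior point on $\gamma$), and the handling of the degenerate antipodal subcase in the semicircle-selection step. An alternative more in keeping with the surrounding text would be to take the limiting case of \cref{p:2linesides}/\cref{p:3inter} in which $R$ degenerates to an open hemisphere bounded by $\gamma$, running monotone walks about the two poles of $\gamma$ to produce three arcs crossing $\gamma$; but the argument via \cref{p:semi} above is shorter and avoids the limiting subtleties.
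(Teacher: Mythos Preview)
Your argument is correct and follows essentially the same route as the paper: both reduce the claim to \cref{p:semi}, the paper constructively (two complementary semicircles yield crossing points $x,y$, and then the longer of the two subarcs they determine, having length $\geq\pi$, yields a third), and you by the contrapositive (at most two crossing points necessarily miss the interior of some semicircle). One small misattribution: the step ``distinct points of $X$ lie on distinct arcs'' is not a consequence of interior-disjointness but of the elementary fact that a geodesic arc, having length less than $\pi$, can meet a great circle in at most one interior point---the paper invokes exactly this in its final sentence.
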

\begin{proof}
Let $\gamma$ be a great circle, and let $p,p'\in\gamma$ be two antipodal points which divide $\gamma$ into two great semicircles. By \cref{p:semi}, the interior of each of these two semicircles is crossed by an arc of $\mathcal D$. Note that the two crossing points $x$ and $y$ are distinct, because the two semicircles have disjoint interiors. Now, $x$ and $y$ divide $\gamma$ into two arcs, at least one of which is not shorter than a great semicircle. The interior of this arc is crossed by an arc of $\mathcal D$, again by \cref{p:semi}. Thus, we have found three distinct points where arcs of $\mathcal D$ cross $\gamma$. Since a geodesic arc cannot cross a great circle in more than one point, there must be three distinct arcs of $\mathcal D$ crossing $\gamma$.
\end{proof}

We can improve \cref{p:2linesides,p:3inter} in the case where $R$ is an attractor hull.

\begin{proposition}\label{p:3linesides}
Let $H$ be a non-total attractor hull of a $k$-oriented SD $\mathcal D$, and let $x$ be a point of $\mathcal D$ that lies in the interior of $H$. Then, for every great circle $\gamma$ through $x$, there are three distinct arcs of $\mathcal D$ that thrust the boundary of $H$ at three distinct points, all internally $H$-connected with $x$, two of which lie on strictly opposite sides of $\gamma$ (i.e., not on $\gamma$).
\end{proposition}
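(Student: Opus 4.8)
The plan is to locate the three thrust points using monotone (``clockwise''/``counterclockwise'') walks around a well-chosen fulcrum, together with \cref{p:3inter} for the third point. Fix a hemisphere whose interior contains the non-total hull $H$, let $\gamma_0$ be its boundary great circle, and note $\gamma\neq\gamma_0$ since $\gamma$ meets $\mathrm{int}\,H$ while $\gamma_0\cap H=\emptyset$; hence $\gamma\cap\gamma_0=\{q,-q\}$ for an antipodal pair, with $q,-q$ outside $H$. Call a clockwise or counterclockwise walk with fulcrum $q$ a \emph{$q$-walk}. Two observations drive the argument. First (\emph{Exit}): any $q$-walk starting in $\mathrm{int}\,H$ must reach $\partial H$, because its direction around $q$ is a monotone function taking values in an interval of length $<\pi$ (as $H$ is a compact subset of the open hemisphere whose boundary contains $q$), and a sliding walk confined forever to $\mathrm{int}\,H$ would be eventually periodic, forcing a nontrivial periodic sub-walk to lie on a single great semicircle through $q$ — impossible, since that is a simply connected arc; moreover, by \cref{p:2cross}, the arc carrying the walk when it first reaches $\partial H$ at a point $y$ also meets $\mathrm{int}\,H$ just before $y$, so it thrusts $\partial H$ at $y$, and $y$ is internally $H$-connected with the start of the walk. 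Second (\emph{Side}): if a $q$-walk starts at a point $z\in\gamma\cap\mathrm{int}\,H$ along an arc not contained in $\gamma$ (hence not collinear with $q$), its direction around $q$ leaves — and, by monotonicity, never returns to — the unique ``$\gamma$-direction'' available in that length-$<\pi$ range; so after leaving $z$ the clockwise (resp.\ counterclockwise) $q$-walk stays strictly on one fixed side $S^{+}$ (resp.\ $S^{-}$) of $\gamma$, and by \emph{Exit} it thrusts $\partial H$ at a point strictly in $S^{+}$ (resp.\ $S^{-}$), internally $H$-connected with $z$.

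Using these, I would first produce two thrust points, one strictly in $S^{+}$ and one strictly in $S^{-}$, each internally $H$-connected with $x$. Pick an arc $a_0\in\mathcal D$ through $x$. If $a_0\not\subseteq\gamma$, the two $q$-walks from $x$ start along $a_0$, which is not collinear with $q$, so \emph{Exit} and \emph{Side} (with $z=x$) immediately give such a pair $y^{+}\in S^{+}$, $y^{-}\in S^{-}$. If $a_0\subseteq\gamma$, I run the two $q$-walks from $x$ along $a_0$ in opposite directions; while a $q$-walk follows arcs lying on $\gamma$ its direction around $q$ is constant, and the first time it turns onto an arc not on $\gamma$ (necessarily at a point of $\gamma\cap\mathrm{int}\,H$) the \emph{Side} argument applies. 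So if both walks eventually turn off $\gamma$ before hitting $\partial H$ we again obtain $y^{+},y^{-}$; the remaining possibility is that one walk (hence possibly both, by symmetry) follows arcs of $\mathcal D$ lying on $\gamma$ all the way to $\partial H$, so that $\gamma\cap H$ is covered by a chain of arcs of $\mathcal D$ lying on $\gamma$ that connects $x$ to $\partial H$ within $\mathrm{int}\,H$.

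This last case is what I expect to be the main obstacle. Here I would invoke \cref{p:3intercirc}: three arcs of $\mathcal D$ cross $\gamma$, at three distinct points, and none of these can lie in the relative interior of an arc of $\mathcal D$ lying on $\gamma$ (arcs of $\mathcal D$ are pairwise interior-disjoint). Hence at least one crossing point $z$ either lies at an endpoint of a chain-arc inside $\mathrm{int}\,H$ — in which case $z\in\gamma\cap\mathrm{int}\,H$ is internally $H$-connected with $x$ along the chain, and starting the two $q$-walks at $z$ along the crossing arc yields, via \emph{Exit} and \emph{Side}, the pair $y^{+}\in S^{+}$, $y^{-}\in S^{-}$ internally $H$-connected with $z$ and hence with $x$ — or else all three crossings occur at the two endpoints of the sub-arc $\gamma\cap H$ or strictly outside $H$; in that subcase one analyses a crossing arc $b$ at such an endpoint: by \cref{p:2cross}, $b$ meets $\partial H$ only there, so the sub-arc of $b$ entering $\mathrm{int}\,H$ runs strictly on one side of $\gamma$ and reaches a point of $\mathrm{int}\,H$ from which a $q$-walk again yields a strictly one-sided thrust point, and tracking the two endpoints of the chain (which sit on opposite sides of $\gamma$ at a vertex of $H$) produces a point on each side. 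The delicate bookkeeping is precisely here, where one must trade the lost monotone-walk flexibility for the three crossings supplied by \cref{p:3intercirc}.

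Finally, to reach three distinct thrust points carried by three distinct arcs, I would add a third one from \cref{p:3inter} applied with $R=H$ at an endpoint of the arc through $x$ lying in $\mathrm{int}\,H$ (such an endpoint exists because, by \cref{p:2cross}, that arc meets $\partial H$ in at most one point, so deleting that point leaves an endpoint in $\mathrm{int}\,H$, and it is internally $H$-connected with $x$ along the arc). This gives three thrust points internally $H$-connected with $x$, at least one of which is distinct from $y^{+}$ and $y^{-}$; its carrying arc is then distinct from those carrying $y^{+}$ and $y^{-}$, since by \cref{p:2cross} each arc thrusts $\partial H$ at a single point. Collecting this with $y^{+}$ and $y^{-}$ yields the three required arcs and points, with two of the points strictly on opposite sides of $\gamma$.
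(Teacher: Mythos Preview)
Your approach is essentially the paper's: both proofs use a clockwise and a counterclockwise walk with fulcrum at a point of $\gamma$ lying on the boundary of a hemisphere containing $H$, and both invoke \cref{p:3inter} and \cref{p:2cross} to obtain three distinct arcs at three distinct thrust points. Your \emph{Exit} and \emph{Side} observations are exactly the mechanism underlying the paper's \cref{p:2linesides}.

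Where you diverge is in the case $a_0\subseteq\gamma$, and here you have missed a one-line reduction that makes your entire detour through \cref{p:3intercirc} unnecessary. By \cref{p:2cross}, $a_0$ meets $\partial H$ in at most one point, so at least one endpoint $x'$ of $a_0$ lies in $\mathrm{int}\,H$; there $a_0$ hits an arc $a'\in\mathcal D$. Now $a'$ \emph{cannot} lie on $\gamma$: if it did, then near $x'$ (which is interior to $a'$ and an endpoint of $a_0$) the interiors of $a_0$ and $a'$ would overlap, contradicting interior-disjointness. Hence $a'$ crosses $\gamma$ at $x'\in\mathrm{int}\,H$, and you are immediately back in your first case with $x'$ in place of $x$ and $a'$ in place of $a_0$; both $q$-walks from $x'$ along $a'$ give $y^{+}\in S^{+}$ and $y^{-}\in S^{-}$, internally $H$-connected with $x'$ and hence with $x$. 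This is precisely what the paper does (``$a'$ crosses $\gamma$ at $x'$ \dots\ repeat the previous argument with $x'$ and $a'$ in lieu of $x$ and $a$'').

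In particular, your ``chain of arcs of $\mathcal D$ lying on $\gamma$'' can never be longer than the single arc $a_0$, so the elaborate ``else'' subcase you flag as ``delicate bookkeeping'' is vacuous. That is fortunate, because your treatment of it is genuinely incomplete: you assert that the three crossings supplied by \cref{p:3intercirc} interact with the chain or with $\partial H\cap\gamma$ in a controlled way, but nothing prevents those three crossings from all lying far outside $H$, and the phrase ``tracking the two endpoints of the chain \dots\ produces a point on each side'' is not an argument. The gap is harmless only because the subcase cannot occur.
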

\begin{proof}
Since any non-total attractor hull is a spherical polygon contained in the interior of a hemisphere, \cref{p:3inter} implies that there are at least three points, all internally $H$-connected with $x$, where arcs of $\mathcal D$ thrust the boundary of $H$. Moreover, each of these points belongs to a distinct arc of $\mathcal D$, due to \cref{p:2cross}. To conclude the proof, it suffices to show that two such points lie on opposite sides of $\gamma$.

\cref{p:04} implies that $x$ is internal to a unique arc $a\in\mathcal D$. By \cref{p:2cross}, at least one endpoint $x'$ of $a$ lies in the interior of $H$; $a$ hits another arc $a'\in\mathcal D$ at $x'$.

Assume first that $a$ crosses $\gamma$ at $x$, i.e., $a$ does not lie on $\gamma$. Proceeding as in the proof of \cref{p:2linesides}, we construct two paths from $x$, following $a$ in opposite directions away from $\gamma$. The points $y$ and $z$ thus obtained are therefore on strictly opposite sides of $\gamma$.

Assume now that $a$ does not cross $\gamma$, hence it lies on it. Therefore, $a'$ crosses $\gamma$ at $x'$, which is internally $H$-connected with $x$. Thus, we can repeat the previous argument with $x'$ and $a'$ in lieu of $x$ and $a$.
\end{proof}

\begin{proposition}\label{l:3interhull}
Let $H$ be a non-total attractor hull of a $k$-oriented SD $\mathcal D$, and let $x$ be a point of intersection between an arc of $\mathcal D$ and the interior of $H$. Then, there are three distinct arcs of $\mathcal D$ that thrust the boundary of $H$ at three distinct points, not all lying on the same edge of $H$, all of which are internally $H$-connected with $x$.
\end{proposition}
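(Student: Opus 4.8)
The plan is to derive this statement from \cref{p:3linesides}, whose conclusion already matches the one we want \emph{except} for the additional requirement that the three thrust points must not all lie on a single edge of $H$. The key leverage is that \cref{p:3linesides} is available for \emph{every} great circle through $x$, so the idea is to apply it twice, to two carefully chosen great circles, and then assemble the desired three thrust points from the two applications.

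First I would fix an arbitrary great circle $\gamma_1$ through $x$ and invoke \cref{p:3linesides}: this already yields three distinct arcs of $\mathcal D$ thrusting $\partial H$ at three distinct points, all internally $H$-connected with $x$. If these three points do not all lie on one edge of $H$, the statement is proved. Otherwise they all lie on a common edge $e$ of $H$, and I extract from them two distinct points $y,z\in e$; these lie on two distinct arcs of $\mathcal D$, since by \cref{p:2cross} an arc that meets the interior of $H$ thrusts $\partial H$ at most once, so distinct thrust points belong to distinct arcs. Both $y$ and $z$ are internally $H$-connected with $x$.

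The next step is to pick a second great circle $\gamma_2$ through $x$ that misses the edge $e$ entirely. Let $\gamma_e$ be the great circle containing $e$. Convexity of $H$ makes $\gamma_e$ a supporting great circle, so the interior of $H$ is disjoint from $\gamma_e$, and in particular $x\notin\gamma_e$; moreover, since $H$ lies in an open hemisphere (it is non-total), $e$ is shorter than a great semicircle, so $\gamma_e$ is not exhausted by $e$ together with its antipodal arc $-e$. Choosing $\gamma_2$ to be a great circle through $x$ and through a point $q\in\gamma_e\setminus(e\cup(-e))$, $q\neq -x$, gives $\gamma_2\neq\gamma_e$ and hence $\gamma_2\cap\gamma_e=\{q,-q\}$, so $\gamma_2\cap e=\emptyset$; thus $e$ lies in a single open hemisphere bounded by $\gamma_2$. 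Applying \cref{p:3linesides} to $\gamma_2$ produces three distinct arcs thrusting $\partial H$ at three distinct points internally $H$-connected with $x$, two of which, say $y'$ and $z'$, lie on strictly opposite sides of $\gamma_2$; since $e$ sits entirely on one open side, at least one of $y',z'$ — rename it $y'$ — is not on $e$. Finally I would check that the three points $y,z,y'$ do the job: they are pairwise distinct (because $y\neq z$ and $y'\notin e\ni y,z$), they lie on three distinct arcs (again by \cref{p:2cross}), they are internally $H$-connected with $x$, and they are not all on one edge, since if some edge $e''$ contained $y,z,y'$ then $y,z\in e\cap e''$ would force $e''=e$ (distinct edges of a convex spherical polygon meet in at most one point), contradicting $y'\notin e$.

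The only delicate point is the construction of $\gamma_2$ — guaranteeing that a great circle through the interior point $x$ can be made disjoint from a prescribed boundary edge — but this reduces to the elementary facts that the circle carrying an edge supports the hull and that the edge is shorter than a semicircle, so I do not expect a genuine obstacle there. Everything else is bookkeeping with \cref{p:3linesides}, \cref{p:2cross} and \cref{p:noncoll}.
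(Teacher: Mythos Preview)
Your argument is correct and follows essentially the same route as the paper's proof: apply \cref{p:3linesides} once, and if the three thrust points all fall on a single edge $e$, choose a second great circle through $x$ disjoint from $e$ (using that $e$ is shorter than a semicircle) and apply \cref{p:3linesides} again to extract a thrust point off $e$. Your construction of $\gamma_2$ is spelled out more explicitly than the paper's one-line justification, but the idea and the ingredients (\cref{p:3linesides}, \cref{p:2cross}) are identical.
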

\begin{proof}
By \cref{p:3linesides}, there are three arcs of $\mathcal D$ that thrust the boundary of $H$ at distinct points, all internally $H$-connected with $x$. If no edge of $H$ contains all such points, we are finished. Otherwise, all such points lie on the same edge $e$ of $H$. Also, $e$ is unique, because at least one of the three points is internal to $e$. Let $\gamma$ be a great circle through $x$ that does not intersect $e$ (note that $\gamma$ exists because $e$ is shorter than a great semicircle). By \cref{p:3linesides} there is a point, internally $H$-connected with $x$ and not lying on $e$, where an arc of $\mathcal D$ thrusts the boundary of $H$. Such an arc is distinct from the previous ones due to \cref{p:2cross}.
\end{proof}

It is easy to see that the previous results cannot be improved, as there are $k$-oriented SDs whose arcs thrust the boundary of an attractor hull at exactly three points, two of which lie on the same edge of the attractor hull.

Let $A$ be an attractor of and SD $\mathcal D$, and let $H$ be the attractor hull relative to $A$, i.e., the spherical convex hull of $A$. A point of $A$ is a \emph{boundary point} if it lies on the boundary of $H$, and is an \emph{internal point} if it lies in the interior of $H$.

\begin{proposition}\label{p:hulltri}
Let $H$ be a non-total attractor hull relative to an attractor $A$ of an SD $\mathcal D$, and let $h_1$, $h_2$, \dots, $h_m$ be the boundary points of $A$, taken in clockwise order. Let $3\leq i\leq m$, and let $p$ be the point of intersection between $h_1h_{i-1}$ and $h_2h_i$. Assume that the interiors of the triangles $h_1h_2p$ and $h_{i-1}h_ip$ are devoid of points of $A$, while the triangle $h_1ph_i$ contains at most one internal point of $A$. Then, if a point $x$ of $\mathcal D$ lies in the interior of $h_1ph_i$, there exists an arc of $\mathcal D$ that crosses the interior of $h_1h_i$ at a point internally $H$-connected with $x$.
\end{proposition}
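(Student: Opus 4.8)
The plan is to locate, among the arcs that must thrust the boundary of the triangle $T:=h_1ph_i$, one that crosses the relative interior of its side $h_1h_i$, exploiting the fact that the two ``ears'' $\tau_1:=h_1h_2p$ and $\tau_2:=h_{i-1}h_ip$ are free of points of $A$ while $T$ contains at most one. First I record the geometry that makes this plausible. Since $p$ lies on the segments $h_1h_{i-1}$ and $h_2h_i$, the side $h_1p$ of $T$ is a sub-segment of $h_1h_{i-1}$ and the side $ph_i$ of $T$ is a sub-segment of $h_2h_i$; hence $\tau_1$ lies on the far side of $h_1p$ from $T$ and shares the whole edge $h_1p$ with it, and symmetrically $\tau_2$ lies across $ph_i$ from $T$ and shares $ph_i$ with it. Also $T\subseteq H$ lies in the interior of a hemisphere, because $H$ is non-total.

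Next I would apply \cref{p:2linesides} with $R=T$ and the great circle $\gamma$ through $x$ and the vertex $p$. Since $x$ is interior to the spherical triangle $T$ and $p$ is the vertex where $h_1p$ and $ph_i$ meet, $\gamma\cap T$ is a geodesic segment from $p$ to a point $q_0$ in the relative interior of the opposite side $h_1h_i$; this segment splits $T$ into two sub-triangles, one meeting $\partial T$ along $h_1p$ together with the piece of $h_1h_i$ from $h_1$ to $q_0$, the other along $ph_i$ together with the piece of $h_1h_i$ from $q_0$ to $h_i$. By \cref{p:2linesides} there are arcs of $\mathcal D$ thrusting $\partial T$ at a point $y$ on one side of $\gamma$ and a point $z$ on the other, both internally $H$-connected with $x$; discarding the cases where $y$ or $z$ is a vertex of $T$ (handled by perturbing $\gamma$ together with a limiting argument), $y$ lies in the relative interior of $h_1p$ or of $h_1h_i$, and $z$ in the relative interior of $ph_i$ or of $h_1h_i$. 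If $y$ or $z$ lies in the relative interior of $h_1h_i$, the corresponding arc crosses the interior of $h_1h_i$ and we are done --- except in the degenerate sub-case where that arc runs along the great circle carrying $h_1h_i$, in which case the arc is collinear with $h_1$ or $h_i$ by \cref{o:attractor} and \cref{p:1inters} yields a genuine crossing nearby. There remains the case where $y$ lies in the relative interior of $h_1p$ and $z$ in that of $ph_i$; by the symmetry $h_1\leftrightarrow h_i$, $h_2\leftrightarrow h_{i-1}$, $\tau_1\leftrightarrow\tau_2$ of the configuration, it suffices to treat $z$.

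The crux is this remaining case. The arc thrusting $\partial T$ at $z$ crosses $ph_i$ and so enters the interior of $\tau_2$, furnishing a point $x'$ of $\mathcal D$ in the interior of $\tau_2$ that is internally $H$-connected with $x$. From $x'$ I would run a sliding walk following each arc toward its vanishing point in $A$ (which exists by \cref{o:attractor}), exactly as in the proof of \cref{p:attractor}: by convexity of $H$ and the inclusion $A\subseteq H$ the walk never leaves $H$, and by \cref{p:2cross} none of its arcs can cross $\partial H$, so it cannot leave $\tau_2$ across the edge $h_{i-1}h_i\subseteq\partial H$. Since the interior of $\tau_2$ contains no point of $A$, the walk cannot stay within $\tau_2$, so it exits either back across $ph_i$ into $T$ or across $ph_{i-1}$; the emptiness of $\tau_1$ gives the analogous behaviour if the walk ever enters $\tau_1$, while the bound of one point of $A$ in $T$ constrains how it can move inside $T$. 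Iterating this analysis, the aim is to show that the walk --- hence a connected chain of arcs issuing from $x$ and staying in $H$ --- is eventually forced across the relative interior of $h_1h_i$. The main obstacle is precisely this termination step: one must rule out that the walk escapes across $ph_2$ or $ph_{i-1}$ into the part of $H$ that may still carry points of $A$ (where it could be captured by a swirl whose eye avoids $h_1h_i$), and that it oscillates indefinitely among $T$, $\tau_1$ and $\tau_2$. The two emptiness hypotheses together with the one-point bound on $T$ are exactly what should power this, presumably via a descent argument with a potential measuring the walk's progress toward the edge $h_1h_i$, or via a further application of \cref{p:2linesides} to a shrinking family of regions abutting that edge; turning this into a clean monotone argument is the delicate part.
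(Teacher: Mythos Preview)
Your proposal has a genuine gap precisely where you flag it: the termination step. Once you send the walk from $x'$ into the ear $\tau_2$ following each arc toward its vanishing point in $A$, you lose control. Nothing prevents that walk from exiting $\tau_2$ across $ph_{i-1}$ into the portion of $H$ on the far side of $h_1h_{i-1}$, which may contain many points of $A$; there it can be absorbed by a swirl whose eye never touches $h_1h_i$. The hypotheses on $\tau_1$, $\tau_2$ and $T$ say nothing about that region, so no descent or potential argument confined to $T\cup\tau_1\cup\tau_2$ can work without an additional idea. Your detour through \cref{p:2linesides} and into the ears is thus a dead end.

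The paper bypasses this entirely by never leaving $T=h_1ph_i$. It launches a single sliding walk from $x$ whose direction rule is \emph{not} ``always toward the vanishing point in $A$'' but instead depends on where that vanishing point lies: toward $v$ if $v\in\{h_1,h_i,h_{i+1},\dots,h_m\}$, \emph{away} from $v$ if $v\in\{h_2,\dots,h_{i-1}\}$, and with a two-phase rule for the possible internal point $q\in T$ (toward $q$ while outside $h_1qh_i$, away from $q$ once inside). The geometric fact driving this is that $T$ is exactly the intersection $\bigcap_{v} h_1vh_i$ over all $v$ in the spherical polygon $ph_2h_3\dots h_{i-1}$; hence moving away from any such $v$ from within $T$ keeps you inside the corresponding triangle $h_1vh_i$ and pushes you toward its base $h_1h_i$, while moving toward any $v$ in the outer arc $h_1h_i\dots h_m$ does the same. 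This gives a genuinely monotone walk confined to the interior of $T$, which therefore must cross the interior of $h_1h_i$. The step you were missing is precisely this asymmetric, location-dependent choice of direction.
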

\begin{proof}
Let $q$ be the unique internal point of $A$ contained in $h_1ph_i$, if such point exists. Let $w$ be a sliding walk starting from $x$ with the following properties. As long as $w$ is in the interior of an arc $a\in\mathcal D$ with a vanishing point $v\in h_1h_ih_{i+1}\dots h_m$, $w$ proceeds along $a$ in the direction of $v$. Instead, if $a$ has a vanishing point $v\in ph_2h_3\dots h_{i-1}$, then $w$ follows $a$ in the direction away from $v$. If $q$ is a vanishing point of $a$, then $w$ follows $a$ in the direction of $q$ if and only if $w$ is outside of the triangle $h_1qh_i$.

Observe that $h_1ph_i$ is the intersection of all the triangles of the form $h_1vh_i$, with $v$ in the spherical polygon $ph_2h_3\dots h_{i-1}$. Therefore, $w$ always remains within the interior of $h_1ph_i$, approaching the triangle $h_1qh_i$ as long as it is outside of it, and then approaching $h_1h_i$ while it is in $h_1qh_i$. Thus, $w$ eventually crosses the interior of $h_1h_i$ at a point internally $H$-connected with $x$.
\end{proof}

Note that \cref{p:hulltri} cannot be improved, as there are counterexamples where $h_1h_2p$ or $h_{i-1}h_ip$ contains an internal attractor point or $h_1ph_i$ contains two internal attractor points.

\begin{corollary}\label{c:hulltri}
With the notation of \cref{p:hulltri}, if a point $x$ of $\mathcal D$ lies in the interior of $h_1h_2h_3$, and $h_1h_2h_3$ contains at most one internal point of $A$, then there is an arc of $\mathcal D$ that crosses the interior of $h_1h_3$ at a point internally $H$-connected with $x$.
\end{corollary}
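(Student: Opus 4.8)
The plan is to obtain the corollary as the special case $i=3$ of \cref{p:hulltri}. First I would unwind what the ingredients of \cref{p:hulltri} become for $i=3$. The point $p$ is defined there as the intersection of the geodesic segments $h_1h_{i-1}$ and $h_2h_i$; for $i=3$ these are $h_1h_2$ and $h_2h_3$, which share the endpoint $h_2$, and since the distinct boundary points $h_1,h_2,h_3$ of a spherically convex polygon contained in a hemisphere are not collinear, these two segments meet only at $h_2$. Hence $p=h_2$.

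Next I would verify that the three hypotheses of \cref{p:hulltri} collapse onto the single hypothesis of the corollary. With $p=h_2$, the triangles $h_1h_2p$ and $h_{i-1}h_ip$ become $h_1h_2h_2$ and $h_2h_3h_2$, which are degenerate and have empty interior, so the requirement that their interiors contain no points of $A$ holds vacuously. The remaining triangle $h_1ph_i$ becomes $h_1h_2h_3$, so the hypothesis that it carry at most one internal point of $A$, and the hypothesis that $x$ lie in its interior, are exactly what the corollary assumes. Invoking \cref{p:hulltri} then produces an arc of $\mathcal D$ crossing the interior of $h_1h_i=h_1h_3$ at a point internally $H$-connected with $x$, which is precisely the sought conclusion.

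The only point needing care — the nearest thing to an obstacle — is making sure the argument inside \cref{p:hulltri} survives this degeneration: there the sliding walk is kept inside $h_1ph_i$ by using that $h_1ph_i$ is the intersection of the triangles $h_1vh_i$ as $v$ ranges over the spherical polygon $ph_2h_3\cdots h_{i-1}$. For $i=3$ this polygon shrinks to the single point $h_2$, so $h_1ph_i$ is just the triangle $h_1h_2h_3$ and the walk's rule for choosing the direction along each arc according to the location of its vanishing point remains unambiguous; nothing in that proof breaks. Thus the corollary is a one-line application of \cref{p:hulltri} with $i=3$, preceded only by this short bookkeeping.
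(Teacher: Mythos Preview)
Your proposal is correct and matches the paper's own proof essentially line for line: the paper also applies \cref{p:hulltri} with $i=3$, observes that $p=h_2$ so that the two side triangles degenerate to edges with empty interior, and concludes immediately. Your extra paragraph checking that the sliding-walk argument in \cref{p:hulltri} survives the degeneration is a reasonable bit of caution but is not needed for the corollary itself, since you are invoking the statement of \cref{p:hulltri} rather than reproving it.
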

\begin{proof}
It is immediate from \cref{p:hulltri} with $i=3$. Note that $p=h_2$, and therefore the triangles $h_1h_2p$ and $h_{i-1}h_ip$ degenerate to the edges $h_1h_2$ and $h_2h_3$. Since the interiors of such edges are empty, they are automatically devoid of points of $A$.
\end{proof}

We say that an attractor hull of a $k$-oriented SD is \emph{void} if it is a spherical $k$-gon, i.e., there are no poles or anti-poles in its interior. In the case of void attractor hulls, we have an improved version of  \cref{l:3interhull}.

\begin{proposition}\label{l:3edgehull}
Let $H$ be a void non-total attractor hull of a $k$-oriented SD $\mathcal D$, and let $x$ be a point of $\mathcal D$ that lies in the interior of $H$. Then, there are at least three distinct arcs of $\mathcal D$ that cross (respectively, thrust) the boundary of $H$ at distinct points, not all lying on the same two edges of $H$, all of which are $H$-connected (respectively, internally $H$-connected) with $x$.
\end{proposition}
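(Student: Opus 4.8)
The plan is to refine the argument of \cref{l:3interhull} by using the extra structure available when $H$ is void, namely that the boundary points of the attractor $A$ are exactly the $k$ vertices of $H$ and there are no internal attractor points to worry about. I will first establish the ``cross'' version and then deduce the ``thrust'' version, or handle them in parallel, since the only difference is whether the witnessing arc passes through the boundary transversally or merely touches it while also entering the interior. As in \cref{l:3interhull}, \cref{p:3linesides} already supplies three distinct arcs thrusting $\partial H$ at three distinct points $y_1,y_2,y_3$, all internally $H$-connected with $x$, with two of them on strictly opposite sides of a chosen great circle through $x$; the distinctness of the arcs comes from \cref{p:2cross}. The task is to upgrade ``not all on the same edge'' to ``not all on the same two edges'', and to arrange that the witnessing points are genuine crossing points when possible.

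The key new tool is \cref{c:hulltri} (equivalently \cref{p:hulltri} with $i=3$): since $H$ is void, any triangle $h_1h_2h_3$ spanned by three boundary (= corner) points of $A$ contains no internal attractor point, so a point $x$ of $\mathcal D$ inside $h_1h_2h_3$ is internally $H$-connected to a point where an arc of $\mathcal D$ \emph{crosses} the interior of the edge $h_1h_3$. The strategy is: suppose for contradiction that all arcs of $\mathcal D$ meeting $\partial H$ at points $H$-connected with $x$ do so on the union of just two edges $e$ and $e'$ of $H$. If $e$ and $e'$ are non-adjacent, pick a vertex $h$ of $H$ not on $e\cup e'$; the triangle formed by $h$ and the two ``far'' endpoints of $e$ and $e'$ (or a suitable sub-triangle containing $x$ cut out by a great circle through $x$ avoiding both $e$ and $e'$, exactly as in the last step of \cref{l:3interhull}) has $x$ in its interior, and \cref{c:hulltri} produces a crossing point on an edge disjoint from $e\cup e'$, a contradiction. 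If $e$ and $e'$ are adjacent at a vertex $h_2$, with $e=h_1h_2$ and $e'=h_2h_3$, apply \cref{c:hulltri} to the triangle $h_1h_2h_3$ (which contains $x$, or a sub-triangle of which containing $x$ we obtain by cutting with a great circle through $x$ that misses $h_1h_3$): it yields an arc crossing the interior of $h_1h_3$, again an edge distinct from $e$ and $e'$ — contradiction. This forces the three points of \cref{p:3linesides} to lie on at least three distinct edges, and a simultaneous application of \cref{c:hulltri} (or \cref{p:hulltri}) along the way replaces ``thrust'' by ``cross'' for the witnessing arcs.

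The main obstacle I anticipate is the bookkeeping needed to guarantee that $x$ (or a point internally $H$-connected with it) actually lies in the interior of a triangle of the special form required by \cref{p:hulltri}/\cref{c:hulltri}, rather than merely inside $H$. The point $x$ need not sit inside any single corner-triangle a priori; one must run a preliminary sliding walk from $x$ toward appropriate vanishing points (as in the proof of \cref{p:attractor}) to push a connected trail of $\mathcal D$ into such a triangle while staying in the interior of $H$, thereby preserving internal $H$-connectivity. Getting the directions of this walk right so that it lands in the intended triangle without exiting $H$ — and doing so for each of the two-edge configurations (adjacent vs.\ non-adjacent) — is the delicate part; everything else is a repackaging of \cref{p:3linesides}, \cref{p:2cross}, and \cref{c:hulltri}.
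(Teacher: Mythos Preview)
Your overall contradiction strategy matches the paper's, but there is a concrete gap. In your adjacent-edge case ($e=h_1h_2$, $e'=h_2h_3$), \cref{c:hulltri} yields an arc crossing the interior of $h_1h_3$, and you declare this ``an edge distinct from $e$ and $e'$''. In general $h_1h_3$ is a \emph{diagonal} of $H$, not an edge (unless $k=3$), so a crossing of $h_1h_3$ is a point interior to $H$, not on $\partial H$, and gives no immediate contradiction. The paper closes this gap with one more step: once you have a point $x''$ on the diagonal $h_1h_i$, apply \cref{p:3linesides} at $x''$ with $\gamma$ the great circle through $h_1h_i$; this produces a thrust point of $Y$ on the side of $\gamma$ opposite to both $e_1$ and $e_2$, and \emph{that} is the contradiction. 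Your non-adjacent case is too vague to evaluate, but the same issue recurs: a corner-triangle argument only lands you on a diagonal, and you still need the \cref{p:3linesides} step to reach $\partial H$ off the two edges.

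Two further remarks on how the paper resolves the obstacle you flagged. First, it does not separate the adjacent and non-adjacent cases; it uses \cref{p:hulltri} with general $i$ (with $p$ the intersection of the diagonals $h_1h_{i-1}$ and $h_2h_i$), which handles both at once. Second, to find a point inside the correct triangle $h_1ph_i$ (or $h_2ph_{i-1}$), the paper does \emph{not} launch a new sliding walk from $x$; instead it uses the internal $H$-connecting paths $P_1,P_2$ between the already-known thrust points $y_1,y_2\in e_1$ and $y_3\in e_2$: any such path from $e_1$ to $e_2$ must meet one of those two triangles (with a short separate argument excluding the degenerate possibility $x'=p$). This avoids the bookkeeping you were worried about. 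Finally, the direction of the ``cross/thrust'' reduction is the opposite of what you wrote: one proves the thrust statement first and then uses \cref{p:1inters} to upgrade each thrust point to a nearby genuine crossing point on the same edge.
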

\begin{proof}
Note that it suffices to prove that there are arcs of $\mathcal D$ that thrust (rather than cross) the boundary of $H$ in at least three distinct points, not all lying on the same two edges of $H$, all of which are internally $H$-connected with $x$. Indeed, given these points, \cref{p:1inters} easily implies that there are also at least three crossing points with the desired properties. In addition, \cref{p:2cross} implies that all such points lie on distinct arcs of $\mathcal D$.

Let $Y$ be the set of points on the boundary of $H$ such that, for all $y\in Y$ there is an arc of $\mathcal D$ that thrusts the boundary of $H$ at $y$, and $y$ is internally $H$-connected with $x$. By \cref{l:3interhull}, $Y$ contains at least three points, and no single edge of $H$ contains all of them. Assume for a contradiction that all points of $Y$ lie within two edges of $H$, say $e_1=h_1h_2$ and $e_2=h_{i-1}h_{i}$. Without loss of generality, $e_1$ contains at least two points of $Y$, say $y_1$ and $y_2$, and $e_2$ contains at least one point of $Y$, say $y_3$.

Recall that $y_1$ and $y_3$ are internally $H$-connected (with respect to $\mathcal D$) by a path $P_1$ along $\mathcal D$. Similarly, $y_2$ and $y_3$ are internally $H$-connected (with respect to $\mathcal D$) by a path $P_2$. It is easy to recognize that the union of $P_1$ or $P_2$ (whose points are all internal to $H$, except for $y_1$, $y_2$, $y_3$) contains a point $x'$ such that either $x'=p$ or $x'$ lies in the interior of one of the triangles $h_1ph_i$ and $h_2h_{i-1}p$.

We can exclude the case $x'=p$, because any arc $a\in \mathcal D$ that has $p$ in its interior and avoids the interiors of $h_1ph_i$ and $h_2h_{i-1}p$ must overlap with $h_1h_{i-1}$ or with $h_2h_i$. This is because $a$ is collinear with a vertex of $H$, and there are no vertices of $H$ in the interior of $h_1h_2$ or in the interior of $h_{i-1}h_i$. Then, by \cref{p:2cross}, $a$ has an endpoint in the interior of $h_1h_{i-1}$ or in the interior of $h_2h_i$, where it hits another arc $a'\in \mathcal D$, which in turn intersects the interior of $h_1ph_i$ or $h_2h_{i-1}p$, respectively.

Therefore, without loss of generality, we may assume that $x'$ is in the interior of $h_1ph_i$. Also, $x'$ is internally $H$-connected with $x$, because it lies on $P_1$ or on $P_2$. Note that $H$ and $x'$ satisfy the hypotheses of \cref{p:hulltri}, because $H$ is void. Thus, there is an arc of $\mathcal D$ that crosses $h_1h_i$ at an internal point $x''$, which is internally $H$-connected with $x'$ (hence with $x$). If $h_1h_i$ is an edge of $H$, we have reached a contradiction, because $x''\in Y$, but $x''$ is not in $e_1$ or in $e_2$.

Otherwise, $h_1h_i$ is a diagonal of $H$; let $\gamma$ be the great circle containing $h_1h_i$. By \cref{p:3linesides}, since $x''$ lies in the interior of $H$, there is an arc of $\mathcal D$ that thrusts the boundary of $H$ at a point $z$ on the side of $\gamma$ opposite to $e_1$ and $e_2$. Moreover, $z$ is internally $H$-connected with $x''$, and therefore with $x$. Hence $z\in Y$, which contradicts the fact that $z$ is not in $e_1$ or in $e_2$.

We conclude that no two edges of $H$ contain all points of $Y$, as desired.
\end{proof}

Again, \cref{l:3edgehull} cannot be improved, in the sense that there are counterexamples where exactly one attractor point lies in the interior of $H$.

\section{Open Problems}\label{asec:open}
We conclude this paper with some directions for future research.

Although we have proved all the results listed in \cref{tab:1}, which include all non-degenerate configurations of $k$-oriented SDs and SODs, several degenerate configurations are still unexplored. These include $6$-oriented SDs with alignment $(2,3,3,3,4,4)$ and $(4,4,4,4,4,4)$, both of which have two triplets of collinear poles (in the former case, one pole lies at the intersection of the two great circles containing all other poles). In fact, even classifying all degenerate configurations of $k\geq 6$ poles is a challenge in itself.

\begin{open}
Extend \cref{tab:1} to all degenerate $k$-oriented SDs and SODs for $k\geq 6$.
\end{open}

In particular, \cref{tab:1} still leaves a small uncertainty on the minimum number of swirls in a $5$-oriented SD with alignment $(3,3,3,4,4)$, which is either four or five. Although the analysis in \cref{l:5nondeg} can give insights on this configuration, it is not quite sufficient.

\begin{open}
Determine if there are $5$-oriented SDs with alignment $(3,3,3,4,4)$ having exactly four swirls.
\end{open}

\cref{fig:44444} shows a non-degenerate $5$-oriented SD with exactly four swirls and nine arcs. By $\epsilon$-doubling the green and the red arc, we obtain a non-degenerate $5$-oriented SOD with exactly four swirls and $11$ arcs. However, there is still a gap between this number and the absolute minimum number of arcs in a non-degenerate $5$-oriented SOD, which is nine.

\begin{open}
Determine if there exist non-degenerate $5$-oriented SODs with exactly four swirls and fewer than $11$ arcs.
\end{open}

In \cref{sec:1.1}, we remarked that, for any sufficiently large $k$, there are $k$-oriented SODs that are not the visibility map of any vertex-hidden point in a polygonal scene, due to the main result of~\cite{kimberly}. It would be interesting to determine if this is true for all $k$.

\begin{open}
Determine all values of $k$ such that any $k$-oriented SOD is the visibility map of a vertex-hidden point in a $k$-edge-oriented polygonal scene.
\end{open}

Furthermore, to address our original motivating question in \cref{sec:1.2}, we would like to find examples of $k$-edge-oriented polygonal scenes matching the numbers in the last column of \cref{tab:1}. The main contribution of~\cite{mini} is a first step in this direction, as it provides a polygonal scene where a vertex-hidden point sees exactly eight edges.

\begin{open}
For every $k$, determine whether there are $k$-edge-oriented polygonal scenes where a vertex-hidden point sees a number of edges that matches the minimum number of arcs in a non-degenerate $k$-oriented SOD.
\end{open}

In \cref{yt:22swirls}, we proved that any SD has at least two clockwise swirls and two counterclockwise swirls. However, this says nothing about whether these swirls are contiguous.

\begin{open}
Determine if all SDs have at least two clockwise swirls and two counterclockwise swirls, all of which are non-contiguous.
\end{open}

\cref{yt:22swirls} cannot be improved for $4$-oriented SDs, since it is possible to slightly modify the SD in \cref{fig:2223SD} to obtain a $4$-oriented SD with alignment $(2,2,2,3)$ having exactly two clockwise swirls. Doubling some of its edges also yields an SOD with the same property. However, perhaps \cref{yt:22swirls} can be improved for $3$-oriented SDs.

\begin{open}
Determine if all $3$-oriented SDs have at least four clockwise swirls and four counterclockwise swirls.
\end{open}

Since every octant of a $3$-oriented SD is a void attractor hull, it immediately follows from \cref{c:hulltri} that the interiors of all three edges of each octant must be crossed by arcs. Thus, the swirl graph of any $3$-oriented SD with exactly $12$ arcs has an induced subgraph isomorphic to the cubical graph. However, this is not enough to characterize the swirl graphs of such minimal $3$-oriented SDs: in fact, there are $3$-oriented SDs that have more than eight swirls in spite of having exactly $12$ arcs (some swirls may revolve around poles and be contiguous to some of the other swirls).

\begin{open}
Characterize the swirl graphs of $3$-oriented SDs with exactly $12$ arcs.
\end{open}

Observe that, in a $k$-oriented SD, the degree of a swirl cannot exceed $2k$, because at most two arcs in the swirl may have the same vanishing points. Nonetheless, the degree of a swirl may very well exceed $k$, although it is shown in~\cite{spherical2} that any \emph{swirling} SOD must have a swirl of degree exactly three. Still, not much is known about non-swirling SODs.

\begin{open}
Determine if there is a $k$-oriented SOD where all swirls have degrees exceeding $k$.
\end{open}

\cref{t:6orien} implies that there is a continuum of configurations of six poles that allow for the construction of $6$-oriented SDs with exactly six arcs. In principle, however, there may be non-degenerate configurations of six (or more) poles that make this construction impossible.

\begin{open}
Determine if it is always possible to construct a non-degenerate $6$-oriented SD with exactly six arcs, given the locations of its six poles.
\end{open}
\end{document}